\documentclass[a4paper,oneside,10pt]{article}
\usepackage{amsmath,amssymb}
\usepackage[hidelinks]{hyperref}
\usepackage{color}
\newtheorem{theorem}{Theorem}[section]
\newtheorem{corollary}[theorem]{Corollary}
\newtheorem{definition}[theorem]{Definition}
\newtheorem{lemma}[theorem]{Lemma}
\newtheorem{assumption}[theorem]{Assumption}
\newtheorem{proposition}[theorem]{Proposition}
\newtheorem{remark}[theorem]{Remark}
\newenvironment{proof}[1][Proof]{\noindent \textbf{#1.} }{\  $\Box$}
\numberwithin{equation}{section}

\makeatletter
\newcommand{\equationlabelalias}[2]{%
  \@ifundefined{r@#2}{}{%
    \protected@write\@auxout{}{%
      \string\newlabel{#1}{\csname r@#2\endcsname}}}%
}
\makeatother
\begin{document}

\title{Vector- and operator-valued backward stochastic equations with finite-variation drivers and a maximum principle for singular stochastic control in infinite dimensions}
\author{Ying Hu\thanks{IRMAR, Universit\'e
        Rennes 1, Rennes, France.\ ying.hu@univ-rennes1.fr. This author's
        research is partially supported by Lebesgue Center of Mathematics
        ``Investissements d'avenir'' Program (ANR-11-LABX-0020-01), by ANR CAESARS
        (ANR-15-CE05-0024) and by ANR MFG (ANR-16-CE40-0015-01).}
    \and Guomin Liu\thanks{School of Mathematical Sciences, Nankai University, Tianjin,
China.\ gmliu@nankai.edu.cn. Research supported by National
Natural Science Foundation of China (12571479, 12201315).}
\and Shanjian Tang\thanks{School of Mathematical Sciences, Fudan University, Shanghai, China. sjtang@fudan.edu.cn. Research supported by National Key R\&D
	Program of China (2018YFA0703903) and National Natural Science Foundation
	of China (11631004).}
}
\date{}
\maketitle

\begin{abstract}
We study a mixed regular--singular control problem for stochastic
evolution equations in a Hilbert space with possibly unbounded random linear
operators, a nonconvex regular-control domain, and a state-dependent
singular coefficient. The singular control is an adapted nondecreasing
c\`adl\`ag process with a possibly unbounded terminal value. We establish
well-posedness and weighted moment estimates for the forward and backward
equations and characterize the second-order adjoint through a conditionally
expected operator-valued backward stochastic integral equation. An
It\^o-type formula for the quadratic form associated with this adjoint,
combined with spike and convex variations, yields a Hamiltonian condition
with a second-order correction for the regular control, together with
nonnegativity and a contact condition for the optional singular Hamiltonian.
We illustrate the maximum principle with a stochastic heat equation
subject to a three-valued regular control and nonnegative singular inputs.

\medskip\noindent\textbf{Keywords. } Stochastic evolution equations,
singular stochastic control, nonconvex control domain, maximum principle,
finite-variation drivers, operator-valued equations.

\smallskip
\noindent\textbf{AMS 2020 Subject Classifications.} 93E20, 60H15, 49K27.

\end{abstract}

\vspace{-10pt}

\section{Introduction}

We consider a mixed regular--singular stochastic optimal control problem
governed by stochastic evolution equations (SEEs) in a Hilbert space.
The state equation takes the form
\begin{align}
	x(t)
	&=
	x_0
	+
	\int_0^t
	\big[
	A(s)x(s)+a(s,x(s),u(s))
	\big]\,ds
	\notag\\
	&\quad
	+
	\int_0^t
	\big[
	B(s)x(s)+b(s,x(s),u(s))
	\big]\,dW(s)
	+
	\int_{(0,t]}
	\gamma(s,x(s-))\,d\xi(s),
	\quad t\in[0,T],
	\label{eq:introduction-state}
\end{align}
where \(A(\cdot)\) and \(B(\cdot)\) are possibly unbounded random linear
operators, \(u(\cdot)\) is a regular control with values in a separable
metric space \(U\), and \(\xi(\cdot)\) is an adapted nondecreasing
c\`adl\`ag process. The control domain \(U\) need not be convex.
The performance criterion is
\begin{equation}
	J(u,\xi)
	=
	\mathbb E\left[
	h(x(T))
	+
	\int_0^T k(t,x(t),u(t))\,dt
	+
	\int_{(0,T]}
	g(t,x(t-))\,d\xi(t)
	\right].
	\label{eq:introduction-cost}
\end{equation}

Singular controls act through random measures and arise, for example,
in monotone-follower problems \cite{BSW80,KS84}, transaction-cost and
limit-order-book models \cite{DN90,HorstNaujokat2014}, and singular or reflected SPDE models 
\cite{OksendalSulemZhang2014,AgramHilbertOksendal2020}.
In finite dimensions, such problems have been studied using dynamic
programming and maximum principles; see \cite{BSW80,CMR85,KS84,DN90,HS94,HS95,HS95-2,
CadenillasHaussmann1994,BahlaliMezerdi2005}. 

For regular stochastic control problems, Peng
\cite{Peng1990} established a general maximum principle allowing a
nonconvex control domain and a control-dependent diffusion coefficient. A spike variation produces a first state variation of order
\(O(\rho^{1/2})\), so the cost expansion must retain the second state
variation. The resulting maximum principle involves a second-order
adjoint process and a quadratic Hamiltonian correction.

For mixed regular--singular control with an additive term
\(G(t)\,d\xi(t)\), Bahlali and Mezerdi \cite{BahlaliMezerdi2005}
combined regular spike variations with singular convex perturbations,
allowing a nonconvex regular-control domain. The relaxed formulations of
Bahlali, Djehiche, and Mezerdi \cite{Bahlali2007} and Andersson
\cite{Andersson2009} allow measure-valued regular controls, with
uncontrolled and control-dependent diffusion, respectively. Further
developments include \cite{JiXue09,SongWu2022,Zhang2021}.

State-dependent singular coefficients have been treated by different
methods. Dufour and Miller \cite{DufourMiller2006} used a time transformation
to derive a maximum principle for a nonlinear singular system with a
state-independent singular direction. Their subsequent work
\cite{DufourMiller2007} allowed the coefficient multiplying the singular
control to depend on the state. Neither formulation contains an independent
regular control in the drift or diffusion coefficient. Kushner
\cite{Kushner2013} developed a numerical approximation method for stochastic
singular control problems with state-dependent singular controls, again
without a regular control. Wang, Zhao, Wang, Song, and Ma
\cite{WangZhaoWangSongMa2018} studied a one-dimensional partial-information
regular--singular jump-diffusion with a state-dependent singular coefficient.
They obtained a necessary maximum principle through first-order
directional variations, assuming an open and convex regular-control
domain.

Early maximum principles for distributed parameter systems and semilinear
stochastic evolution equations were established by Bensoussan
\cite{Bensoussan1983} and Hu and Peng \cite{HuPeng1990}. Subsequent work
established general maximum principles for regular controls with
nonconvex control domains in infinite dimensions; see, for example,
\cite{DM13,FHT13,LZ14,SW21,LT21}. A central issue in this setting is how to characterize the
operator-valued second-order adjoint process. Existing approaches use
stochastic bilinear forms, transposition solutions, or operator-valued
backward equations. Liu and Tang
\cite{LT21} introduced a conditionally expected operator-valued backward
stochastic integral equation (BSIE) that characterizes the second-order
adjoint process dynamically without treating the nonseparable operator space
\(\mathfrak L(H)\) as a Hilbert space. These works concern regular controls
and do not include a singular finite-variation component.

Infinite-dimensional singular control has also been studied under different
structural assumptions. {\O}ksendal, Sulem, and Zhang
\cite{OksendalSulemZhang2014} considered SPDEs in which the singular measure
is multiplied by the state-dependent coefficient
\(\lambda(t,x,Y(t,x))\). They derived necessary and sufficient maximum principles, the latter
under a concavity condition, and related the adjoint equation to a
reflected backward SPDE. Agram, Hilbert, and {\O}ksendal
\cite{AgramHilbertOksendal2020} derived necessary and sufficient maximum principles for SPDEs with
space-mean dynamics. Their adjoint equation is a reflected backward SPDE
with space-mean dependence. Federico, Ferrari, Riedel, and
R\"ockner \cite{FedericoEtAl2021} formulated a class of linear singular control
problems on partially ordered infinite-dimensional spaces and derived
necessary and sufficient first-order optimality conditions by semigroup and
convex-analytic methods. 
None of these formulations includes an independent
regular control.

The state equation \eqref{eq:introduction-state} combines a nonconvex
regular-control domain and control-dependent diffusion with a nonlinear
state-dependent singular coefficient in an infinite-dimensional setting.
The random linear operators \(A(\cdot)\) and \(B(\cdot)\)
may also be unbounded as operators on \(H\).
The first- and second-order expansions of the singular coefficient
give rise to finite-variation terms in the variational equations; see \eqref{eq:first-variation-regular} and
\eqref{eq:second-variation-regular}. Accordingly, the first-order adjoint
equation is a backward stochastic evolution equation with a finite-variation
driver, whereas the second-order adjoint equation is an operator-valued BSIE
with a finite-variation term.

The first analytic issue is the well-posedness of forward and backward
stochastic evolution equations driven by \(dt\), \(dW\), and a nondecreasing
c\`adl\`ag process. Hu and Peng \cite{HuPeng1991} studied adapted solutions of semilinear
backward stochastic evolution equations. In finite dimensions,
Pardoux and Zhang \cite{PardouxZhang1998} treated generalized BSDEs driven
by a continuous nondecreasing process, and Li and Tang
\cite{LiTang2015} studied a recursive optimal control problem governed by
such BSDEs. In the present setting, the driver \(\xi\) may have jumps.
We use exponential weights with separate parameters for time and the
Stieltjes driver to estimate the \(dt\)- and \(d\xi\)-integrals and
the jump terms.

The second issue is that \(\mathfrak L(H)\) is generally nonseparable and
not a Hilbert space. We extend the conditionally expected BSIE
formulation of \cite{LT21} to include a c\`adl\`ag finite-variation term.
A contraction argument in a weighted space, together with aggregation of
scalar conditional expectations, gives a weakly progressively measurable
operator-valued solution. We also establish an It\^o-type formula for
the quadratic form associated with this solution along a spike-generated
state variation.

A general nondecreasing c\`adl\`ag driver need not satisfy
\(\xi(t_0+\rho)-\xi(t_0)=O(\rho)\).
The moving-spike estimates therefore rely on right continuity and
uniform bounds for the homogeneous flow.
We combine regular spikes with convex singular perturbations.
Vector-valued duality eliminates the linear state-variation terms from
the cost expansion, and the operator-valued It\^o formula yields the
quadratic diffusion correction in the regular Hamiltonian. Optional
projection and stopping-time tests then yield nonnegativity and the contact
condition from the singular variational inequality.

We also apply the maximum principle to a stochastic heat equation
with a three-valued regular control and nonnegative singular inputs.
We derive a threshold rule for the optimal regular control in terms
of the adjoint process and show that any optimal singular input
is concentrated at the terminal time. The size of this terminal input
is given by an explicit threshold formula.

Section~2 introduces the notation and operator-valued conditional
expectation. Section~3 establishes well-posedness and estimates for
SEEs and BSEEs. Section~4 treats operator-valued BSIEs and their
It\^o-type formula. Section~5 formulates the mixed regular--singular
stochastic control problem and derives the maximum principle. The Appendix
contains the bounded-driver constructions and the deferred technical proofs.

\section{Preliminaries}\label{preliminary}
Let \((\Omega,\mathcal F,\mathbb P)\) be a probability space equipped with a
filtration \(\mathbb F=\{\mathcal F_t\}_{0\le t\le T}\) satisfying the usual
conditions. We denote by \(\operatorname{Prog}\) the progressive
\(\sigma\)-algebra on \(\Omega\times[0,T]\). A subset of
\(\Omega\times[0,T]\) is called evanescent if it is contained in
\(N\times[0,T]\) for some \(N\in\mathcal F\) with \(\mathbb P(N)=0\).
For Banach spaces \(X\) and \(Y\),
\(\mathfrak L(X;Y)\) denotes the space of bounded linear operators from \(X\)
to \(Y\), and we write \(\mathfrak L(X)=\mathfrak L(X;X)\).

Let \(E\) be a separable Hilbert space,
\(\mathcal G\subset\mathcal F\) a sub-\(\sigma\)-algebra, and \(\alpha\ge1\).
The space \(L^\alpha(\mathcal G;E)\) consists of
\(\mathcal G\)-measurable random variables with norm
\[
	\|\eta\|_{L^\alpha(\mathcal G;E)}
:=\bigl(\mathbb E\|\eta\|_E^\alpha\bigr)^{1/\alpha}.\]
The spaces 
\(L^\alpha_{\mathbb F}(0,T;E)\),
\(L^{1,\alpha}_{\mathbb F}(0,T;E)\), and
\(L^{2,\alpha}_{\mathbb F}(0,T;E)\) consist of progressively measurable
processes with the respective norms
\begin{align*}
&\|\phi\|_{L^\alpha_{\mathbb F}(0,T;E)}
:=\left(\mathbb E\int_0^T\|\phi(t)\|_E^\alpha\,dt\right)^{1/\alpha},\quad
\|\phi\|_{L^{1,\alpha}_{\mathbb F}(0,T;E)}
:=\left[\mathbb E\left(\int_0^T\|\phi(t)\|_E\,dt\right)^\alpha\right]^{1/\alpha},\\
&\|\phi\|_{L^{2,\alpha}_{\mathbb F}(0,T;E)}
:=\left[\mathbb E\left(\int_0^T\|\phi(t)\|_E^2\,dt\right)^{\alpha/2}\right]^{1/\alpha}.
\end{align*}
The space
\(\mathbb D^\alpha_{\mathbb F}(0,T;E)\) consists of adapted c\`adl\`ag processes
with norm
\[
	\|\phi\|_{\mathbb D^\alpha_{\mathbb F}(0,T;E)}
	:=\left(\mathbb E\sup_{0\le t\le T}\|\phi(t)\|_E^\alpha\right)^{1/\alpha}.
\]
Let \(F\) be a real-valued adapted c\`adl\`ag finite-variation process
on \([0,T]\), and let \(|F|_v\) denote its total variation process.
For \(q\in\{1,2\}\) and \(\alpha\ge1\), the space
\(L^{q,\alpha}_{\mathbb F,F}(0,T;E)\) consists of progressively measurable
processes with finite norm
\[
 \|\phi\|_{L^{q,\alpha}_{\mathbb F,F}(0,T;E)}
 :=\left[\mathbb E\left(
   \int_{(0,T]}\|\phi(t)\|_E^q\,d|F|_v(t)
 \right)^{\alpha/q}\right]^{1/\alpha}.
\]
We write \(L^2_{\mathbb F,F}=L^{2,2}_{\mathbb F,F}\).
When \(F\) is nondecreasing, \(d|F|_v=dF\). When
\(E=\mathbb R\), we omit \(E\) from the notation.
On a subinterval \([r,T]\), the Lebesgue integrals are taken over
\([r,T]\), the Stieltjes integrals over \((r,T]\), and the maximum
in the \(\mathbb D^\alpha_{\mathbb F}(r,T;E)\)-norm over \([r,T]\).

Let \(H\) be a separable real Hilbert space. An \(\mathfrak L(H)\)-valued
random variable \(Z\) is weakly \(\mathcal G\)-measurable, where
\(\mathcal G\subset\mathcal F\), if
\(\omega\mapsto\langle Z(\omega)u,v\rangle_H\) is
\(\mathcal G\)-measurable for every \(u,v\in H\). Weak progressive
measurability of an operator-valued process \(Q\) means that
\((\omega,t)\mapsto\langle Q(\omega,t)u,v\rangle_H\) is progressively
measurable for every \(u,v\in H\). Equivalently, weak (progressive) measurability means (progressive)
measurability with respect to the \(\sigma\)-algebra
\(\mathcal B_w(\mathfrak L(H))\) generated by
\(Q\mapsto\langle Qu,v\rangle_H\), \(u,v\in H\). For \(\alpha\ge1\), we write $L_w^\alpha(\mathcal G;\mathfrak L(H))$ for the space of $\mathcal G$-measurable mappings with norm
\[
\|Z\|_{L_w^\alpha(\mathcal G;\mathfrak L(H))}
:=\bigl(\mathbb E\|Z\|_{\mathfrak L(H)}^\alpha\bigr)^{1/\alpha}.
\]
For \(\alpha\ge1\), we denote by
\(L^\alpha_{\mathbb F,w}(0,T;\mathfrak L(H))\) the space of weakly
progressively measurable processes \(Q:\Omega\times[0,T]\to\mathfrak L(H)\)
such that
\[
\mathbb E\int_0^T\|Q(t)\|_{\mathfrak L(H)}^\alpha\,dt<\infty.
\]
If \(F\) is a real-valued adapted c\`adl\`ag finite-variation process, then
\(L^\alpha_{\mathbb F,F,w}(0,T;\mathfrak L(H))\) consists of weakly
progressively measurable processes satisfying
\[
\mathbb E\int_{(0,T]}\|Q(t)\|_{\mathfrak L(H)}^\alpha\,d|F|_v(t)<\infty.
\]
For \(q\in\{1,2\}\) and \(\alpha\ge1\), the spaces
\(L^{q,\alpha}_{\mathbb F,w}(0,T;\mathfrak L(H))\) and
\(L^{q,\alpha}_{\mathbb F,F,w}(0,T;\mathfrak L(H))\) consist of weakly
progressively measurable processes with the respective finite norms
\begin{align*}
\|Q\|_{L^{q,\alpha}_{\mathbb F,w}(0,T;\mathfrak L(H))}
&:=\left[\mathbb E\left(
\int_0^T\|Q(t)\|_{\mathfrak L(H)}^q\,dt
\right)^{\alpha/q}\right]^{1/\alpha},\\
\|Q\|_{L^{q,\alpha}_{\mathbb F,F,w}(0,T;\mathfrak L(H))}
&:=\left[\mathbb E\left(
\int_{(0,T]}\|Q(t)\|_{\mathfrak L(H)}^q\,d|F|_v(t)
\right)^{\alpha/q}\right]^{1/\alpha}.
\end{align*}

We recall the operator-valued conditional expectation introduced in
\cite[Section~2.1]{LT21}. Let
\(Y:H\times H\to L^1(\mathcal F)\) be a bounded bilinear mapping. A
weakly \(\mathcal G\)-measurable \(\mathfrak L(H)\)-valued random variable
\(Z\) is called a conditional expectation of \(Y\) given
\(\mathcal G\), denoted by \(\mathbb E[Y\mid\mathcal G]\), if, for every
\(u,v\in H\),
\begin{equation}
    \langle Zu,v\rangle_H
    =
    \mathbb E\big[Y(u,v)\mid\mathcal G\big],
    \quad \mathbb P\text{-a.s.}
    \label{eq:operator-valued-conditional-expectation}
\end{equation}
By \cite[Theorem~2.1]{LT21}, such a conditional
expectation exists and is integrable if and only if there is a nonnegative
\(g\in L^1(\mathcal G)\) such that for every
\(u,v\in H\),
\begin{equation}
    \left|\mathbb E\big[Y(u,v)\mid\mathcal G\big]\right|
    \le g\,\|u\|_H\|v\|_H,
    \quad \mathbb P\text{-a.s.}
    \label{eq:operator-valued-conditional-expectation-domination}
\end{equation}
In this case, the conditional expectation is unique up to \(\mathbb P\text{-a.s.}\) equality and
\begin{equation}
    \|\mathbb E[Y\mid\mathcal G]\|_{\mathfrak L(H)}\le g,
    \quad\mathbb P\text{-a.s.}
    \label{eq:operator-valued-conditional-expectation-bound}
\end{equation}
If, in addition, \(g\in L^2(\mathcal G)\), then
\(\mathbb E[Y\mid\mathcal G]\in L_w^2(\mathcal G;\mathfrak L(H))\).

\section{SEEs and BSEEs with finite-variation drivers}
Throughout this section, \((V,H,V^*)\) is a Gelfand triple: \(V\) is a
separable real Hilbert space densely and continuously embedded in the
separable real Hilbert space \(H\), which is identified with its dual. We
write \(\langle\cdot,\cdot\rangle_H\) for the inner product in \(H\) and
\(\langle\cdot,\cdot\rangle_*\) for the duality pairing between \(V^*\) and
\(V\). Let \(W\) be a one-dimensional Brownian motion and let \(\mathbb F\)
be the usual augmentation of its natural filtration.

Let \(\xi\) be a real-valued adapted nondecreasing c\`adl\`ag process such that
\(\xi(0)=0\) and \(\xi(T)<\infty\) \(\mathbb P\text{-a.s.}\) 
We write \(\Delta\xi(t):=\xi(t)-\xi(t-)\),
\(t\in(0,T]\), and denote by \(\xi^c\) its continuous part. Its purely
discontinuous part is
\(\xi^d(t):=\sum_{0<s\le t}\Delta\xi(s), \  t\in[0,T],\)
so that \(\xi=\xi^c+\xi^d\).

We define the Dol\'eans measure
\begin{equation}
\mu^\xi(B)
:=
\mathbb E\int_{(0,T]} I_B(\omega,t)\,d\xi(t),
\quad
B\in\mathcal F\otimes\mathcal B([0,T]).
\label{eq:Doleans-measure-xi}
\end{equation}

For \(\ell\ge0\) and \(\lambda>0\), set
\begin{equation}
    \mathfrak E_{\lambda,\ell}(\xi)
    :=
    \sup_{\tau}
    \left\|
        \mathbb E\left[
            \exp\!\left\{
                \lambda\ell\bigl(\xi(T)-\xi(\tau)\bigr)
            \right\}
            \,\middle|\,
            \mathcal F_\tau
        \right]
    \right\|_{L^\infty(\Omega)},
    \label{eq:unbounded-xi-conditional-exponential-moment}
\end{equation}
where the supremum is taken over all \([0,T]\)-valued stopping times.
We use the convention \(\mathfrak E_{\lambda,0}(\xi)=1\).

Fix \(\kappa>2\). For \(0\le r\le t\le T\), define
\begin{equation}
\begin{aligned}
    \rho_{\beta,\ell}^{r}(t)
    &:={}
    \exp\!\left\{
        -\beta(t-r)-\kappa\ell\bigl(\xi(t)-\xi(r)\bigr)
    \right\},
    \quad
    \theta_{\beta,\ell}^{r}(t)
    &:={}
    \bigl(\rho_{\beta,\ell}^{r}(t)\bigr)^{-1}.
\end{aligned}
\label{eq:unbounded-xi-simple-weights}
\end{equation}
We use \(\beta\) to absorb the terms integrated against \(dt\), and
\(\kappa\ell\) to absorb the multiplicative part of the Stieltjes coefficient.

The following inequalities will be used to absorb the homogeneous jump
terms in the forward and backward energy estimates.

For every \(z\ge0\) and every \(\kappa\ge2\),
\begin{align}
    e^{\kappa z}-1
    &\ge
    \kappa z+\kappa z^2,
    \label{eq:unbounded-xi-exponential-quadratic-lower-bound}
    \\
    e^{\kappa z/2}
    &\ge 1+z,
    \label{eq:unbounded-xi-exponential-linear-lower-bound}\\
    e^{\kappa z}-(1+z)^2
    &\ge
    (\kappa-2)z+(\kappa-1)z^2.
    \label{eq:unbounded-xi-exponential-flow-lower-bound}
\end{align}
These inequalities follow from the elementary bounds
\(e^u\ge1+u+u^2/2\) and \(e^u\ge1+u\), for \(u\ge0\),
using \(\kappa\ge2\); the third inequality is the first with
\(2z+z^2\) subtracted from both sides.
\equationlabelalias{lem:unbounded-xi-exponential-jump-inequalities}{eq:unbounded-xi-exponential-quadratic-lower-bound}

\subsection{Weighted It\^o formulas}

We first present the weighted forward and backward It\^o formulas used in the
subsequent energy estimates.

Let \(F\) be a real-valued adapted c\`adl\`ag finite-variation process on
\([0,T]\). Suppose that
\[
    h_0\in L^2(\mathcal F_0;H),\quad
    v\in L^2_{\mathbb F}(0,T;V),\quad
    v^*\in L^2_{\mathbb F}(0,T;V^*),\quad
    \eta\in L^{1,2}_{\mathbb F}(0,T;H),\quad
    \zeta\in L^{1,2}_{\mathbb F,F}(0,T;H),
\]
and let \(M\) be an \(H\)-valued continuous local martingale with \(M(0)=0\).
We denote by \(\langle M\rangle\) its scalar quadratic variation: the
unique continuous adapted nondecreasing process, starting at zero, such
that \(\|M\|_H^2-\langle M\rangle\) is a local martingale.

We use the following Hilbert-space It\^o formula from \cite[Theorem~1]{GK82}.
\begin{lemma}
\label{Ito-lemma}
Suppose that, for every \(\varphi\in V\),
\begin{align}
    \langle v(t),\varphi\rangle_H
    &=
    \langle h_0,\varphi\rangle_H
    +
    \int_0^t \langle v^*(s),\varphi\rangle_*\,ds
    +
    \int_0^t \langle \eta(s),\varphi\rangle_H\,ds
    +
    \int_{(0,t]}
        \langle \zeta(s),\varphi\rangle_H\,dF(s)
    +
    \langle M(t),\varphi\rangle_H
    \label{eq:FV-Ito-weak-form}
\end{align}
for \(dt\otimes d\mathbb P\)-almost every \((t,\omega)\). Then there
exists an adapted c\`adl\`ag \(H\)-valued process \(h\) such that
\(h=v\) \(dt\otimes d\mathbb P\)-almost everywhere, and outside a
\(\mathbb P\)-null set, \(h(0)=h_0\) and, for every \(t\in[0,T]\),
\begin{align}
    \|h(t)\|_H^2
    &=
    \|h_0\|_H^2
    +
    2\int_0^t \langle v^*(s),v(s)\rangle_*\,ds
    +
    2\int_0^t \langle \eta(s),h(s)\rangle_H\,ds
    +
    \langle M\rangle(t)
    +
    2\int_0^t \langle h(s),dM(s)\rangle_H
    \notag\\
    &\quad
    +
    \int_{(0,t]}
    \Big[
        2\langle h(s-),\zeta(s)\rangle_H
        +
        \|\zeta(s)\|_H^2\Delta F(s)
    \Big] \,dF(s).
    \label{eq:FV-Ito-left-limit}
\end{align}
\end{lemma}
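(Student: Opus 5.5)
The plan is to reduce the lemma to the Gyöngy--Krylov Itô formula for semimartingales \cite[Theorem~1]{GK82}. That result treats processes of the form \(v = h_0 + \int v^*\,ds + \mathcal{A} + M\), where \(\mathcal A\) is an \(H\)-valued adapted càdlàg process of finite variation and \(M\) is a local martingale (possibly càdlàg). The term \(\int\eta\,ds\) and the Stieltjes term \(\int\zeta\,dF\) will be merged into a single finite-variation \(H\)-valued process
\[
\mathcal A(t):=\int_0^t\eta(s)\,ds+\int_{(0,t]}\zeta(s)\,dF(s).
\]

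First, I will check that \(\mathcal A\) is well defined as a Bochner--Stieltjes integral \(\omega\)-wise. By the integrability assumptions, \(\int_0^T\|\eta\|_H\,ds<\infty\) and \(\int_{(0,T]}\|\zeta\|_H\,d|F|_v<\infty\) almost surely. Hence \(\mathcal A\) is adapted and càdlàg, with total variation bounded by these two integrals. Its jumps are \(\Delta\mathcal A(s)=\zeta(s)\Delta F(s)\), and adaptedness follows from progressive measurability of \(\eta\) and \(\zeta\). The weak identity \eqref{eq:FV-Ito-weak-form} then reads \(\langle v,\varphi\rangle=\langle h_0+\mathcal A+M,\varphi\rangle+\int\langle v^*,\varphi\rangle_*\), which is exactly the hypothesis of \cite[Theorem~1]{GK82} with semimartingale part \(h_0+\mathcal A+M\). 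That theorem yields a càdlàg \(H\)-valued version \(h\) of \(v\) with \(h(0)=h_0\), together with
\[
\|h(t)\|^2=\|h_0\|^2+2\int_0^t\langle v^*,v\rangle_*ds+2\int_0^t\langle h(s-),d(\mathcal A+M)(s)\rangle+[\mathcal A+M](t),
\]
where \([\cdot]\) denotes the scalar quadratic variation.

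Next, I will identify the terms. Since \(M\) is continuous and \(\mathcal A\) has finite variation, the cross bracket vanishes. Moreover, \([\mathcal A](t)=\sum_{s\le t}\|\Delta\mathcal A(s)\|^2=\sum_{s\le t}\|\zeta(s)\|^2(\Delta F(s))^2\), which equals \(\int_{(0,t]}\|\zeta\|^2\Delta F\,dF\). The quadratic variation \([M]\) coincides with \(\langle M\rangle\) by continuity. The stochastic integral against \(dM\) may use \(h(s)\) in place of \(h(s-)\), because \(M\) is continuous and \(h\) has only countably many jumps. The \(d\mathcal A\) integral splits as \(2\int\langle h(s-),\eta\rangle ds+2\int\langle h(s-),\zeta\rangle dF\). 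In the \(ds\) part, \(h(s-)=h(s)\) for all but countably many \(s\). Assembling these gives \eqref{eq:FV-Ito-left-limit}.

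The main point needing care is matching the exact hypotheses of \cite{GK82}. Their formulation requires the identity for \(dt\otimes d\mathbb P\)-a.e. \((t,\omega)\) and uses \(v\in L^2(0,T;V)\) almost surely, which our assumptions ensure. It also requires \(h_0+\mathcal A+M\) to be an \(H\)-valued semimartingale, which holds as shown above. The remaining subtlety is ensuring the null set is independent of \(t\) and \(\varphi\). I will handle this by testing against a countable dense subset of \(V\) and using càdlàg regularity, which is exactly what \cite{GK82} provides.
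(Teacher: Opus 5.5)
Your proposal is correct and follows the paper's route. The paper states this lemma by direct citation of \cite[Theorem~1]{GK82} without further proof, and your reduction supplies the bookkeeping that the citation leaves implicit: you take Lebesgue measure as the driver of \(v^*\), absorb \(\int\eta\,ds+\int\zeta\,dF\) into the \(H\)-valued semimartingale \(h_0+\mathcal A+M\), and then use \([\mathcal A,M]=0\), \([\mathcal A](t)=\sum_{0<s\le t}\|\zeta(s)\|_H^2(\Delta F(s))^2\), and the replacement of \(h(s-)\) by \(h(s)\) in the \(ds\)- and \(dM\)-integrals.
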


For the forward estimates, we use a decreasing weight. Let
\(K\) be a real-valued adapted nondecreasing c\`adl\`ag process with
\(K(0)=0\), and put \(\rho(t):=e^{-K(t)}\), \(t\in[0,T]\).
\begin{corollary}
\label{cor:weighted-Ito-general}
Under the assumptions of Lemma~\ref{Ito-lemma}, the following identity
holds outside a \(\mathbb P\)-null set for every \(t\in[0,T]\):
\begin{align}
    \rho(t)\|h(t)\|_H^2
    &=
    \|h_0\|_H^2
    +
    2\int_0^t
        \rho(s)\langle v^*(s),v(s)\rangle_*\,ds
    +
    2\int_0^t
        \rho(s)\langle \eta(s),h(s)\rangle_H\,ds
    \notag\\
    &\quad
    +
    \int_0^t \rho(s)\,d\langle M\rangle(s)
    -
    \int_0^t
        \rho(s)\|h(s)\|_H^2\,dK^c(s)
    \notag\\
    &\quad
    +
    \int_{(0,t]}
    \rho(s)
    \Big[
        2\langle h(s-),\zeta(s)\rangle_H
        +
        \|\zeta(s)\|_H^2\Delta F(s)
    \Big] \,dF(s)
    \notag\\
    &\quad
    -
    \sum_{0<s\le t}
    \rho(s)\big(e^{\Delta K(s)}-1\big)\|h(s-)\|_H^2
    +
    2\int_0^t
        \rho(s)\langle h(s),dM(s)\rangle_H.
    \label{eq:weighted-FV-Ito}
\end{align}
\end{corollary}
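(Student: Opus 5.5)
Since \(\rho(t)=e^{-K(t)}\) is an adapted c\`adl\`ag process of finite variation and \(\|h\|_H^2\) is, by Lemma~\ref{Ito-lemma}, a real c\`adl\`ag semimartingale with explicit decomposition \eqref{eq:FV-Ito-left-limit}, the plan is to apply the one-dimensional integration-by-parts formula for the product \(\rho\,\|h\|_H^2\). Because \(\rho\) has finite variation, the only bracket contribution is \([\rho,\|h\|^2]=\sum\Delta\rho\,\Delta\|h\|^2\). We use the form
\[
\rho(t)\|h(t)\|_H^2=\|h_0\|_H^2+\int_{(0,t]}\rho(s)\,d\|h\|_H^2(s)+\int_{(0,t]}\|h(s-)\|_H^2\,d\rho(s),
\]
with \(\rho(s)\) (not \(\rho(s-)\)) in the first integral, which is legitimate for a finite-variation factor and absorbs the cross-jump term. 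Here \(\rho(0)=1\) since \(K(0)=0\).

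The first step is to substitute \(d\|h\|^2\) from \eqref{eq:FV-Ito-left-limit} into \(\int\rho\,d\|h\|^2\). The \(ds\)-integrals, the \(d\langle M\rangle\) integral and the \(dF\) integral are pathwise Lebesgue--Stieltjes integrals, so multiplying the integrands by \(\rho(s)\) is immediate. For the stochastic integral, \(\int\rho(s)\langle h(s),dM(s)\rangle_H\) is defined via the integrand \(\rho(s-)\langle h(s-),\cdot\rangle\). Since \(M\) is continuous and \(\rho,h\) have only countably many jumps, the left limits may be replaced by \(\rho(s),h(s)\) because \(d\langle M\rangle\) is atomless. This gives exactly the terms in \eqref{eq:weighted-FV-Ito} other than those involving \(K\).

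The second step handles \(\int\|h(s-)\|^2\,d\rho(s)\). Split \(d\rho=-\rho(s-)\,dK^c(s)+\Delta\rho(s)\), using that \(\rho=e^{-K}\) solves \(d\rho=-\rho\,dK^c\) between jumps, by the chain rule for finite-variation functions. The continuous part gives \(-\int\rho(s)\|h(s-)\|^2\,dK^c(s)\). Since \(dK^c\) has no atoms and \(h(s-)=h(s)\) off a countable set, this equals \(-\int\rho\|h(s)\|^2\,dK^c\). At jumps, \(\Delta\rho(s)=\rho(s-)(e^{-\Delta K(s)}-1)=-\rho(s)(e^{\Delta K(s)}-1)\), using \(\rho(s-)=\rho(s)e^{\Delta K(s)}\). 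This yields the sum term. The sum converges absolutely because \(\sum\Delta\rho\) is dominated by the total variation of \(\rho\), which is at most \(1\), and \(\sup\|h\|\) is finite.

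The main obstacle is justifying the integration-by-parts form with \(\rho(s)\) rather than \(\rho(s-)\) in the \(d\|h\|^2\) integral for all \(t\) simultaneously outside one null set. I would first write the standard formula
\[
d(\rho X)=\rho(s-)\,dX+X(s-)\,d\rho+d[\rho,X],\qquad X=\|h\|^2,
\]
and then note that \([\rho,X]=\sum\Delta\rho\,\Delta X\), because \(\rho\) is of finite variation and the continuous martingale part of \(X\) does not interact with it. Then \(\rho(s-)\,dX+\Delta\rho\,\Delta X=\rho(s)\,dX\) holds pathwise on the finite-variation part of \(X\). On the continuous martingale part the two forms agree by the atomless argument above. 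Everything holds on the full-measure set from Lemma~\ref{Ito-lemma}, which gives the simultaneous-in-\(t\) statement.
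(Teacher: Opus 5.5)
Your proposal is correct and follows essentially the same route as the paper. Both arguments use the product formula for $\rho\,\|h\|_H^2$, the finite-variation chain rule for $\rho=e^{-K}$ with $\Delta\rho(s)=-\rho(s)\bigl(e^{\Delta K(s)}-1\bigr)$, and the replacement of left limits by current values in the atomless $ds$, $dK^c$ and $d\langle M\rangle$ terms. The only cosmetic difference is bookkeeping: you absorb the cross-jump term $\Delta\rho\,\Delta\|h\|_H^2$ into $\int\rho(s)\,d\|h\|_H^2$ at the outset, whereas the paper keeps $\rho(s-)\,dY$ and recombines the jump as $\rho(s)\Delta Y(s)+Y(s-)\Delta\rho(s)$.
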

\begin{proof}
	Set \(Y(t)=\|h(t)\|_H^2\). Lemma~\ref{Ito-lemma} gives
	\begin{equation}
		\Delta Y(s)
		=
		\Big[
		2\langle h(s-),\zeta(s)\rangle_H
		+\|\zeta(s)\|_H^2\Delta F(s)
		\Big]\Delta F(s).
		\label{eq:delat-Y-explicit}
	\end{equation}
	The finite-variation chain rule for \(\rho=e^{-K}\) yields
	\begin{equation}
		d\rho(s)
		=
		-\rho(s)\,dK^c(s)
		-\sum_{0<r\le T}
		\rho(r)\bigl(e^{\Delta K(r)}-1\bigr)\delta_r(ds),
		\label{eq:d-rho-explicit}
	\end{equation}
	and hence
	\begin{equation}
		\Delta\rho(s)
		=-\rho(s)\bigl(e^{\Delta K(s)}-1\bigr).
		\label{eq:delta-rho-explicit}
	\end{equation}
	Since \(\rho\) has finite variation, the product formula gives
	\begin{equation}
		d(\rho Y)(s)
		=
		\rho(s-)\,dY(s)+Y(s-)\,d\rho(s)
		+\sum_{0<r\le T}\Delta\rho(r)\Delta Y(r)\delta_r(ds).
		\label{eq:product-formula-1}
	\end{equation}
	
	Since c\`adl\`ag paths have at most countably many jumps,
the set of jump times has measure zero with respect to
\(ds\), \(d|F^c|_v\), \(d\langle M\rangle\), and \(dK^c\).
Thus left limits can be replaced by current values
	in the continuous terms, including the integral against the
	continuous martingale \(M\). At a jump time, the product formula,
	\eqref{eq:delat-Y-explicit}, and \eqref{eq:delta-rho-explicit} give
	\begin{align*}
		\Delta(\rho Y)(s)
		&=\rho(s)\Delta Y(s)+Y(s-)\Delta\rho(s)\\
		&=\rho(s)
		\Big[
		2\langle h(s-),\zeta(s)\rangle_H
		+\|\zeta(s)\|_H^2\Delta F(s)
		\Big]\Delta F(s)-\rho(s)\bigl(e^{\Delta K(s)}-1\bigr)
		\|h(s-)\|_H^2.
	\end{align*}
	We substitute \eqref{eq:FV-Ito-left-limit} and
	\eqref{eq:d-rho-explicit} into \eqref{eq:product-formula-1}
	and use the preceding identity for \(\Delta(\rho Y)(s)\).
	Integrating over \((0,t]\) then yields \eqref{eq:weighted-FV-Ito},
	since \(\rho(0)=1\) and \(h(0)=h_0\).
\end{proof}

 Let
\[
 h_T\in L^2(\mathcal F_T;H),\quad
 v\in L^2_{\mathbb F}(0,T;V),\quad
 v^*\in L^2_{\mathbb F}(0,T;V^*),\quad
 \zeta\in L^{1,2}_{\mathbb F,F}(0,T;H),
\]
and let \(M\) be an \(H\)-valued continuous local martingale.
Following the argument in the proof of
\cite[Theorem~1]{GK82}, we obtain the following backward It\^o formula.
\begin{lemma}
\label{lem:backward-FV-Ito-M}
Suppose that, for every \(\varphi\in V\),
\begin{align}
    \langle \varphi,v(t)\rangle_H
    &=
    \langle \varphi,h_T\rangle_H
    +
    \int_t^T
        \langle v^*(s),\varphi\rangle_*\,ds
    +
    \int_{(t,T]}
        \langle \zeta(s),\varphi\rangle_H\,dF(s)
    +
    \langle \varphi,M(T)-M(t)\rangle_H
    \label{eq:backward-FV-Ito-weak-form-M}
\end{align}
for \(dt\otimes d\mathbb P\)-almost every \((t,\omega)\). Then there
exists an adapted c\`adl\`ag \(H\)-valued process \(h\) such that
\(h=v\) \(dt\otimes d\mathbb P\)-almost everywhere, and outside a
\(\mathbb P\)-null set, \(h(T)=h_T\) and, for every \(t\in[0,T]\),
\begin{align}
    \|h(t)\|_H^2
   & +
    \int_t^T d\langle M\rangle(s)
    =
    \|h_T\|_H^2
    +
    2\int_t^T
        \langle v^*(s),v(s)\rangle_*\,ds
    \notag\\
    &\quad
    +
    \int_{(t,T]}
    \Big[
        2\langle h(s-),\zeta(s)\rangle_H
        -
        \|\zeta(s)\|_H^2\Delta F(s)
    \Big] \,dF(s)
    +
    2\int_t^T
        \langle h(s),dM(s)\rangle_H.
    \label{eq:backward-FV-Ito-M-equivalent}
\end{align}
\end{lemma}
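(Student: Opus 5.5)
We reduce the backward It\^o formula of Lemma~\ref{lem:backward-FV-Ito-M} to the forward one, Lemma~\ref{Ito-lemma}.

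\textbf{Step 1: forward form of the equation.} Taking $t=0$ in \eqref{eq:backward-FV-Ito-weak-form-M} is not legitimate, since the identity holds only for a.e.\ $t$. Instead, fix a time $t_0$ at which it holds a.s.\ for all $\varphi$ in a countable dense subset of $V$. Such $t_0$ can be chosen arbitrarily close to $0$ by Fubini, and we then extend to $t_0=0$ by the limiting argument below; if $t_0=0$ is not attained, we simply work on $[t_0,T]$. Set
\[
 h_0:=h_T+\int_0^T v^*\,ds+\int_{(0,T]}\zeta\,dF+M(T)-M(0).
\]
A priori this is only a $V^*$-valued $\mathcal F_T$-measurable element. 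Subtracting the identities at $t$ and at $0$ gives
\[
 \langle \varphi,v(t)\rangle_H=\langle\varphi,\tilde h_0\rangle+\int_0^t\langle -v^*,\varphi\rangle_*\,ds+\int_{(0,t]}\langle-\zeta,\varphi\rangle\,dF+\langle\varphi,\tilde M(t)\rangle,
\]
with $\tilde M:=-(M-M(0))$ and $\tilde h_0:=v(0)$ interpreted as above. The difficulty is that $\tilde h_0$ need not be $\mathcal F_0$-measurable: it is $\mathcal F_T$-measurable. However, the proof of \cite[Theorem~1]{GK82} is pathwise. It uses only the Gelfand-triple structure, the decomposition into $ds$, finite-variation and continuous-semimartingale parts, and a.s.\ convergence of the Riemann-type sums. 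Adaptedness enters only in identifying the stochastic integral $\int\langle h,dM\rangle$ as an It\^o integral. Following that proof, as the statement indicates, we therefore repeat the Krylov--Gy\"ongy argument directly on the backward equation rather than invoking Lemma~\ref{Ito-lemma} as a black box.

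\textbf{Step 2: the GK82 scheme backward.} The proof of \cite[Theorem~1]{GK82} proceeds as follows, and we apply it in reverse time.
\begin{enumerate}
\item[(i)] Construct the c\`adl\`ag version $h$ as $h(t)=h_T+\int_t^T v^*ds+\int_{(t,T]}\zeta\,dF+M(T)-M(t)$ in $V^*$. Show that $h$ is $H$-valued, weakly c\`adl\`ag, and equal to $v$ a.e.
\item[(ii)] For partitions $t_i$ of $[t,T]$ chosen in the set of good times, expand with the algebraic identity
\[
\|h(t_i)\|^2-\|h(t_{i+1})\|^2=2\langle h(t_i),h(t_i)-h(t_{i+1})\rangle-\|h(t_i)-h(t_{i+1})\|^2 .
\]
Using the left endpoint $h(t_i)$ in the pairing produces $\langle v^*,v\rangle$ from the $V$-duality. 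The quadratic increment yields the sign changes relative to the forward case.
\item[(iii)] Pass to the limit. The $M$-increments give $2\int\langle h,dM\rangle-\langle M\rangle$. Because we must pair with the adapted (left-time) value, the quadratic variation appears on the left side of \eqref{eq:backward-FV-Ito-M-equivalent}. The $dF$-increments give $2\langle h(s),\zeta\rangle\Delta F-\|\zeta\|^2(\Delta F)^2$. Rewriting with $h(s)=h(s-)-\zeta(s)\Delta F(s)$ produces exactly $2\langle h(s-),\zeta\rangle-\|\zeta\|^2\Delta F$ against $dF$. Cross terms between the continuous martingale and the finite-variation part vanish, since the martingale is continuous.
\end{enumerate}

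\textbf{Step 3: extension to all $t$.} Extend from the good partition times to all $t$ by right continuity of both sides. Continuity of $\|h\|^2$ from the right follows from weak c\`adl\`ag paths plus the identity itself, as in GK82.

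\textbf{Main obstacle.} The hardest point is Step 2(i)--(iii): showing that $h$ has $H$-valued c\`adl\`ag paths and that $\sup_t\|h(t)\|_H<\infty$ a.s. This bound is needed to control the Riemann sums for the $\langle v^*,v\rangle$ term, since $v$ is only defined a.e. and $V$-valued only a.e. I would first prove the identity on the partition points with bounds, deduce boundedness of $\|h\|_H$ on the dense good set, and then use a weak-compactness argument in $H$. The $dF$ jumps are handled exactly by the explicit jump formula above.
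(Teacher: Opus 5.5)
Your overall route, rerunning the Gy\"ongy--Krylov partition argument in reverse time, is the paper's route; the paper gives nothing beyond that pointer. Your target identity is also the right one. However, several steps of your sketch are wrong.

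\emph{Step~1.} The obstruction you cite does not exist. Because $v$ is progressively measurable, $v(t_0)$ is $\mathcal F_{t_0}$-measurable for every $t_0$, and any limit at $0$ is $\mathcal F_{0+}=\mathcal F_0$-measurable. Nothing here is merely $\mathcal F_T$-measurable. For a.e.\ $t_0$ you have $v(t_0)\in L^2(\mathcal F_{t_0};V)$ and the hypothesis at $t_0$ for all $\varphi\in V$. Subtracting the identities at $t_0$ and $t$ yields the forward weak form on $[t_0,T]$ with data $(v(t_0),-v^*,-\zeta,-(M-M(t_0)))$. Lemma~\ref{Ito-lemma} on $[t_0,T]$ therefore applies as a black box. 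Subtract its conclusion at $t$ from that at $T$, where $h(T)=h_T$ by the identity at $t_0$. This gives exactly \eqref{eq:backward-FV-Ito-M-equivalent} on $[t_0,T]$, signs included. Only the endpoint $t=0$ then needs a separate limit $t_0\downarrow0$; everything else follows without reopening the forward proof.

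\emph{Step~2(iii), jumps.} Since $h(s-)=h(s)+\zeta(s)\Delta F(s)$, the exact jump is $\|h(s-)\|_H^2-\|h(s)\|_H^2=2\langle h(s),\zeta\rangle_H\Delta F+\|\zeta\|_H^2(\Delta F)^2$, with a plus sign. Your version has a minus sign, and after your substitution it becomes $2\langle h(s-),\zeta\rangle_H\Delta F-3\|\zeta\|_H^2(\Delta F)^2$, not the target. With your own left-endpoint identity no rewriting is needed: $h(t_i)\to h(s-)$, and the target term appears directly.

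\emph{Step~2(iii), cross terms.} This is the more important issue. Continuity of $M$ kills only the cross terms with the $H$-valued increments $J_i=\int_{(t_i,t_{i+1}]}\zeta\,dF$. The increments $I_i=\int_{t_i}^{t_{i+1}}v^*\,ds$ are only $V^*$-valued a priori. So the terms $\sum_i\langle I_i,\Delta M_i\rangle_H$ and $\sum_i\|I_i\|_H^2$, hidden in $-\|h(t_i)-h(t_{i+1})\|_H^2$, are not handled by that remark.

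Your left-endpoint identity is fine for the a priori bound, where you simply drop $-\|h(t_i)-h(t_{i+1})\|_H^2\le0$. To identify the limit, set $\Delta M_i=M(t_{i+1})-M(t_i)$ and rearrange as
\[
\|h(t_i)\|_H^2-\|h(t_{i+1})\|_H^2
=2\langle I_i,h(t_{i+1})\rangle_*+2\langle h(t_i),J_i+\Delta M_i\rangle_H
+\|I_i\|_H^2-\|J_i+\Delta M_i\|_H^2 .
\]
You then must prove $\sum_i\|I_i\|_H^2\to0$ along suitably chosen partitions. That is the actual content of the argument you are deferring.
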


For the backward energy estimates, we use an increasing weight.
Let \(K\) be a real-valued adapted nondecreasing c\`adl\`ag process with
\(K(0)=0\), and set \(\theta(t):=e^{K(t)}\), \(t\in[0,T]\).
\begin{corollary}
\label{cor:positive-weight-backward-FV-Ito-M}
Under the assumptions of Lemma~\ref{lem:backward-FV-Ito-M}, the following
identity holds outside a \(\mathbb P\)-null set for every \(t\in[0,T]\):
\begin{align}
    &\theta(t)\|h(t)\|_H^2
    +
    \int_t^T \theta(s)\,d\langle M\rangle(s)
    +
    \int_t^T \theta(s)\|h(s)\|_H^2\,dK^c(s)
    +
    \sum_{t<s\le T}
    \theta(s)\big(1-e^{-\Delta K(s)}\big)\|h(s-)\|_H^2
    \notag\\
    &=
    \theta(T)\|h_T\|_H^2
    +
    2\int_t^T
        \theta(s)\langle v^*(s),v(s)\rangle_*\,ds
    \notag\\
    &\quad
    +
    \int_{(t,T]}
    \theta(s)
    \Big[
        2\langle h(s-),\zeta(s)\rangle_H
        -
        \|\zeta(s)\|_H^2\Delta F(s)
    \Big] \,dF(s)
    +
    2\int_t^T
        \theta(s)\langle h(s),dM(s)\rangle_H.
    \label{eq:positive-weight-backward-FV-Ito-M}
\end{align}
\end{corollary}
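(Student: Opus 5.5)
The plan is to repeat the argument of Corollary~\ref{cor:weighted-Ito-general}, now with the increasing weight \(\theta=e^{K}\) and the backward identity of Lemma~\ref{lem:backward-FV-Ito-M}. Set \(Y(t):=\|h(t)\|_H^2\). First I will read \eqref{eq:backward-FV-Ito-M-equivalent} as a forward semimartingale decomposition of \(Y\). Taking differences of that identity at two times \(t<t'\) gives
\[
dY(s)=-2\langle v^*(s),v(s)\rangle_*\,ds-\Big[2\langle h(s-),\zeta(s)\rangle_H-\|\zeta(s)\|_H^2\Delta F(s)\Big]dF(s)+d\langle M\rangle(s)-2\langle h(s),dM(s)\rangle_H .
\]
Hence the jumps of \(Y\) are
\[
\Delta Y(s)=-\Big[2\langle h(s-),\zeta(s)\rangle_H-\|\zeta(s)\|_H^2\Delta F(s)\Big]\Delta F(s).
\]
As a consistency check, the weak form \eqref{eq:backward-FV-Ito-weak-form-M} gives \(h(s-)=h(s)+\zeta(s)\Delta F(s)\). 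Expanding \(\|h(s)\|_H^2=\|h(s-)-\zeta(s)\Delta F(s)\|_H^2\) reproduces exactly this jump.

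Next I will compute the finite-variation chain rule for \(\theta=e^{K}\):
\[
d\theta(s)=\theta(s)\,dK^c(s)+\sum_{0<r\le T}\theta(r)\bigl(1-e^{-\Delta K(r)}\bigr)\delta_r(ds),
\]
since \(\Delta\theta(r)=e^{K(r)}-e^{K(r-)}=\theta(r)(1-e^{-\Delta K(r)})\). Because \(\theta\) has finite variation, the product formula applies:
\[
d(\theta Y)=\theta(s-)\,dY+Y(s-)\,d\theta+\Delta\theta\,\Delta Y .
\]
At jump times the first and last terms combine into \(\theta(s)\Delta Y(s)\). In the continuous parts (the \(ds\), \(dF^c\), \(dK^c\), \(d\langle M\rangle\) integrals and the stochastic integral against the continuous \(M\)), left limits may be replaced by current values. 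This is because the countable set of jump times is null for all of these measures. Likewise, \(v\) may be replaced by \(h\) where needed, since \(h=v\) \(dt\otimes d\mathbb P\)-a.e. Consequently
\[
d(\theta Y)(s)=\theta(s)\,dY(s)+\theta(s)Y(s)\,dK^c(s)+\theta(s)\bigl(1-e^{-\Delta K(s)}\bigr)\|h(s-)\|_H^2\ \text{at jumps}.
\]

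Finally I will integrate over \((t,T]\) and substitute the expression for \(dY\). This gives \(\theta(T)\|h_T\|_H^2-\theta(t)\|h(t)\|_H^2\) equal to the sum of the following terms:
\begin{itemize}
\item \(-2\int_t^T\theta\langle v^*,v\rangle_*\,ds\);
\item \(-\int_{(t,T]}\theta\bigl[2\langle h(s-),\zeta\rangle_H-\|\zeta\|_H^2\Delta F\bigr]dF\);
\item \(\int_t^T\theta\,d\langle M\rangle-2\int_t^T\theta\langle h,dM\rangle_H\);
\item the \(dK^c\) term and the jump sum in \(\|h(s-)\|_H^2\).
\end{itemize}
Rearranging yields \eqref{eq:positive-weight-backward-FV-Ito-M}; the null set is the one from Lemma~\ref{lem:backward-FV-Ito-M} together with the null set off which the product formula holds pathwise. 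The only delicate point is the bookkeeping at jumps, namely checking that \(\theta(s-)\Delta Y+\Delta\theta\,\Delta Y=\theta(s)\Delta Y\) and that the \(Y(s-)\,d\theta\) jump produces the factor \(\|h(s-)\|_H^2\), not \(\|h(s)\|_H^2\). Everything else is routine.
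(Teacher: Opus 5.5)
Your proposal is correct and follows essentially the same route as the paper's proof: you read the backward identity as a semimartingale decomposition of \(Y=\|h\|_H^2\), apply the finite-variation chain rule to \(\theta=e^{K}\), use the product formula for \(\theta Y\), and combine the jump contributions as \(\theta(s)\Delta Y(s)+Y(s-)\Delta\theta(s)\), replacing left limits by current values in the continuous terms. Your sign bookkeeping and the final rearrangement into \eqref{eq:positive-weight-backward-FV-Ito-M} agree with the paper's computation.
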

\begin{proof}
	Set \(Y(t)=\|h(t)\|_H^2\). By
	Lemma~\ref{lem:backward-FV-Ito-M},
	\begin{equation}
		\begin{aligned}
			dY(s)
			&=d\langle M\rangle(s)
			-2\langle v^*(s),v(s)\rangle_*\,ds
			-2\langle h(s-),dM(s)\rangle_H\\
			&\quad-
			\Big[
			2\langle h(s-),\zeta(s)\rangle_H
			-\|\zeta(s)\|_H^2\Delta F(s)
			\Big]\,dF(s).
		\end{aligned}
		\label{eq:dY-explicit-backward}
	\end{equation}
	The finite-variation chain rule for \(\theta=e^K\) gives
	\begin{equation}
		d\theta(s)
		=
		\theta(s)\,dK^c(s)
		+\sum_{0<r\le T}
		\theta(r)\bigl(1-e^{-\Delta K(r)}\bigr)\delta_r(ds).
		\label{eq:d-theta-explicit}
	\end{equation}
	Applying the product formula to \(\theta Y\), we obtain
	\begin{equation}
		d(\theta Y)(s)
		=
		\theta(s-)\,dY(s)+Y(s-)\,d\theta(s)
		+\sum_{0<r\le T}\Delta\theta(r)\Delta Y(r)\delta_r(ds).
		\label{eq:product-2}
	\end{equation}
	
	As in the proof of Corollary~\ref{cor:weighted-Ito-general},
	left limits can be replaced by current values in the continuous
	terms of \eqref{eq:product-2}. At each jump time,
	\eqref{eq:dY-explicit-backward} and
	\eqref{eq:d-theta-explicit} yield
	\begin{align*}
		\Delta(\theta Y)(s)
		&=\theta(s)\Delta Y(s)+Y(s-)\Delta\theta(s)\\
		&=-\theta(s)
		\Big[
		2\langle h(s-),\zeta(s)\rangle_H
		-\|\zeta(s)\|_H^2\Delta F(s)
		\Big]\Delta F(s)+\theta(s)\bigl(1-e^{-\Delta K(s)}\bigr)
		\|h(s-)\|_H^2.
	\end{align*}
	We substitute \eqref{eq:dY-explicit-backward} and
	\eqref{eq:d-theta-explicit} into \eqref{eq:product-2}, use the
	jump contribution computed above, and integrate over \((t,T]\).
	Using \(h(T)=h_T\) and collecting the continuous and atomic
	parts of \(dF\) gives \eqref{eq:positive-weight-backward-FV-Ito-M}.
\end{proof}

\begin{remark}
\label{rem:multiple-FV-driver-reduction}
Let \(\nu_j\), \(1\le j\le m\), be adapted c\`adl\`ag
real-valued finite-variation processes with \(\nu_j(0)=0\), and set
$
K(t):=\sum_{j=1}^m|\nu_j|_v(t).
$
The Radon--Nikodym theorem for adapted finite-variation processes
provides progressively measurable densities \(q_j\) such that,
\(\mathbb P\)-a.s.,
\[
d\nu_j=q_j\,dK,\quad
d|\nu_j|_v=|q_j|\,dK,\quad
\sum_{j=1}^m|q_j|=1\quad dK\text{-a.e.}
\]
Setting the densities to zero on the progressively measurable set
where \(\sum_j|q_j|\ne1\) preserves these measure identities and
ensures \(\sum_j|q_j|\le1\) everywhere.

For progressively measurable \(H\)-valued processes \(\zeta_j\)
satisfying
\(\int_{(0,T]}\|\zeta_j(t)\|_H\,d|\nu_j|_v(t)<\infty\),
\(\mathbb P\)-a.s., define \(\zeta=\sum_jq_j\zeta_j\). Then
\[
\zeta\,dK=\sum_{j=1}^m\zeta_j\,d\nu_j,\quad
\zeta(s)\Delta K(s)
=\sum_{j=1}^m\zeta_j(s)\Delta\nu_j(s).
\]
Under their respective integrability hypotheses,
Lemmas~\ref{Ito-lemma} and~\ref{lem:backward-FV-Ito-M}
therefore apply with finite-variation driver \(K\).
In these formulas, each linear Stieltjes term is replaced by
its sum over \(j\), and
\(\|\zeta(s)\Delta F(s)\|_H^2\) is replaced by
$
\|\sum_{j=1}^m\zeta_j(s)\Delta\nu_j(s)\|_H^2.
$

For each \(j\), let
\(\gamma_j,\psi_j:\Omega\times[0,T]\times H\to H\)
denote the coefficients multiplying \(d\nu_j\) in the SEE and
BSEE, respectively. Assume that these maps are
\(\operatorname{Prog}\otimes\mathcal B(H)\)-measurable and
Lipschitz in the state variable, with Lipschitz constants
bounded by \(L_j\). Define
\[
\widetilde\gamma(t,x)
:=\sum_{j=1}^m q_j(t)\gamma_j(t,x),\quad
\widetilde\psi(t,y)
:=\sum_{j=1}^m q_j(t)\psi_j(t,y).
\]
Since \(\sum_j|q_j|\le1\), both combined coefficients have
Lipschitz constant at most \(\max_jL_j\). For every nonnegative
measurable weight \(w\),
\[
\int_{(0,T]}w(t)\|\widetilde\gamma(t,0)\|_H\,dK(t)
\le
\sum_{j=1}^m
\int_{(0,T]}w(t)\|\gamma_j(t,0)\|_H\,d|\nu_j|_v(t),
\]
and the same bound holds for \(\widetilde\psi\).
Thus the single-driver SEE and BSEE estimates and well-posedness
results apply to the combined coefficients with driver \(K\),
provided the remaining hypotheses of the respective result hold.
The weights and exponential-moment conditions are then formulated
in terms of \(K\).

One can retain a given adapted nondecreasing c\`adl\`ag
reference driver \(\xi\) if, \(\mathbb P\)-a.s.,
\[
d\nu_j\ll d\xi\quad\text{for every }j,\quad
\sum_{j=1}^m L_j\,d|\nu_j|_v\le\ell\,d\xi.
\]
Take progressively measurable densities \(q_j\) with respect
to \(d\xi\), so that
\[
d\nu_j=q_j\,d\xi,\quad
d|\nu_j|_v=|q_j|\,d\xi,\quad
\sum_{j=1}^m L_j|q_j|\le\ell
\quad d\xi\text{-a.e.},\quad \mathbb P\text{-a.s.}
\]
Setting all \(q_j\) to zero where the last inequality fails
ensures that it holds everywhere without changing the measure identities.
With these densities, the coefficients
\(\widetilde\gamma,\widetilde\psi\) are Lipschitz with constant at most
\(\ell\), and the preceding data bound holds with \(K\) replaced
by \(\xi\). Under the remaining hypotheses of the corresponding
result, the single-driver estimates apply with driver \(\xi\)
and weights \(\rho_{\beta,\ell}^{r}\) and
\(\theta_{\beta,\ell}^{0}\).
A coefficient independent of the state variable has \(L_j=0\)
and contributes only to the data term; for the fixed reference
driver \(\xi\), it does not change the weight.
\end{remark}

\subsection{Stochastic evolution equations with finite-variation drivers}\label{subsec:FV-SEE}

We consider the following nonlinear stochastic evolution equation:
\begin{equation}
\begin{aligned}
    x(t)
    &=
    x_0
    +
    \int_0^t
        \big[
            A(s)x(s)+a(s,x(s))
        \big]\,ds
    +
    \int_0^t
        \big[
            B(s)x(s)+b(s,x(s))
        \big]\,dW(s)
    \\
    &\quad
    +
    \int_{(0,t]}
        \gamma(s,x(s-))\,d\xi(s),
    \quad t\in[0,T],
\end{aligned}
\label{eq:nonlinear-FV-SEE}
\end{equation}
where
\(A:[0,T]\times\Omega\to\mathfrak L(V;V^*), \ B:[0,T]\times\Omega\to\mathfrak L(V;H)\)
are random linear operators. The mappings
\(a,b,\gamma:\Omega\times[0,T]\times H\to H\) are nonlinear coefficients.

\begin{definition}
\label{def:nonlinear-FV-SEE-solution}
An adapted c\`adl\`ag \(H\)-valued process \(x\), with
\(x(\cdot,\omega)\in L^2(0,T;V)\) for \(\mathbb P\)-almost every \(\omega\), is called a
solution of
\eqref{eq:nonlinear-FV-SEE} if, for every \(t\in[0,T]\) and
\(\varphi\in V\), the following identity holds outside one
\(\mathbb P\)-null set:
\begin{align}
    \langle x(t),\varphi\rangle_H
    &=
    \langle x_0,\varphi\rangle_H
    +
    \int_0^t
        \langle A(s)x(s),\varphi\rangle_*\,ds
    +
    \int_0^t
        \langle a(s,x(s)),\varphi\rangle_H\,ds
    \notag\\
    &\quad
    +
    \int_0^t
        \langle B(s)x(s)+b(s,x(s)),\varphi\rangle_H\,dW(s)
    +
    \int_{(0,t]}
        \langle\gamma(s,x(s-)),\varphi\rangle_H\,d\xi(s).
    \label{eq:nonlinear-FV-SEE-weak-form}
\end{align}

\end{definition}

We use the following assumptions.
\begin{description}

\item[\((A1)\)]
For every \(x,y\in V\) and \(z\in H\), the scalar processes
\((t,\omega)\mapsto\langle A(t)x,y\rangle_*\) and
\((t,\omega)\mapsto\langle B(t)x,z\rangle_H\) are progressively
measurable. There exist constants
\(\delta>0\), \(C_A>0\), and \(C_B>0\) such that the following
inequalities hold for every \((t,\omega)\in[0,T]\times\Omega\) and
\(x\in V\):
\begin{equation}
    2\langle A(t)x,x\rangle_*
    +
    \|B(t)x\|_H^2
    \le
    -\delta\|x\|_V^2
    +
    C_A\|x\|_H^2,
    \label{eq:SEE-coercivity}
\end{equation}
\begin{equation}
    \|A(t)x\|_{V^*}
    \le
    C_A\|x\|_V,
    \label{eq:SEE-operator-boundedness}
\end{equation}
and
\begin{equation}
    \big|
        \langle B(t)x,x\rangle_H
    \big|
    \le
    C_B\|x\|_H^2.
    \label{eq:SEE-B-quasi-skew}
\end{equation}

\item[\((A2)\)]
The mappings \(a,b:\Omega\times[0,T]\times H\to H\) are
\(\operatorname{Prog}\otimes\mathcal B(H)\)-measurable. There exists
\(L>0\) such that, for every \((t,\omega)\in[0,T]\times\Omega\),
\[
\|a(t,x)-a(t,y)\|_H+\|b(t,x)-b(t,y)\|_H
\le L\|x-y\|_H,
\quad x,y\in H.
\]
\item[\((A3)\)]
The mapping \(\gamma:\Omega\times[0,T]\times H\to H\) is
\(\operatorname{Prog}\otimes\mathcal B(H)\)-measurable. There exists
\(\ell\ge0\) such that, for every \((t,\omega)\in[0,T]\times\Omega\),
\[
\|\gamma(t,x)-\gamma(t,y)\|_H\le \ell\|x-y\|_H,
\quad x,y\in H.
\]
\end{description}

The coercivity and boundedness conditions in \((A1)\), together with the
continuous embedding \(V\hookrightarrow H\), imply the uniform bound
\(\|B(t)x\|_H\le C\|x\|_V, \ x\in V.\)

For use with the state and variational equations, we formulate
the estimates on a deterministic interval \([r,T]\). Consider
\begin{equation}
\left\{
\begin{aligned}
    dX(t)
    &={}
    [A(t)X(t)+a(t,X(t))]dt
    +[B(t)X(t)+b(t,X(t))]dW(t)
    +
    \gamma(t,X(t-))d\xi(t),
    \\
    X(r)&=X_r.
\end{aligned}
\right.
\label{eq:unbounded-xi-forward-SEE}
\end{equation}

The next proposition gives weighted and unweighted estimates for the maximum
of the state process and its \(V\)-energy.
\begin{proposition}
	\label{prop:unbounded-xi-SEE-estimate}
	\label{prop:nonlinear-FV-SEE-estimate}
	Let \(\alpha\ge1\) and assume that
	\((A1)\)--\((A3)\) hold. Let $X_r\in L^{2\alpha}(\mathcal F_r;H)$, and suppose that $X$ solves \eqref{eq:unbounded-xi-forward-SEE}. There exist constants
	\(\beta_\alpha>0\) and \(C_\alpha>0\), depending only on
	\(\alpha,\delta,C_A,C_B,L\) and the norm of the embedding
	\(V\hookrightarrow H\), such that, for every
	\(\beta\ge\beta_\alpha\), if
	\[
\begin{aligned}
 \bigl(\rho_{\beta,\ell}^{r}\bigr)^{1/2}a(\cdot,0)
		\in L^{1,2\alpha}_{\mathbb F}(r,T;H),\quad
 \bigl(\rho_{\beta,\ell}^{r}\bigr)^{1/2}b(\cdot,0)
		\in L^{2,2\alpha}_{\mathbb F}(r,T;H),\quad
 \bigl(\rho_{\beta,\ell}^{r}\bigr)^{1/2}\gamma(\cdot,0)
		\in L^{1,2\alpha}_{\mathbb F,\xi}(r,T;H),
\end{aligned}
\]
	then
	\begin{align}
		&\mathbb E\Bigg[
		\sup_{r\le t\le T}
		\bigl(\rho_{\beta,\ell}^{r}(t)\bigr)^\alpha
		\|X(t)\|_H^{2\alpha}
		+
		\left(
		\int_r^T
		\rho_{\beta,\ell}^{r}(t)\|X(t)\|_V^2\,dt
		\right)^\alpha
        \notag\\
        &\quad+
        \left(\beta\int_r^T\rho_{\beta,\ell}^{r}(t)
                                  \|X(t)\|_H^2\,dt\right)^\alpha
        +\left((\kappa-2)\ell\int_{(r,T]}
              \rho_{\beta,\ell}^{r}(t)\|X(t-)\|_H^2\,d\xi(t)\right)^\alpha
		\Bigg]
		\notag\\
		&\quad\le C_\alpha\,\mathbb E\Bigg[
		\|X_r\|_H^{2\alpha}
		+
		\left(
		\int_r^T
		\bigl(\rho_{\beta,\ell}^{r}(t)\bigr)^{1/2}
		\|a(t,0)\|_H\,dt
		\right)^{2\alpha}
		\notag\\
		&\hspace{31mm}
		+
		\left(
		\int_r^T
		\rho_{\beta,\ell}^{r}(t)
		\|b(t,0)\|_H^2\,dt
		\right)^\alpha
		+
		\left(
		\int_{(r,T]}
		\bigl(\rho_{\beta,\ell}^{r}(t)\bigr)^{1/2}
		\|\gamma(t,0)\|_H\,d\xi(t)
		\right)^{2\alpha}
		\Bigg],
		\label{eq:unbounded-xi-weighted-SEE-estimate}
	\end{align}
	Assume in addition that $
		\mathfrak E_{2\alpha\kappa,\ell}(\xi)<\infty
	$
	and
	\begin{equation*}
\begin{aligned}
 &X_r\in L^{4\alpha}(\mathcal F_r;H),\quad
  a(\cdot,0)\in L^{1,4\alpha}_{\mathbb F}(r,T;H),\quad
 b(\cdot,0)\in L^{2,4\alpha}_{\mathbb F}(r,T;H),\quad
  \gamma(\cdot,0)\in L^{1,4\alpha}_{\mathbb F,\xi}(r,T;H).
\end{aligned}
\end{equation*}
	Then, for every
	\(\beta\ge\max\{\beta_\alpha,\beta_{2\alpha}\}\),
	\begin{align}
		&\mathbb E\left[
		\sup_{r\le t\le T}\|X(t)\|_H^{2\alpha}
		+
		\left(
		\int_r^T\|X(t)\|_V^2\,dt
		\right)^\alpha
		\right]
		\notag\\
		&\quad\le
		C_{\alpha,\beta}
		\mathfrak E_{2\alpha\kappa,\ell}(\xi)^{1/2}
		\Bigg\{
		\mathbb E\Bigg[
		\|X_r\|_H^{4\alpha}
		+
		\left(
		\int_r^T\|a(t,0)\|_H\,dt
		\right)^{4\alpha}
		\notag\\
		&\hspace{37mm}
		+
		\left(
		\int_r^T\|b(t,0)\|_H^2\,dt
		\right)^{2\alpha}
		+
		\left(
		\int_{(r,T]}\|\gamma(t,0)\|_H\,d\xi(t)
		\right)^{4\alpha}
		\Bigg]
		\Bigg\}^{1/2}.
		\label{eq:unbounded-xi-unweighted-SEE-estimate}
	\end{align}
	Here \(C_{\alpha,\beta}\) depends only on
	\(\alpha,\beta,T,\delta,C_A,C_B,L\) and the norm of the embedding
	\(V\hookrightarrow H\).
	
	Both \eqref{eq:unbounded-xi-weighted-SEE-estimate} and
	\eqref{eq:unbounded-xi-unweighted-SEE-estimate} remain valid with
	\(\mathbb E[\cdot\mid\mathcal F_r]\) in place of each expectation,
	under the respective data conditions. The deterministic constants
	are unchanged.
\end{proposition}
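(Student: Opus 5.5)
We apply Corollary~\ref{cor:weighted-Ito-general} on \([r,T]\) with \(h=X\) and \(\rho=\rho_{\beta,\ell}^{r}\), i.e.\ \(K(t)=\beta(t-r)+\kappa\ell(\xi(t)-\xi(r))\). The data are \(v^*=AX\), \(\eta=a(\cdot,X)\), \(F=\xi\), \(\zeta=\gamma(\cdot,X(\cdot-))\) and \(M=\int_r^\cdot[BX+b(\cdot,X)]\,dW\). To make the integrability hypotheses of Lemma~\ref{Ito-lemma} legitimate, we first localize with stopping times \(\tau_n\) that make \(\|X\|_H\), \(\int\|X\|_V^2\,dt\) and \(\xi\) bounded, and we remove the localization at the end by Fatou's lemma.

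The first task is to bound each term of the weighted identity.
\begin{itemize}
\item The \(dt\)-part is \(2\langle AX,X\rangle_*+\|BX+b\|_H^2+2\langle a(X),X\rangle_H-\beta\|X\|_H^2\). By \eqref{eq:SEE-coercivity} and \((A2)\) it is at most \(-\delta\|X\|_V^2+(C-\beta)\|X\|_H^2+2\|a(0)\|\|X\|+C\|b(0)\|^2\). The cross term \(2\langle BX,b(X)\rangle_H\) only involves \(\|BX\|_H\le C\|X\|_V\), so it is absorbed into \(\delta/2\|X\|_V^2\) plus \(C\|X\|_H^2+C\|b(0)\|^2\).
\item For the Stieltjes part we write \(dK=\beta\,dt+\kappa\ell\,d\xi\). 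The continuous part contributes \(-\kappa\ell\rho\|X\|^2d\xi^c\). At a jump of size \(z=\ell\Delta\xi\), the homogeneous part combines \(\|X(s-)+\gamma\Delta\xi\|^2\) with the weight decrement \(-(e^{\Delta K}-1)\|X(s-)\|^2\). Using \(\|\gamma(s,X(s-))\|\le \ell\|X(s-)\|+\|\gamma(s,0)\|\) and \eqref{eq:unbounded-xi-exponential-flow-lower-bound} (note \(e^{\Delta K}\ge e^{\kappa z}\)), the homogeneous contribution is at most \(-(\kappa-2)\ell\|X(s-)\|^2\Delta\xi\), up to a \(\rho(s)\) versus \(\rho(s-)\) bookkeeping. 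This produces the dissipative term \((\kappa-2)\ell\int\rho\|X(s-)\|^2\,d\xi\) on the left.
\item The inhomogeneous \(\gamma(0)\)-terms, such as \(2\rho\|X(s-)\|\|\gamma(s,0)\|\,d\xi\), are bounded by \(2\sup_t(\rho^{1/2}\|X\|)\cdot\int\rho^{1/2}\|\gamma(0)\|\,d\xi\). Here one uses \(\rho(s)\le\rho(s-)\). The quadratic jump term \(\|\gamma(0)\|^2(\Delta\xi)^2\) is controlled by the square of the same integral, and the cross terms \(\ell\|X(s-)\|\|\gamma(0)\|(\Delta\xi)^2\) are handled by Young's inequality. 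The \(a(0)\)-term is treated in the same way.
\end{itemize}
Choosing \(\beta_\alpha\) large then yields a pathwise inequality. Its left side is \(\rho\|X\|^2+\int\rho(\delta\|X\|_V^2+\beta\|X\|^2)+(\kappa-2)\ell\int\rho\|X(-)\|^2d\xi\), and its right side is \(\|X_r\|^2\), plus the data terms, plus \(\epsilon\sup\rho\|X\|^2\), plus the martingale term \(2\int\rho\langle X,(BX+b)\,dW\rangle\).

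The main obstacle is the martingale term. Its quadratic variation involves \(\langle X,BX\rangle_H\), and \(B\) is unbounded. This is exactly where \eqref{eq:SEE-B-quasi-skew} enters: \(|\langle X,BX+b(X)\rangle|\le C\|X\|_H^2+\|X\|_H\|b(0)\|\). We raise the pathwise inequality to the power \(\alpha\), take the supremum, and apply BDG. The martingale term is then bounded by \(\mathbb E(\int\rho^2\|X\|^4+\dots)^{\alpha/2}\le \epsilon\,\mathbb E\sup(\rho\|X\|^2)^\alpha+C_\epsilon\mathbb E(\int\rho\|X\|^2dt)^\alpha+C\,\mathbb E(\int\rho\|b(0)\|^2)^\alpha\). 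The middle term is absorbed by the \(\beta\)-term once \(\beta\ge\beta_\alpha\) is large, and the \(\epsilon\)-terms are absorbed into the left. This gives \eqref{eq:unbounded-xi-weighted-SEE-estimate}.

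For \eqref{eq:unbounded-xi-unweighted-SEE-estimate} we write \(\|X\|^{2\alpha}=\theta^\alpha\rho^\alpha\|X\|^{2\alpha}\) with \(\sup_t\theta_{\beta,\ell}^{r}(t)^\alpha\le e^{\alpha\beta T}e^{\alpha\kappa\ell(\xi(T)-\xi(r))}\), and apply Cauchy--Schwarz. The first factor is bounded by \(\mathfrak E_{2\alpha\kappa,\ell}(\xi)^{1/2}\), taking \(\tau=r\). The second factor is the weighted estimate with \(2\alpha\) in place of \(\alpha\), where we use \(\rho\le1\) to drop the weights on the data. The \(V\)-integral is treated the same way. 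The conditional versions follow by running the identical argument with \(\mathbb E[\cdot\mid\mathcal F_r]\); this requires multiplying by \(I_G\) for \(G\in\mathcal F_r\), noting that BDG holds conditionally, and, in the second step, using \(\mathfrak E\) with \(\tau=r\) as an \(L^\infty\) bound on the conditional exponential moment.
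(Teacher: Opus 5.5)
Your proposal is correct and follows essentially the same route as the paper: the weighted It\^o identity, coercivity for the \(dt\)-terms, an exponential jump inequality that produces the dissipative \((\kappa-2)\ell\,d\xi\) term, the quasi-skew bound \eqref{eq:SEE-B-quasi-skew} in the bracket, BDG with the \(\beta^\alpha\)-term absorbing \(\mathbb E(\int\rho\|X\|_H^2\,dt)^\alpha\), and Cauchy--Schwarz against \(\mathfrak E_{2\alpha\kappa,\ell}(\xi)\) to remove the weight. The differences are only technical. The paper localizes through \(\langle M\rangle\) rather than through the state, which avoids the overshoot of \(X\) and \(\xi\) at a jump at the stopping time; your state-based localization has to handle this separately, for example via \(\rho(\tau_n)^{1/2}\|X(\tau_n)\|_H\le\rho(\tau_n-)^{1/2}\|X(\tau_n-)\|_H+\rho(\tau_n)^{1/2}\|\gamma(\tau_n,0)\|_H\Delta\xi(\tau_n)\). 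The paper also applies \eqref{eq:unbounded-xi-exponential-quadratic-lower-bound} directly, whereas you use the flow inequality together with Young's inequality, which gives exactly the same jump bound.
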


\begin{proof}
Throughout this proof, write $\rho(t)=\rho_{\beta,\ell}^{r}(t)$ and set
\begin{align*}
 D&=\|X_r\|_H^2
   +\left(\int_r^T\rho(t)^{1/2}\|a(t,0)\|_H\,dt\right)^2
   +\int_r^T\rho(t)\|b(t,0)\|_H^2\,dt+\left(\int_{(r,T]}\rho(t)^{1/2}
                          \|\gamma(t,0)\|_H\,d\xi(t)\right)^2.
\end{align*}
The data assumptions are equivalent to \(D\in L^\alpha(\mathcal F_T)\).

Applying Corollary~\ref{cor:weighted-Ito-general} to \(X\), we obtain
	\begin{align}
		&\rho(t)\|X(t)\|_H^2
		=
		\|X_r\|_H^2
		\notag\\
		&\quad+
		\int_r^t\rho(s)
		\Big[
		2\langle A(s)X(s)+a(s,X(s)),X(s)\rangle_*
		+\|B(s)X(s)+b(s,X(s))\|_H^2
		-\beta\|X(s)\|_H^2
		\Big]\,ds
		\notag\\
		&\quad+
		\int_{(r,t]}\rho(s)
		\Big[
		2\langle X(s-),\gamma(s,X(s-))\rangle_H
		+\|\gamma(s,X(s-))\|_H^2\Delta\xi(s)
		\Big]\,d\xi(s)
		\notag\\
		&\quad-
		\kappa\ell\int_r^t
		\rho(s)\|X(s)\|_H^2\,d\xi^c(s)-
		\sum_{r<s\le t}
		\rho(s)
		\bigl(e^{\kappa\ell\Delta\xi(s)}-1\bigr)
		\|X(s-)\|_H^2
		+M(t),
		\label{eq:unbounded-xi-forward-weighted-identity}
	\end{align}
\equationlabelalias{eq:FV-SEE-high-order-Vstar-weighted-identity}{eq:unbounded-xi-forward-weighted-identity}
	where
	\begin{equation}
M(t)
	=
	2\int_r^t
	\rho(s-)
	\big\langle
	X(s-),B(s)X(s)+b(s,X(s))
	\big\rangle_H\,dW(s)
\label{eq:SEE-weighted-energy-martingale}
\end{equation}
	 is a continuous local martingale.
	
	We estimate the \(d\xi\)-integral, the \(d\xi^c\)-integral,
	and the jump sum in
	\eqref{eq:unbounded-xi-forward-weighted-identity} together.
	Since \(X(t)=X(t-)\) for \(d\xi^c\)-almost every \(t\),
	\(\mathbb P\)-a.s., assumption \((A3)\) yields
	\begin{align}
		&2\langle X(t),\gamma(t,X(t))\rangle_H
		-\kappa\ell\|X(t)\|_H^2
		\le
		-(\kappa-2)\ell\|X(t)\|_H^2
		+2\|X(t)\|_H\|\gamma(t,0)\|_H.
		\label{eq:unbounded-xi-continuous-FV-bound}
	\end{align}
	At a jump time \(t\), 
	(\ref{eq:unbounded-xi-exponential-quadratic-lower-bound}) with \(z=\ell\Delta\xi(t)\) gives
	\begin{align}
		&2\langle X(t-),\gamma(t,X(t-))\rangle_H\Delta\xi(t)
		+\|\gamma(t,X(t-))\|_H^2\bigl(\Delta\xi(t)\bigr)^2-
		\bigl(e^{\kappa\ell\Delta\xi(t)}-1\bigr)
		\|X(t-)\|_H^2
		\notag\\
		&\le
		-(\kappa-2)\ell\|X(t-)\|_H^2\Delta\xi(t)
		-(\kappa-2)\ell^2\|X(t-)\|_H^2
		\bigl(\Delta\xi(t)\bigr)^2
		\notag\\
		&\quad+
		2\|X(t-)\|_H\|\gamma(t,0)\|_H\Delta\xi(t)
		+2\|\gamma(t,0)\|_H^2\bigl(\Delta\xi(t)\bigr)^2.
		\label{eq:unbounded-xi-forward-jump-bound}
	\end{align}
	Since \(\rho\) is nonincreasing and \(X\) is c\`adl\`ag,
	taking the left limit at \(s\) gives, for \(r<s\le t\le T\),
	\[
	\begin{aligned}
		\bigl(\rho(s)\bigr)^{1/2}\|X(s-)\|_H
		&\le
		\bigl(\rho(s-)\bigr)^{1/2}\|X(s-)\|_H \le
		\sup_{r\le u\le t}
		\bigl(\rho(u)\bigr)^{1/2}\|X(u)\|_H .
	\end{aligned}
	\]
	
	The additive jump terms satisfy
	\[
	\sum_{r<s\le t}
	\rho(s)\|\gamma(s,0)\|_H^2
	\bigl(\Delta\xi(s)\bigr)^2
	\le
	\left(
	\int_{(r,t]}
	\bigl(\rho(s)\bigr)^{1/2}
	\|\gamma(s,0)\|_H\,d\xi(s)
	\right)^2.
	\]
	We multiply \eqref{eq:unbounded-xi-continuous-FV-bound} by
	\(\rho(s)\) and integrate against \(d\xi^c(s)\) over \((r,t]\),
	then multiply \eqref{eq:unbounded-xi-forward-jump-bound} by
	\(\rho(s)\) and sum over \(r<s\le t\). Adding these inequalities
	and applying the preceding maximum and additive-jump bounds gives
	\begin{align}
		&\int_{(r,t]}\rho(s)
		\Big[2\langle X(s-),\gamma(s,X(s-))\rangle_H
		+\|\gamma(s,X(s-))\|_H^2\Delta\xi(s)\Big]\,d\xi(s)
		\notag\\
		&\quad-\kappa\ell\int_r^t\rho(s)\|X(s)\|_H^2\,d\xi^c(s)
		-\sum_{r<s\le t}\rho(s)
		\bigl(e^{\kappa\ell\Delta\xi(s)}-1\bigr)\|X(s-)\|_H^2
		\notag\\
		&\quad\le-(\kappa-2)\ell
		\int_{(r,t]}
		\rho(s)
		\|X(s-)\|_H^2\,d\xi(s)
		\notag\\
		&\quad+
		2\sup_{r\le s\le t}
		\bigl(\rho(s)\bigr)^{1/2}
		\|X(s)\|_H
		\int_{(r,t]}
		\bigl(\rho(s)\bigr)^{1/2}
		\|\gamma(s,0)\|_H\,d\xi(s)
		\notag\\
		&\quad+
		2\left(
		\int_{(r,t]}
		\bigl(\rho(s)\bigr)^{1/2}
		\|\gamma(s,0)\|_H\,d\xi(s)
		\right)^2.
		\label{eq:unbounded-xi-forward-dxi-bound}
	\end{align}

	For the Lebesgue integrand in
	\eqref{eq:unbounded-xi-forward-weighted-identity},
	\eqref{eq:SEE-coercivity} and \((A2)\) give 
	\begin{align}
		&2\langle A(t)X(t)+a(t,X(t)),X(t)\rangle_*
		+\|B(t)X(t)+b(t,X(t))\|_H^2
		\notag\\
		&\quad\le
		-\frac{\delta}{2}\|X(t)\|_V^2
		+C_0\|X(t)\|_H^2
		+C_0\|b(t,0)\|_H^2
		+2\|X(t)\|_H\|a(t,0)\|_H,
		\label{eq:nonlinear-SEE-dt-bound}
	\end{align}
	where \(C_0\) depends only on
	\(\delta,C_A,L\) and the norm of the embedding \(V\hookrightarrow H\).
	
Inserting
	\eqref{eq:nonlinear-SEE-dt-bound} and
	\eqref{eq:unbounded-xi-forward-dxi-bound} into
	\eqref{eq:unbounded-xi-forward-weighted-identity}, we obtain
	\begin{align}
		&\rho(t)\|X(t)\|_H^2
		+\frac{\delta}{2}\int_r^t
		\rho(s)\|X(s)\|_V^2\,ds
		+(\beta-C_0)\int_r^t
		\rho(s)\|X(s)\|_H^2\,ds
		\notag\\
		&\quad
		+(\kappa-2)\ell\int_{(r,t]}
		\rho(s)
		\|X(s-)\|_H^2\,d\xi(s)
		\notag\\
		&\le
		\|X_r\|_H^2
		+C_0\int_r^t
		\rho(s)\|b(s,0)\|_H^2\,ds
		\notag\\
		&\quad
		+2\left(
		\sup_{r\le u\le t}
		\rho(u)\|X(u)\|_H^2
		\right)^{1/2}
		\Bigg[
		\int_r^t
		\bigl(\rho(s)\bigr)^{1/2}
		\|a(s,0)\|_H\,ds
		+\int_{(r,t]}
		\bigl(\rho(s)\bigr)^{1/2}
		\|\gamma(s,0)\|_H\,d\xi(s)
		\Bigg]
		\notag\\
		&\quad
		+2\left(
		\int_{(r,t]}
		\bigl(\rho(s)\bigr)^{1/2}
		\|\gamma(s,0)\|_H\,d\xi(s)
		\right)^2
		+M(t).
		\label{eq:unbounded-xi-forward-pre-Young}
	\end{align}
Choose \(\beta\ge2C_0+1\) and a stopping time
\(\sigma\in[r,T]\). For \(r\le t\le\sigma\), Young's inequality gives
\begin{align*}
&2\left(\sup_{r\le u\le t}\rho(u)\|X(u)\|_H^2\right)^{1/2}
\left[\int_r^t\rho(s)^{1/2}\|a(s,0)\|_H\,ds
+\int_{(r,t]}\rho(s)^{1/2}\|\gamma(s,0)\|_H\,d\xi(s)\right]\\
&\quad\le\varepsilon\sup_{r\le u\le\sigma}\rho(u)\|X(u)\|_H^2
+C_\varepsilon D.
\end{align*}
Using this bound for the product in
\eqref{eq:unbounded-xi-forward-pre-Young}, we evaluate the resulting inequality
at \(t=\sigma\) to control the nonnegative integrals and take
the supremum over \(t\in[r,\sigma]\) to control the maximum of
\(\rho(t)\|X(t)\|_H^2\). Adding these two bounds and absorbing the multiple of the maximum
for sufficiently small \(\varepsilon\), we obtain
	\begin{align}
		&\sup_{r\le t\le\sigma}
		\rho(t)\|X(t)\|_H^2
		+\int_r^\sigma
		\rho(t)\|X(t)\|_V^2\,dt
		+\beta\int_r^\sigma
		\rho(t)\|X(t)\|_H^2\,dt
        \notag\\
        &\quad+(\kappa-2)\ell\int_{(r,\sigma]}
                    \rho(t)\|X(t-)\|_H^2\,d\xi(t)
		\notag\\
		&\quad\le C\left[D+\sup_{r\le t\le\sigma}|M(t)|\right].
        \label{eq:unbounded-xi-forward-pathwise-estimate}
    \end{align}

	It remains to control the martingale maximum. Set
	\[
	\sigma_n
	:=
	\inf\{t\in[r,T]:\langle M\rangle(t)\ge n\}\wedge T,
	\quad
	\inf\varnothing:=+\infty.
	\]
	By \eqref{eq:SEE-B-quasi-skew} and \((A2)\), the quadratic variation
	of \(M\) defined in \eqref{eq:SEE-weighted-energy-martingale} satisfies
	\begin{align}
		\langle M\rangle(\sigma_n)
		&\le
		C\left(
		\sup_{r\le t\le\sigma_n}
		\rho(t)\|X(t)\|_H^2
		\right)
		\left[
		\int_r^{\sigma_n}
		\rho(t)\|X(t)\|_H^2\,dt
		+
		\int_r^T
		\rho(t)\|b(t,0)\|_H^2\,dt
		\right].
		\label{eq:unbounded-xi-forward-bracket-bound}
	\end{align}
\equationlabelalias{eq:FV-SEE-high-order-Vstar-bracket}{eq:unbounded-xi-forward-bracket-bound}
	The constant in \eqref{eq:unbounded-xi-forward-bracket-bound} depends only on \(C_B,L\).
	
	Since \(\langle M\rangle\) is continuous,
	\(\langle M\rangle(\sigma_n)\le n\), and the BDG inequality gives
	\(\mathbb E\sup_{r\le t\le\sigma_n}|M(t)|^\alpha
	\le C_\alpha n^{\alpha/2}<\infty\). Thus
	\eqref{eq:unbounded-xi-forward-pathwise-estimate}, with
	\(\sigma=\sigma_n\), shows that each stopped term on its left
	is integrable to order \(\alpha\), since \(D\in L^\alpha\).
	Combining the BDG inequality with
	\eqref{eq:unbounded-xi-forward-bracket-bound} and Young's
	inequality, we therefore obtain, for every \(\varepsilon>0\),
	\begin{align}
		\mathbb E
		\sup_{r\le t\le\sigma_n}|M(t)|^\alpha
		&\quad\le
		\varepsilon
		\mathbb E\sup_{r\le t\le\sigma_n}
		\bigl(\rho(t)\bigr)^\alpha
		\|X(t)\|_H^{2\alpha}
		+
		C_{\alpha,\varepsilon}
		\mathbb E\left(
		\int_r^{\sigma_n}
		\rho(t)\|X(t)\|_H^2\,dt
		\right)^\alpha
		\notag\\
		&\quad+
		C_{\alpha,\varepsilon}
		\mathbb E\left(
		\int_r^T
		\rho(t)\|b(t,0)\|_H^2\,dt
		\right)^\alpha.
		\label{eq:unbounded-xi-forward-BDG-bound}
	\end{align}

	We set \(\sigma=\sigma_n\) in
	\eqref{eq:unbounded-xi-forward-pathwise-estimate}, raise both
	sides to the power \(\alpha\), and take expectations.
	We then use \eqref{eq:unbounded-xi-forward-BDG-bound} to estimate
	\(\mathbb E\sup_{r\le t\le\sigma_n}|M(t)|^\alpha\).
	After rescaling the parameter in Young's inequality, we obtain, for every
	\(\varepsilon>0\),
	\begin{align}
		&\mathbb E\sup_{r\le t\le\sigma_n}
		\bigl(\rho(t)\bigr)^\alpha
		\|X(t)\|_H^{2\alpha}
		+
		\mathbb E\left(
		\int_r^{\sigma_n}
		\rho(t)\|X(t)\|_V^2\,dt
		\right)^\alpha
		+\beta^\alpha\mathbb E\left(
		\int_r^{\sigma_n}
		\rho(t)\|X(t)\|_H^2\,dt
		\right)^\alpha
        \notag\\
        &\quad+\mathbb E\left((\kappa-2)\ell\int_{(r,\sigma_n]}
                  \rho(t)\|X(t-)\|_H^2\,d\xi(t)\right)^\alpha
		\notag\\
		&\le
		C_{\alpha,\varepsilon}\mathbb ED^\alpha
		+\varepsilon\,
		\mathbb E\sup_{r\le t\le\sigma_n}
		\bigl(\rho(t)\bigr)^\alpha
		\|X(t)\|_H^{2\alpha}
		+C_{\alpha,\varepsilon}\mathbb E\left(
		\int_r^{\sigma_n}
		\rho(t)\|X(t)\|_H^2\,dt
		\right)^\alpha .
		\label{eq:unbounded-xi-forward-before-absorption}
	\end{align}

	Choose \(\varepsilon=1/2\), and then choose
	\[
	\beta_\alpha
	\ge
	\max\left\{
	2C_0+1,\,
	\bigl(2C_{\alpha,1/2}\bigr)^{1/\alpha}
	\right\}.
	\]
	For every \(\beta\ge\beta_\alpha\), we have
	\(C_{\alpha,1/2}\le\frac{\beta^\alpha}{2}.\)
	Absorbing the last two terms on the right-hand side of
	\eqref{eq:unbounded-xi-forward-before-absorption}, we obtain
	\begin{align}
		&\frac12\mathbb E\sup_{r\le t\le\sigma_n}
		\bigl(\rho(t)\bigr)^\alpha
		\|X(t)\|_H^{2\alpha}
		+
		\mathbb E\left(
		\int_r^{\sigma_n}
		\rho(t)\|X(t)\|_V^2\,dt
		\right)^\alpha
		\notag\\
		&\quad
		+\frac{\beta^\alpha}{2}\mathbb E\left(
		\int_r^{\sigma_n}
		\rho(t)\|X(t)\|_H^2\,dt
		\right)^\alpha
        +\mathbb E\left((\kappa-2)\ell\int_{(r,\sigma_n]}
                  \rho(t)\|X(t-)\|_H^2\,d\xi(t)\right)^\alpha
		\notag\\
		&\le
		C_\alpha\mathbb ED^\alpha.
		\label{eq:unbounded-xi-forward-stopped-estimate}
	\end{align}
	The right-hand side of
	\eqref{eq:unbounded-xi-forward-stopped-estimate} is independent of
	\(n\).
Since \(M\) is a continuous local martingale on \([r,T]\),
    \(\langle M\rangle(T)<\infty\) almost surely. Consequently,
    \(\sigma_n=T\) eventually on almost every path. Fatou's
    lemma applied to the stopped maximum and each stopped energy in
    \eqref{eq:unbounded-xi-forward-stopped-estimate} gives
    \eqref{eq:unbounded-xi-weighted-SEE-estimate}.

    For the conditional estimate, fix \(E\in\mathcal F_r\) and
    multiply the pathwise inequality
    \eqref{eq:unbounded-xi-forward-pathwise-estimate} by \(I_E\).
    On \([r,T]\), \(I_EM\) is a continuous local martingale with
    bracket \(I_E\langle M\rangle\). Applying the BDG inequality
    to \(I_EM(\cdot\wedge\sigma_n)\) yields
    \eqref{eq:unbounded-xi-forward-BDG-bound} with
    \(\mathbb E[I_E\,\cdot]\) in place of each expectation.
    The same choices of \(\varepsilon\) and \(\beta_\alpha\) then
    give \eqref{eq:unbounded-xi-forward-stopped-estimate} with
    \(\mathbb E[I_E\,\cdot]\) in place of each expectation. Fatou's lemma and the
    characterization of conditional expectation by testing against events give
    \eqref{eq:unbounded-xi-weighted-SEE-estimate}, including all four
    nonnegative terms on its left, conditional on \(\mathcal F_r\).

	It remains to remove the weight. Let
	\(\beta\ge\max\{\beta_\alpha,\beta_{2\alpha}\}\). Since
	\(\bigl(\rho(t)\bigr)^{-1} \le e^{\beta T+\kappa\ell(\xi(T)-\xi(r))}, \ r\le t\le T,\)
the conditional Cauchy--Schwarz inequality, followed by the conditional form of
\eqref{eq:unbounded-xi-weighted-SEE-estimate} at order \(2\alpha\), gives
\begin{equation}
\begin{aligned}
&\mathbb E\left[
 \sup_{r\le t\le T}\|X(t)\|_H^{2\alpha}
 +\left(\int_r^T\|X(t)\|_V^2\,dt\right)^\alpha
 \,\middle|\,\mathcal F_r\right]\\
&\quad\le C_\alpha e^{\alpha\beta T}
 \left(\mathbb E\left[
  e^{2\alpha\kappa\ell(\xi(T)-\xi(r))}
  \,\middle|\,\mathcal F_r\right]\right)^{1/2}
 \left(\mathbb E[D^{2\alpha}\mid\mathcal F_r]\right)^{1/2}.
\end{aligned}
\label{eq:SEE-conditional-unweighting}
\end{equation}
In \eqref{eq:SEE-conditional-unweighting},
\(\mathbb E[e^{2\alpha\kappa\ell(\xi(T)-\xi(r))}
\mid\mathcal F_r]^{1/2}\le
\mathfrak E_{2\alpha\kappa,\ell}(\xi)^{1/2}\) by
\eqref{eq:unbounded-xi-conditional-exponential-moment}.
Expanding \(D\) and using \(\rho\le1\) gives the conditional
form of \eqref{eq:unbounded-xi-unweighted-SEE-estimate}.
Taking expectations in this conditional bound and using
\(\mathbb E[(\mathbb E[D^{2\alpha}\mid\mathcal F_r])^{1/2}]
\le(\mathbb ED^{2\alpha})^{1/2}\) gives the unconditional estimate.
\end{proof}

We next establish well-posedness in the weighted class.
\begin{theorem}
\label{thm:FV-SEE-weighted-wellposed}
Let \(\alpha\ge1\), assume that
\((A1)\)--\((A3)\) hold, and let $X_r\in L^{2\alpha}(\mathcal F_r;H)$. Fix \(\kappa>2\) and
\(\beta\ge\beta_\alpha\), and suppose that
\begin{equation*}
\begin{aligned}
 \bigl(\rho_{\beta,\ell}^{r}\bigr)^{1/2}a(\cdot,0)
\in L^{1,2\alpha}_{\mathbb F}(r,T;H),\quad
 \bigl(\rho_{\beta,\ell}^{r}\bigr)^{1/2}b(\cdot,0)
\in L^{2,2\alpha}_{\mathbb F}(r,T;H),\quad
 \bigl(\rho_{\beta,\ell}^{r}\bigr)^{1/2}\gamma(\cdot,0)
\in L^{1,2\alpha}_{\mathbb F,\xi}(r,T;H).
\end{aligned}
\end{equation*}
Then \eqref{eq:unbounded-xi-forward-SEE} has a unique
solution \(X\) such that
$
\bigl(\rho_{\beta,\ell}^{r}\bigr)^{1/2}X
\in \mathbb D^{2\alpha}_{\mathbb F}(r,T;H)
\cap L^{2,2\alpha}_{\mathbb F}(r,T;V).
\label{eq:FV-SEE-weighted-solution-class}
$
\end{theorem}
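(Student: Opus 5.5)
Uniqueness comes directly from Proposition~\ref{prop:unbounded-xi-SEE-estimate}, and existence from a Picard (contraction) argument in the weighted space built on the same estimate. The difference \(\hat X=X^1-X^2\) of two solutions in the stated class solves \eqref{eq:unbounded-xi-forward-SEE} with zero initial datum. Its coefficients are \(\hat a(t,x)=a(t,x+X^2(t))-a(t,X^2(t))\) and similarly \(\hat b,\hat\gamma\) (with \(X^2(t-)\) inside \(\hat\gamma\)). These are progressively measurable, satisfy \((A2)\)--\((A3)\) with the same \(L,\ell\), and vanish at \(x=0\). The weighted estimate \eqref{eq:unbounded-xi-weighted-SEE-estimate} therefore gives \(\hat X=0\). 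The only thing to check is that this estimate is an a priori estimate for any solution (it was proved for an arbitrary solution), so no extra integrability is needed.

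For existence, I would freeze the nonlinearity. Define the Banach space \(\mathcal S\) of adapted c\`adl\`ag \(Y\) with \(\rho^{1/2}Y\in\mathbb D^{2\alpha}_{\mathbb F}(r,T;H)\), normed by \(\|\rho^{1/2}Y\|_{\mathbb D^{2\alpha}}+\|(\beta\int_r^T\rho\|Y\|_H^2dt)^{1/2}\|_{L^{2\alpha}}+\|((\kappa-2)\ell\int_{(r,T]}\rho\|Y(t-)\|^2d\xi)^{1/2}\|_{L^{2\alpha}}\). Given \(Y\in\mathcal S\), consider the linear equation with forcing \(a(t,Y(t))\), \(b(t,Y(t))\), \(\gamma(t,Y(t-))\). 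The key sub-step is to solve this linear SEE with coercive \(A,B\) and a given \(d\xi\)-forcing. I would reduce the \(d\xi\) term by the substitution \(\tilde X=X-\int_{(r,\cdot]}\gamma\,d\xi\). This turns it into a classical Krylov--Rozovskii variational equation whose drift and diffusion contain \(A\tilde\Gamma\) and \(B\tilde\Gamma\) with \(\tilde\Gamma=\int\gamma d\xi\), but \(\tilde\Gamma\) need not lie in \(V\). So instead I would approximate \(\gamma\) by \(V\)-valued processes, or use a Galerkin scheme with the energy identity of Corollary~\ref{cor:weighted-Ito-general}. Alternatively, localize via stopping times where \(\xi\) is bounded and solve successively between the jump times of a truncation. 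The cleanest route is Galerkin: project onto finite-dimensional spaces, solve the finite-dimensional SDE driven by \(dt,dW,d\xi\) (standard), obtain uniform bounds from the same computation as Proposition~\ref{prop:unbounded-xi-SEE-estimate}, and pass to weak limits using monotonicity/linearity. The limit is identified through the weak form \eqref{eq:nonlinear-FV-SEE-weak-form}, and Lemma~\ref{Ito-lemma} supplies a c\`adl\`ag version. The data are in the right weighted class because \(\|a(t,Y)\|\le\|a(t,0)\|+L\|Y\|\), and the factor \(\rho^{1/2}\) with \(\int\rho^{1/2}\|Y\|dt\le T^{1/2}(\int\rho\|Y\|^2)^{1/2}\) is controlled by the \(\mathcal S\)-norm. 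The \(d\xi\) term is handled similarly, via Cauchy--Schwarz against \(\xi\), which is where the \((\kappa-2)\ell\) energy term in the norm is used.

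The contraction step is the main obstacle. For \(\Phi(Y^1)-\Phi(Y^2)\), the weighted estimate with zero initial data bounds the norm by \(C_\alpha\) times \((\int\rho^{1/2}L\|\hat Y\|dt)^2+\int\rho L^2\|\hat Y\|^2dt+(\int\rho^{1/2}\ell\|\hat Y(t-)\|d\xi)^2\). The \(dt\)-parts carry a factor \(1/\beta\) against the \(\beta\)-weighted norm and become small for large \(\beta\). The \(d\xi\)-part, however, produces \(\ell^2\xi(T)\int\rho\|\hat Y(t-)\|^2d\xi\), which is not small. I would therefore avoid linearizing \(\gamma\) and instead apply the energy identity directly to the full Lipschitz \(\gamma\) in the difference equation, as in \eqref{eq:unbounded-xi-forward-jump-bound}. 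This means treating \(a\) and \(b\) in the fixed point only, and solving for each frozen \((a,b)\) the equation that keeps the full \(\gamma(t,X(t-))\). That equation is solved by Galerkin approximation plus a pathwise fixed point in the \(d\xi\) direction, where the weight \(e^{-\kappa\ell\xi}\) with \(\kappa>2\) makes the Stieltjes map contractive, exactly as the inequality \(e^{\kappa z}-1\ge\kappa z+\kappa z^2\) absorbs it. Once this is done, the map \(Y\mapsto X\) is a contraction for \(\beta\ge\beta_\alpha\) (enlarged if needed), and Banach's theorem gives the solution in the stated class.
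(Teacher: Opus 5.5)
Your uniqueness argument is the paper's. Your decision to run the fixed point only in \((a,b)\) is also sound: the difference equation keeps the full \(\gamma(t,X(t-))\), so \eqref{eq:unbounded-xi-weighted-SEE-estimate}, whose constant does not depend on \(\beta\ge\beta_\alpha\), yields a factor \(C/\beta\). The gap is the inner step, where all of the difficulty lies. You need existence for \(dX=[AX+f]\,dt+[BX+g]\,dW+\gamma(t,X(t-))\,d\xi\) with a state-dependent Lipschitz \(\gamma\) and an unbounded driver \(\xi\); the theorem assumes only \(\xi(T)<\infty\) a.s. None of the mechanisms you name delivers this.

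\emph{Pathwise fixed point.} This equation has no pathwise solution map, so a literal pathwise fixed point is unavailable.

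\emph{Picard iteration in the \(\gamma\)-argument.} This freezes \(\gamma(t,Y(t-))\) again, which is exactly the linearization you already found non-contractive. Its Stieltjes datum is bounded only by \(\ell^2(\xi(T)-\xi(r))\int_{(r,T]}\rho\|\hat Y(t-)\|_H^2\,d\xi(t)\), and this factor is neither small nor bounded. In the data class of Proposition~\ref{prop:unbounded-xi-SEE-estimate}, the frozen map is not even shown to be well defined on \(\mathcal S\). The inequality \(e^{\kappa z}-1\ge\kappa z+\kappa z^2\) absorbs the Stieltjes term only when the coefficient is evaluated at the same unknown whose weighted norm is being estimated. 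In a Picard step the right-hand side involves a different process, so you need an actual contraction ratio. A ratio of order \(4/\kappa^2\) can be obtained formally at \(\alpha=1\) in a left-limit energy norm that also contains \(\sum\rho(\Delta\xi)^2\|Z(s-)\|_H^2\). That, however, would require a linear existence theory for unbounded \(\xi\) with Stieltjes data in that class, which you do not provide.

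\emph{Galerkin on the nonlinear equation.} This leaves you to identify the weak limits of \(\gamma(t,x_n(t-))\) and of \(x_n(t-)\) under \(d\xi\). Linearity does not apply, and no Minty-type argument with jumps is given.

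The missing idea, which is the paper's route, is localization to a bounded driver. Let \(\sigma_n\) be the first time that
\(K_r(t)=\xi(t)-\xi(r)+\int_r^t\|a(s,0)\|_H\,ds+\int_r^t\|b(s,0)\|_H^2\,ds+\int_{(r,t]}\|\gamma(s,0)\|_H\,d\xi(s)\) exceeds \(n\). Replace \(\xi\) by \(\chi_n=\int_{(r,\cdot]}I_{\{s<\sigma_n\}}\,d\xi(s)\le n\), and cut \(a,b\) off after \(\sigma_n\).

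With a bounded driver, the full frozen map, including \(\gamma\), is a contraction for the weight \(\exp\{-\beta[(t-r)+\chi_n(t)-\chi_n(r)]\}\). The frozen Stieltjes coefficient is state-independent, so its weight exponent may be taken proportional to \(\beta\). The Stieltjes energy on the left is then of size \(\beta\). Cauchy--Schwarz bounds the frozen Stieltjes datum by \(\ell^2n\int_{(r,T]}\rho\|\hat y(t-)\|_H^2\,d\chi_n\), that is, by \(\ell^2n/\beta\) times that energy. This gives the factor \(C\{L^2(T-r+1)+\ell^2n\}/\beta\) in Lemma~\ref{lem:localized-FV-SEE-construction}.

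The localized solutions agree on \([r,\sigma_n)\); this follows from applying the weighted estimate to \(I_{\{t<\sigma_n\}}(X^m-X^n)\) with the extra driver \(I_{\{\sigma_n\le t\}}\). They can be pasted together because \(\sigma_n=+\infty\) eventually. Finally, \eqref{eq:unbounded-xi-weighted-SEE-estimate} places the pasted solution in the stated weighted class.
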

\begin{proof}
To prove uniqueness, let \(X\) and \(\widetilde X\) be two solutions
satisfying the weighted integrability condition in the statement. Then
\(Y:=X-\widetilde X\) has zero initial value and satisfies the
difference equation whose nonlinear coefficients are
\[
\begin{aligned}
a(t,z+\widetilde X(t))-a(t,\widetilde X(t)),\ \
b(t,z+\widetilde X(t))-b(t,\widetilde X(t)),\ \
\gamma(t,z+\widetilde X(t-))
 -\gamma(t,\widetilde X(t-)).
\end{aligned}
\]
They vanish at \(z=0\) and have the same Lipschitz bounds as
\(a,b,\gamma\). Hence
\eqref{eq:unbounded-xi-weighted-SEE-estimate}, applied to \(Y\), gives
\[
 \mathbb E\sup_{r\le t\le T}
 \bigl(\rho_{\beta,\ell}^{r}(t)\bigr)^\alpha
 \|Y(t)\|_H^{2\alpha}=0.
\]
Since \(\rho_{\beta,\ell}^{r}>0\), \(X\) and \(\widetilde X\)
are indistinguishable.

We next construct a solution. Since
\(\rho_{\beta,\ell}^{r}\) is nonincreasing and
\(\rho_{\beta,\ell}^{r}(T)>0\) \(\mathbb P\text{-a.s.}\), the weighted data
assumptions imply
\[
 \int_r^T\|a(t,0)\|_H\,dt
 +\int_r^T\|b(t,0)\|_H^2\,dt
 +\int_{(r,T]}\|\gamma(t,0)\|_H\,d\xi(t)
 <\infty
 \quad\mathbb P\text{-a.s.}
\]
Define the adapted nondecreasing c\`adl\`ag process
\[
 K_r(t)
 :=
 \xi(t)-\xi(r)
 +\int_r^t\|a(s,0)\|_H\,ds
 +\int_r^t\|b(s,0)\|_H^2\,ds
 +\int_{(r,t]}\|\gamma(s,0)\|_H\,d\xi(s).
\]
Then \(K_r(T)<\infty\) \(\mathbb P\text{-a.s.}\) For \(n\ge1\), set
\[
 \sigma_n
 :=
 \inf\{t\in[r,T]:K_r(t)>n\},
 \quad
 \inf\varnothing:=+\infty,
\]
and
\[
 \chi_n(t)
 :=
 \int_{(r,t]}I_{\{s<\sigma_n\}}\,d\xi(s).
\]
Since \(K_r(r)=0\) and \(K_r\) is right-continuous,
\(\sigma_n>r\) whenever \(\sigma_n<\infty\).
The process \(\chi_n\) is adapted, nondecreasing, and c\`adl\`ag,
\(\chi_n(r)=0\), \(\chi_n(T)\le n\), and
\[
\begin{aligned}
 \int_r^T I_{\{t<\sigma_n\}}\|a(t,0)\|_H\,dt\le n,\quad
  \int_r^T I_{\{t<\sigma_n\}}\|b(t,0)\|_H^2\,dt\le n,\quad\int_{(r,T]}\|\gamma(t,0)\|_H\,d\chi_n(t)\le n.
\end{aligned}
\]
Hence Lemma~\ref{lem:localized-FV-SEE-construction}, applied with
driver \(\chi_n\), initial value \(X_r\), and coefficients
$
 I_{\{t<\sigma_n\}}a(t,x),\allowbreak\
 I_{\{t<\sigma_n\}}b(t,x),\allowbreak\
 \gamma(t,x),
$
gives a unique
$
 X^n\in
 \mathbb D^2_{\mathbb F}(r,T;H)
 \cap L^2_{\mathbb F}(r,T;V).
$

To verify consistency of the localized solutions, fix
\(m\ge n\), set
$
 \tau:=\sigma_n,
 \
 Y:=X^m-X^n,
$
and define
\[
 \widehat Y(t):=I_{\{t<\tau\}}Y(t),
 \quad
 J^\tau(t):=I_{\{\tau\le t\}},
 \quad r\le t\le T.
\]
Since \(K_r\) is nondecreasing, \(\sigma_m\ge\sigma_n\).
Thus, on \([r,\tau)\), the two localized equations have the same
initial value, \(dt\)- and \(dW\)-coefficients, and
finite-variation driver. Subtracting the two equations gives, on
\([r,\tau)\),
\begin{align*}
dY(t)
&=
\Big[
 A(t)Y(t)
 +a(t,Y(t)+X^n(t))-a(t,X^n(t))
\Big]\,dt\\
&\quad+
\Big[
 B(t)Y(t)
 +b(t,Y(t)+X^n(t))-b(t,X^n(t))
\Big]\,dW(t)\\
&\quad+
\Big[
 \gamma(t,Y(t-)+X^n(t-))
 -\gamma(t,X^n(t-))
\Big]\,d\xi(t),
\quad
Y(r)=0.
\end{align*}

We apply integration by parts to
\(I_{\{t<\tau\}} Y(t)\) on \([r,T]\).
At \(\tau\), the jump of the indicator and its
cross term with \(\Delta Y(\tau)\) cancel the original jump of
\(Y\), so that
\[
 \Delta\widehat Y(\tau)= I_{\{\tau<\tau\}}Y(\tau)
 -
Y(\tau-)=-Y(\tau-)
 =-\widehat Y(\tau-).
\]
Thus,
\begin{align}
d\widehat Y(t)
&=
\Big[
 A(t)\widehat Y(t)
 +I_{\{t<\tau\}}
 \big(
   a(t,\widehat Y(t)+X^n(t))-a(t,X^n(t))
 \big)
\Big]\,dt
\notag\\
&\quad+
\Big[
 B(t)\widehat Y(t)
 +I_{\{t<\tau\}}
 \big(
   b(t,\widehat Y(t)+X^n(t))-b(t,X^n(t))
 \big)
\Big]\,dW(t)
\notag\\
&\quad+
\Big[
 \gamma(t,\widehat Y(t-)+X^n(t-))
 -\gamma(t,X^n(t-))
\Big]\,d\chi_n(t)
-
\widehat Y(t-)\,dJ^\tau(t),
\quad
\widehat Y(r)=0.
\label{eq:strictly-stopped-SEE-difference}
\end{align}
The process \(\widehat Y\) belongs to
\(\mathbb D^2_{\mathbb F}(r,T;H)\cap L^2_{\mathbb F}(r,T;V)\),
since the indicator is bounded and \(X^m,X^n\) belong to these spaces.
By Remark~\ref{rem:multiple-FV-driver-reduction}, the two
finite-variation terms in
\eqref{eq:strictly-stopped-SEE-difference} can be written with the
single nondecreasing driver
\[
 G_n(t):=\chi_n(t)+J^\tau(t),\quad G_n(r)=0,\quad G_n(T)\le n+1.
\]
The combined coefficient vanishes at zero and has a Lipschitz constant
of at most \(\max\{\ell,1\}\). 
Thus \eqref{eq:strictly-stopped-SEE-difference},
written with driver \(G_n\), satisfies the assumptions of
Proposition~\ref{prop:unbounded-xi-SEE-estimate}. We apply
\eqref{eq:unbounded-xi-weighted-SEE-estimate} with moment parameter
\(1\), Lipschitz bound \(\max\{\ell,1\}\), zero initial value,
zero values of the coefficients at the origin, and sufficiently large
\(\lambda\), to obtain
\[
 \mathbb E\sup_{r\le t\le T}
 e^{-\lambda(t-r)-\kappa\max\{\ell,1\}G_n(t)}
 \|\widehat Y(t)\|_H^2=0.
\]
The weight is strictly positive, so \(\widehat Y=0\) up to
indistinguishability. Consequently,
$
 X^m(t)=X^n(t),\ r\le t<\sigma_n,\ t\le T,
$
outside one null set for the fixed pair \((m,n)\).

We intersect the events of probability one on which
\(X^m(t)=X^n(t)\) for every \(r\le t<\sigma_n\), over all
integer pairs \(m\ge n\). Since \(K_r(T)<\infty\) \(\mathbb P\text{-a.s.}\), for every
\(\omega\) in the resulting set there exists \(n_0(\omega)\) such
that
$
 \sigma_n(\omega)=+\infty,
 \ n\ge n_0(\omega).
$
Hence the paths \(X^n(\cdot,\omega)\) eventually coincide on the whole
interval \([r,T]\). We define \(X(\cdot,\omega)\) to be this common
path and set \(X=0\) on the complement of this event of probability one.

Then \(X\) is adapted and c\`adl\`ag, and
$
 \int_r^T\|X(t)\|_V^2\,dt<\infty
 \ \mathbb P\text{-a.s.}
$
Indeed, on almost every path, \(X\) coincides on \([r,T]\) with one of the
processes \(X^n\).
The Lipschitz bounds, \(K_r(T)<\infty\), and the pathwise
\(V\)-integrability ensure that the drift and Stieltjes integrals in
\eqref{eq:unbounded-xi-forward-SEE} are finite. They also imply
\(\int_r^T\|B(t)X(t)+b(t,X(t))\|_H^2\,dt<\infty\) almost
surely, so the Brownian integral is a continuous local martingale.
For each \(n\), \(X=X^n\) on \([r,\sigma_n)\); the Lebesgue
and Stieltjes integrals agree there pathwise, and the Brownian
integrals agree by locality of stochastic integration.
Since these stochastic intervals cover \([r,T]\) almost surely,
the weak equations for \(X^n\) give
\eqref{eq:unbounded-xi-forward-SEE} in the sense of
Definition~\ref{def:nonlinear-FV-SEE-solution}.

Finally, applying \eqref{eq:unbounded-xi-weighted-SEE-estimate} with the original
parameters \(\alpha,\beta,\ell\) and the assumed weighted data shows
that \(X\) belongs to the weighted solution class stated in
Theorem~\ref{thm:FV-SEE-weighted-wellposed}.
\end{proof}

The following corollary gives well-posedness in the unweighted moment space
used in the control and adjoint analysis.

\begin{corollary}
\label{thm:FV-SEE-wellposed}
Let \(\alpha\ge1\) and assume
\((A1)\)--\((A3)\). Suppose that
$
    e^{\kappa\ell\xi(T)}\in L^{2\alpha}(\mathcal F_T),
$
and
\[
\begin{aligned}
 X_r\in L^{4\alpha}(\mathcal F_r;H),\quad
  a(\cdot,0)\in L^{1,4\alpha}_{\mathbb F}(r,T;H),\quad b(\cdot,0)\in L^{2,4\alpha}_{\mathbb F}(r,T;H),\quad
  \gamma(\cdot,0)\in L^{1,4\alpha}_{\mathbb F,\xi}(r,T;H).
\end{aligned}
\]
Then \eqref{eq:unbounded-xi-forward-SEE} has a unique 
solution in
$
    \mathbb D^{2\alpha}_{\mathbb F}(r,T;H)
    \cap L^{2,2\alpha}_{\mathbb F}(r,T;V).
$
Moreover, for
\(\beta\ge\max\{\beta_\alpha,\beta_{2\alpha}\}\),
\begin{align}
&\mathbb E\left[
    \sup_{r\le t\le T}\|X(t)\|_H^{2\alpha}
    +\left(\int_r^T\|X(t)\|_V^2dt\right)^\alpha
\right]
\notag\\
&\quad\le
C_{\alpha,\beta,\kappa}
\left(\mathbb E e^{2\alpha\kappa\ell\xi(T)}\right)^{1/2}
\Bigg\{
\mathbb E\Bigg[
    \|X_r\|_H^{4\alpha}
    +\left(\int_r^T\|a(t,0)\|_Hdt\right)^{4\alpha}
\notag\\
&\hspace{35mm}
    +\left(\int_r^T\|b(t,0)\|_H^2dt\right)^{2\alpha}
    +\left(\int_{(r,T]}\|\gamma(t,0)\|_Hd\xi(t)\right)^{4\alpha}
\Bigg]
\Bigg\}^{1/2}.
\label{eq:FV-SEE-unconditional-estimate}
\end{align}
The weighted estimate
\eqref{eq:unbounded-xi-weighted-SEE-estimate}, including its
conditional form given \(\mathcal F_r\), also holds.
If, in addition,
\(\mathfrak E_{2\alpha\kappa,\ell}(\xi)<\infty\), then the unweighted
conditional estimate in
Proposition~\ref{prop:unbounded-xi-SEE-estimate} holds as well,
with a deterministic constant independent of \(r\).
\end{corollary}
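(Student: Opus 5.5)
The corollary follows by combining Theorem~\ref{thm:FV-SEE-weighted-wellposed} with the unweighting step at the end of the proof of Proposition~\ref{prop:unbounded-xi-SEE-estimate}. The only new ingredient is replacing the conditional exponential moment \(\mathfrak E_{2\alpha\kappa,\ell}(\xi)\) by the unconditional moment \(\mathbb E e^{2\alpha\kappa\ell\xi(T)}\) via Cauchy--Schwarz.

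\emph{Existence.} Fix \(\beta\ge\beta_\alpha\). Since \(\rho^{r}_{\beta,\ell}\le1\), the unweighted data assumptions at order \(4\alpha\) imply the weighted ones at order \(2\alpha\) required in Theorem~\ref{thm:FV-SEE-weighted-wellposed}. Indeed, by Jensen, \(\mathbb E(\int_r^T\|a(t,0)\|_H\,dt)^{2\alpha}\le(\mathbb E(\int_r^T\|a(t,0)\|_H\,dt)^{4\alpha})^{1/2}\), and similarly for \(b\), \(\gamma\) and \(X_r\). The theorem therefore yields a solution \(X\) in the weighted class, and \eqref{eq:unbounded-xi-weighted-SEE-estimate} holds together with its conditional form.

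\emph{Unweighted estimate.} Pathwise, \((\rho^r_{\beta,\ell}(t))^{-1}\le e^{\beta T+\kappa\ell\xi(T)}\) because \(\xi(r)\ge0\). Hence
\[
\sup_{r\le t\le T}\|X(t)\|_H^{2\alpha}+\Bigl(\int_r^T\|X(t)\|_V^2\,dt\Bigr)^\alpha
\le e^{\alpha\beta T+\alpha\kappa\ell\xi(T)}\,\Theta,
\]
where \(\Theta\) is the weighted left-hand side of \eqref{eq:unbounded-xi-weighted-SEE-estimate}. Take expectations and apply Cauchy--Schwarz. The first factor is \((\mathbb Ee^{2\alpha\kappa\ell\xi(T)})^{1/2}\). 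For the second factor, \(\mathbb E\Theta^2\) is bounded by applying \eqref{eq:unbounded-xi-weighted-SEE-estimate} at order \(2\alpha\) (so one needs \(\beta\ge\beta_{2\alpha}\)), which gives \(C\,\mathbb E D^{2\alpha}\). Finally, \(\rho\le1\) bounds \(\mathbb ED^{2\alpha}\) by the unweighted data at order \(4\alpha\). This is \eqref{eq:FV-SEE-unconditional-estimate}, with a constant depending on \(\beta\) through \(e^{\alpha\beta T}\). The assumption \(e^{\kappa\ell\xi(T)}\in L^{2\alpha}\) makes the right-hand side finite, so \(X\) lies in \(\mathbb D^{2\alpha}_{\mathbb F}(r,T;H)\cap L^{2,2\alpha}_{\mathbb F}(r,T;V)\).

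\emph{Uniqueness.} This is the one genuinely delicate step: a competing solution \(\widetilde X\) in the unweighted class need not a priori lie in the weighted class used in the uniqueness part of Theorem~\ref{thm:FV-SEE-weighted-wellposed}. The fix is to note that \(\rho\le1\) gives the inclusion \emph{weighted} \(\supseteq\) \emph{unweighted}. Explicitly, \(\sup_t\rho(t)^\alpha\|\widetilde X(t)\|_H^{2\alpha}\le\sup_t\|\widetilde X(t)\|_H^{2\alpha}\), and similarly for the \(V\)-energy. So any unweighted solution belongs to the weighted class, and uniqueness there applies directly. If the weighted uniqueness argument also needs the stopping/Fatou localization, it goes through as in Proposition~\ref{prop:unbounded-xi-SEE-estimate}, since the difference equation has zero data.

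\emph{Final clause.} When \(\mathfrak E_{2\alpha\kappa,\ell}(\xi)<\infty\), the conditional unweighted estimate is exactly the second part of Proposition~\ref{prop:unbounded-xi-SEE-estimate}. Its data hypotheses coincide with those assumed here, and its constant \(C_{\alpha,\beta}\) depends only on structural quantities, not on \(r\).
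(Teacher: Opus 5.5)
Your proposal is correct and follows the paper's proof: existence from Theorem~\ref{thm:FV-SEE-weighted-wellposed} via $\rho^{r}_{\beta,\ell}\le1$, unweighting by the pathwise bound $\rho^{-1}\le e^{\beta T+\kappa\ell\xi(T)}$ plus Cauchy--Schwarz with the order-$2\alpha$ weighted estimate, uniqueness because the unweighted class lies inside the weighted one, and the final clause from the conditional part of Proposition~\ref{prop:unbounded-xi-SEE-estimate}. The only cosmetic difference is that you invoke the theorem at order $\alpha$ and then the estimate at order $2\alpha$, whereas the paper invokes the theorem at order $2\alpha$ directly; both are valid because the Proposition's estimate is a priori and holds for any solution.
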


\begin{proof}
Since \(0<\rho_{\beta,\ell}^{r}\le1\), the data satisfy the assumptions of
Theorem~\ref{thm:FV-SEE-weighted-wellposed} with \(2\alpha\) in place of \(\alpha\).
Hence there exists a solution such that
$
\bigl(\rho_{\beta,\ell}^{r}\bigr)^{1/2}X
\in \mathbb D^{4\alpha}_{\mathbb F}(r,T;H)
\cap L^{2,4\alpha}_{\mathbb F}(r,T;V).
$

The pathwise bound
\(\rho_{\beta,\ell}^{r}(t)^{-1}
 \le e^{\beta T+\kappa\ell\xi(T)}\), the Cauchy--Schwarz inequality, and the
estimate \eqref{eq:unbounded-xi-weighted-SEE-estimate} at order
\(2\alpha\) give \eqref{eq:FV-SEE-unconditional-estimate}. Thus \(X\) belongs to the
asserted unweighted \(2\alpha\)-moment space. Since \(\rho\le1\), any
other solution in this space also belongs to the weighted class at order
\(\alpha\), and Theorem~\ref{thm:FV-SEE-weighted-wellposed} gives
indistinguishability.

Applying \eqref{eq:unbounded-xi-weighted-SEE-estimate} and its
conditional form to \(X\) gives the weighted estimates.
Under \(\mathfrak E_{2\alpha\kappa,\ell}(\xi)<\infty\), the
conditional form of \eqref{eq:unbounded-xi-unweighted-SEE-estimate}
gives the unweighted conditional estimate.
\end{proof}

The next lemma extends Proposition~\ref{prop:unbounded-xi-SEE-estimate}
to \(V^*\)-valued inhomogeneous drifts, as required by the
variational equations.

\par
\begin{lemma}
	\label{lem:FV-SEE-high-order-Vstar}
	Let \(\alpha\ge1\) and let $X_r\in L^{2\alpha}(\mathcal F_r;H)$. Assume
	\((A1)\), and let \(X\) solve \eqref{eq:unbounded-xi-forward-SEE}, where
	\(a:\Omega\times[r,T]\times H\to V^*, \ b,\gamma:\Omega\times[r,T]\times H\to H\)
	are jointly progressively measurable and satisfy, for every
	\((t,\omega)\in[r,T]\times\Omega\), \(x,y\in H\),
	\begin{align}
		\|a(t,x)-a(t,y)\|_{V^*}
		+\|b(t,x)-b(t,y)\|_H
		&\le L\|x-y\|_H,
		\label{eq:FV-SEE-high-order-fg-Lipschitz}\\
		\|\gamma(t,x)-\gamma(t,y)\|_H
		&\le\ell\|x-y\|_H.
		\label{eq:FV-SEE-high-order-eta-Lipschitz}
	\end{align}
	There exist constants \(\beta_\alpha>0\) and \(C_\alpha>0\), depending only on
	\(\alpha,\delta,C_A,C_B,L\) and the norm of the embedding
	\(V\hookrightarrow H\), such that, for every \(\beta\ge\beta_\alpha\), if 
		\[
\begin{aligned}
 \bigl(\rho_{\beta,\ell}^{r}\bigr)^{1/2}a(\cdot,0)
		\in L^{2,2\alpha}_{\mathbb F}(r,T;V^*),\quad\bigl(\rho_{\beta,\ell}^{r}\bigr)^{1/2}b(\cdot,0)
		\in L^{2,2\alpha}_{\mathbb F}(r,T;H),\quad\bigl(\rho_{\beta,\ell}^{r}\bigr)^{1/2}\gamma(\cdot,0)
		\in L^{1,2\alpha}_{\mathbb F,\xi}(r,T;H),
\end{aligned}
\]
	then
	\begin{align}
		&\mathbb E\Bigg[
		\sup_{r\le t\le T}
		\bigl(\rho_{\beta,\ell}^{r}(t)\bigr)^\alpha
		\|X(t)\|_H^{2\alpha}
		+
		\left(
		\int_r^T
		\rho_{\beta,\ell}^{r}(t)\|X(t)\|_V^2dt
		\right)^\alpha
		\Bigg]
		\notag\\
		&\quad\le
		C_\alpha\mathbb E\Bigg[
		\|X_r\|_H^{2\alpha}
		+
		\left(
		\int_r^T
		\rho_{\beta,\ell}^{r}(t)
		\|a(t,0)\|_{V^*}^2dt
		\right)^\alpha
		\notag\\
		&\hspace{29mm}
		+
		\left(
		\int_r^T
		\rho_{\beta,\ell}^{r}(t)
		\|b(t,0)\|_H^2dt
		\right)^\alpha
		+
		\left(
		\int_{(r,T]}
		\bigl(\rho_{\beta,\ell}^{r}(t)\bigr)^{1/2}
		\|\gamma(t,0)\|_H\,d\xi(t)
		\right)^{2\alpha}
		\Bigg].
		\label{eq:FV-SEE-high-order-Vstar-weighted-estimate}
	\end{align}
	If, in addition,
	\(\mathfrak E_{2\alpha\kappa,\ell}(\xi)<\infty\), 	$X_r\in L^{4\alpha}(\mathcal F_r;H)$,
	and
	\begin{equation}
\begin{aligned}
 a(\cdot,0)\in L^{2,4\alpha}_{\mathbb F}(r,T;V^*),\quad
  b(\cdot,0)\in L^{2,4\alpha}_{\mathbb F}(r,T;H),\quad
 \gamma(\cdot,0)\in
			L^{1,4\alpha}_{\mathbb F,\xi}(r,T;H),
\end{aligned}
\label{eq:FV-SEE-high-order-data}
\end{equation}
	then, for every
	\(\beta\ge\max\{\beta_\alpha,\beta_{2\alpha}\}\),
	\begin{align}
		&\mathbb E\left[
		\sup_{r\le t\le T}\|X(t)\|_H^{2\alpha}
		+
		\left(
		\int_r^T\|X(t)\|_V^2dt
		\right)^\alpha
		\right]
		\notag\\
		&\quad\le
		C_{\alpha,\beta}
		\mathfrak E_{2\alpha\kappa,\ell}(\xi)^{1/2}
		\Bigg\{
		\mathbb E\Bigg[
		\|X_r\|_H^{4\alpha}
		+
		\left(
		\int_r^T\|a(t,0)\|_{V^*}^2dt
		\right)^{2\alpha}
		\notag\\
		&\hspace{34mm}
		+
		\left(
		\int_r^T\|b(t,0)\|_H^2dt
		\right)^{2\alpha}
		+
		\left(
		\int_{(r,T]}
		\|\gamma(t,0)\|_H\,d\xi(t)
		\right)^{4\alpha}
		\Bigg]
		\Bigg\}^{1/2}.
		\label{eq:FV-SEE-high-order-Vstar-estimate}
	\end{align}
	Here \(C_{\alpha,\beta}\) depends only on
	\(\alpha,\beta,T,\delta,C_A,C_B,L\) and the norm of the embedding
	\(V\hookrightarrow H\).
	
	Both
	\eqref{eq:FV-SEE-high-order-Vstar-weighted-estimate} and
	\eqref{eq:FV-SEE-high-order-Vstar-estimate} hold with
	\(\mathbb E[\cdot\mid\mathcal F_r]\) in place of each expectation,
	under the respective data conditions and with unchanged deterministic
	constants.
\end{lemma}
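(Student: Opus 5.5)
The proof follows that of Proposition~\ref{prop:unbounded-xi-SEE-estimate} step by step. The only change is the treatment of the drift, since \(a\) now takes values in \(V^*\) and its pairing with \(X\) is the duality pairing \(\langle\cdot,\cdot\rangle_*\).

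First, write \(\rho=\rho_{\beta,\ell}^{r}\) and set
\[
D:=\|X_r\|_H^2+\int_r^T\rho(t)\|a(t,0)\|_{V^*}^2\,dt+\int_r^T\rho(t)\|b(t,0)\|_H^2\,dt+\Bigl(\int_{(r,T]}\rho(t)^{1/2}\|\gamma(t,0)\|_H\,d\xi(t)\Bigr)^2 .
\]
Apply Corollary~\ref{cor:weighted-Ito-general} with \(v^*=A X+a(\cdot,X)\in L^2(0,T;V^*)\) pathwise and \(\eta=0\). Any integrability gap can be handled by the same localization used in Theorem~\ref{thm:FV-SEE-weighted-wellposed}. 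This yields the identity \eqref{eq:unbounded-xi-forward-weighted-identity} with \(\langle a(s,X(s)),X(s)\rangle_*\) in the Lebesgue integrand.

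The finite-variation and jump terms do not involve \(a\). They are therefore bounded exactly as in \eqref{eq:unbounded-xi-continuous-FV-bound}--\eqref{eq:unbounded-xi-forward-dxi-bound}, using \eqref{eq:FV-SEE-high-order-eta-Lipschitz} and \eqref{eq:unbounded-xi-exponential-quadratic-lower-bound}.

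The new step replaces \eqref{eq:nonlinear-SEE-dt-bound}. By \eqref{eq:FV-SEE-high-order-fg-Lipschitz} and the embedding \(\|x\|_H\le c\|x\|_V\),
\[
2\langle a(t,X),X\rangle_*\le 2\bigl(L\|X\|_H+\|a(t,0)\|_{V^*}\bigr)\|X\|_V\le \frac{\delta}{4}\|X\|_V^2+C\|X\|_H^2+\frac{8}{\delta}\|a(t,0)\|_{V^*}^2 .
\]
Combining this with \eqref{eq:SEE-coercivity} and the \(b\)-terms gives
\[
-\frac{\delta}{4}\|X\|_V^2+C_0\|X\|_H^2+C_0\bigl(\|a(t,0)\|_{V^*}^2+\|b(t,0)\|_H^2\bigr).
\]
The \(a(\cdot,0)\) contribution is now a purely additive term \(C_0\int_r^t\rho\|a(s,0)\|_{V^*}^2ds\le C_0D\). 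It enters the analogue of \eqref{eq:unbounded-xi-forward-pre-Young} alongside the \(b\)-term, rather than through the cross term with the running maximum. This is why the data norm is \(L^{2,2\alpha}\) rather than \(L^{1,2\alpha}\).

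Choosing \(\beta\ge 2C_0+1\), the pathwise bound \eqref{eq:unbounded-xi-forward-pathwise-estimate} follows with \(\delta/4\) in place of \(\delta/2\). The stochastic integral \(M\) is unchanged, so its bracket bound \eqref{eq:unbounded-xi-forward-bracket-bound} only needs \eqref{eq:SEE-B-quasi-skew} and the Lipschitz property of \(b\).

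The rest repeats the earlier argument. Localize with \(\sigma_n\), apply BDG and Young, absorb terms by choosing \(\beta_\alpha\ge(2C_{\alpha,1/2})^{1/\alpha}\), and let \(n\to\infty\) using Fatou's lemma. This gives \eqref{eq:FV-SEE-high-order-Vstar-weighted-estimate}. The conditional version follows by multiplying by \(I_E\) for \(E\in\mathcal F_r\), exactly as before.

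For \eqref{eq:FV-SEE-high-order-Vstar-estimate}, use \(\rho^{-1}\le e^{\beta T+\kappa\ell(\xi(T)-\xi(r))}\) and conditional Cauchy--Schwarz. Then apply the weighted estimate at order \(2\alpha\), which requires \(\beta\ge\beta_{2\alpha}\), bound the exponential factor by \(\mathfrak E_{2\alpha\kappa,\ell}(\xi)\), and finally use \(\rho\le1\).

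The main obstacle is justifying the It\^o formula when \(a(\cdot,X)\) is only in \(V^*\). Lemma~\ref{Ito-lemma} allows a \(V^*\)-valued \(v^*\) only in \(L^2_{\mathbb F}(0,T;V^*)\), which can fail under merely weighted integrability of the data. I would handle this by the stopping argument of Theorem~\ref{thm:FV-SEE-weighted-wellposed}, stopping when \(\int_r^t\|X\|_V^2+\|a(s,0)\|_{V^*}^2\,ds\) exceeds \(n\). The weighted identity then holds on each stochastic interval \([r,\sigma_n]\), which suffices because the estimate is itself derived on \([r,\sigma_n]\).
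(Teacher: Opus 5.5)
Your proposal is correct and follows the paper's proof essentially step by step. Like the paper, you bound $2|\langle a(t,x),x\rangle_*|$ by Young's inequality so that $a(\cdot,0)$ enters only through the additive term $C_0\|a(t,0)\|_{V^*}^2$, reuse the finite-variation, martingale, localization and unweighting arguments of Proposition~\ref{prop:unbounded-xi-SEE-estimate} unchanged, and justify the It\^o formula by localization, which relies on $\rho(T)>0$ turning the weighted data condition into pathwise square integrability of $a(\cdot,0)$ in $V^*$ (the fact your stopping times $\sigma_n\to T$ implicitly use).
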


\begin{proof}
We follow the proof of Proposition~\ref{prop:unbounded-xi-SEE-estimate},
estimating the drift in \(V^*,V\) duality.
Write \(\rho(t)=\rho_{\beta,\ell}^{r}(t)\), and set
\[
 D={}\|X_r\|_H^2 +\int_r^T\rho(t)\|a(t,0)\|_{V^*}^2\,dt +\int_r^T\rho(t)\|b(t,0)\|_H^2\,dt +\left(\int_{(r,T]}\rho(t)^{1/2} \|\gamma(t,0)\|_H\,d\xi(t)\right)^2.
\]
The assumptions imply \(D\in L^\alpha(\mathcal F_T)\). Since
\(\rho(T)>0\) \(\mathbb P\text{-a.s.}\), the weighted data conditions also imply pathwise
square integrability of \(a(\cdot,0)\) in \(V^*\).
The Lipschitz bound on \(a\) and the c\`adl\`ag paths of \(X\)
therefore imply
\(a(\cdot,X)\in L^2(r,T;V^*)\) \(\mathbb P\text{-a.s.}\) Thus the
localized It\^o formula gives
\eqref{eq:unbounded-xi-forward-weighted-identity}, with its drift
term interpreted in \(V^*,V\) duality.

For \(x\in V\) and \(\varepsilon>0\), the Lipschitz bound on the drift
and Young's inequality give
\begin{align}
 2|\langle a(t,x),x\rangle_*|
 &\le 2L\|x\|_H\|x\|_V
        +2\|a(t,0)\|_{V^*}\|x\|_V\notag\\
 &\le 2\varepsilon\|x\|_V^2
       +\frac{L^2}{\varepsilon}\|x\|_H^2
       +\frac1\varepsilon\|a(t,0)\|_{V^*}^2.
 \label{eq:FV-SEE-high-order-Vstar-drift-bound}
\end{align}
Also \(\|B(t)x\|_H\le C\|x\|_V\) and
\(\|b(t,x)\|_H\le L\|x\|_H+\|b(t,0)\|_H\), so
\begin{equation}
2|\langle B(t)x,b(t,x)\rangle_H|
 \le\varepsilon\|x\|_V^2
       +C_{\varepsilon,L}\|x\|_H^2
       +C_\varepsilon\|b(t,0)\|_H^2.
\label{eq:SEE-Vstar-diffusion-cross-bound}
\end{equation}
Choose \(3\varepsilon\le\delta/2\). In
\eqref{eq:FV-SEE-high-order-Vstar-drift-bound}, the energy term is
\(2\varepsilon\|x\|_V^2\), whereas in
\eqref{eq:SEE-Vstar-diffusion-cross-bound} it is
\(\varepsilon\|x\|_V^2\). Combining these bounds with
\eqref{eq:SEE-coercivity} and the growth bound on
\(\|b(t,x)\|_H^2\) gives
\begin{align}
 &2\langle A(t)x+a(t,x),x\rangle_*
       +\|B(t)x+b(t,x)\|_H^2\notag\\
 &\quad\le-\frac\delta2\|x\|_V^2+C_0\|x\|_H^2
       +C_0\|a(t,0)\|_{V^*}^2+C_0\|b(t,0)\|_H^2.
 \label{eq:FV-SEE-high-order-Vstar-dt-bound}
\end{align}
The constant \(C_0\) depends only on \(\delta,C_A,L\) and the
norm of the embedding \(V\hookrightarrow H\). Since the finite-variation bound
\eqref{eq:unbounded-xi-forward-dxi-bound} uses only the Lipschitz
bound on \(\gamma\), it remains valid. We substitute
\eqref{eq:FV-SEE-high-order-Vstar-dt-bound} and
\eqref{eq:unbounded-xi-forward-dxi-bound} into
\eqref{eq:unbounded-xi-forward-weighted-identity}, apply Young's inequality to
\(2(\sup_{r\le t\le\sigma}\rho(t)\|X(t)\|_H^2)^{1/2}
\int_{(r,\sigma]}\rho(t)^{1/2}\|\gamma(t,0)\|_H\,d\xi(t)\)
and absorb the resulting multiple of the maximum. For
\(\beta\ge2C_0+1\) and every stopping time \(\sigma\in[r,T]\),
we obtain
\begin{align}
 &\sup_{r\le t\le\sigma}\rho(t)\|X(t)\|_H^2
   +\int_r^\sigma\rho(t)\|X(t)\|_V^2\,dt
   +\beta\int_r^\sigma\rho(t)\|X(t)\|_H^2\,dt\notag\\
 &\quad+(\kappa-2)\ell\int_{(r,\sigma]}
                    \rho(t)\|X(t-)\|_H^2\,d\xi(t)
 \le C\left[D+\sup_{r\le t\le\sigma}|M(t)|\right].
 \label{eq:FV-SEE-high-order-Vstar-pathwise}
\end{align}
Here \(M\) is the stochastic integral defined in
\eqref{eq:SEE-weighted-energy-martingale}. Its integrand
contains \(B,b,X\), but not \(a\), so
\eqref{eq:unbounded-xi-forward-bracket-bound} and
\eqref{eq:unbounded-xi-forward-BDG-bound} remain unchanged.

Applying the localization and absorption argument leading to
\eqref{eq:unbounded-xi-forward-stopped-estimate} to
\eqref{eq:FV-SEE-high-order-Vstar-pathwise}, with the present
\(D\in L^\alpha\), gives
\eqref{eq:FV-SEE-high-order-Vstar-weighted-estimate}.
For \(E\in\mathcal F_r\), these martingale estimates apply to \(I_EM\),
whose bracket is \(I_E\langle M\rangle\). Testing against all such
\(E\) gives the conditional form of
\eqref{eq:FV-SEE-high-order-Vstar-weighted-estimate}.

Under the additional data assumptions,
\(D\in L^{2\alpha}\). The conditional estimate at order
\(2\alpha\), the bound
\(\rho(t)^{-1}\le e^{\beta T+\kappa\ell(\xi(T)-\xi(r))}\),
and the conditional Cauchy--Schwarz inequality give
\eqref{eq:FV-SEE-high-order-Vstar-estimate} and its conditional
form, as in \eqref{eq:SEE-conditional-unweighting}, with the
present definition of \(D\).
Taking expectations and applying Jensen's inequality to the
square root yields the unconditional bound.
\end{proof}

\begin{remark}
In \cite{GK82}, post-jump evaluation leads to the implicit relation
\(x(t)=x(t-)+\gamma(t,x(t))\Delta\xi(t)\) and an accompanying
small-jump condition. Here the pre-jump coefficient gives the explicit
update \(x(t)=x(t-)+\gamma(t,x(t-))\Delta\xi(t)\). The weighted
estimate therefore requires no smallness restriction on individual
jumps. 
\end{remark}
\subsection{Backward stochastic evolution equations with finite-variation drivers}
\label{subsec:FV-BSEE}
We next study backward stochastic evolution equations driven jointly by
Lebesgue measure and an adapted nondecreasing c\`adl\`ag process
\(\xi\), with \(\xi(0)=0\) and \(\xi(T)<\infty\) \(\mathbb P\text{-a.s.}\):
\begin{equation}
\begin{aligned}
    p(t)
    &=
    \varphi
    +
    \int_t^T
        \big[
            M(s)p(s)+N(s)q(s)+g(s,p(s),q(s))
        \big]\,ds
    -
    \int_t^T q(s)\,dW(s)
    \\
    &\quad
    +
    \int_{(t,T]}
        \psi(s,p(s))\,d\xi(s),
    \quad t\in[0,T].
\end{aligned}
\label{eq:FV-BSEE}
\end{equation}
The linear operators have the following domains and ranges:
\[
M:[0,T]\times\Omega\to\mathfrak L(V;V^*),\quad
N:[0,T]\times\Omega\to\mathfrak L(H;V^*),\quad
N^*:[0,T]\times\Omega\to\mathfrak L(V;H).
\]
Here \(N^*(t)\) is the adjoint of \(N(t)\) relative to the
Gelfand triple. The mappings
\(g:\Omega\times[0,T]\times H\times H\to H\) and
\(\psi:\Omega\times[0,T]\times H\to H\) are the generators of the
Lebesgue and finite-variation terms, respectively. The terminal datum
\(\varphi\) is \(\mathcal F_T\)-measurable and \(H\)-valued.

At a jump time \(t\), \eqref{eq:FV-BSEE} gives
\begin{equation}\label{eq:backward jump indentity}
    p(t-)=p(t)+\psi(t,p(t))\Delta\xi(t),
    \quad
    \Delta p(t)=-\psi(t,p(t))\Delta\xi(t).
\end{equation}
Thus, in contrast to the forward equation, the finite-variation coefficient
is evaluated at the post-jump value of the backward state.

\begin{definition}
\label{def:FV-BSEE-solution}
A pair \((p,q)\in
(\mathbb D^2_{\mathbb F}(0,T;H)\cap L^2_{\mathbb F}(0,T;V))
\times L^2_{\mathbb F}(0,T;H)\) is called a solution of
\eqref{eq:FV-BSEE} if, for each \(t\in[0,T]\) and \(\eta\in V\),
the following identity holds outside a \(\mathbb P\)-null set:
\begin{align}
    \langle \eta,p(t)\rangle_H
    &=
    \langle \eta,\varphi\rangle_H
    +
    \int_t^T
        \big\langle
            M(s)p(s)+N(s)q(s)+g(s,p(s),q(s)),
            \eta
        \big\rangle_*\,ds
    \notag\\
    &\quad
    +
    \int_{(t,T]}
        \langle \psi(s,p(s)),\eta\rangle_H\,d\xi(s)
    -
    \int_t^T
        \langle \eta,q(s)\rangle_H\,dW(s).
    \label{eq:FV-BSEE-weak-form}
\end{align} 
\end{definition}

We make the following assumptions on the operators and generators.
\begin{description}
\item[\((B1)\)] For every \(y,z\in V\) and \(h\in H\), the scalar
processes
\((t,\omega)\mapsto\langle M(t)y,z\rangle_*, \ (t,\omega)\mapsto\langle N^*(t)y,h\rangle_H\)
are progressively measurable. There exist constants \(\delta_B>0\),
\(\lambda\ge0\), and \(C_M>0\) such that, for
all \((t,\omega)\in[0,T]\times\Omega\) and \(y\in V\):
\begin{equation}
    2\langle M(t)y,y\rangle_*
    +
    \|N^*(t)y\|_H^2
    \le
    -\delta_B\|y\|_V^2
    +
    \lambda\|y\|_H^2,
    \label{eq:BSEE-coercivity}
\end{equation}
and
\begin{equation}
    \|M(t)y\|_{V^*}\le C_M\|y\|_V.
    \label{eq:BSEE-M-bound}
\end{equation}

\item[\((B2)\)] The mapping
\(g:\Omega\times[0,T]\times H\times H\to H\) is
\(\operatorname{Prog}\otimes\mathcal B(H)\otimes\mathcal B(H)\)-measurable.
There exists \(L>0\) such that, for every
\((t,\omega)\in[0,T]\times\Omega\),
\[
\|g(t,y,z)-g(t,y',z')\|_H
\le L(\|y-y'\|_H+\|z-z'\|_H),
\quad y,y',z,z'\in H.
\]
\item[\((B3)\)] The mapping
\(\psi:\Omega\times[0,T]\times H\to H\) is
\(\operatorname{Prog}\otimes\mathcal B(H)\)-measurable.
There exists \(\ell\ge0\) such that, for every
\((t,\omega)\in[0,T]\times\Omega\),
\[
\|\psi(t,y)-\psi(t,y')\|_H\le \ell\|y-y'\|_H,
\quad y,y'\in H.
\]
\end{description}

Assumption \((B1)\) also gives the operator bound needed for
\(N(t)q(t)\). By \eqref{eq:BSEE-coercivity},
\eqref{eq:BSEE-M-bound}, and the continuous embedding \(V\hookrightarrow H\),
we obtain, for every \((t,\omega)\),
\[
\|N^*(t)y\|_H^2
\le
-2\langle M(t)y,y\rangle_*
+\lambda\|y\|_H^2
\le C\|y\|_V^2,
\quad y\in V.
\]
By duality,
\(\|N(t)h\|_{V^*}\le C\|h\|_H, \  h\in H.\)

For \(\kappa>2\) and \(\beta>0\), set
\(\theta_{\beta,\ell}^{0}(t)=e^{\beta t+\kappa\ell\xi(t)}\), and define
\begin{equation}
 \|(p,q)\|_{\theta_{\beta,\ell}^{0}}^2
 :=\mathbb E\left[
 \sup_{0\le t\le T}\theta_{\beta,\ell}^{0}(t)\|p(t)\|_H^2
 +\int_0^T\theta_{\beta,\ell}^{0}(t)
       (\|p(t)\|_V^2+\|q(t)\|_H^2)dt\right].
\label{eq:BSEE-theta-norm}
\end{equation}

We first estimate the Lebesgue terms that enter the energy inequality.
For every \(\varepsilon>0\),
\begin{align*}
    2\langle N(s)q(s),p(s)\rangle_*
    =
    2\langle q(s),N^*(s)p(s)\rangle_H
    &\le
    \frac{1}{1+\varepsilon}\|q(s)\|_H^2
    +
    (1+\varepsilon)\|N^*(s)p(s)\|_H^2.
\end{align*}
Moreover,
\(-2\varepsilon\langle M(s)p(s),p(s)\rangle_* \le 2\varepsilon C_M\|p(s)\|_V^2.\)
Therefore, by \eqref{eq:BSEE-coercivity},
\begin{align*}
    &2\langle M(s)p(s)+N(s)q(s),p(s)\rangle_*
    \\
    &\le
    (1+\varepsilon)
    \Big[
        2\langle M(s)p(s),p(s)\rangle_*
        +
        \|N^*(s)p(s)\|_H^2
    \Big] 
    -2\varepsilon\langle M(s)p(s),p(s)\rangle_*
    +
    \frac{1}{1+\varepsilon}\|q(s)\|_H^2
    \\
    &\le
    -\big((1+\varepsilon)\delta_B-2\varepsilon C_M\big)\|p(s)\|_V^2
    +
    (1+\varepsilon)\lambda\|p(s)\|_H^2
    +
    \frac{1}{1+\varepsilon}\|q(s)\|_H^2.
\end{align*}
We choose \(\varepsilon>0\) so that
\((1+\varepsilon)\delta_B-2\varepsilon C_M\ge\delta_B/2\)
and set \(c_0:=\varepsilon/(1+\varepsilon)\). Then
\begin{equation}
    2\langle M(s)p(s)+N(s)q(s),p(s)\rangle_*
    \le
    -\frac{\delta_B}{2}\|p(s)\|_V^2
    +
    C\|p(s)\|_H^2
    +
    (1-c_0)\|q(s)\|_H^2.
    \label{eq:BSEE-linear-part-final}
\end{equation}
By \((B2)\) and Young's inequality,
\begin{align}
    2\langle g(s,p(s),q(s)),p(s)\rangle_H
    &\le
    \frac{\beta}{4}\|p(s)\|_H^2
    +
    \frac{4}{\beta}\|g(s,0,0)\|_H^2
    +C_L\|p(s)\|_H^2
    +\frac{c_0}{2}\|q(s)\|_H^2.
    \label{eq:BSEE-generator-estimate}
\end{align}

The following proposition gives weighted and unweighted a priori estimates.
\begin{proposition}
\label{prop:unbounded-xi-BSEE-estimate}
\label{prop:FV-BSEE-apriori}
Let \(\alpha\ge1\) and \(\kappa>2\), and assume
\((B1)\)--\((B3)\).
There exist constants \(\beta_\alpha\ge1\) and \(C_\alpha>0\), depending only on \(\alpha,\kappa,\delta_B,\lambda,C_M,L\) and the norm of the embedding
\(V\hookrightarrow H\), such that
for every \(\beta\ge\beta_\alpha\), the following estimates hold.
Let \((p,q)\) solve \eqref{eq:FV-BSEE} with
\(\|(p,q)\|_{\theta_{\beta,\ell}^{0}}<\infty\).  Let \(\psi_1,\psi_2\) be progressively
measurable \(H\)-valued processes satisfying
\(\psi(t,0)=\psi_1(t)+\psi_2(t)\) for \(d\xi\)-almost every \(t\),
\(\mathbb P\text{-a.s.}\) Suppose that
\[
\begin{aligned}
 &\theta_{\beta,\ell}^{0}(T)^{1/2}\varphi\in L^{2\alpha}(\mathcal F_T;H),
 \quad \bigl(\theta_{\beta,\ell}^{0}\bigr)^{1/2}g(\cdot,0,0)
       \in L^{2,2\alpha}_{\mathbb F}(0,T;H),\\
 &\bigl(\theta_{\beta,\ell}^{0}\bigr)^{1/2}\psi_1\in L^{1,2\alpha}_{\mathbb F,\xi}(0,T;H),
 \quad \bigl(\theta_{\beta,\ell}^{0}\bigr)^{1/2}\psi_2
       \in L^{2,2\alpha}_{\mathbb F,\xi}(0,T;H).
\end{aligned}
\]
When \(\ell=0\), take \(\psi_2=0\) and omit all terms with
\((\kappa-2)\ell\) in the denominator below and in the proof. Then
\begin{align}
&\mathbb E\Bigg[
 \sup_{0\le t\le T}\theta_{\beta,\ell}^{0}(t)^\alpha\|p(t)\|_H^{2\alpha}
 +\left(
    \int_0^T\theta_{\beta,\ell}^{0}(t)
       \bigl(\|p(t)\|_V^2+\|q(t)\|_H^2
                         +\beta\|p(t)\|_H^2\bigr)\,dt
  \right)^\alpha
 \Bigg]\notag\\
&\quad\le C_\alpha\,\mathbb E\Bigg[\Bigg(
 \theta_{\beta,\ell}^{0}(T)\|\varphi\|_H^2
 +\frac1\beta\int_0^T\theta_{\beta,\ell}^{0}(t)\|g(t,0,0)\|_H^2\,dt\notag\\
&\hspace{28mm}
 +\left(
    \int_{(0,T]}\theta_{\beta,\ell}^{0}(t)^{1/2}\|\psi_1(t)\|_H\,d\xi(t)
  \right)^2
 +\frac1{(\kappa-2)\ell}
    \int_{(0,T]}\theta_{\beta,\ell}^{0}(t)\|\psi_2(t)\|_H^2\,d\xi(t)
 \Bigg)^\alpha\Bigg].
\label{eq:unbounded-xi-weighted-BSEE-estimate}
\end{align}

If, in addition,
\[
\begin{aligned}
 \varphi\in L^{4\alpha}(\mathcal F_T;H),
 \quad g(\cdot,0,0)\in L^{2,4\alpha}_{\mathbb F}(0,T;H),\quad
 \psi_1\in L^{1,4\alpha}_{\mathbb F,\xi}(0,T;H),
 \quad\psi_2\in L^{2,4\alpha}_{\mathbb F,\xi}(0,T;H),
\end{aligned}
\]
and \(\mathbb E e^{2\alpha\kappa\ell\xi(T)}<\infty\), then
\begin{align}
&\mathbb E\Bigg[
    \sup_{0\le t\le T}\|p(t)\|_H^{2\alpha}
    +\left(\int_0^T\|p(t)\|_V^2\,dt\right)^\alpha
    +\left(\int_0^T\|q(t)\|_H^2\,dt\right)^\alpha
\Bigg]\notag\\
&\quad\le
C_{\alpha,\beta,\kappa}\,
\bigl(\mathbb E e^{2\alpha\kappa\ell\xi(T)}\bigr)^{1/2}
\Bigg\{\mathbb E\Bigg[
    \|\varphi\|_H^{4\alpha}
    +\frac1{\beta^{2\alpha}}
       \left(\int_0^T\|g(t,0,0)\|_H^2\,dt\right)^{2\alpha}
\notag\\
&\hspace{31mm}
    +\left(\int_{(0,T]}\|\psi_1(t)\|_H\,d\xi(t)\right)^{4\alpha}
    +\frac1{((\kappa-2)\ell)^{2\alpha}}
       \left(\int_{(0,T]}\|\psi_2(t)\|_H^2\,d\xi(t)\right)^{2\alpha}
\Bigg]\Bigg\}^{1/2}.
\label{eq:unbounded-xi-unweighted-BSEE-estimate}
\end{align}
The constant \(C_{\alpha,\beta,\kappa}\) depends only on
\(\alpha,\beta,\kappa,\delta_B,\lambda,C_M,L,T\) and the norm of the embedding
\(V\hookrightarrow H\). 
\end{proposition}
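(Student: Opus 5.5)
We apply Corollary~\ref{cor:positive-weight-backward-FV-Ito-M} to \(p\), with \(K(t)=\beta t+\kappa\ell\xi(t)\), so that \(\theta=\theta_{\beta,\ell}^0\). The ingredients are \(v^*=M p+Nq+g(\cdot,p,q)\), \(\zeta=\psi(\cdot,p)\), \(F=\xi\), and \(M(t)=\int_0^t q\,dW\). Then \(dK^c=\beta\,dt+\kappa\ell\,d\xi^c\) and \(1-e^{-\Delta K(s)}=1-e^{-\kappa\ell\Delta\xi(s)}\). This gives, for every \(t\),
\[
\theta(t)\|p(t)\|_H^2+\int_t^T\theta\bigl(\|q\|_H^2+\beta\|p\|_H^2\bigr)ds+\kappa\ell\int_t^T\theta\|p\|_H^2d\xi^c+\sum_{t<s\le T}\theta(s)\bigl(1-e^{-\kappa\ell\Delta\xi(s)}\bigr)\|p(s-)\|_H^2
\]
on the left. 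The right-hand side is \(\theta(T)\|\varphi\|_H^2\) plus the \(v^*\)-term, the \(d\xi\)-term with integrand \(2\langle p(s-),\psi(s,p(s))\rangle-\|\psi(s,p(s))\|^2\Delta\xi(s)\), and the martingale \(2\int_t^T\theta\langle p,q\,dW\rangle\).

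\textbf{The \(dt\) terms.} We control them with \eqref{eq:BSEE-linear-part-final} and \eqref{eq:BSEE-generator-estimate}. This leaves \(\frac{\delta_B}{2}\|p\|_V^2\) and \(\frac{c_0}{2}\|q\|_H^2\) on the left. Taking \(\beta\ge 4(C+C_L)\), a multiple \(\beta/2\) of \(\|p\|_H^2\) also survives, and the data enter as \(\frac4\beta\int\theta\|g(\cdot,0,0)\|^2\).

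\textbf{The finite-variation terms (main obstacle).} Here the backward coefficient is evaluated at the post-jump value \(p(s)\), while the weight dissipation acts on \(p(s-)\). We use the jump identity \eqref{eq:backward jump indentity}, \(p(s-)=p(s)+\psi(s,p(s))\Delta\xi(s)\). Writing \(\psi_s=\psi(s,p(s))\), we get
\[
2\langle p(s-),\psi_s\rangle-\|\psi_s\|^2\Delta\xi=2\langle p(s),\psi_s\rangle+\|\psi_s\|^2\Delta\xi .
\]
We also have \(\|p(s-)\|^2=\|p(s)\|^2+(2\langle p(s),\psi_s\rangle+\|\psi_s\|^2\Delta\xi)\Delta\xi\).

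To handle these, we express the dissipation \(\theta(s)(1-e^{-\kappa\ell\Delta\xi})\|p(s-)\|^2\) through \(\theta(s-)=\theta(s)e^{-\kappa\ell\Delta\xi}\). The cleanest route is to rewrite everything in terms of \(\theta(s-)\|p(s-)\|^2\) versus \(\theta(s)\|p(s)\|^2\) and compare \(e^{\kappa z}\) with \((1+z)^2\). With \(z=\ell\Delta\xi\) and \(\|p(s-)\|\le(1+z)\|p(s)\|+\|\psi(s,0)\|\Delta\xi\), inequality \eqref{eq:unbounded-xi-exponential-flow-lower-bound} produces the negative term \(-(\kappa-2)\ell\theta\|p(s)\|^2\Delta\xi\) plus a quadratic remainder. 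The same bound, with \(z\,d\xi^c\), handles the continuous part.

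We then split \(\psi(s,0)=\psi_1+\psi_2\):
\begin{itemize}
\item The \(\psi_1\) cross terms are bounded by \(2\sup_s\theta^{1/2}\|p\|\cdot\int\theta^{1/2}\|\psi_1\|d\xi\), followed by Young's inequality.
\item The \(\psi_2\) cross terms are absorbed into the \((\kappa-2)\ell\) dissipation via \(2ab\le(\kappa-2)\ell a^2/2+\frac{2}{(\kappa-2)\ell}b^2\), which yields the \(\frac1{(\kappa-2)\ell}\int\theta\|\psi_2\|^2d\xi\) term.
\item The additive quadratic terms \(\|\psi_i\|^2(\Delta\xi)^2\) are handled as in the forward proof. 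For \(\psi_2\), they are bounded using the weight inequality at jumps, after possibly adjusting constants depending on \(\kappa\).
\end{itemize}
If this direct bookkeeping fails, we would first try splitting \(\kappa\ell\) into \(2\ell\) and \((\kappa-2)\ell\), using \(e^{2z}\ge(1+z)^2\) to exactly neutralize the Lipschitz part.

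\textbf{Pathwise bound and moments.} The result is the pathwise inequality
\[
\sup_{[t,T]}\theta\|p\|^2+\int_t^T\theta(\|p\|_V^2+\|q\|^2+\beta\|p\|^2)\le C\Bigl[D+\sup_u\Bigl|\int_u^T\theta\langle p,q\,dW\rangle\Bigr|\Bigr],
\]
where \(D\) is the bracket on the right of \eqref{eq:unbounded-xi-weighted-BSEE-estimate}. We raise this to the power \(\alpha\) and apply BDG. The bracket satisfies \(\le\sup\theta\|p\|^2\int\theta\|q\|^2\), so Young's inequality absorbs it. The finiteness of \(\|(p,q)\|_{\theta}\), together with localization by stopping times as in Proposition~\ref{prop:unbounded-xi-SEE-estimate}, justifies the absorption. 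This gives \eqref{eq:unbounded-xi-weighted-BSEE-estimate}.

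\textbf{Unweighted estimate.} For \eqref{eq:unbounded-xi-unweighted-BSEE-estimate}, we use \(\theta\ge1\) on the left. On the right, we apply Cauchy--Schwarz with \(\theta\le e^{\beta T}e^{\kappa\ell\xi(T)}\), pulling \((\mathbb E e^{2\alpha\kappa\ell\xi(T)})^{1/2}\) out of \(\mathbb E D^\alpha\).
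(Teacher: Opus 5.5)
Your finite-variation bookkeeping is acceptable. The paper first removes \(\psi_1\Delta\xi\) exactly, via \(\|x\|_H^2=\|x-h\|_H^2+2\langle x,h\rangle_H-\|h\|_H^2\) with \(h=\psi_1(s)\Delta\xi(s)\), so that \(\psi_1\) enters only linearly. Your triangle inequality with all of \(\psi(s,0)\) also closes: with \(d=\Delta\xi(s)\) and \(z=\ell d\), use \(e^{-\kappa z}(1+z)^2\le1\) and \(d\,e^{-\kappa\ell d}\le(e\kappa\ell)^{-1}\) to control the extra cross and quadratic terms. Before invoking the It\^o formula you should also check that \(\psi(\cdot,p)\in L^{1,2}_{\mathbb F,\xi}(0,T;H)\); the paper gets this from \(\ell\int_{(0,T]}\theta^{-1/2}d\xi\le2/\kappa\).

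The genuine gap is in the moment step. Write \(S=\sup_t\theta(t)\|p(t)\|_H^2\) and \(E_q=\int_0^T\theta\|q\|_H^2\,dt\). BDG bounds the martingale term by \(C_\alpha\mathbb E[S^{\alpha/2}E_q^{\alpha/2}]\), and Young's inequality turns this into either \(\varepsilon\mathbb ES^\alpha+C_\varepsilon\mathbb EE_q^\alpha\) or \(\varepsilon\mathbb EE_q^\alpha+C_\varepsilon\mathbb ES^\alpha\).

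Neither form can be absorbed. On the left, \(S\) has coefficient \(1\) and \(E_q\) only the fixed coercivity coefficient \(c\), while the pathwise constant times the BDG constant is not small. In the SEE proof, absorption worked only because the bracket involved \(\int\rho\|X\|_H^2\,dt\), which appears on the left multiplied by \(\beta^\alpha\). There is no large parameter in front of the \(q\)-energy.

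So, as written, your argument is circular, even for \(\alpha=1\). Finiteness of \(\|(p,q)\|_\theta\) does not repair the constants; it gives only order-one integrability, so \(\mathbb ES^\alpha<\infty\) is not even known for \(\alpha>1\). Localization ``as in the SEE proof'' does not transfer either: stopping the backward identity at \(\tau_n\) leaves \(\theta(\tau_n)\|p(\tau_n)\|_H^2\) on the right, and that term is controlled only by the unknown \(S\).

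The missing idea is to use that the martingale has zero mean, and to control the supremum through conditional expectations rather than BDG.

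\begin{itemize}
\item Since \(\mathbb E\langle\mathcal M\rangle_T^{1/2}\le(\mathbb ES)^{1/2}(\mathbb EE_q)^{1/2}<\infty\), the process \(\mathcal M\) is an \(H^1\)-martingale.
\item Taking expectations in the energy inequality on \([r,T]\) then gives \(\mathbb E\int_r^T\theta(\|p\|_V^2+\|q\|_H^2+\beta\|p\|_H^2)\,ds\le\varepsilon\mathbb E\sup_{[r,T]}\theta\|p\|_H^2+C_\varepsilon\mathbb ED\), with no martingale term. Combined with BDG for the supremum, this closes at order one.
\item Running the same argument with \(I_E\), \(E\in\mathcal F_r\), yields \(\theta(r)\|p(r)\|_H^2\le C\mathbb E[D\mid\mathcal F_r]\), which extends to all \(r\) by right continuity.
\item Doob's \(L^\alpha\) inequality for \(t\mapsto\mathbb E[D\mid\mathcal F_t]\) then gives \(\mathbb ES^\alpha\le C_\alpha\mathbb ED^\alpha\).
\item Only after this do you localize at \(\tau_n\), bound \(\theta(\tau_n)\|p(\tau_n)\|_H^2\le S\), and apply BDG with Young in the direction \(\varepsilon\mathbb EE_q^\alpha+C_\varepsilon\mathbb ES^\alpha\). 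This now closes because \(\mathbb ES^\alpha\) is already bounded by the data.
\end{itemize}

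Your unweighting step is then the same as the paper's.
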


\begin{proof}
Set \(\theta=\theta_{\beta,\ell}^{0}\). To derive
\eqref{eq:unbounded-xi-BSEE-basic-energy}, we estimate the Lebesgue integral,
the \(d\xi^c\)-integral, and the jump sum separately. We then use
conditional estimates to control the higher moments of the maximum
and localization to control those of the energy integral.

Define
\begin{align*}
D
&=\theta(T)\|\varphi\|_H^2
  +\frac1\beta\int_0^T\theta(s)\|g(s,0,0)\|_H^2\,ds\\
&\quad+
  \left(\int_{(0,T]}\theta(s)^{1/2}\|\psi_1(s)\|_H\,d\xi(s)\right)^2
  +\frac1{(\kappa-2)\ell}
       \int_{(0,T]}\theta(s)\|\psi_2(s)\|_H^2\,d\xi(s).
\end{align*}
By the assumptions on the data, \(D\in L^\alpha(\mathcal F_T)\).

We first verify the integrability needed to apply the weighted It\^o
formula and take expectations. Define
\[
\mathcal M(t)=\int_0^t\theta(s-)
                  \langle p(s-),q(s)\rangle_H\,dW(s).
\]
For \(0\le t\le u\le T\), the quadratic variation of
\(\mathcal M\) satisfies
\begin{equation}
\begin{split}
\langle\mathcal M\rangle(u)-\langle\mathcal M\rangle(t)
&=\int_t^u\theta(s-)^2
             |\langle p(s-),q(s)\rangle_H|^2\,ds\\
&\le
 \left(\sup_{t\le v\le u}\theta(v)\|p(v)\|_H^2\right)
 \int_t^u\theta(s)\|q(s)\|_H^2\,ds.
\end{split}
\label{eq:BSEE-martingale-bracket-bound}
\end{equation}
By \eqref{eq:BSEE-theta-norm}, the assumption
\(\|(p,q)\|_\theta<\infty\) gives
\[
\mathbb E\langle\mathcal M\rangle(T)^{1/2}
\le
\left(\mathbb E\sup_{0\le t\le T}\theta(t)\|p(t)\|_H^2\right)^{1/2}
\left(\mathbb E\int_0^T\theta(s)\|q(s)\|_H^2\,ds\right)^{1/2}
<\infty.
\]
By the \(H^1\)-martingale criterion, \(\mathcal M\) has an
integrable maximum and is a uniformly integrable martingale.

To verify
\(\psi(\cdot,p)\in L^{1,2}_{\mathbb F,\xi}(0,T;H)\), suppose
first that \(\ell>0\). The finite-variation chain rule gives,
for \(c>0\),
\begin{align*}
c\int_{(0,T]}e^{-c\xi(s)}\,d\xi(s)
&=1-e^{-c\xi(T)}-
\sum_{0<s\le T}e^{-c\xi(s)}
       \bigl(e^{c\Delta\xi(s)}-1-c\Delta\xi(s)\bigr)
\le1.
\end{align*}
Taking \(c=\kappa\ell/2\) and \(c=\kappa\ell\), and using
\(e^{-\beta s}\le1\), we obtain
\begin{equation}
\ell\int_{(0,T]}\theta(s)^{-1/2}\,d\xi(s)\le\frac2\kappa,
\quad
\int_{(0,T]}\theta(s)^{-1}\,d\xi(s)\le\frac1{\kappa\ell}.
\label{eq:BSEE-reciprocal-weight-integrals}
\end{equation}
The Cauchy--Schwarz inequality with respect to \(d\xi\) gives
\[
\mathbb E\left(\int_{(0,T]}\|\psi_2(s)\|_H\,d\xi(s)\right)^2
\le\frac1{\kappa\ell}
       \mathbb E\int_{(0,T]}\theta(s)\|\psi_2(s)\|_H^2\,d\xi(s)
<\infty.
\]
By \((B3)\), the decomposition of \(\psi(\cdot,0)\), and the
first inequality in \eqref{eq:BSEE-reciprocal-weight-integrals},
\begin{align*}
\int_{(0,T]}\|\psi(s,p(s))\|_H\,d\xi(s)
&\le\int_{(0,T]}
       \bigl(\|\psi_1(s)\|_H+\|\psi_2(s)\|_H\bigr)\,d\xi(s)\\
&\quad+\frac2\kappa
             \sup_{0\le s\le T}\theta(s)^{1/2}\|p(s)\|_H.
\end{align*}
Every term on the right belongs to \(L^2(\mathcal F_T)\).

We apply \eqref{eq:positive-weight-backward-FV-Ito-M} to
\((p,q)\) on \([t,u]\), with \(F=\xi\) and \(\zeta=\psi(\cdot,p)\), to obtain
\begin{align}
&\theta(t)\|p(t)\|_H^2
 +\int_t^u\theta(s)\Big[
       \|q(s)\|_H^2+\beta\|p(s)\|_H^2\notag\\
&\hspace{35mm}
       -2\langle M(s)p(s)+N(s)q(s)
                    +g(s,p(s),q(s)),p(s)\rangle_*
                    \Big]\,ds\notag\\
&=\theta(u)\|p(u)\|_H^2
  +\int_{(t,u]}\theta(s)
       \bigl[2\langle p(s),\psi(s,p(s))\rangle_H
                      -\kappa\ell\|p(s)\|_H^2\bigr]\,d\xi^c(s)
                      \notag\\
&\quad+\sum_{t<s\le u}
       \bigl[\theta(s-)\|p(s-)\|_H^2
                         -\theta(s)\|p(s)\|_H^2\bigr]
       -2\bigl(\mathcal M(u)-\mathcal M(t)\bigr).
\label{eq:BSEE-weighted-energy-identity}
\end{align}
The jump relation \eqref{eq:backward jump indentity} also gives
\[
\sum_{0<s\le T}\|\Delta p(s)\|_H
\le\int_{(0,T]}\|\psi(s,p(s))\|_H\,d\xi(s)<\infty.
\]

For the Lebesgue integrand on the left-hand side of
\eqref{eq:BSEE-weighted-energy-identity}, we combine
\eqref{eq:BSEE-linear-part-final} and
\eqref{eq:BSEE-generator-estimate}. For sufficiently large \(\beta\),
we obtain
\begin{align}
&\|q(s)\|_H^2+\beta\|p(s)\|_H^2
 -2\langle M(s)p(s)+N(s)q(s)
                     +g(s,p(s),q(s)),p(s)\rangle_*\notag\\
&\quad\ge
 c\bigl(\|p(s)\|_V^2+\|q(s)\|_H^2+\beta\|p(s)\|_H^2\bigr)
 -\frac C\beta\|g(s,0,0)\|_H^2.
\label{eq:BSEE-coercive-integrand-bound}
\end{align}
Here \(c>0\), \(C\), and the lower bound on \(\beta\) depend
only on the operator constants, \(L\), and the norm of the embedding.

We next estimate the continuous and jump terms in
\eqref{eq:BSEE-weighted-energy-identity}. For the \(d\xi^c\)-integrand,
\((B3)\) and Young's inequality give, when \(\ell>0\),
\begin{align}
&2\langle p(s),\psi(s,p(s))\rangle_H
                         -\kappa\ell\|p(s)\|_H^2\notag\\
&\quad\le
 2\|p(s)\|_H\|\psi_1(s)\|_H
 +2\|p(s)\|_H\|\psi_2(s)\|_H
                  -(\kappa-2)\ell\|p(s)\|_H^2\notag\\
&\quad\le
 2\|p(s)\|_H\|\psi_1(s)\|_H
 +\frac1{(\kappa-2)\ell}\|\psi_2(s)\|_H^2.
\label{eq:BSEE-continuous-split-bound}
\end{align}
For the jump sum, fix \(s\) with \(\Delta\xi(s)>0\) and assume
\(\ell>0\). The backward jump identity
\eqref{eq:backward jump indentity} gives
\begin{align*}
p(s-)-\psi_1(s)\Delta\xi(s)
&=p(s)+[\psi(s,p(s))-\psi(s,0)]\Delta\xi(s)+\psi_2(s)\Delta\xi(s),
\end{align*}
and hence
\[
\|p(s-)-\psi_1(s)\Delta\xi(s)\|_H
\le(1+\ell\Delta\xi(s))\|p(s)\|_H
                   +\Delta\xi(s)\|\psi_2(s)\|_H.
\]
The identity
\(\|x\|_H^2=\|x-h\|_H^2+2\langle x,h\rangle_H-\|h\|_H^2\),
with \(x=p(s-)\) and \(h=\psi_1(s)\Delta\xi(s)\), together with
\(\theta(s)=\theta(s-)e^{\kappa\ell\Delta\xi(s)}\), gives
\begin{align}
&\theta(s-)\|p(s-)\|_H^2-\theta(s)\|p(s)\|_H^2\notag\\
&\quad\le
 2\theta(s-)\langle p(s-),\psi_1(s)\rangle_H\Delta\xi(s)
 -\theta(s-)\|\psi_1(s)\|_H^2(\Delta\xi(s))^2\notag\\
&\quad+\theta(s-)\Big[
 \bigl((1+\ell\Delta\xi(s))\|p(s)\|_H
                  +\Delta\xi(s)\|\psi_2(s)\|_H\bigr)^2
 -e^{\kappa\ell\Delta\xi(s)}\|p(s)\|_H^2\Big].
\label{eq:BSEE-jump-before-square-completion}
\end{align}
Put \(d=\Delta\xi(s)>0\). By
\eqref{eq:unbounded-xi-exponential-flow-lower-bound},
\[
a:=e^{\kappa\ell d}-(1+\ell d)^2
\ge(\kappa-2)\ell d>0.
\]
Completing the square gives the scalar inequality
\(-ax^2+2bx\le b^2/a\), valid for \(a>0\). Applying it with
\(x=\|p(s)\|_H\) and \(b=(1+\ell d)d\|\psi_2(s)\|_H\)
yields
\begin{align}
&\bigl((1+\ell\Delta\xi(s))\|p(s)\|_H
                   +\Delta\xi(s)\|\psi_2(s)\|_H\bigr)^2
 -e^{\kappa\ell\Delta\xi(s)}\|p(s)\|_H^2\notag\\
&\quad\le
 \frac{e^{\kappa\ell\Delta\xi(s)}(\Delta\xi(s))^2}
      {e^{\kappa\ell\Delta\xi(s)}-(1+\ell\Delta\xi(s))^2}
             \|\psi_2(s)\|_H^2.
\label{eq:BSEE-jump-square-completion-bound}
\end{align}
Substituting \eqref{eq:BSEE-jump-square-completion-bound} into
\eqref{eq:BSEE-jump-before-square-completion}, using
\(-\theta(s-)\|\psi_1(s)\|_H^2(\Delta\xi(s))^2\le0\)
and \(\theta(s-)e^{\kappa\ell\Delta\xi(s)}=\theta(s)\), we obtain
\begin{align}
&\theta(s-)\|p(s-)\|_H^2-\theta(s)\|p(s)\|_H^2\notag\\
&\quad\le
 2\theta(s-)\langle p(s-),\psi_1(s)\rangle_H\Delta\xi(s)
 +\frac{\theta(s)(\Delta\xi(s))^2}
        {e^{\kappa\ell\Delta\xi(s)}-(1+\ell\Delta\xi(s))^2}
              \|\psi_2(s)\|_H^2\notag\\
&\quad\le
 2\theta(s-)\langle p(s-),\psi_1(s)\rangle_H\Delta\xi(s)
 +\frac{\theta(s)\Delta\xi(s)}{(\kappa-2)\ell}
              \|\psi_2(s)\|_H^2.
\label{eq:BSEE-split-jump-estimate}
\end{align}

When \(\ell=0\), \(\psi(s,p(s))=\psi_1(s)\) and
\(\theta(s)=e^{\beta s}\) is continuous. Thus the left-hand
side of \eqref{eq:BSEE-continuous-split-bound} is
\(2\langle p(s),\psi_1(s)\rangle_H\), and
\begin{align*}
\theta(s-)\|p(s-)\|_H^2-\theta(s)\|p(s)\|_H^2
&=2\theta(s-)\langle p(s-),\psi_1(s)\rangle_H\Delta\xi(s)\\
&\quad-\theta(s-)\|\psi_1(s)\|_H^2(\Delta\xi(s))^2\\
&\leq 2\theta(s-)\langle p(s-),\psi_1(s)\rangle_H\Delta\xi(s)
\end{align*}
Consequently, \eqref{eq:BSEE-continuous-split-bound} and
\eqref{eq:BSEE-split-jump-estimate} remain valid with their
\(\psi_2\)-terms omitted.

We now integrate \eqref{eq:BSEE-continuous-split-bound} against
\(\theta(s)\,d\xi^c(s)\) over \((t,u]\), and sum
\eqref{eq:BSEE-split-jump-estimate} over \(t<s\le u\).
For these jump times, the left limits of \(\theta^{1/2}p\) give
\[
\theta(s-)\|p(s-)\|_H
\le
\left(\sup_{t\le v\le u}\theta(v)\|p(v)\|_H^2\right)^{1/2}
\theta(s)^{1/2},
\]
where we use \(\theta(s-)\le\theta(s)\).
Combining the integrals against the continuous part of \(\xi\)
with the sums over its atoms, we obtain
\begin{align}
&\int_{(t,u]}\theta(s)
 \bigl[2\langle p(s),\psi(s,p(s))\rangle_H
                          -\kappa\ell\|p(s)\|_H^2\bigr]\,d\xi^c(s)
                          +\sum_{t<s\le u}
 \bigl[\theta(s-)\|p(s-)\|_H^2-\theta(s)\|p(s)\|_H^2\bigr]
                          \notag\\
&\le
 2\left(\sup_{t\le v\le u}\theta(v)\|p(v)\|_H^2\right)^{1/2}
     \int_{(t,u]}\theta(s)^{1/2}\|\psi_1(s)\|_H\,d\xi(s)
                        +\frac1{(\kappa-2)\ell}
     \int_{(t,u]}\theta(s)\|\psi_2(s)\|_H^2\,d\xi(s).
\label{eq:BSEE-combined-Stieltjes-bound}
\end{align}
Setting \(u=T\) in \eqref{eq:BSEE-weighted-energy-identity} and applying
\eqref{eq:BSEE-coercive-integrand-bound} to the Lebesgue term and
\eqref{eq:BSEE-combined-Stieltjes-bound} to the finite-variation terms,
we obtain, by the definition of \(D\),
\begin{equation}
\begin{split}
&\theta(t)\|p(t)\|_H^2
 +c\int_t^T\theta(s)
       \bigl(\|p(s)\|_V^2+\|q(s)\|_H^2+\beta\|p(s)\|_H^2\bigr)\,ds\\
&\quad\le CD
 +C\left(\sup_{t\le v\le T}\theta(v)\|p(v)\|_H^2\right)^{1/2}D^{1/2}
 -2\bigl(\mathcal M(T)-\mathcal M(t)\bigr).
\end{split}
\label{eq:unbounded-xi-BSEE-basic-energy}
\end{equation}

We first close the estimate at order one. All expectations below
are finite by \eqref{eq:BSEE-theta-norm}, \(D\in L^1\), and
\[
\beta\int_0^T\theta(s)\|p(s)\|_H^2\,ds
\le\beta T\sup_{0\le s\le T}\theta(s)\|p(s)\|_H^2.
\]
Fix \(r\in[0,T]\). We take expectations in
\eqref{eq:unbounded-xi-BSEE-basic-energy} at \(t=r\), use
\(\mathbb E[\mathcal M(T)-\mathcal M(r)]=0\), and apply
Young's inequality to
\((\sup_{r\le u\le T}\theta(u)\|p(u)\|_H^2)^{1/2}D^{1/2}\).
For every \(\varepsilon>0\), this gives
\begin{align}
&\mathbb E\int_r^T\theta(s)
       \bigl(\|p(s)\|_V^2+\|q(s)\|_H^2+\beta\|p(s)\|_H^2\bigr)\,ds
\le\varepsilon\mathbb E
       \sup_{r\le t\le T}\theta(t)\|p(t)\|_H^2
       +C_\varepsilon\mathbb ED.
\label{eq:BSEE-expected-energy-bound}
\end{align}
To estimate \(\mathbb E\sup_{r\le t\le T}
\theta(t)\|p(t)\|_H^2\), we discard the nonnegative energy integral
in \eqref{eq:unbounded-xi-BSEE-basic-energy} and take the supremum
and expectation. For the martingale term, we use
\(\sup_{r\le t\le T}|\mathcal M(T)-\mathcal M(t)|
\le2\sup_{r\le t\le T}|\mathcal M(t)-\mathcal M(r)|\)
and apply the BDG inequality with \eqref{eq:BSEE-martingale-bracket-bound}.
We then apply Young's inequality to the products with \(D^{1/2}\)
and with \((\int_r^T\theta(s)\|q(s)\|_H^2\,ds)^{1/2}\). This gives
\begin{align}
\mathbb E\sup_{r\le t\le T}\theta(t)\|p(t)\|_H^2
&\le C\mathbb ED
 +C\mathbb E\left[
    \left(\sup_{r\le t\le T}\theta(t)\|p(t)\|_H^2\right)^{1/2}D^{1/2}
              \right]\notag\\
&\quad+C\mathbb E\left[
    \left(\sup_{r\le t\le T}\theta(t)\|p(t)\|_H^2\right)^{1/2}
    \left(\int_r^T\theta(s)\|q(s)\|_H^2\,ds\right)^{1/2}
              \right]\notag\\
&\le\frac12\mathbb E
       \sup_{r\le t\le T}\theta(t)\|p(t)\|_H^2
       +C\mathbb E\int_r^T\theta(s)\|q(s)\|_H^2\,ds
       +C\mathbb ED.
\label{eq:BSEE-expected-maximum-bound}
\end{align}
Using \eqref{eq:BSEE-expected-energy-bound} to estimate the integral
involving \(q\) in \eqref{eq:BSEE-expected-maximum-bound}, we obtain
\[
\mathbb E\sup_{r\le t\le T}\theta(t)\|p(t)\|_H^2
\le\left(\frac12+C\varepsilon\right)
       \mathbb E\sup_{r\le t\le T}\theta(t)\|p(t)\|_H^2
       +C_\varepsilon\mathbb ED.
\]
We choose \(C\varepsilon\le1/4\), absorb the maximum term into the
left-hand side, and substitute the resulting bound into
\eqref{eq:BSEE-expected-energy-bound} to obtain
\begin{equation}
\begin{split}
\mathbb E\Bigg[
 &\sup_{r\le t\le T}\theta(t)\|p(t)\|_H^2\\
 &+\int_r^T\theta(s)
       \bigl(\|p(s)\|_V^2+\|q(s)\|_H^2+\beta\|p(s)\|_H^2\bigr)\,ds
              \Bigg]
\le C\mathbb ED.
\end{split}
\label{eq:unbounded-xi-BSEE-closed-L2}
\end{equation}
With \(r=0\), this proves
\eqref{eq:unbounded-xi-weighted-BSEE-estimate} for \(\alpha=1\).

For \(\alpha>1\), we first derive a conditional version of
\eqref{eq:unbounded-xi-BSEE-closed-L2} to control the higher moment
of the maximum.
For \(E\in\mathcal F_r\), we apply the argument leading to
\eqref{eq:unbounded-xi-BSEE-closed-L2} on \([r,T]\), inserting
\(I_E\) in each expectation. The martingale estimate applies to the
\(H^1\)-martingale
\(I_E(\mathcal M(t)-\mathcal M(r))\), \(r\le t\le T\).
Absorbing the weighted maximum term gives
\[
\mathbb E[I_E\theta(r)\|p(r)\|_H^2]
\le C\mathbb E[I_ED],\quad E\in\mathcal F_r.
\]
Hence
\begin{equation}
\theta(r)\|p(r)\|_H^2
\le C\mathbb E[D\mid\mathcal F_r],\quad\mathbb P\text{-a.s.}
\label{eq:BSEE-conditional-pointwise-bound}
\end{equation}
We take the c\`adl\`ag version of
\(t\mapsto\mathbb E[D\mid\mathcal F_t]\).
Outside one null set, \eqref{eq:BSEE-conditional-pointwise-bound}
holds simultaneously for rational \(r\) and \(r=T\).
The right continuity of \(\theta\|p\|_H^2\) and of the
conditional-expectation martingale extends the inequality to every
\(r\in[0,T]\). Doob's \(L^\alpha\)-inequality applied to
\(t\mapsto\mathbb E[D\mid\mathcal F_t]\) yields
\begin{align}
\mathbb E\left(
  \sup_{0\le t\le T}\theta(t)\|p(t)\|_H^2\right)^\alpha
&\le C^\alpha\,
 \mathbb E\sup_{0\le t\le T}
          |\mathbb E[D\mid\mathcal F_t]|^\alpha\notag\\
&\le C^\alpha\left(\frac\alpha{\alpha-1}\right)^\alpha
          \mathbb ED^\alpha
 \le C_\alpha\mathbb ED^\alpha.
\label{eq:unbounded-xi-BSEE-maximal-higher-moment}
\end{align}

We now estimate
\(\mathbb E(\int_0^T\theta(s)
 (\|p(s)\|_V^2+\|q(s)\|_H^2+\beta\|p(s)\|_H^2)\,ds)^\alpha\).
Set
\[
\tau_n=\inf\{t\in[0,T]:\langle\mathcal M\rangle(t)\ge n\}\wedge T,
\quad \inf\varnothing:=+\infty.
\]
Then
\(\langle\mathcal M\rangle(\tau_n)\le n\). Thus the BDG inequality implies
\(\mathbb E|\mathcal M(\tau_n)|^\alpha\le C_\alpha n^{\alpha/2}<\infty\).
Also, \(\langle\mathcal M\rangle(T)<\infty\) almost surely,
so \(\tau_n=T\) eventually on almost every path.

We evaluate \eqref{eq:BSEE-weighted-energy-identity} at
\((t,u)=(0,\tau_n)\) and apply
\eqref{eq:BSEE-coercive-integrand-bound} on \([0,\tau_n]\)
and \eqref{eq:BSEE-combined-Stieltjes-bound} on \((0,\tau_n]\).
Dropping \(\theta(0)\|p(0)\|_H^2\), we obtain
\begin{align*}
&c\int_0^{\tau_n}\theta(s)
       \bigl(\|p(s)\|_V^2+\|q(s)\|_H^2+\beta\|p(s)\|_H^2\bigr)\,ds\\
&\quad\le\theta(\tau_n)\|p(\tau_n)\|_H^2+CD
 +C\left(\sup_{0\le t\le T}\theta(t)\|p(t)\|_H^2\right)^{1/2}D^{1/2}
 +2|\mathcal M(\tau_n)|.
\end{align*}
We bound \(\theta(\tau_n)\|p(\tau_n)\|_H^2\) by
\(\sup_{0\le t\le T}\theta(t)\|p(t)\|_H^2\) and use
\[
C\left(\sup_{0\le t\le T}\theta(t)\|p(t)\|_H^2\right)^{1/2}D^{1/2}
\le C\left[\sup_{0\le t\le T}\theta(t)\|p(t)\|_H^2+D\right].
\]
We obtain
\begin{align}
&\int_0^{\tau_n}\theta(s)
       \bigl(\|p(s)\|_V^2+\|q(s)\|_H^2+\beta\|p(s)\|_H^2\bigr)\,ds
       \notag\\
&\quad\le C\left[
       \sup_{0\le t\le T}\theta(t)\|p(t)\|_H^2
       +D+|\mathcal M(\tau_n)|\right].
\label{eq:BSEE-stopped-energy-bound}
\end{align}
By \eqref{eq:unbounded-xi-BSEE-maximal-higher-moment},
\(D\in L^\alpha\), and
\(\mathbb E|\mathcal M(\tau_n)|^\alpha<\infty\), both sides of
\eqref{eq:BSEE-stopped-energy-bound} belong to \(L^\alpha(\mathcal F_T)\).
We now apply the BDG inequality to \(\mathcal M(\cdot\wedge\tau_n)\),
use \eqref{eq:BSEE-martingale-bracket-bound}, and apply
Young's inequality to the product of powers of the maximum and the
integral involving \(q\). For every \(\varepsilon>0\), this gives
\begin{equation}
\begin{aligned}
\mathbb E|\mathcal M(\tau_n)|^\alpha
&\le C_\alpha\mathbb E\left[
       \left(\sup_{0\le t\le T}\theta(t)\|p(t)\|_H^2\right)^{\alpha/2}
       \left(\int_0^{\tau_n}\theta(s)\|q(s)\|_H^2\,ds\right)^{\alpha/2}
                          \right]\\
&\le\varepsilon\mathbb E
       \left(\int_0^{\tau_n}\theta(s)\|q(s)\|_H^2\,ds\right)^\alpha
       +C_{\alpha,\varepsilon}\mathbb E
       \left(\sup_{0\le t\le T}\theta(t)\|p(t)\|_H^2\right)^\alpha.
\end{aligned}
\label{eq:BSEE-stopped-martingale-moment}
\end{equation}
Raising \eqref{eq:BSEE-stopped-energy-bound} to the power
\(\alpha\), taking expectations, and substituting
\eqref{eq:BSEE-stopped-martingale-moment}, we obtain
\begin{align*}
&\mathbb E\left(\int_0^{\tau_n}\theta(s)
       \bigl(\|p(s)\|_V^2+\|q(s)\|_H^2+\beta\|p(s)\|_H^2\bigr)\,ds
                                                        \right)^\alpha\\
&\quad\le C_\alpha\mathbb ED^\alpha
       +C_{\alpha,\varepsilon}\mathbb E
          \left(\sup_{0\le t\le T}\theta(t)\|p(t)\|_H^2\right)^\alpha\\
&\quad+C_\alpha\varepsilon\mathbb E
          \left(\int_0^{\tau_n}\theta(s)\|q(s)\|_H^2\,ds\right)^\alpha.
\end{align*}
Since
\[
\int_0^{\tau_n}\theta(s)\|q(s)\|_H^2\,ds
\le\int_0^{\tau_n}\theta(s)
       \bigl(\|p(s)\|_V^2+\|q(s)\|_H^2+\beta\|p(s)\|_H^2\bigr)\,ds,
\]
choosing \(C_\alpha\varepsilon\le1/2\) absorbs the term
\(C_\alpha\varepsilon\mathbb E
 (\int_0^{\tau_n}\theta(s)\|q(s)\|_H^2\,ds)^\alpha\)
into the moment of the full energy on the left.
We then apply \eqref{eq:unbounded-xi-BSEE-maximal-higher-moment} to
\(\mathbb E(\sup_{0\le t\le T}\theta(t)\|p(t)\|_H^2)^\alpha\)
and obtain
\begin{equation}
\mathbb E\left(\int_0^{\tau_n}\theta(s)
       \bigl(\|p(s)\|_V^2+\|q(s)\|_H^2+\beta\|p(s)\|_H^2\bigr)\,ds
                                                    \right)^\alpha
\le C_\alpha\mathbb ED^\alpha.
\label{eq:BSEE-stopped-higher-energy}
\end{equation}
The constant is independent of \(n\). The integrals on the left-hand
side of \eqref{eq:BSEE-stopped-higher-energy} are nonnegative and
increase to the integral over \([0,T]\), so monotone
convergence gives
\[
\mathbb E\left(\int_0^T\theta(s)
       \bigl(\|p(s)\|_V^2+\|q(s)\|_H^2+\beta\|p(s)\|_H^2\bigr)\,ds
                                                    \right)^\alpha
\le C_\alpha\mathbb ED^\alpha.
\]
Together with \eqref{eq:unbounded-xi-BSEE-maximal-higher-moment},
this proves \eqref{eq:unbounded-xi-weighted-BSEE-estimate}.

Finally, we remove the weights under the additional data
assumptions. From \(1\le\theta(t)\le
 e^{\beta T+\kappa\ell\xi(T)}\), the definition of \(D\) gives
\begin{equation}
\begin{aligned}
D
&\le e^{\beta T+\kappa\ell\xi(T)}\Bigg[
 \|\varphi\|_H^2
 +\frac1\beta\int_0^T\|g(s,0,0)\|_H^2\,ds\\
&\hspace{28mm}
 +\left(\int_{(0,T]}\|\psi_1(s)\|_H\,d\xi(s)\right)^2
 +\frac1{(\kappa-2)\ell}
      \int_{(0,T]}\|\psi_2(s)\|_H^2\,d\xi(s)\Bigg].
\end{aligned}
\label{eq:BSEE-data-exponential-comparison}
\end{equation}
Raising \eqref{eq:BSEE-data-exponential-comparison} to the power
\(\alpha\) and applying the Cauchy--Schwarz inequality, we obtain
\begin{align}
\mathbb ED^\alpha
&\le C_\alpha e^{\alpha\beta T}
        \bigl(\mathbb Ee^{2\alpha\kappa\ell\xi(T)}\bigr)^{1/2}\notag\\
&\quad\times\Bigg\{\mathbb E\Bigg[
 \|\varphi\|_H^{4\alpha}
 +\frac1{\beta^{2\alpha}}
      \left(\int_0^T\|g(s,0,0)\|_H^2\,ds\right)^{2\alpha}\notag\\
&\hspace{25mm}
 +\left(\int_{(0,T]}\|\psi_1(s)\|_H\,d\xi(s)\right)^{4\alpha}
 +\frac1{((\kappa-2)\ell)^{2\alpha}}
      \left(\int_{(0,T]}\|\psi_2(s)\|_H^2\,d\xi(s)\right)^{2\alpha}
              \Bigg]\Bigg\}^{1/2}.
\label{eq:BSEE-unweighted-data-bound}
\end{align}
Since \(\theta\ge1\) and \(\alpha\ge1\), the left-hand side of
\eqref{eq:unbounded-xi-weighted-BSEE-estimate} dominates that of
\eqref{eq:unbounded-xi-unweighted-BSEE-estimate}. Combining
\eqref{eq:unbounded-xi-weighted-BSEE-estimate} and
\eqref{eq:BSEE-unweighted-data-bound} proves
\eqref{eq:unbounded-xi-unweighted-BSEE-estimate}.
\end{proof}

We now construct a solution in the weighted space by approximating the driver with bounded processes. 
\begin{theorem}
\label{thm:unbounded-xi-BSEE-finite-order}
Let \(\alpha\ge1\), \(\kappa>2\), and
\(\beta\ge\max\{\beta_\alpha,\beta_{2\alpha}\}\). Assume \((B1)\)--\((B3)\)
and suppose that
\begin{equation}
\begin{aligned}
 e^{\kappa\ell\xi(T)}\in L^{4\alpha}(\mathcal F_T),\quad
  \varphi\in L^{8\alpha}(\mathcal F_T;H),\quad
 g(\cdot,0,0)\in L^{2,8\alpha}_{\mathbb F}(0,T;H),\quad
  \psi(\cdot,0)\in L^{1,8\alpha}_{\mathbb F,\xi}(0,T;H).
\end{aligned}
\label{eq:unbounded-xi-BSEE-finite-order-data}
\end{equation}
Then \eqref{eq:FV-BSEE} has a unique solution with
\(\|(p,q)\|_{\theta_{\beta,\ell}^{0}}<\infty\). The solution \((p,q)\) satisfies
\eqref{eq:unbounded-xi-weighted-BSEE-estimate}
with \(\psi_1=\psi(\cdot,0)\) and \(\psi_2=0\).
The same estimate also holds with \(\alpha\) replaced
throughout by \(2\alpha\).
\end{theorem}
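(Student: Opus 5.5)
Uniqueness and existence are handled separately, with the a priori estimate of Proposition~\ref{prop:unbounded-xi-BSEE-estimate} doing most of the work.

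\emph{Uniqueness.} If \((p,q)\) and \((\tilde p,\tilde q)\) are two solutions with finite \(\theta_{\beta,\ell}^0\)-norm, their difference solves a BSEE of the form \eqref{eq:FV-BSEE}. Its terminal value is zero, and its generators
\[
g(s,y+\tilde p,z+\tilde q)-g(s,\tilde p,\tilde q),\qquad \psi(s,y+\tilde p)-\psi(s,\tilde p)
\]
vanish at the origin and keep the constants \(L\) and \(\ell\). Estimate \eqref{eq:unbounded-xi-weighted-BSEE-estimate} with \(\alpha=1\), \(\psi_1=\psi_2=0\) gives zero norm. Hence the solutions coincide.

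\emph{Existence by truncating the driver.} For \(n\ge1\) let \(\sigma_n\) be the first time that
\[
K(t):=\xi(t)+\int_0^t\|g(s,0,0)\|_H^2\,ds+\int_{(0,t]}\|\psi(s,0)\|_H\,d\xi(s)
\]
exceeds \(n\), and set \(\xi_n(t)=\int_{(0,t]}I_{\{s<\sigma_n\}}\,d\xi(s)\), so that \(\xi_n\le n\). The terminal datum is kept as \(\varphi\). With a bounded driver, a bounded-driver construction gives a unique solution \((p^n,q^n)\) in the unweighted \(\mathbb D^2\times L^2\) class. For this step I would invoke the backward analogue of Lemma~\ref{lem:localized-FV-SEE-construction} from the Appendix; if it is not available, I would run a Picard contraction in the \(\theta\)-norm, using \eqref{eq:unbounded-xi-weighted-BSEE-estimate}. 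Because \(\xi_n\le\xi\), we have \(\theta_n:=e^{\beta t+\kappa\ell\xi_n}\le\theta_{\beta,\ell}^0\). Proposition~\ref{prop:unbounded-xi-BSEE-estimate} applied with driver \(\xi_n\) therefore bounds \((p^n,q^n)\) in the \(\theta_n\)-norm, uniformly in \(n\), at orders \(\alpha\) and \(2\alpha\). The data are in these weighted spaces by Cauchy--Schwarz with \(e^{\kappa\ell\xi(T)}\in L^{4\alpha}\) and the \(8\alpha\)-moments; this accounts for the doubling of exponents in \eqref{eq:unbounded-xi-BSEE-finite-order-data}.

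\emph{Cauchy property (main obstacle).} The difference \((p^m-p^n,q^m-q^n)\) is a BSEE driven by \(\xi_m\), with zero terminal value. It has the Lipschitz structure above plus the inhomogeneous Stieltjes term
\[
\psi(s,p^n(s))\,I_{\{\sigma_n\le s<\sigma_m\}}\,d\xi(s).
\]
Proposition~\ref{prop:unbounded-xi-BSEE-estimate}, with this term as \(\psi_1\), bounds the difference norm by
\[
\mathbb E\Bigl(\int_{[\sigma_n,\sigma_m)}\theta^{1/2}\|\psi(s,p^n)\|_H\,d\xi\Bigr)^2 .
\]
To make this small, I would split \(\psi(s,p^n)\) into \(\psi(s,0)\) and a Lipschitz part of size at most \(\ell\|p^n\|_H\). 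The Lipschitz part is handled by Cauchy--Schwarz against the uniform bound on \(\sup_t\theta\|p^n\|_H^2\) obtained at order \(2\alpha\), together with \eqref{eq:BSEE-reciprocal-weight-integrals}. What remains is a factor supported on \(\{\sigma_n<T\}\), whose probability tends to zero since \(K(T)<\infty\) a.s. Dominated convergence, using the integrability from the higher-order bound, then gives the Cauchy property. I expect this step to be the delicate one, because \(\theta\) must be uniformly integrable in the difference estimate.

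\emph{Passage to the limit.} The Cauchy property yields a limit \((p,q)\) with finite \(\theta_{\beta,\ell}^0\)-norm. Since \(p^n\) already solves the original equation on \([\sigma_n,T]\)-complementary pieces, the weak form \eqref{eq:FV-BSEE-weak-form} follows from locality together with \(L^2\) convergence. The estimate at orders \(\alpha\) and \(2\alpha\) then follows from Fatou's lemma applied to the uniform bounds, or directly from Proposition~\ref{prop:unbounded-xi-BSEE-estimate} with \(\psi_1=\psi(\cdot,0)\), \(\psi_2=0\).
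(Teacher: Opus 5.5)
\textbf{Gap: the weights do not match in the Cauchy step.} Your skeleton is the paper's: truncate the driver, solve with Lemma~\ref{lem:BSEE-bounded-driver-construction}, obtain uniform bounds, prove the Cauchy property through the tail Stieltjes term, identify the limit, and get uniqueness from \eqref{eq:unbounded-xi-weighted-BSEE-estimate}. Localizing by \(\sigma_n\) instead of \(\xi^n=\xi\wedge n\) is harmless.

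Proposition~\ref{prop:unbounded-xi-BSEE-estimate} with driver \(\xi_n\) controls \(S_n':=\sup_t\theta_n(t)\|p^n(t)\|_H^2\), where \(\theta_n=e^{\beta t+\kappa\ell\xi_n(t)}\). The inequality \(\theta_n\le\theta:=\theta^0_{\beta,\ell}\) goes the wrong way for what you use later. In your Cauchy step you estimate the difference with driver \(\xi_m\), so the data carry the weight \(\theta_m\), which equals \(\theta\) on \([\sigma_n,\sigma_m)\). On that interval \(p^n\) is controlled only with \(\theta_n(s)=\theta(s)e^{-\kappa\ell(\xi(s)-\xi(\sigma_n-))}\). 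Hence the ``uniform bound on \(\sup_t\theta\|p^n\|_H^2\) obtained at order \(2\alpha\)'' has not been obtained, and the tail \(\ell\int_{[\sigma_n,\sigma_m)}\theta^{1/2}\|p^n\|_H\,d\xi\) is not controlled. A related problem: the differences are small in \(\theta_m\)-norms that change with \(m\). So ``Cauchy yields a limit with finite \(\theta\)-norm'' needs its own argument.

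\textbf{How the paper closes it.} It estimates every \(p^n\) in the single weight \(\theta\).
\begin{itemize}
\item For fixed \(n\), the order-\(2\) estimate with the bounded weight \(e^{\widehat\beta t+\kappa\ell\xi^n(t)}\) gives fourth moments of \((p^n,q^n)\).
\item Cauchy--Schwarz with \(\mathbb E\theta(T)^2<\infty\) then gives \(\|(p^n,q^n)\|_\theta<\infty\). This is the a priori finiteness the Proposition requires.
\item Since \(\ell\,d\xi^n\le\ell\,d\xi\), Remark~\ref{rem:multiple-FV-driver-reduction} with reference driver \(\xi\) allows the Proposition with weight \(\theta\). This yields \(\sup_n\mathbb E(\sup_t\theta\|p^n\|_H^2)^{2\alpha}<\infty\).
\item The difference is then estimated in the same \(\theta\)-norm, with tail \(\ell S_m^{1/2}(\xi(T)-n)^++I_{\{\xi(T)>n\}}\int_{(0,T]}\theta^{1/2}\|\psi(t,0)\|_H\,d\xi\). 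This gives the Cauchy property directly in the weighted space.
\end{itemize}

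\textbf{Two repairs inside your own scheme.}
\begin{itemize}
\item[(i)] Put \(\psi(s,p^m)\) rather than \(\psi(s,p^n)\) in the additive term against \(d(\xi_m-\xi_n)\). Estimate the homogeneous part with reference driver \(\xi_m\), which is allowed since \(d\xi_n\le d\xi_m\). Then only \(\sup_t\theta_m\|p^m\|_H^2\) enters, and you do control it uniformly. You get the Cauchy property in the unweighted space, since \(\theta_m\ge1\). The \(\theta\)-weighted estimates for the limit then follow by Fatou, because \(\theta_n=\theta\) on \(\{\sigma_n=+\infty\}\).
\item[(ii)] Keep your ordering and absorb the factor \(e^{\kappa\ell(\xi(s)-\xi(\sigma_n-))/2}\) by H\"older, using \(e^{\kappa\ell\xi(T)}\in L^{4\alpha}\).
\end{itemize}

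\textbf{Identification of the limit.} Unlike the forward equation, \(p^n\) does not agree with \(p\) on \([0,\sigma_n)\), because its values there depend on the truncated driver after \(\sigma_n\). So ``locality'' is not what passes the weak form to the limit. You need convergence of the Stieltjes terms themselves, as in Step~3 of the paper. Alternatively, use \(\xi_n=\xi\) on all of \([0,T]\) on the event \(\{\sigma_n=+\infty\}\), together with convergence in probability of every term.
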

\begin{proof}
	Write
	\[
	\theta(t):=\theta_{\beta,\ell}^{0}(t)
	=e^{\beta t+\kappa\ell\xi(t)},
	\quad
	\xi^n(t):=\xi(t)\wedge n,
	\quad
	I_0:=\int_{(0,T]}\|\psi(t,0)\|_H\,d\xi(t).
	\]
	\noindent\emph{Step 1: Construction for the bounded drivers.}
	For every \(n\ge1\),
	\[
	0\le d\xi^n\le d\xi,
	\quad
	\xi^n(T)\le n,\quad
	\mathbb E
	\left(
	\int_{(0,T]}\|\psi(t,0)\|_H\,d\xi^n(t)
	\right)^2
	\le
	\mathbb E I_0^2
	<\infty.
	\]
	Hence Lemma~\ref{lem:BSEE-bounded-driver-construction}, with
	\(\chi=\xi^n\), gives a unique pair
	$
	(p^n,q^n)
	\in
	\bigl(
	\mathbb D^2_{\mathbb F}(0,T;H)
	\cap L^2_{\mathbb F}(0,T;V)
	\bigr)
	\times L^2_{\mathbb F}(0,T;H)
	$
	satisfying
	\begin{align*}
		p^n(t)
		&=
		\varphi
		+\int_t^T
		\bigl[
		M(s)p^n(s)
		+N(s)q^n(s)
		+g(s,p^n(s),q^n(s))
		\bigr]\,ds
		\\
		&\quad
		+\int_{(t,T]}\psi(s,p^n(s))\,d\xi^n(s)
		-\int_t^T q^n(s)\,dW(s).
	\end{align*}

	Fix
	\(\widehat\beta\ge\beta_2\), and set
	$
	\widehat\theta_n(t)
	:=
	e^{\widehat\beta t+\kappa\ell\xi^n(t)}.
	$
	Since \(\xi^n(T)\le n\),
	\[
	1\le\widehat\theta_n(t)
	\le e^{\widehat\beta T+\kappa\ell n},
	\quad 0\le t\le T.
	\]
	The bound on \(\widehat\theta_n\) implies
	\(\|(p^n,q^n)\|_{\widehat\theta_n}<\infty\).
	The assumptions \eqref{eq:unbounded-xi-BSEE-finite-order-data} and
	\(d\xi^n\le d\xi\) also imply the data conditions of
	Proposition~\ref{prop:unbounded-xi-BSEE-estimate} with moment
	parameter \(2\), driver \(\xi^n\), and
	\(\psi_1=\psi(\cdot,0)\), \(\psi_2=0\).
	Applying that proposition, we obtain
	\begin{equation}
\mathbb E\Bigg[
	\sup_{0\le t\le T}\|p^n(t)\|_H^4
	+
	\left(\int_0^T\|p^n(t)\|_V^2\,dt\right)^2
	+
	\left(\int_0^T\|q^n(t)\|_H^2\,dt\right)^2
	\Bigg]
	<\infty.
\label{eq:BSEE-fixed-driver-fourth-moment}
\end{equation}
	This bound may depend on \(n\).

	Estimate \eqref{eq:BSEE-fixed-driver-fourth-moment} implies
	\(\|(p^n,q^n)\|_\theta<\infty\), where the norm is defined in
	\eqref{eq:BSEE-theta-norm}. Indeed, since
	\(\theta(t)\le\theta(T)\),
	\begin{align*}
		\|(p^n,q^n)\|_{\theta_{\beta,\ell}^{0}}^2
		&\le
		\bigl(\mathbb E\theta(T)^2\bigr)^{1/2}
		\\
		&\quad\times
		\Bigg\{
		\mathbb E\left[
		\left(
		\sup_{0\le t\le T}\|p^n(t)\|_H^2
		+
		\int_0^T
		\bigl(
		\|p^n(t)\|_V^2+\|q^n(t)\|_H^2
		\bigr)\,dt
		\right)^2
		\right]
		\Bigg\}^{1/2}
		<\infty.
	\end{align*}
	Here \(\mathbb E\theta(T)^2<\infty\) follows from
	\eqref{eq:unbounded-xi-BSEE-finite-order-data}.
	
	To verify the weighted data conditions with moment parameter \(2\alpha\), set
	\[
	F_0:=\|\varphi\|_H+
	       \left(\int_0^T\|g(t,0,0)\|_H^2\,dt\right)^{1/2}+I_0
	       \in L^{8\alpha}(\mathcal F_T).
	\]
	Since \(\theta(T)\in L^{4\alpha}(\mathcal F_T)\),
	the Cauchy--Schwarz inequality gives
	\[
	\mathbb E[\theta(T)^{2\alpha}F_0^{4\alpha}]
	\le e^{2\alpha\beta T}
	   \bigl(\mathbb E e^{4\alpha\kappa\ell\xi(T)}\bigr)^{1/2}
	   \bigl(\mathbb E F_0^{8\alpha}\bigr)^{1/2}<\infty.
	\]
	Together with \(\theta(t)\le\theta(T)\), this gives
	\begin{equation}
	\theta(T)^{1/2}\varphi\in L^{4\alpha}(\mathcal F_T;H),
	\quad
	\theta^{1/2}g(\cdot,0,0)
	   \in L^{2,4\alpha}_{\mathbb F}(0,T;H),
	\label{eq:BSEE-weighted-terminal-and-drift-data}
	\end{equation}
	and, since the weighted Stieltjes integral is at most
	\(\theta(T)^{1/2}I_0\),
	\begin{equation}
	\mathbb E\left(
	\int_{(0,T]}\theta(t)^{1/2}\|\psi(t,0)\|_H\,d\xi(t)
	\right)^{4\alpha}
	\le\mathbb E[\theta(T)^{2\alpha}F_0^{4\alpha}]<\infty.
	\label{eq:BSEE-weighted-Stieltjes-data-moment}
	\end{equation}
	
	Since
	$
	\ell\,d\xi^n\le\ell\,d\xi,
    $
	we apply Remark~\ref{rem:multiple-FV-driver-reduction} with
	\(\xi\) as the reference driver and \(\ell\) as the Lipschitz
	upper bound. We may therefore estimate \(\int\psi(s,p^n(s))\,d\xi^n(s)\)
	with the weight \(\theta_{\beta,\ell}^{0}\). The data term satisfies
	\[
	\int_{(0,T]}
	\theta(t)^{1/2}\|\psi(t,0)\|_H\,d\xi^n(t)
	\le
	\int_{(0,T]}
	\theta(t)^{1/2}\|\psi(t,0)\|_H\,d\xi(t).
	\]
Applying Proposition~\ref{prop:unbounded-xi-BSEE-estimate}
with moment parameter \(2\alpha\) to the equation with
reference driver \(\xi\), as in
Remark~\ref{rem:multiple-FV-driver-reduction}, taking
\(\psi_2=0\), and using the preceding bound on the Stieltjes
datum, we obtain
	\begin{equation}
		\sup_{n\ge1}
		\mathbb E\left[
		S_n^{2\alpha}
		+
		\left(
		\int_0^T
		\theta(t)
		\bigl(
		\|p^n(t)\|_V^2+\|q^n(t)\|_H^2
		\bigr)\,dt
		\right)^{2\alpha}
		\right]
		<\infty,
		\quad
		S_n:=
		\sup_{0\le t\le T}
		\theta(t)\|p^n(t)\|_H^2.
		\label{eq:unbounded-xi-BSEE-uniform-approximation}
	\end{equation}
	
	\medskip
	\noindent\emph{Step 2: Cauchy property of the approximating solutions.}
	Fix \(m\ge n\), and set
	\[
	\widehat p^{m,n}:=p^m-p^n,
	\quad
	\widehat q^{m,n}:=q^m-q^n.
	\]
	Since the function
	\(x\mapsto(x\wedge m)-(x\wedge n)\) is nondecreasing,
	\[
	0\le d(\xi^m-\xi^n)\le d\xi,
	\quad
	\xi^m(T)-\xi^n(T)
	\le(\xi(T)-n)^+.
	\]
	Subtracting the equation with driver \(\xi^n\)
	from the one with driver \(\xi^m\) gives
	\begin{align}
		\widehat p^{m,n}(t)
		&=
		\int_t^T
		\Big[
		M(s)\widehat p^{m,n}(s)
		+N(s)\widehat q^{m,n}(s)
		\notag\\
		&\hspace{23mm}
		+g(s,p^m(s),q^m(s))
		-g(s,p^n(s),q^n(s))
		\Big]\,ds
		\notag\\
		&\quad+
		\int_{(t,T]}
		\bigl[
		\psi(s,p^m(s))-\psi(s,p^n(s))
		\bigr]\,d\xi^n(s)
		\notag\\
		&\quad+
		\int_{(t,T]}
		\psi(s,p^m(s))\,d(\xi^m-\xi^n)(s)
		-\int_t^T\widehat q^{m,n}(s)\,dW(s).
		\label{eq:BSEE-truncated-difference-equation}
	\end{align}
	
	The coefficient of \(d\xi^n\) in
	\eqref{eq:BSEE-truncated-difference-equation} is the map
	\(y\mapsto\psi(s,p^n(s)+y)-\psi(s,p^n(s))\), evaluated at
	\(y=\widehat p^{m,n}(s)\). It vanishes at zero and has
	Lipschitz constant \(\ell\). Since
	$
	\ell\,d\xi^n\le\ell\,d\xi,
	$
	Remark~\ref{rem:multiple-FV-driver-reduction} permits the use of
	\(\theta\) when estimating the first Stieltjes integral in
	\eqref{eq:BSEE-truncated-difference-equation}.
	In the integral against \(d(\xi^m-\xi^n)\), the process
	\(\psi(\cdot,p^m)\) is fixed as the difference state varies
	and thus enters the difference equation as an additive coefficient.
	To estimate
	\(\mathbb E(\int_{(0,T]}\theta(t)^{1/2}\|\psi(t,p^m(t))\|_H
	\,d(\xi^m-\xi^n)(t))^{2\alpha}\), we use \((B3)\) to obtain
	\begin{align}
		&\int_{(0,T]}
		\theta(t)^{1/2}
		\|\psi(t,p^m(t))\|_H\,d(\xi^m-\xi^n)(t)
		\notag\\
		&\quad\le
		\ell S_m^{1/2}
		\bigl[\xi^m(T)-\xi^n(T)\bigr]
		+
		\int_{(0,T]}
		\theta(t)^{1/2}
		\|\psi(t,0)\|_H\,d(\xi^m-\xi^n)(t)
		\notag\\
		&\quad\le
		\ell S_m^{1/2}(\xi(T)-n)^+
		+
		I_{\{\xi(T)>n\}}
		\int_{(0,T]}
		\theta(t)^{1/2}
		\|\psi(t,0)\|_H\,d\xi(t).
		\label{eq:BSEE-truncated-tail-bound}
	\end{align}
	If \(\ell>0\), the exponential moment assumption implies
	\(\xi(T)\in L^{4\alpha}(\mathcal F_T)\). Hence
	\[
	\mathbb E\bigl[((\xi(T)-n)^+)^{4\alpha}\bigr]
	\longrightarrow0.
	\]
	By the Cauchy--Schwarz inequality and
	\eqref{eq:unbounded-xi-BSEE-uniform-approximation},
	\begin{align*}
		&\sup_{m\ge n}
		\mathbb E
		\left[
		\ell S_m^{1/2}(\xi(T)-n)^+
		\right]^{2\alpha}
		\le
		\ell^{2\alpha}
		\left(
		\sup_m\mathbb E S_m^{2\alpha}
		\right)^{1/2}
		\left\{
		\mathbb E
		\bigl[((\xi(T)-n)^+)^{4\alpha}\bigr]
		\right\}^{1/2}
		\longrightarrow0.
	\end{align*}
	When \(\ell=0\),
	\(\ell S_m^{1/2}(\xi(T)-n)^+\) is identically zero.
	
	For the second term on the right-hand side of
	\eqref{eq:BSEE-truncated-tail-bound},
	\eqref{eq:BSEE-weighted-Stieltjes-data-moment} and dominated
	convergence give
	\[
	\mathbb E\left[
	I_{\{\xi(T)>n\}}
	\left(
	\int_{(0,T]}
	\theta(t)^{1/2}
	\|\psi(t,0)\|_H\,d\xi(t)
	\right)^{2\alpha}
	\right]
	\longrightarrow0.
	\]
	Consequently,
	\begin{equation}
		\sup_{m\ge n}
		\mathbb E
		\left(
		\int_{(0,T]}
		\theta(t)^{1/2}
		\|\psi(t,p^m(t))\|_H\,d(\xi^m-\xi^n)(t)
		\right)^{2\alpha}
		\longrightarrow0.
		\label{eq:BSEE-truncated-tail-convergence}
	\end{equation}
	
	Estimate \eqref{eq:unbounded-xi-BSEE-uniform-approximation} gives
	\(\|(\widehat p^{m,n},\widehat q^{m,n})\|_\theta<\infty\)
	in the norm \eqref{eq:BSEE-theta-norm}. The Lebesgue generator of
	\eqref{eq:BSEE-truncated-difference-equation} vanishes at
	\((\widehat p^{m,n},\widehat q^{m,n})=(0,0)\) and has the same
	Lipschitz constant \(L\). Thus
	Remark~\ref{rem:multiple-FV-driver-reduction} and
	Proposition~\ref{prop:unbounded-xi-BSEE-estimate}, applied with
	moment parameter \(\alpha\), zero terminal datum, and the additive
	Stieltjes datum in
	\eqref{eq:BSEE-truncated-difference-equation}, give
	\begin{align}
		&\mathbb E\Bigg[
		\sup_{0\le t\le T}
		\theta(t)^\alpha
		\|\widehat p^{m,n}(t)\|_H^{2\alpha}
		\notag\\
		&\quad+
		\left(
		\int_0^T
		\theta(t)
		\bigl(
		\|\widehat p^{m,n}(t)\|_V^2
		+\|\widehat q^{m,n}(t)\|_H^2
		+\beta\|\widehat p^{m,n}(t)\|_H^2
		\bigr)\,dt
		\right)^\alpha
		\Bigg]
		\notag\\
		&\quad\le
		C_\alpha
		\mathbb E
		\left(
		\int_{(0,T]}
		\theta(t)^{1/2}
		\|\psi(t,p^m(t))\|_H\,d(\xi^m-\xi^n)(t)
		\right)^{2\alpha}.
		\label{eq:BSEE-truncated-difference-estimate}
	\end{align}
Estimates \eqref{eq:BSEE-truncated-difference-estimate} and
\eqref{eq:BSEE-truncated-tail-convergence} show that
\((\theta^{1/2}p^n,\theta^{1/2}q^n)\)
is Cauchy in the Banach space
\[
\bigl(
\mathbb D^{2\alpha}_{\mathbb F}(0,T;H)
\cap L^{2,2\alpha}_{\mathbb F}(0,T;V)
\bigr)
\times L^{2,2\alpha}_{\mathbb F}(0,T;H).
\]
Its limit can therefore be written as
\((\theta^{1/2}p,\theta^{1/2}q)\). Thus
\begin{align}
&\mathbb E\Bigg[
\sup_{0\le t\le T}
\theta(t)^\alpha
\|p^n(t)-p(t)\|_H^{2\alpha}
\notag\\
&\quad+
\left(
\int_0^T
\theta(t)
\bigl(
\|p^n(t)-p(t)\|_V^2
+\|q^n(t)-q(t)\|_H^2
\bigr)\,dt
\right)^\alpha
\Bigg]
\longrightarrow0.
\label{eq:BSEE-approximation-convergence}
\end{align}
Since \(\theta^{-1/2}\) is adapted and c\`adl\`ag,
\(p\) has an adapted c\`adl\`ag version.
Since \(\theta\ge1\) and \(\alpha\ge1\),
\eqref{eq:BSEE-approximation-convergence} also implies convergence
in \(\mathbb D^2_{\mathbb F}(0,T;H)\cap
L^2_{\mathbb F}(0,T;V)\) for \(p^n\) and
\(L^2_{\mathbb F}(0,T;H)\) for \(q^n\).

	\medskip
	\noindent\emph{Step 3: Identification of the limit.}
	The bounds on \(M,N\) and the Lipschitz property of \(g\) imply
	\begin{equation}
\begin{aligned}
		&\mathbb E\int_0^T
		\Big\|
		M(t)(p^n(t)-p(t))
		+N(t)(q^n(t)-q(t))
		\\
		&\hspace{29mm}
		+g(t,p^n(t),q^n(t))
		-g(t,p(t),q(t))
		\Big\|_{V^*}^2\,dt
		\\
		&\quad\le
		C\mathbb E\int_0^T
		\bigl(
		\|p^n(t)-p(t)\|_V^2
		+\|q^n(t)-q(t)\|_H^2
		\bigr)\,dt
		\longrightarrow0.
	\end{aligned}
\label{eq:BSEE-drift-difference-convergence}
\end{equation}
	The Cauchy--Schwarz inequality in time and
	\eqref{eq:BSEE-drift-difference-convergence} imply that the integrals
	from \(0\) to \(t\) of
	\(M(p^n-p)+N(q^n-q)+g(\cdot,p^n,q^n)-g(\cdot,p,q)\)
	converge to zero in \(L^2(\Omega;C([0,T];V^*))\).
	
	For the stochastic integrals, the BDG inequality gives
	\[
	\mathbb E
	\sup_{0\le t\le T}
	\left\|
	\int_0^t
	(q^n(s)-q(s))\,dW(s)
	\right\|_H^2
	\le
	C\mathbb E\int_0^T
	\|q^n(s)-q(s)\|_H^2\,ds
	\longrightarrow0.
	\]
	
	It remains to identify the Stieltjes term. If \(\ell>0\),
	\eqref{eq:BSEE-reciprocal-weight-integrals} gives
	\[
	\ell
	\int_{(0,T]}\|p(s)\|_H\,d\xi(s)
	\le
	\frac2\kappa
	\sup_{0\le t\le T}
	\theta(t)^{1/2}\|p(t)\|_H.
	\]
	Hence
	\[
	\int_{(0,T]}
	\|\psi(s,p(s))\|_H\,d\xi(s)
	\le
	\frac2\kappa
	\sup_{0\le t\le T}
	\theta(t)^{1/2}\|p(t)\|_H
	+I_0
	\in L^2(\mathcal F_T).
	\]
	If \(\ell=0\), \(\psi(t,y)=\psi(t,0)\); the square-integrable
	total variation of the limiting Stieltjes integral follows from
	\[
	\int_{(0,T]}
	\|\psi(s,p(s))\|_H\,d\xi(s)
	=I_0\in L^2(\mathcal F_T).
	\]
	
	Now
	\begin{align*}
		&\int_{(0,t]}
		\psi(s,p^n(s))\,d\xi^n(s)
		-
		\int_{(0,t]}
		\psi(s,p(s))\,d\xi(s)
		\\
		&\quad=
		\int_{(0,t]}
		\bigl[
		\psi(s,p^n(s))-\psi(s,p(s))
		\bigr]\,d\xi^n(s)
		-
		\int_{(0,t]}
		\psi(s,p(s))\,d(\xi-\xi^n)(s).
	\end{align*}
	For \(\ell>0\),
	\eqref{eq:BSEE-reciprocal-weight-integrals} and
	\(d\xi^n\le d\xi\) yield
	\[
	\sup_{0\le t\le T}
	\left\|
	\int_{(0,t]}
	\bigl[
	\psi(s,p^n(s))-\psi(s,p(s))
	\bigr]\,d\xi^n(s)
	\right\|_H
	\le
	\frac2\kappa
	\sup_{0\le t\le T}
	\theta(t)^{1/2}\|p^n(t)-p(t)\|_H.
	\]
	When \(\ell=0\), the integral on the left-hand side is identically
	zero. Moreover,
$
	d(\xi-\xi^n)
	\le
	I_{\{\xi(T)>n\}}\,d\xi.
$
	Therefore
	\begin{align*}
		&\mathbb E
		\sup_{0\le t\le T}
		\left\|
		\int_{(0,t]}
		\psi(s,p^n(s))\,d\xi^n(s)
		-
		\int_{(0,t]}
		\psi(s,p(s))\,d\xi(s)
		\right\|_H^2
		\\
		&\quad\le
		\frac8{\kappa^2}
		\mathbb E
		\sup_{0\le t\le T}
		\theta(t)\|p^n(t)-p(t)\|_H^2
		\\
		&\quad+
		2\mathbb E\left[
		I_{\{\xi(T)>n\}}
		\left(
		\int_{(0,T]}
		\|\psi(s,p(s))\|_H\,d\xi(s)
		\right)^2
		\right]
		\longrightarrow0.
	\end{align*}
	When \(\ell=0\), we may omit the first term on the right.
	The expected weighted maximum tends to zero by
	\eqref{eq:BSEE-approximation-convergence}, while the expectation
	containing \(I_{\{\xi(T)>n\}}\) tends to zero by dominated
	convergence because
	\((\int_{(0,T]}\|\psi(s,p(s))\|_H\,d\xi(s))^2\in L^1\).
	
	Since \(p^n(T)=\varphi\) and
	\(p^n\to p\) in
	\(\mathbb D^2_{\mathbb F}(0,T;H)\), we have
	\(p(T)=\varphi\) almost surely.
	The convergence of \(p^n,q^n\), together with that of the drift,
	stochastic, and Stieltjes integrals, yields \eqref{eq:FV-BSEE-weak-form} for each
	fixed time and test vector. We intersect the full-measure sets for
	rational times, the endpoint \(T\), and a countable dense subset
	of \(V\). The c\`adl\`ag \(V^*\)-valued versions and continuity
	in the test vector then extend the weak identity to every time and
	every test vector on the resulting full-measure set. Hence \((p,q)\) solves
	\eqref{eq:FV-BSEE}.
	
	The bounds on the terminal, Lebesgue, and Stieltjes data in
	\eqref{eq:BSEE-weighted-terminal-and-drift-data} and
	\eqref{eq:BSEE-weighted-Stieltjes-data-moment} allow us to apply
	Proposition~\ref{prop:unbounded-xi-BSEE-estimate} to \((p,q)\)
	with moment parameters \(\alpha\) and \(2\alpha\) to obtain
	the two asserted estimates.
	
	\medskip
	\noindent\emph{Step 4: Uniqueness in the weighted quadratic class.}
	Let \((\widetilde p,\widetilde q)\) be another solution satisfying
	$
	\|(\widetilde p,\widetilde q)\|_{\theta_{\beta,\ell}^{0}}
	<\infty.
	$
	Set \(\beta':=\max\{\beta,\beta_1\}\). Both solutions remain
	in the weighted quadratic class for \(\beta'\), since the ratio of
	the new weight to the old one is bounded by \(e^{(\beta'-\beta)T}\).
	Their difference has zero terminal datum; its generators vanish
	at zero and retain the Lipschitz bounds \(L,\ell\). Applying
	\eqref{eq:unbounded-xi-weighted-BSEE-estimate} with moment parameter
	\(1\) and time-weight parameter \(\beta'\) gives
	\[
	\|(p-\widetilde p,q-\widetilde q)\|
	_{\theta_{\beta',\ell}^{0}}^2
	=0.
	\]
	Hence \(p\) and \(\widetilde p\) are indistinguishable and
	\(q=\widetilde q\)
	\(dt\otimes d\mathbb P\)-almost everywhere.
	
\end{proof}

The next corollary establishes well-posedness in the unweighted
\(4\alpha\)-moment space.
\begin{corollary}
\label{cor:FV-BSEE-unweighted-wellposed}
Assume the hypotheses of
Theorem~\ref{thm:unbounded-xi-BSEE-finite-order}.
Then \eqref{eq:FV-BSEE} has a unique solution in
\[
 \bigl(\mathbb D_{\mathbb F}^{4\alpha}(0,T;H)
       \cap L_{\mathbb F}^{2,4\alpha}(0,T;V)\bigr)
 \times L_{\mathbb F}^{2,4\alpha}(0,T;H).
\]
The solution satisfies \eqref{eq:unbounded-xi-unweighted-BSEE-estimate}
with moment parameter \(2\alpha\), \(\psi_1=\psi(\cdot,0)\), and
\(\psi_2=0\).
\end{corollary}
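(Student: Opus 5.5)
The plan has three steps: take existence from Theorem~\ref{thm:unbounded-xi-BSEE-finite-order}, obtain the unweighted estimate and membership in the stated space from Proposition~\ref{prop:unbounded-xi-BSEE-estimate}, and obtain uniqueness by placing any solution of the unweighted class inside the weighted class at a lower order. This mirrors the passage from Theorem~\ref{thm:FV-SEE-weighted-wellposed} to Corollary~\ref{thm:FV-SEE-wellposed}.

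\emph{Existence and the estimate.} Theorem~\ref{thm:unbounded-xi-BSEE-finite-order} yields a solution \((p,q)\) with \(\|(p,q)\|_{\theta_{\beta,\ell}^{0}}<\infty\). It also gives the weighted estimate \eqref{eq:unbounded-xi-weighted-BSEE-estimate} at order \(2\alpha\), with \(\psi_1=\psi(\cdot,0)\) and \(\psi_2=0\). We check the hypotheses of the unweighted part of Proposition~\ref{prop:unbounded-xi-BSEE-estimate} at moment parameter \(2\alpha\), which requires fourth-order \(4\alpha\)-type data: \(\varphi\in L^{8\alpha}\), \(g(\cdot,0,0)\in L^{2,8\alpha}_{\mathbb F}\), \(\psi(\cdot,0)\in L^{1,8\alpha}_{\mathbb F,\xi}\), and \(\mathbb E e^{4\alpha\kappa\ell\xi(T)}<\infty\). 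These are exactly \eqref{eq:unbounded-xi-BSEE-finite-order-data}, and \(\beta\ge\beta_{2\alpha}\) by assumption. Therefore \eqref{eq:unbounded-xi-unweighted-BSEE-estimate} holds with \(2\alpha\) in place of \(\alpha\), and its right-hand side is finite. Since \(\theta\ge1\), the left-hand side controls \(\mathbb E\sup\|p\|_H^{4\alpha}\), \(\mathbb E(\int\|p\|_V^2)^{2\alpha}\) and \(\mathbb E(\int\|q\|_H^2)^{2\alpha}\). Hence \((p,q)\) lies in \((\mathbb D^{4\alpha}_{\mathbb F}\cap L^{2,4\alpha}_{\mathbb F}(0,T;V))\times L^{2,4\alpha}_{\mathbb F}(0,T;H)\).

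\emph{Uniqueness.} Let \((\widetilde p,\widetilde q)\) be another solution in the unweighted \(4\alpha\)-class. We show \(\|(\widetilde p,\widetilde q)\|_{\theta_{\beta,\ell}^{0}}<\infty\). Using \(\theta(t)\le\theta(T)=e^{\beta T+\kappa\ell\xi(T)}\) and Cauchy--Schwarz,
\[
\|(\widetilde p,\widetilde q)\|_{\theta_{\beta,\ell}^{0}}^2\le e^{\beta T}\bigl(\mathbb E e^{2\kappa\ell\xi(T)}\bigr)^{1/2}\Bigl\{\mathbb E\Bigl(\sup_t\|\widetilde p(t)\|_H^2+\int_0^T(\|\widetilde p\|_V^2+\|\widetilde q\|_H^2)dt\Bigr)^2\Bigr\}^{1/2}.
\]
This is finite because \(e^{\kappa\ell\xi(T)}\in L^{4\alpha}\subset L^2\) and the bracket has finite second moment, given \(4\alpha\ge 4\). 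The uniqueness statement of Theorem~\ref{thm:unbounded-xi-BSEE-finite-order}, in the weighted quadratic class, then gives that \(p,\widetilde p\) are indistinguishable and \(q=\widetilde q\) a.e.

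The only point needing care is the moment bookkeeping in the uniqueness step: the exponents must line up so that Cauchy--Schwarz pairs an exponential moment of order \(2\) against second moments of the energies. The assumed \(L^{4\alpha}\) exponential integrability leaves ample room for this. Everything else is a direct citation of the earlier results.
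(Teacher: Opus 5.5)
Your proposal is correct and follows essentially the same route as the paper. You take existence from Theorem~\ref{thm:unbounded-xi-BSEE-finite-order}, obtain the estimate and the $4\alpha$-integrability from the unweighted part of Proposition~\ref{prop:unbounded-xi-BSEE-estimate} at moment parameter $2\alpha$ (whose hypotheses are exactly \eqref{eq:unbounded-xi-BSEE-finite-order-data}), and prove uniqueness by using Cauchy--Schwarz with $\theta_{\beta,\ell}^{0}(T)\in L^2(\mathcal F_T)$ to place any unweighted-class solution into the weighted quadratic class, where the theorem's uniqueness applies.
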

\equationlabelalias{eq:FV-BSEE-finite-order-unweighted-estimate}{eq:unbounded-xi-unweighted-BSEE-estimate}

\begin{proof}
Theorem~\ref{thm:unbounded-xi-BSEE-finite-order} gives a solution
in the weighted quadratic class. The data conditions
\eqref{eq:unbounded-xi-BSEE-finite-order-data} allow us to apply
\eqref{eq:unbounded-xi-unweighted-BSEE-estimate} with moment parameter
\(2\alpha\), \(\psi_1=\psi(\cdot,0)\), and \(\psi_2=0\).
This gives the stated estimate and the required integrability.

For uniqueness, let \((\widetilde p,\widetilde q)\) be another
solution in the stated unweighted class. Since \(\alpha\ge1\),
its squared maximum and integrated squared norms belong to
\(L^2(\mathcal F_T)\). Also,
\(\theta_{\beta,\ell}^{0}(T)\in L^2(\mathcal F_T)\) by
\eqref{eq:unbounded-xi-BSEE-finite-order-data}. The Cauchy--Schwarz inequality gives
\begin{align*}
 \|(\widetilde p,\widetilde q)\|_{\theta_{\beta,\ell}^{0}}^2
 &\le \|\theta_{\beta,\ell}^{0}(T)\|_{L^2(\mathcal F_T)}\\
 &\quad\times\left\|
    \sup_{0\le t\le T}\|\widetilde p(t)\|_H^2
     +\int_0^T\bigl(\|\widetilde p(t)\|_V^2
                      +\|\widetilde q(t)\|_H^2\bigr)\,dt
    \right\|_{L^2(\mathcal F_T)}<\infty.
\end{align*}
Uniqueness in
Theorem~\ref{thm:unbounded-xi-BSEE-finite-order} therefore implies
that \(\widetilde p\) and \(p\) are indistinguishable and
\(\widetilde q=q\), \(dt\otimes d\mathbb P\)-almost everywhere.
\end{proof}
\section{Operator-valued BSIEs with finite-variation drivers}
\label{subsec:FV-BSIE-well-posedness}

We extend the conditional-expectation formulation of operator-valued BSIEs in
\cite{LT21} to equations with a finite-variation driver:
\begin{equation}
\begin{aligned}
P(t)=\mathbb E\Bigg[
&L^*(t,T)\Xi L(t,T)
+\int_t^T L^*(t,s)F(s,P(s))L(t,s)\,ds
\\
&+\int_{(t,T]}L^*(t,s-)G(s)L(t,s-)\,d\xi(s)
\,\Bigg|\,\mathcal F_t\Bigg].
\end{aligned}
\label{eq:FV-BSIE}
\end{equation}

\subsection{Assumptions on the stochastic evolution family and the data}

Throughout this section, \(\xi\) is a real-valued adapted nondecreasing
c\`adl\`ag process with \(\xi(0)=0\) and \(\xi(T)<\infty\) \(\mathbb P\text{-a.s.}\) Set
$
\Delta:=\{(t,s)\in[0,T]^2:0\le t\le s\le T\}.
$ Let $L$ be a stochastic evolution operator on $H$, that is, \[L=
\{L(t,s)\in\mathfrak{L}(L^{2}(\mathcal{F}_{t},H);L^{2}(\mathcal{F}%
_{s},H)):(t,s)\in\Delta\}.
\]
We use \(L^{\ast}\) as formal adjoint notation. For fixed \((t,s)\in\Delta\)
and \(u\in L^{1}(\mathcal F_s;H)\), its action on
\(v\in L^{2}(\mathcal F_t;H)\) is defined through the scalar pairing
\[
(L^{\ast}(t,s)u)(v):=\langle u,L(t,s)v\rangle\ \ \mathbb P\text{-a.s.},\quad\text{for
each}\ v\in L^{2}(\mathcal{F}_{t},H).
\]
We impose the following two conditions.

\begin{description}
\item[\((C1)\)]
For each \((t,s)\in\Delta\),
$
L(t,s)\in
\mathfrak L\bigl(L^4(\mathcal F_t;H);L^4(\mathcal F_s;H)\bigr).
$
Moreover,
$
L(t,t)\eta=\eta
\ \text{for}\
\eta\in L^4(\mathcal F_t;H)\ \text{and}\ t\in[0,T],
$
 and  $
L(r,s)L(t,r)\eta=L(t,s)\eta$ for
$ \eta\in L^4(\mathcal F_t;H)
$ and \(0\le t\le r\le s\le T\).
The evolution family is local with respect to the initial
\(\sigma\)-field: $
L(t,s)(I_A\eta)=I_A L(t,s)\eta,
\ \mathbb P\text{-a.s.}
$ for every \(A\in\mathcal F_t\),
\(\eta\in L^4(\mathcal F_t;H)\), and \(t\le s\le T\).

For every \(t\in[0,T]\) and every
\(\eta\in L^4(\mathcal F_t;H)\), the process
\(s\mapsto L(t,s)\eta\) admits a version in
\(\mathbb D^4_{\mathbb F}(t,T;H)\).
There exists a constant \(C_L>0\), independent of \(t\), such that
\begin{equation}
\mathbb E\left[
\sup_{t\le s\le T}\|L(t,s)\eta\|_H^4
\,\middle|\,\mathcal F_t
\right]
\le C_L\|\eta\|_H^4,
\quad\mathbb P\text{-a.s.}
\label{eq:BSIE-random-maximal-estimate}
\end{equation}

For every \(u\in H\), the family
\(\{L(t,s)u:(t,s)\in\Delta\}\) admits an \(H\)-valued version such that
$
(\omega,t,s)\longmapsto (L(t,s)u)(\omega)
$
is jointly measurable on \(\Omega\times\Delta\), and, for every
\((\omega,t)\), the path \(s\mapsto (L(t,s)u)(\omega)\) is
c\`adl\`ag on \([t,T]\).

\item[\((C2)\)]
The terminal datum satisfies
\(\Xi\in L_w^2(\mathcal F_T;\mathfrak L(H))\). The mapping
$
F:\Omega\times[0,T]\times\mathfrak L(H)\longrightarrow\mathfrak L(H)
$
is measurable from
\(\operatorname{Prog}\otimes\mathcal B_w(\mathfrak L(H))\) to
\(\mathcal B_w(\mathfrak L(H))\), and there exists \(L>0\) such that, for
every \((t,\omega)\in[0,T]\times\Omega\),
\begin{equation}
\|F(t,Q_1)-F(t,Q_2)\|_{\mathfrak L(H)}
\le L\|Q_1-Q_2\|_{\mathfrak L(H)},
\quad Q_1,Q_2\in\mathfrak L(H),
\label{eq:BSIE-F-Lipschitz}
\end{equation}
and $
F(\cdot,0)\in L^2_{\mathbb F,w}(0,T;\mathfrak L(H)).
$ Moreover,
\(G\in L^{1,2}_{\mathbb F,\xi,w}(0,T;\mathfrak L(H)).\)
\end{description}

\subsection{The solution space and the fixed-time conditional expectation}

For \(\beta>0\), set
\begin{equation}
\|P\|_\beta^2
:=
\mathbb E\int_0^T e^{\beta t}
\|P(t)\|_{\mathfrak L(H)}^2\,dt.
\label{eq:BSIE-Hbeta-norm}
\end{equation}
Let \(\mathcal H_\beta\) denote
\(L^2_{\mathbb F,w}(0,T;\mathfrak L(H))\), equipped with the norm
\(\|\cdot\|_\beta\), where processes equal
\((dt\otimes d\mathbb P)\)-almost everywhere are identified.
Since \(1\le e^{\beta t}\le e^{\beta T}\), this norm is equivalent to
the usual \(L^2_{\mathbb F,w}(0,T;\mathfrak L(H))\)-norm.
In particular, \(\mathcal H_\beta\) is a Banach space for every
\(\beta>0\). 
The deterministic weight \(e^{\beta t}\) suffices here because the
finite-variation term \(G(s)\,d\xi(s)\) in \eqref{eq:FV-BSIE}
does not contain \(P\).

For \(R\in\mathcal H_\beta\) and \(t\in[0,T]\), define
\(\Phi_R(t):H\times H\to L^1(\mathcal F_T)\) by
\begin{align}
\Phi_R(t)(u,v)
:={}&
\langle\Xi L(t,T)u,L(t,T)v\rangle_H
\notag\\
&+\int_t^T
\langle F(s,R(s))L(t,s)u,L(t,s)v\rangle_H\,ds
\notag\\
&+\int_{(t,T]}
\langle G(s)L(t,s-)u,L(t,s-)v\rangle_H\,d\xi(s),
\quad u,v\in H.
\label{eq:BSIE-Phi-R}
\end{align}

We first justify the conditional expectation in \eqref{eq:FV-BSIE}.
\begin{lemma}
\label{lem:BSIE-fixed-time-aggregation}
For every \(R\in\mathcal H_\beta\) and \(t\in[0,T]\),
\(\Phi_R(t)\) is a bounded bilinear mapping
from \(H\times H\) to \(L^1(\mathcal F_T)\). Moreover,
$
\Gamma_t(R)
:=
\mathbb E\big[\Phi_R(t)\mid\mathcal F_t\big]
\in L_w^2(\mathcal F_t;\mathfrak L(H)),
$
and
\begin{align}
\|\Gamma_t(R)\|_{\mathfrak L(H)}^2
&\le
3C_L\,\mathbb E\Bigg[
\|\Xi\|_{\mathfrak L(H)}^2
+T\int_t^T\|F(s,R(s))\|_{\mathfrak L(H)}^2\,ds
\notag\\
&\hspace{32mm}
+\left(\int_{(t,T]}\|G(s)\|_{\mathfrak L(H)}\,d\xi(s)\right)^2
\,\Bigg|\,\mathcal F_t\Bigg],
\quad\mathbb P\text{-a.s.}
\label{eq:BSIE-Gamma-pointwise}
\end{align}
The family \(\{\Gamma_t(R):0\le t\le T\}\) admits a weakly progressively
measurable modification.
\end{lemma}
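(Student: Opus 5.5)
The plan is to verify the domination criterion \eqref{eq:operator-valued-conditional-expectation-domination} of \cite[Theorem~2.1]{LT21}. The candidate dominating function is the square root of the right-hand side of \eqref{eq:BSIE-Gamma-pointwise}. Measurability needs to be checked first. For fixed \(u,v\in H\), the integrands in \eqref{eq:BSIE-Phi-R} are jointly measurable in \((\omega,s)\). This follows from the jointly measurable c\`adl\`ag versions in \((C1)\), the weak measurability of \(F(\cdot,R(\cdot))\) granted by \((C2)\), and the weak progressive measurability of \(G\). For \(s\mapsto L(t,s-)u\), one takes left limits of the c\`adl\`ag version. Each term is bounded pathwise by the product of an operator norm and \(\|L(t,s)u\|_H\|L(t,s)v\|_H\le \sup_{s}\|L(t,s)u\|_H\sup_s\|L(t,s)v\|_H\). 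Set
\[
S_u:=\sup_{t\le s\le T}\|L(t,s)u\|_H,\qquad
Y:=\|\Xi\|_{\mathfrak L(H)}+\int_t^T\|F(s,R(s))\|_{\mathfrak L(H)}\,ds+\int_{(t,T]}\|G(s)\|_{\mathfrak L(H)}\,d\xi(s).
\]
Then \(|\Phi_R(t)(u,v)|\le Y S_uS_v\). Since \(F(s,R(s))\) has norm at most \(\|F(s,0)\|+L\|R(s)\|\), one gets \(Y\in L^2\) from \((C2)\) and \(R\in\mathcal H_\beta\). Moreover \(S_u\in L^4\) with \(\mathbb E S_u^4\le C_L\|u\|^4\) by \eqref{eq:BSIE-random-maximal-estimate}. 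Hölder with exponents \((2,4,4)\) then gives \(\|\Phi_R(t)(u,v)\|_{L^1}\le C\|u\|\|v\|\), and bilinearity is inherited from the linearity of \(L(t,s)\). This proves that \(\Phi_R(t)\) is a bounded bilinear map into \(L^1(\mathcal F_T)\).

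For the conditional bound, apply conditional Hölder in the same \((2,4,4)\) pattern:
\[
\bigl|\mathbb E[\Phi_R(t)(u,v)\mid\mathcal F_t]\bigr|\le \mathbb E[Y^2\mid\mathcal F_t]^{1/2}\,\mathbb E[S_u^4\mid\mathcal F_t]^{1/4}\,\mathbb E[S_v^4\mid\mathcal F_t]^{1/4}.
\]
By \eqref{eq:BSIE-random-maximal-estimate} with deterministic \(\eta=u,v\), the last two factors are bounded by \(C_L^{1/2}\|u\|\|v\|\). For the first factor, use \(Y^2\le 3(\|\Xi\|^2+T\int_t^T\|F\|^2ds+(\int\|G\|d\xi)^2)\), where the middle term comes from Cauchy--Schwarz in time. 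This yields a nonnegative \(g:=(3C_L\,\mathbb E[\cdots\mid\mathcal F_t])^{1/2}\in L^2(\mathcal F_t)\) satisfying \eqref{eq:operator-valued-conditional-expectation-domination} almost surely for each fixed pair \((u,v)\). That is exactly what \cite[Theorem~2.1]{LT21} requires. The theorem then gives existence and uniqueness of \(\Gamma_t(R)\) together with \eqref{eq:operator-valued-conditional-expectation-bound}, which after squaring is \eqref{eq:BSIE-Gamma-pointwise}. The final remark after that theorem gives \(\Gamma_t(R)\in L^2_w(\mathcal F_t;\mathfrak L(H))\).

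The main obstacle is the weakly progressively measurable modification. My plan is to use the c\`adl\`ag structure in \(s\) together with a martingale representation. For fixed \(u,v\), consider the process
\[
X_{u,v}(t):=\mathbb E\Bigl[\Phi_R(t)(u,v)+\int_0^t\langle F(s,R(s))\cdots\rangle\,ds\Bigm|\mathcal F_t\Bigr],
\]
but this is awkward because \(L(t,\cdot)\) depends on \(t\). Instead I would proceed in three steps.
\begin{enumerate}
\item Show that \((\omega,t)\mapsto\Phi_R(t)(u,v)(\omega)\) is \(\mathcal F_T\otimes\mathcal B([0,T])\)-measurable, using joint measurability of \(L(t,s)u\) on \(\Omega\times\Delta\).
\item Take its optional projection, which is a progressively measurable process \(\gamma_{u,v}\) with \(\gamma_{u,v}(t)=\mathbb E[\Phi_R(t)(u,v)\mid\mathcal F_t]\) a.s. for each \(t\). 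Integrability of \(\sup_t|\Phi_R(t)(u,v)|\le YS^*_uS^*_v\) needs a uniform-in-\(t\) bound; failing that, I would work with the bounded truncations \(\Phi\wedge n\) and pass to the limit.
\item Aggregate into an operator. Fix a countable dense \(\mathbb Q\)-linear subspace \(D\subset H\). On the progressive set where \((u,v)\mapsto\gamma_{u,v}\) is \(\mathbb Q\)-bilinear on \(D\) and bounded by \(g\cdot\|u\|\|v\|\), extend it by continuity to a bounded operator \(\widetilde\Gamma(t)\), and set \(\widetilde\Gamma=0\) elsewhere.
\end{enumerate}
The delicate point is that the bound by \(g\) must hold simultaneously in \(t\). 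Here I would use a progressive version of \(t\mapsto\mathbb E[\cdots\mid\mathcal F_t]\), again obtained by optional projection. For each fixed \(t\) the exceptional set is null, so \(\widetilde\Gamma(t)=\Gamma_t(R)\) a.s., which is what a modification requires.
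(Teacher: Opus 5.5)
Your proposal is correct and follows the paper's proof essentially step by step: the pathwise bound with H\"older exponents \((2,4,4)\), the conditional domination fed into \cite[Theorem~2.1]{LT21}, joint measurability of \((\omega,t)\mapsto\Phi_R(t)(u,v)\) from the jointly measurable c\`adl\`ag version in \((C1)\), a progressive (optional) projection of each scalar process, and aggregation over a countable dense set, which the paper delegates to \cite[Lemma~2.9]{LT21} and you carry out by hand. The one detail you pass over quickly is that linearity of \(L(t,s)\) holds only almost surely for each fixed \(s\), so bilinearity of \(\Phi_R(t)\) needs a single exceptional null set valid for all \(s\) and all left limits; the paper obtains this from rational times, the endpoint, and right continuity of the c\`adl\`ag versions.
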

\begin{proof}
Fix \(t\in[0,T]\). For each fixed finite family of vectors and
scalars, the linearity identities for \(L(t,s)\) hold simultaneously
at all rational times and at the endpoints outside one null set.
The c\`adl\`ag versions extend these identities to every
\(s\in[t,T]\) and to all left limits on \((t,T]\). 
Thus each term of \eqref{eq:BSIE-Phi-R}, including the
Stieltjes term, is bilinear in \(u\) and \(v\), with the
identities understood as almost-sure equalities.

Since \(R\in\mathcal H_\beta\) and \(e^{\beta s}\ge1\),
\eqref{eq:BSIE-F-Lipschitz} gives
\begin{align}
\mathbb E\int_t^T\|F(s,R(s))\|_{\mathfrak L(H)}^2\,ds
 &\le2\mathbb E\int_t^T\|F(s,0)\|_{\mathfrak L(H)}^2\,ds
       +2L^2\mathbb E\int_t^T\|R(s)\|_{\mathfrak L(H)}^2\,ds
 <\infty.
\label{eq:BSIE-F-R-square-integrability}
\end{align}
For every \(s\in(t,T]\), the left limit of
\(L(t,\cdot)u\) satisfies
\(\|L(t,s-)u\|_H\le\sup_{t\le r\le T}\|L(t,r)u\|_H\).
Consequently, \eqref{eq:BSIE-Phi-R} yields the pathwise bound
\begin{align*}
 |\Phi_R(t)(u,v)|
 &\le\left(
     \|\Xi\|_{\mathfrak L(H)}
     +\int_t^T\|F(s,R(s))\|_{\mathfrak L(H)}\,ds
     +\int_{(t,T]}\|G(s)\|_{\mathfrak L(H)}\,d\xi(s)
   \right)\\
 &\quad\times
     \sup_{t\le s\le T}\|L(t,s)u\|_H
     \sup_{t\le s\le T}\|L(t,s)v\|_H.
\end{align*}
H\"older's inequality with exponents \((2,4,4)\),
\eqref{eq:BSIE-random-maximal-estimate}, the data conditions,
and \eqref{eq:BSIE-F-R-square-integrability} show that
\(\Phi_R(t)\) is a bounded bilinear map from
\(H\times H\) to \(L^1(\mathcal F_T)\).

To construct the conditional expectation of \(\Phi_R(t)\), we apply
conditional H\"older's inequality and
\eqref{eq:BSIE-random-maximal-estimate} to obtain, for \(u,v\in H\),
\begin{align}
&\left|\mathbb E\big[\Phi_R(t)(u,v)\mid\mathcal F_t\big]\right|
\notag\\
&\quad\le
\sqrt{3C_L}\,\|u\|_H\|v\|_H
\Bigg(\mathbb E\Bigg[
\|\Xi\|_{\mathfrak L(H)}^2
+T\int_t^T\|F(s,R(s))\|_{\mathfrak L(H)}^2\,ds
\notag\\
&\hspace{54mm}
+\left(\int_{(t,T]}\|G(s)\|_{\mathfrak L(H)}\,d\xi(s)\right)^2
\,\Bigg|\,\mathcal F_t\Bigg]\Bigg)^{1/2}.
\label{eq:BSIE-scalar-conditional-domination}
\end{align}
The square root of the conditional expectation on the right-hand side of
\eqref{eq:BSIE-scalar-conditional-domination} belongs to
\(L^2(\mathcal F_t)\) by \((C2)\) and
\eqref{eq:BSIE-F-R-square-integrability}. The domination criterion
\eqref{eq:operator-valued-conditional-expectation-domination}
therefore yields \(\Gamma_t(R)\in
L_w^2(\mathcal F_t;\mathfrak L(H))\), uniquely up to \(\mathbb P\text{-a.s.}\)
equality, and gives \eqref{eq:BSIE-Gamma-pointwise}.

For fixed \(u,v\in H\), condition \((C1)\) and the weak progressive
measurability of \(F(\cdot,R(\cdot))\) and \(G\) imply that the terminal and
Lebesgue integrands are jointly measurable in their parameters. Taking
left limits in the jointly measurable c\`adl\`ag version in \((C1)\) also makes
\(\langle G(s)L(t,s-)u,L(t,s-)v\rangle_H\) jointly measurable
in \((\omega,t,s)\). The parameter-integration theorem shows that
\((\omega,t)\mapsto\Phi_R(t)(u,v)(\omega)\) is jointly measurable.
The scalar projection theorem \cite[Corollary~7.6.8]{CE15}
therefore gives a progressively measurable modification of
\(\{\mathbb E[\Phi_R(t)(u,v)\mid\mathcal F_t]:0\le t\le T\}\)
for each \(u,v\in H\). By \cite[Lemma~2.9]{LT21}, the weakly
adapted operator-valued family \(\{\Gamma_t(R):0\le t\le T\}\)
has a weakly progressively measurable modification.
\end{proof}

With \(\Gamma_t(R)\) defined for every deterministic \(t\) by
Lemma~\ref{lem:BSIE-fixed-time-aggregation}, we define a solution of
\eqref{eq:FV-BSIE} as follows.
\begin{definition}
\label{def:FV-BSIE-solution}
A process \(P\in\mathcal H_\beta\) is called a solution of \eqref{eq:FV-BSIE}
if, for every \(t\in[0,T]\),
\begin{equation}
P(t)=\Gamma_t(P),
\quad \mathbb P\text{-a.s.}
\label{eq:FV-BSIE-fixed-point}
\end{equation}
\end{definition}

\subsection{Well-posedness and conditional estimates}
The next theorem constructs the BSIE solution used in the quadratic
duality formula and gives its second-moment estimate.
\begin{theorem}
\label{thm:FV-BSIE-wellposed}\label{thm:unbounded-xi-BSIE}
Assume \((C1)\)--\((C2)\). For every sufficiently large \(\beta\),
BSIE \eqref{eq:FV-BSIE} has a solution
\(P\in\mathcal H_\beta\), unique up to modification. Moreover,
\begin{align}
\|P\|_\beta^2\le C_\beta\mathbb E\Bigg[
 &\|\Xi\|_{\mathfrak L(H)}^2
 +\int_0^T\|F(s,0)\|_{\mathfrak L(H)}^2ds+\left(\int_{(0,T]}\|G(s)\|_{\mathfrak L(H)}d\xi(s)\right)^2\Bigg],
\label{eq:FV-BSIE-estimate}
\end{align}
where \(C_\beta\) depends only on \(\beta,C_L,L,T\).
\end{theorem}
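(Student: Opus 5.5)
The plan is a Banach fixed-point argument for the map \(\Gamma:R\mapsto\Gamma_\cdot(R)\) on \(\mathcal H_\beta\), using Lemma~\ref{lem:BSIE-fixed-time-aggregation} for well-definedness and measurability.

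\emph{Step 1: \(\Gamma\) maps \(\mathcal H_\beta\) into itself.} For \(R\in\mathcal H_\beta\), the lemma gives a weakly progressively measurable modification of \(t\mapsto\Gamma_t(R)\). Multiplying the pointwise bound \eqref{eq:BSIE-Gamma-pointwise} by \(e^{\beta t}\), taking expectations and integrating over \(t\), we get \(\|\Gamma(R)\|_\beta^2<\infty\). Here the tower property removes the conditional expectation, and Fubini handles the \(F\)-term:
\[
\mathbb E\int_0^Te^{\beta t}\int_t^T\|F(s,R(s))\|^2ds\,dt\le\mathbb E\int_0^T\|F(s,R(s))\|^2\frac{e^{\beta s}}{\beta}\,ds .
\]

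\emph{Step 2: contraction.} For \(R_1,R_2\in\mathcal H_\beta\), the difference \(\Gamma_t(R_1)-\Gamma_t(R_2)\) is the conditional expectation of the bilinear form \(\int_t^T\langle (F(s,R_1)-F(s,R_2))L(t,s)u,L(t,s)v\rangle ds\). The terms with \(\Xi\) and \(G\) cancel, and uniqueness of the operator conditional expectation justifies this. The argument of the lemma, with \(\Xi=G=0\), then gives the pointwise bound
\[
\|\Gamma_t(R_1)-\Gamma_t(R_2)\|^2\le C_LT\,L^2\,\mathbb E\Big[\int_t^T\|R_1-R_2\|^2ds\,\Big|\,\mathcal F_t\Big].
\]
Weighting by \(e^{\beta t}\) and applying the same Fubini computation yields
\[
\|\Gamma(R_1)-\Gamma(R_2)\|_\beta^2\le \frac{C_LTL^2}{\beta}\|R_1-R_2\|_\beta^2 .
\]
For \(\beta>2C_LTL^2\), \(\Gamma\) is therefore a contraction, and it has a unique fixed point \(P\in\mathcal H_\beta\) with \(P=\Gamma(P)\) \(dt\otimes d\mathbb P\)-a.e.

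\emph{Step 3: upgrade to every \(t\), and uniqueness.} The main subtlety is that Definition~\ref{def:FV-BSIE-solution} requires \(P(t)=\Gamma_t(P)\) a.s. for every \(t\), while the fixed point only holds almost everywhere in \(t\). Since \(\Gamma_t(P)\) depends on \(P\) only through its \(dt\otimes d\mathbb P\)-class, we replace \(P\) by the weakly progressively measurable modification \(\widetilde P(t):=\Gamma_t(P)\) from the lemma. Then \(\widetilde P=P\) in \(\mathcal H_\beta\), so \(\Gamma_t(\widetilde P)=\Gamma_t(P)=\widetilde P(t)\) a.s. for every \(t\). For uniqueness, any two solutions are fixed points in \(\mathcal H_\beta\), hence equal a.e. Both then equal \(\Gamma_t\) of the common class for each \(t\), so they are modifications of each other.

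\emph{Step 4: the estimate.} From \(\|P\|_\beta\le\|\Gamma(P)-\Gamma(0)\|_\beta+\|\Gamma(0)\|_\beta\le\tfrac12\|P\|_\beta+\|\Gamma(0)\|_\beta\) (for \(\beta\) large), we get \(\|P\|_\beta\le2\|\Gamma(0)\|_\beta\). Step 1 with \(R=0\) then bounds \(\|\Gamma(0)\|_\beta^2\) by \(3C_L(Te^{\beta T}+1)\) times the right-hand side of \eqref{eq:FV-BSIE-estimate}, which gives the estimate with \(C_\beta\) depending only on \(\beta,C_L,L,T\).
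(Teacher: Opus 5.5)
Your proposal is correct and follows the paper's route: a Banach fixed-point argument for \(\Gamma\) on \(\mathcal H_\beta\), built on the pointwise bound of Lemma~\ref{lem:BSIE-fixed-time-aggregation} together with Tonelli and \(\int_0^s e^{\beta t}\,dt\le e^{\beta s}/\beta\). Like the paper, you pass from the \(dt\otimes d\mathbb P\)-a.e.\ fixed point to the every-\(t\) identity by taking the modification \(\Gamma(P)\). The only cosmetic difference is in the final estimate: the paper substitutes \(R=P\) into its self-map bound and absorbs the term \((6C_LTL^2/\beta)\|P\|_\beta^2\), whereas you use \(\|P\|_\beta\le 2\|\Gamma(0)\|_\beta\); both give a constant \(C_\beta\) depending only on \(\beta,C_L,L,T\).
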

\begin{proof}
We use the weakly progressively measurable modification of \(\Gamma(R)\)
constructed in Lemma~\ref{lem:BSIE-fixed-time-aggregation}.
The Stieltjes term in the estimate of \(\|\Gamma(R)\|_\beta^2\)
satisfies
\[
\int_0^T e^{\beta t}\mathbb E
 \left(\int_{(t,T]}\|G(s)\|_{\mathfrak L(H)}d\xi(s)\right)^2dt
\le Te^{\beta T}\mathbb E
 \left(\int_{(0,T]}\|G(s)\|_{\mathfrak L(H)}d\xi(s)\right)^2.
\]
We integrate \eqref{eq:BSIE-Gamma-pointwise} against
\(e^{\beta t}dt\). Applying Tonelli's theorem to the term containing
\(\int_t^T\|R(s)\|^2ds\) and using
\(\int_0^s e^{\beta t}dt\le e^{\beta s}/\beta\), we obtain
\begin{equation}
\|\Gamma(R)\|_\beta^2 \le CTe^{\beta T}\mathbb E\Bigg[ \|\Xi\|_{\mathfrak L(H)}^2+ \int_0^T\|F(s,0)\|_{\mathfrak L(H)}^2ds +\left(\int_{(0,T]}\|G(s)\|_{\mathfrak L(H)}d\xi(s)\right)^2\Bigg] +\frac{6C_LTL^2}{\beta}\|R\|_\beta^2.
\label{eq:BSIE-self-map-estimate}
\end{equation}
Thus \(\Gamma\) maps \(\mathcal H_\beta\) to itself.
In the difference \(\Phi_{R_1}(t)-\Phi_{R_2}(t)\), the terms involving
\(\Xi\) and \(G\) in \eqref{eq:BSIE-Phi-R} cancel.
Applying \eqref{eq:BSIE-F-Lipschitz} to
\(F(s,R_1(s))-F(s,R_2(s))\) therefore gives
\begin{equation}
\|\Gamma(R_1)-\Gamma(R_2)\|_\beta^2
\le\frac{C_LTL^2}{\beta}\|R_1-R_2\|_\beta^2.
\label{eq:BSIE-contraction-final}
\end{equation}
We choose \(\beta\) such that \(6C_LTL^2/\beta\le1/2\). Banach's
fixed-point theorem gives a fixed point \(R\) in the space of
\(dt\otimes d\mathbb P\)-equivalence classes. Put \(P=\Gamma(R)\). Then \(P(t)=\Gamma_t(P)\ \mathbb P\text{-a.s.}\) for each $t\in[0,T]$.
If \(P_1\) and \(P_2\) are solutions, uniqueness of the fixed point gives
\(P_1=P_2\), \(dt\otimes d\mathbb P\)-almost everywhere. Then
\(\Gamma_t(P_1)=\Gamma_t(P_2)\) \(\mathbb P\text{-a.s.}\) for each $t\in[0,T]$. The identity \eqref{eq:FV-BSIE-fixed-point} therefore gives
\(P_1(t)=P_2(t)\) \(\mathbb P\text{-a.s.}\) for each \(t\), which proves uniqueness up to
modification. We substitute \(R=P\) into \eqref{eq:BSIE-self-map-estimate}.
Since \(P=\Gamma(P)\), the bound
\((6C_LTL^2/\beta)\|P\|_\beta^2\le\tfrac12\|P\|_\beta^2\)
allows us to absorb the last term of
\eqref{eq:BSIE-self-map-estimate} into the left-hand side and obtain
\eqref{eq:FV-BSIE-estimate}. 
\end{proof}

To control quadratic pairings with random states, we next bound
\(\|P(t)\|_{\mathfrak L(H)}\) by conditional moments of the data.
\begin{proposition}
\label{cor:FV-BSIE-conditional-estimate}
Under \((C1)\)--\((C2)\), for every deterministic \(t\in[0,T]\),
\begin{align}
\|P(t)\|_{\mathfrak L(H)}^2
&\le C\mathbb E\Bigg[
 \|\Xi\|_{\mathfrak L(H)}^2+
 \int_t^T\|F(s,0)\|_{\mathfrak L(H)}^2ds
 \notag\\
&\hspace{26mm}
 +\left(\int_{(t,T]}\|G(s)\|_{\mathfrak L(H)}d\xi(s)\right)^2
 \,\Big|\,\mathcal F_t\Bigg],\quad\mathbb P\text{-a.s.},
\label{eq:FV-BSIE-conditional-estimate}
\end{align}
where \(C\) depends only on \(C_L,L,T\).
\end{proposition}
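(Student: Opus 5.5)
The plan is to start from the pointwise bound \eqref{eq:BSIE-Gamma-pointwise} of Lemma~\ref{lem:BSIE-fixed-time-aggregation}, applied with \(R=P\). Since \(P(t)=\Gamma_t(P)\) a.s. for each fixed \(t\), we get
\[
\|P(t)\|_{\mathfrak L(H)}^2\le 3C_L\,\mathbb E\Bigl[\|\Xi\|^2+T\int_t^T\|F(s,P(s))\|^2ds+\Bigl(\int_{(t,T]}\|G\|\,d\xi\Bigr)^2\Bigm|\mathcal F_t\Bigr].
\]
The Lipschitz bound \eqref{eq:BSIE-F-Lipschitz} gives \(\|F(s,P(s))\|^2\le2\|F(s,0)\|^2+2L^2\|P(s)\|^2\). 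What remains is to control the term \(\mathbb E[\int_t^T\|P(s)\|^2ds\mid\mathcal F_t]\). This is a backward Gronwall problem for the conditional quantities.

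Define \(D(t)\) as the conditional expectation on the right-hand side of \eqref{eq:FV-BSIE-conditional-estimate}, without the constant. For \(s\ge t\), the tower property gives \(\mathbb E[D(s)\mid\mathcal F_t]\le D(t)\). This holds because the integrals over \((s,T]\) are dominated by those over \((t,T]\) and \(\mathcal F_t\subset\mathcal F_s\). Set \(\phi(s):=\mathbb E[\|P(s)\|^2\mid\mathcal F_t]\) for \(s\ge t\). Applying the displayed bound at time \(s\) and then conditioning on \(\mathcal F_t\), we obtain
\[
\phi(s)\le C_1D(t)+C_2\int_s^T\phi(r)\,dr,\quad t\le s\le T,
\]
with \(C_1,C_2\) depending only on \(C_L,L,T\). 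The conditional Fubini interchange is justified because \(P\in\mathcal H_\beta\) and \(\|P\|^2\) is progressively measurable.

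I expect the main obstacle to be making this argument rigorous. The difficulty is that \(s\mapsto\phi(s)\) must be chosen as a jointly measurable version in \((\omega,s)\), integrable in \(s\), with the inequality holding for a.e. \(s\) outside a single null set. To avoid version issues, I would first work with test sets. For \(E\in\mathcal F_t\), set \(f_E(s):=\mathbb E[I_E\|P(s)\|^2]\), which is a deterministic integrable function of \(s\). Then \(f_E(s)\le C_1\mathbb E[I_ED(t)]+C_2\int_s^Tf_E(r)dr\) for a.e. \(s\in[t,T]\). The deterministic backward Gronwall lemma then yields \(\int_t^Tf_E(r)dr\le C_3\mathbb E[I_ED(t)]\).

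Substituting this back at the fixed time \(t\) gives \(\mathbb E[I_E\|P(t)\|^2]\le C\,\mathbb E[I_ED(t)]\) for every \(E\in\mathcal F_t\). Both \(\|P(t)\|^2\) and \(D(t)\) are \(\mathcal F_t\)-measurable; for \(\|P(t)\|\), this uses separability of \(H\) to write the norm as a countable supremum of weakly measurable pairings. Hence \(\|P(t)\|^2\le C D(t)\) a.s., which is \eqref{eq:FV-BSIE-conditional-estimate}. The constant depends only on \(C_L,L,T\), and in particular not on \(\beta\).
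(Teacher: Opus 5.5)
Your proposal is correct and follows essentially the same route as the paper: apply \eqref{eq:BSIE-Gamma-pointwise} with \(R=P\) together with the Lipschitz bound, and test against \(A\in\mathcal F_t\) to obtain a deterministic backward Gronwall inequality for \(r\mapsto\mathbb E[I_A\|P(r)\|_{\mathfrak L(H)}^2]\). Then use the arbitrariness of \(A\) to conclude. The only difference is that you bound \(\int_t^T f_E(r)\,dr\) first and then substitute back at time \(t\), whereas the paper evaluates the Gronwall bound directly at \(r=t\); the two are equivalent.
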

\begin{proof}
Applying \eqref{eq:BSIE-Gamma-pointwise} with \(R=P\), and then using
\eqref{eq:BSIE-F-Lipschitz}, gives, for every deterministic \(r\in[0,T]\),
\begin{align}
\|P(r)\|^2\le C\mathbb E\Bigg[
 &\|\Xi\|^2+\int_r^T\|F(s,0)\|^2ds+
 \left(\int_{(r,T]}\|G(s)\|d\xi(s)\right)^2
 \notag\\
 &+\int_r^T\|P(s)\|^2ds\,\Big|\,\mathcal F_r\Bigg].
\label{eq:BSIE-preconditional-weighted}
\end{align}
Fix \(t\in[0,T]\) and \(A\in\mathcal F_t\). For each \(r\ge t\), we multiply
\eqref{eq:BSIE-preconditional-weighted} by \(I_A\) and take expectations.
Since \(A\in\mathcal F_r\), the tower property and the bound
\(\int_{(r,T]}\|G(s)\|d\xi(s)\le\int_{(t,T]}\|G(s)\|d\xi(s)\)
give
\begin{equation}
\mathbb E[I_A\|P(r)\|^2]
\le C\mathbb E\!\left[I_A\!\left(
\|\Xi\|^2+\int_t^T\|F(s,0)\|^2ds+
\left(\int_{(t,T]}\|G(s)\|d\xi(s)\right)^2\right)\right]
+C\int_r^T\mathbb E[I_A\|P(s)\|^2]ds.
\label{eq:BSIE-tested-Gronwall-bound}
\end{equation}
We apply the backward Gronwall inequality to
\(r\mapsto\mathbb E[I_A\|P(r)\|^2]\) in
\eqref{eq:BSIE-tested-Gronwall-bound} and evaluate the bound at \(r=t\).
Since \(A\in\mathcal F_t\) is arbitrary, this gives
\eqref{eq:FV-BSIE-conditional-estimate}.
\end{proof}

For every \(t\in[0,T]\) and \(u,v\in H\), the
operator-valued identity \eqref{eq:FV-BSIE-fixed-point}, together with
Lemma~\ref{lem:BSIE-fixed-time-aggregation}, yields
\begin{align}
	&\langle P(t)u,v\rangle_H
	\notag\\
	&=
	\mathbb E\Bigg[
	\langle\Xi L(t,T)u,L(t,T)v\rangle_H
	+\int_t^T
	\langle F(s,P(s))L(t,s)u,L(t,s)v\rangle_H\,ds
	\notag\\
	&\hspace{27mm}
	+\int_{(t,T]}
	\langle G(s)L(t,s-)u,L(t,s-)v\rangle_H\,d\xi(s)
	\,\Bigg|\,\mathcal F_t\Bigg],
	\quad\mathbb P\text{-a.s.}
	\label{eq:BSIE-Ito-random-duality}
\end{align}
By the locality property in \((C1)\), the scalar representation
\eqref{eq:BSIE-Ito-random-duality} extends first to
\(\mathcal F_t\)-simple \(H\)-valued random variables and then, by
\(L^4\)-approximation together with \eqref{eq:BSIE-random-maximal-estimate}
and the integrability conditions
\[
\begin{aligned}
\Xi\in L_w^2(\mathcal F_T;\mathfrak L(H)),\quad
F(\cdot,P(\cdot))\in L^2_{\mathbb F,w}(0,T;\mathfrak L(H)),\quad
G\in L^{1,2}_{\mathbb F,\xi,w}(0,T;\mathfrak L(H)),
\end{aligned}
\]
to all \(u,v\in L^4(\mathcal F_t;H)\).

The following weak right-continuity property of \(P\) is needed in the
short-time moving-spike estimate.
\begin{lemma}
\label{lem:FV-BSIE-weak-right-continuity}
Assume \((C1)\)--\((C2)\), and let \(P\) be the solution of
BSIE \eqref{eq:FV-BSIE}. Let \(\alpha\ge1\), and suppose that
\begin{equation}
\begin{aligned}
(\Xi,F(\cdot,0),G)\in{}&
L_w^{2\alpha}(\mathcal F_T;\mathfrak L(H))
\times L^{2,2\alpha}_{\mathbb F,w}(0,T;\mathfrak L(H))
\times L^{1,2\alpha}_{\mathbb F,\xi,w}(0,T;\mathfrak L(H)).
\end{aligned}
\label{eq:BSIE-Ito-high-moment-data}
\end{equation}
Assume, in addition, that there exists \(C_{L,\alpha}>0\) such that,
for every \(t\in[0,T]\) and every
\(\eta\in L^{4\alpha}(\mathcal F_t;H)\),
\begin{equation}
\mathbb E\left[
\sup_{t\le s\le T}
\|L(t,s)\eta\|_H^{4\alpha}
\,\middle|\,\mathcal F_t
\right]
\le
C_{L,\alpha}\|\eta\|_H^{4\alpha},
\quad\mathbb P\text{-a.s.}
\label{eq:BSIE-high-order-maximal-estimate}
\end{equation}
Then the solution satisfies
\begin{equation}
\begin{aligned}
\sup_{0\le t\le T}
\|P(t)\|_{L_w^{2\alpha}(\mathcal F_t;\mathfrak L(H))}<\infty,
\quad F(\cdot,P(\cdot))
\in L^{2,2\alpha}_{\mathbb F,w}(0,T;\mathfrak L(H)).
\end{aligned}
\label{eq:unbounded-xi-BSIE-high-moments}
\end{equation}
Moreover, for every \(r\in[0,T)\) and
\(u,v\in L^{4\alpha}(\mathcal F_r;H)\),
\begin{equation}
\langle P(r+h)u,v\rangle_H
\longrightarrow
\langle P(r)u,v\rangle_H
\quad\text{in }L^\alpha(\Omega),
\quad h\downarrow0.
\label{eq:FV-BSIE-weak-right-continuity}
\end{equation}
\end{lemma}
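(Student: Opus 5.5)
The proof has two parts: the high-moment bounds \eqref{eq:unbounded-xi-BSIE-high-moments}, and then weak right-continuity obtained from the flow property and the scalar representation \eqref{eq:BSIE-Ito-random-duality} tested against $L^{4\alpha}(\mathcal F_r;H)$ vectors.

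\emph{Step 1 (moments).} Start from \eqref{eq:BSIE-Gamma-pointwise} with $R=P$, but redo the conditional H\"older step of Lemma~\ref{lem:BSIE-fixed-time-aggregation} with exponents $(2\alpha,4\alpha,4\alpha)$ in place of $(2,4,4)$. Using \eqref{eq:BSIE-high-order-maximal-estimate}, this gives
\[
\|P(t)\|_{\mathfrak L(H)}^{2\alpha}\le C\,\mathbb E\Big[\|\Xi\|^{2\alpha}+\Big(\int_t^T\|F(s,P(s))\|^2ds\Big)^{\alpha}+\Big(\int_{(t,T]}\|G\|\,d\xi\Big)^{2\alpha}\,\Big|\,\mathcal F_t\Big].
\]
Take expectations and apply the Lipschitz bound \eqref{eq:BSIE-F-Lipschitz} together with $(\int_t^T\|P\|^2)^\alpha\le T^{\alpha-1}\int_t^T\|P\|^{2\alpha}$. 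This yields
\[
y(t):=\mathbb E\|P(t)\|^{2\alpha}\le C\,\text{data}+C\int_t^T y(s)\,ds .
\]
Backward Gronwall then bounds $\sup_t y(t)$. Before applying Gronwall, $y$ must be known to be integrable. I would obtain this either from a truncation/absorption argument, or by running the contraction of Theorem~\ref{thm:FV-BSIE-wellposed} in the $L^{2\alpha}$-analogue of $\mathcal H_\beta$; the fixed point there coincides with $P$ by uniqueness in $\mathcal H_\beta$. Since $P$ is then bounded in $L^{2\alpha}$ uniformly in $t$, the claim $F(\cdot,P)\in L^{2,2\alpha}_{\mathbb F,w}$ follows from Lipschitz continuity.

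\emph{Step 2 (decomposition).} Fix $u,v\in L^{4\alpha}(\mathcal F_r;H)$ and set $u_h=L(r,r+h)u$, $v_h=L(r,r+h)v$, which lie in $L^{4\alpha}(\mathcal F_{r+h};H)$. By the flow property in (C1), $L(r+h,s)u_h=L(r,s)u$. The extended representation \eqref{eq:BSIE-Ito-random-duality} at time $r+h$, applied to $u_h,v_h$, therefore gives
\[
\langle P(r+h)u_h,v_h\rangle=\mathbb E[\Phi(r+h)\mid\mathcal F_{r+h}],
\]
where $\Phi(r+h)$ is $\Phi_P(r)(u,v)$ with the Lebesgue and Stieltjes integrals restricted to $(r+h,T]$. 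Now decompose
\[
\langle P(r+h)u,v\rangle-\langle P(r)u,v\rangle=\mathrm{I}_h+\mathrm{II}_h+\mathrm{III}_h ,
\]
where:
\begin{itemize}
\item $\mathrm{I}_h=\langle P(r+h)u,v\rangle-\langle P(r+h)u_h,v_h\rangle$;
\item $\mathrm{II}_h=\mathbb E[\Phi_P(r)(u,v)\mid\mathcal F_{r+h}]-\mathbb E[\Phi_P(r)(u,v)\mid\mathcal F_r]$;
\item $\mathrm{III}_h=-\mathbb E\big[\int_r^{r+h}\langle F(s,P(s))L(r,s)u,L(r,s)v\rangle\,ds+\int_{(r,r+h]}\langle G(s)L(r,s-)u,L(r,s-)v\rangle\,d\xi(s)\,\big|\,\mathcal F_{r+h}\big]$.
\end{itemize}

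\emph{Step 3 (the three terms).} $\mathrm{II}_h\to0$ in $L^\alpha$ because $\Phi_P(r)(u,v)\in L^\alpha$ by H\"older with the data of Step 1, and right-continuity of the filtration gives $L^\alpha$ convergence of the Doob martingale. $\mathrm{III}_h$ is controlled by conditional Jensen and H\"older, which bound it by
\[
\Big\|\Big(\int_r^{r+h}\|F\|\,ds+\int_{(r,r+h]}\|G\|\,d\xi\Big)\sup_s\|L(r,s)u\|\,\sup_s\|L(r,s)v\|\Big\|_{L^\alpha},
\]
and this tends to zero by dominated convergence, since $(r,r+h]\downarrow\emptyset$ and $\xi$ is right-continuous. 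For $\mathrm{I}_h$, write
\[
\mathrm{I}_h=\langle P(r+h)(u-u_h),v\rangle+\langle P(r+h)u_h,v-v_h\rangle
\]
and bound it by $\|P(r+h)\|_{L^{2\alpha}}\,\|u-u_h\|_{L^{4\alpha}}\,\|v\|_{L^{4\alpha}}$ plus the analogous term. Step 1 gives a uniform bound on $\|P(r+h)\|_{L^{2\alpha}}$. Moreover $u_h\to u$ in $L^{4\alpha}$, because the c\`adl\`ag version $s\mapsto L(r,s)u$ is right-continuous at $r$ with value $u$, and \eqref{eq:BSIE-high-order-maximal-estimate} supplies the domination.

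\emph{Main obstacle.} I expect the main obstacle to be the a priori integrability needed for Gronwall in Step 1. A second delicate point is the justification of applying \eqref{eq:BSIE-Ito-random-duality} with the random vectors $u_h,v_h$ at the shifted time $r+h$, since that identity was extended only from $L^4$ approximations; it has to be checked that the $L^{4\alpha}$ data suffice to pass to the limit there.
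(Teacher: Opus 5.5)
Your proposal is correct, and its continuity argument uses a different decomposition from the paper's. First, two remarks on Step~1.

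The a priori integrability you identify as the main obstacle does not arise if you start from Proposition~\ref{cor:FV-BSIE-conditional-estimate} instead of \eqref{eq:BSIE-Gamma-pointwise}. That proposition has already carried out the Gronwall argument at the $L^2$ level, tested against $I_A$ with $A\in\mathcal F_t$; there, integrability in time is automatic because $P\in\mathcal H_\beta$. Its right-hand side therefore involves only the data. Raising it to the power $\alpha$ and applying conditional Jensen gives $\sup_t\mathbb E\|P(t)\|_{\mathfrak L(H)}^{2\alpha}\le C_\alpha\mathbb E[\|\Xi\|^{2\alpha}+(\int_0^T\|F(s,0)\|^2ds)^\alpha+(\int_{(0,T]}\|G\|\,d\xi)^{2\alpha}]$ directly, without using \eqref{eq:BSIE-high-order-maximal-estimate}. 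This is what the paper does.

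Your remedy of running the contraction in an $L^{2\alpha}$ analogue of $\mathcal H_\beta$ also works. It does so only if you keep the $F(\cdot,0)$ contribution in the form $(\int_0^T\|F(s,0)\|\,ds)^{2\alpha}$, because $F(\cdot,0)$ is assumed to lie in $L^{2,2\alpha}_{\mathbb F,w}$, not in $L^{2\alpha}(dt\otimes d\mathbb P)$. Your second concern is already covered by the paper. The random-vector form of \eqref{eq:BSIE-Ito-random-duality} holds for all $u,v\in L^4(\mathcal F_t;H)$, and $u_h\in L^{4\alpha}(\mathcal F_{r+h};H)\subset L^4(\mathcal F_{r+h};H)$. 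The paper applies it at time $r+h$ in the same way.

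For continuity, the paper keeps $u,v$ fixed and proves $\Phi_P^{u,v}(r+h)\to\Phi_P^{u,v}(r)$ in $L^\alpha$. It writes $L(r+h,s)u-L(r,s)u=L(r+h,s)\delta_hu$ with $\delta_hu=u-L(r,r+h)u$, together with the left-limit analogue. It then bounds the terminal, long $ds$-, and long $d\xi$-increments by the maximal estimate started at $r+h$, and treats the integrals over $(r,r+h]$ separately. It finishes with the same reverse-martingale argument as your $\mathrm{II}_h$.

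You instead transport the test vectors along the flow. The long integrals then coincide exactly, and the whole discrepancy sits in $\mathrm{I}_h$, which you control by the uniform bound on $\|P(r+h)\|_{L^{2\alpha}}$ and by $u_h\to u$ in $L^{4\alpha}$.

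What each route buys:
\begin{itemize}
\item Your route replaces three increment estimates by one and uses the first assertion of the lemma directly.
\item The paper's route uses $P$ in the continuity step only through $F(\cdot,P)\in L^{2,2\alpha}_{\mathbb F,w}$. It also isolates the short-interval convergences of $\int_r^{r+h}\|F(s,P(s))\|\,ds$ and $\int_{(r,r+h]}\|G\|\,d\xi$, which are reused in the proof of Theorem~\ref{thm:FV-BSIE-Ito}; your bound on $\mathrm{III}_h$ contains the same facts.
\end{itemize}
Both routes need the left-limit flow identity $L(r+h,s-)L(r,r+h)u=L(r,s-)u$ simultaneously for all $s\in(r+h,T]$. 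This follows from the identity at rational times and right continuity.
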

The proof is given in
Appendix~\ref{subsec:proof-FV-BSIE-weak-right-continuity}.
\subsection{An It\^o-type formula for the operator-valued BSIE}
\label{subsec:FV-BSIE-Ito}
To establish the It\^o-type formula, we take \(L\) to be the solution
operator of the following homogeneous linear SEE.
Let \(A,B\) satisfy \((A1)\), and let
\(D:[0,T]\times\Omega\to\mathfrak L(H;H)\)
be weakly progressively measurable and uniformly bounded: there exists
\(C_D\ge0\) such that
\begin{equation}\label{bound-D-condition}
\|D(t)x\|_H\le C_D\|x\|_H,
\quad (t,\omega,x)\in[0,T]\times\Omega\times H.
\end{equation}
Fix \(\alpha\ge1\), choose \(\kappa>2\) and
\(\beta\ge\max\{\beta_{2\alpha},\beta_{4\alpha}\}\) in the forward
weight, and assume
\begin{equation}
\mathfrak E_{4\alpha\kappa,C_D}(\xi)<\infty.
\label{eq:BSIE-flow-exponential-condition}
\end{equation}
For \(r\in[0,T]\) and \(\eta\in L^{2m}(\mathcal F_r;H)\),
\(1\le m\le2\alpha\), let
\(X^{r,\eta}\) be the unique solution on \([r,T]\) of
\begin{equation}
\left\{
\begin{aligned}
 dX^{r,\eta}(s)
 &=A(s)X^{r,\eta}(s)\,ds
   +B(s)X^{r,\eta}(s)\,dW(s)
   +D(s)X^{r,\eta}(s-)\,d\xi(s),
   \quad s\in[r,T],
 \\
 X^{r,\eta}(r)&=\eta.
\end{aligned}
\right.
\label{eq:BSIE-deterministic-start-homogeneous-SEE}
\end{equation}
We set 
\begin{equation}
L(r,s)\eta:=X^{r,\eta}(s),
\quad r\le s\le T.
\label{eq:SEE-deterministic-start-L-definition}
\end{equation}
Proposition~\ref{lem:SEE-joint-cadlag-flow-version} in
Appendix~\ref{subsec:homogeneous-SEE-evolution-family} proves that
\(L\) satisfies \((C1)\) and
\eqref{eq:SEE-deterministic-start-conditional-estimate}. In particular,
\eqref{eq:BSIE-random-maximal-estimate} and
\eqref{eq:BSIE-high-order-maximal-estimate} follow by taking
\(m=2\) and \(m=2\alpha\), respectively.

Let \(\zeta\in L_{\mathbb F}^{8\alpha}(0,T;H)\).
For \(t_0\in[0,T)\) and \(0<\rho<T-t_0\), set
\(E_\rho=[t_0,t_0+\rho),\)
and let \(x^\rho\) be the solution of
\begin{equation}
	\left\{
	\begin{aligned}
		dx^\rho(t)
		&=
		A(t)x^\rho(t)\,dt
		+
		\big[B(t)x^\rho(t)+\zeta(t)I_{E_\rho}(t)\big]\,dW(t)
		\\
		&\quad+
		D(t)x^\rho(t-)\,d\xi(t),
		\quad t\in[0,T],
		\\
		x^\rho(0)&=0.
	\end{aligned}
	\right.
	\label{eq:BSIE-Ito-xrho-general}
\end{equation}
The following identity will be used to evaluate the quadratic term
in the spike expansion.
\begin{theorem}
	\label{thm:FV-BSIE-Ito}
	Assume \((A1)\) and \((C2)\), and let \(D\) be weakly
	progressively measurable with \(\|D(t)\|_{\mathfrak L(H)}\le C_D\).
	Let \(L\) be defined by
	\eqref{eq:SEE-deterministic-start-L-definition} and \(P\) be the
	solution of \eqref{eq:FV-BSIE}. Let \(\alpha\ge1\), and suppose that
	\eqref{eq:BSIE-Ito-high-moment-data} and
    \eqref{eq:BSIE-flow-exponential-condition} hold. Let
    \(\zeta\in L_{\mathbb F}^{8\alpha}(0,T;H)\).
    Then, for Lebesgue-almost every \(t_0\in[0,T)\), there exists a unique
    pair of progressively measurable real-valued processes
    \((\sigma^\rho,Z^\rho)\) such that
	\((\sigma^\rho,Z^\rho) \in L^\alpha_{\mathbb F}(0,T) \times L^{2,\alpha}_{\mathbb F}(0,T)\)
    and
    \begin{align}
        &\big\langle
        P(t)x^\rho(t),
        x^\rho(t)
        \big\rangle_H
        +
        \sigma^\rho(t)=
        \big\langle
        \Xi x^\rho(T),
        x^\rho(T)
        \big\rangle_H
        \notag\\
        &\quad+
        \int_t^T
        \Big[
        \big\langle
        F(s,P(s))x^\rho(s),
        x^\rho(s)
        \big\rangle_H
        -
        \big\langle
        P(s)\zeta(s),
        \zeta(s)
        \big\rangle_H
        I_{E_\rho}(s)
        \Big]\,ds
        \notag\\
        &\quad+
        \int_{(t,T]}
        \big\langle
        G(s)x^\rho(s-),
        x^\rho(s-)
        \big\rangle_H\,d\xi(s)-
        \int_t^T
        Z^\rho(s)\,dW(s),
        \label{eq:FV-BSIE-Ito}
    \end{align}
    up to evanescence.
    Moreover,
    \begin{equation}
        \sup_{0\le t\le T}
        \mathbb E|\sigma^\rho(t)|^\alpha
        =
        o(\rho^\alpha),
        \quad
        \rho\downarrow0,
        \label{eq:FV-BSIE-Ito-sigma}
    \end{equation}
    and
    \begin{equation}
        \mathbb E
        \left[
        \left(
        \int_0^T
        |Z^\rho(s)|^2\,ds
        \right)^{\alpha/2}
        \right]
        =
        O(\rho^\alpha),
        \quad
        \rho\downarrow0.
        \label{eq:FV-BSIE-Ito-Z}
    \end{equation}
\end{theorem}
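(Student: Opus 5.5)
The plan is to represent \(x^\rho\) through the evolution family by a variation-of-constants formula, insert it into the scalar representation \eqref{eq:BSIE-Ito-random-duality}, and read off \(\sigma^\rho\) as the error from freezing the spike. Since \(x^\rho\equiv0\) on \([0,t_0]\) and \(x^\rho\) solves the homogeneous equation \eqref{eq:BSIE-deterministic-start-homogeneous-SEE} after \(t_0+\rho\), the flow property in \((C1)\) gives \(x^\rho(s)=L(t,s)x^\rho(t)\) for \(s\ge t\ge t_0+\rho\). On this range \eqref{eq:BSIE-Ito-random-duality}, extended to \(u=v=x^\rho(t)\in L^{4\alpha}(\mathcal F_t;H)\), reads
\[
\langle P(t)x^\rho(t),x^\rho(t)\rangle_H=\mathbb E\big[Y^\rho\,\big|\,\mathcal F_t\big]-\int_0^t\langle F(s,P(s))x^\rho(s),x^\rho(s)\rangle_H\,ds-\int_{(0,t]}\langle G(s)x^\rho(s-),x^\rho(s-)\rangle_H\,d\xi(s),
\]
where \(Y^\rho:=\langle\Xi x^\rho(T),x^\rho(T)\rangle_H+\int_0^T\langle F(s,P(s))x^\rho,x^\rho\rangle_H\,ds+\int_{(0,T]}\langle Gx^\rho(s-),x^\rho(s-)\rangle_H\,d\xi(s)\). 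Accordingly, I will define the martingale \(m^\rho(t):=\mathbb E[Y^\rho\mid\mathcal F_t]-\mathbb E[\int_0^T\langle P(s)\zeta,\zeta\rangle_H I_{E_\rho}ds\mid\mathcal F_t]\) and take \(Z^\rho\) from its Brownian martingale representation. I then define \(\sigma^\rho\) so that \eqref{eq:FV-BSIE-Ito} holds identically, i.e. \(\sigma^\rho(t):=m^\rho(t)-(\text{adapted integrals up to }t)-\langle P(t)x^\rho(t),x^\rho(t)\rangle_H\), using the progressive version of \(P\). With this choice, existence and uniqueness of \((\sigma^\rho,Z^\rho)\) are immediate: once \(Z^\rho\) is fixed by martingale representation, \(\sigma^\rho\) is determined. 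Integrability follows from the moment bounds \eqref{eq:unbounded-xi-BSIE-high-moments}, Lemma~\ref{lem:FV-SEE-high-order-Vstar} applied to \(x^\rho\) (which gives \(\mathbb E\sup\|x^\rho\|^{4\alpha}=O(\rho^{\alpha})\), \(\zeta\) entering only through the diffusion on \(E_\rho\)), and H\"older's inequality.

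The bound \eqref{eq:FV-BSIE-Ito-Z} follows from BDG applied to \(m^\rho\). Its terminal value is bounded by \(\big(\|\Xi\|+\int\|F\|+\int\|G\|d\xi\big)\sup\|x^\rho\|^2+\int_{E_\rho}\|P\|\|\zeta\|^2\), whose \(L^\alpha\) norm is \(O(\rho)\) by H\"older's inequality combined with the \(O(\rho^{1/2})\) bound on \(x^\rho\) and \(\sup_t\mathbb E\|P(t)\|^{2\alpha}<\infty\).

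For \(t\ge t_0+\rho\), the identity above gives exactly \(\sigma^\rho(t)=\mathbb E[\int_{E_\rho}\langle P\zeta,\zeta\rangle\,ds\mid\mathcal F_t]-\int_{E_\rho}\langle P\zeta,\zeta\rangle\,ds=0\). For \(t\le t_0\), \(x^\rho(t)=0\), so \(\sigma^\rho(t)\) reduces to \(\mathbb E[Y^\rho-\int_{E_\rho}\langle P\zeta,\zeta\rangle\mid\mathcal F_t]\). By the tower property this equals \(\mathbb E\big[\langle P(t_0+\rho)x^\rho(t_0+\rho),x^\rho(t_0+\rho)\rangle_H+\int_{E_\rho}(\langle Fx^\rho,x^\rho\rangle-\langle P\zeta,\zeta\rangle)ds+\int_{(t_0,t_0+\rho]}\langle Gx^\rho(s-),x^\rho(s-)\rangle d\xi\mid\mathcal F_t\big]\). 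The intermediate range \(t\in(t_0,t_0+\rho)\) is handled the same way with the quantities evaluated at \(t\). The key step is therefore to show
\[
\mathbb E\big|\langle P(t_0+\rho)x^\rho(t_0+\rho),x^\rho(t_0+\rho)\rangle_H-\textstyle\int_{E_\rho}\langle P(s)\zeta(s),\zeta(s)\rangle_H\,ds\big|^\alpha=o(\rho^\alpha).
\]
The \(F\)- and \(G\)-terms over the short interval are \(o(\rho)\) in \(L^\alpha\), since the factor \(\int_{E_\rho}\|F\|ds\) tends to \(0\) and \(\int_{(t_0,t_0+\rho]}\|G\|d\xi\to0\) by right continuity, and each is multiplied by \(\sup\|x^\rho\|^2=O(\rho)\).

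This key estimate is the main obstacle. My approach is to write \(x^\rho(t_0+\rho)=\int_{E_\rho}\zeta\,dW+r^\rho\). Using the flow bounds and right continuity of \(\xi\) (no \(O(\rho)\) assumption on \(\xi\)), the remainder satisfies \(\mathbb E\|r^\rho\|^{4\alpha}=o(\rho^{2\alpha})\) for a.e. \(t_0\). Next, I replace \(P(t_0+\rho)\) by \(P(t_0)\), applying the weak right-continuity \eqref{eq:FV-BSIE-weak-right-continuity} together with an approximation of \(\zeta\) near \(t_0\) by an \(\mathcal F_{t_0}\)-measurable value \(\zeta(t_0)\). This is where Lebesgue points of \(\zeta\), and of \(s\mapsto\mathbb E\|P(s)\|^{2\alpha}\)-type functions, give the restriction to almost every \(t_0\). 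The resulting It\^o isometry, \(\mathbb E[\langle P(t_0)\int\zeta dW,\int\zeta dW\rangle\mid\mathcal F_{t_0}]\approx\rho\langle P(t_0)\zeta(t_0),\zeta(t_0)\rangle\), holds only in conditional mean. For the \(L^\alpha\) statement I would instead compare pathwise, via the expansion \(\langle P(t_0)w,w\rangle\) with \(w=\zeta(t_0)(W(t_0+\rho)-W(t_0))\), after taking the conditional expectation given \(\mathcal F_t\) for \(t\le t_0\), where only the conditional mean matters. This last point is where I expect the most care to be needed.
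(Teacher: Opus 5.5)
Your architecture matches the paper's. You obtain $Z^\rho$ from the martingale representation of the terminal quantity (including $-\int_{E_\rho}\langle P\zeta,\zeta\rangle_H\,ds$), take $\sigma^\rho$ as the induced remainder, and then use the random-vector duality \eqref{eq:BSIE-Ito-random-duality}, freezing at a Lebesgue point of $\zeta$, and weak right-continuity of $P$. Your observation that the duality applied to $x^\rho$ itself gives $\sigma^\rho(t)=0$ exactly for $t\ge t_1:=t_0+\rho$ is correct. It is also slightly more economical than the paper's comparison with $\sqrt\rho\,z^\rho$ on all of $[0,T]$. There are, however, two genuine gaps.

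First, the estimate you single out as the key step is false. Take $V=H$, $A=B=D=0$, $F=G=0$, $\Xi=P_0$ deterministic (so $P\equiv P_0$), and $\zeta\equiv\zeta_0$. Then the difference is $\langle P_0\zeta_0,\zeta_0\rangle_H\bigl((W(t_1)-W(t_0))^2-\rho\bigr)$, whose $L^\alpha$-norm is exactly of order $\rho$. What must be shown is the conditional statement
\[
\sup_{0\le t\le t_1}\Bigl\|\mathbb E\Bigl[\langle P(t_1)x^\rho(t_1),x^\rho(t_1)\rangle_H-\int_{E_\rho\cap[t,T]}\langle P(s)\zeta(s),\zeta(s)\rangle_H\,ds\,\Big|\,\mathcal F_t\Bigr]-\langle P(t)x^\rho(t),x^\rho(t)\rangle_H\Bigr\|_{L^\alpha}=o(\rho),
\]
plus the short-interval $F$- and $G$-terms. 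For $t\in(t_0,t_1)$, conditioning does not remove the fluctuation. It is cancelled by the current-time term through $\mathbb E[(W(t_1)-W(t_0))^2\mid\mathcal F_t]-(W(t)-W(t_0))^2=t_1-t$, i.e.\ \eqref{eq:constant-spike-Brownian-during}, which your outline never uses.

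The freezing errors must be controlled in unconditional $L^\alpha$ before conditioning, and two ingredients are missing there. (i) The error $\langle[P(r)-P(t_0)]\zeta_0,\zeta_0\rangle_H(W(r)-W(t_0))^2$ requires convergence of the pairing in some $L^{\alpha'}$ with $\alpha'>\alpha$, not only in $L^\alpha$ as in \eqref{eq:FV-BSIE-weak-right-continuity}. The paper gets this by interpolating with the uniform $L^{2\alpha}$ bound on $P$. (ii) You need $\sup_{t_0\le t\le t_1}\|x^\rho(t)-\zeta_0(W(t)-W(t_0))\|_H=o(\sqrt\rho)$ in $L^{4\alpha}$, uniformly on the spike interval and not only at $t_1$. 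Flow bounds and right continuity of $\xi$ do not give this when $A$ and $B$ are unbounded. The energy estimate yields only $\int_{E_\rho}\|x^\rho\|_V^2\,ds=O(\rho)$, so $\int_{E_\rho}Bx^\rho\,dW$ is merely $O(\sqrt\rho)$. The paper first takes a bounded $V$-valued $\mathcal F_{t_0}$-measurable amplitude, so that $\sqrt\rho\,A\eta^\rho$ and $\sqrt\rho\,B\eta^\rho$ enter Lemma~\ref{lem:FV-SEE-high-order-Vstar} as data of size $O(\rho)$ (Proposition~\ref{prop:moving-Brownian-spike-FV}). It then passes to $H$-valued amplitudes by density, and to general $\zeta$ through the Lebesgue point.

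Second, your proof of \eqref{eq:FV-BSIE-Ito-Z} fails at $\alpha=1$. An $L^1$ bound on the terminal value does not control $\mathbb E(\int_0^T|Z^\rho(s)|^2ds)^{1/2}$, because Doob's inequality fails in $L^1$; you need $\|R^\rho\|_{L^r}=O(\rho)$ for some $r>1$. At $\alpha=1$ the factors $\|\Xi\|$, $\int_0^T\|F(s,P(s))\|\,ds$ and $\int_{(0,T]}\|G\|\,d\xi$ are only in $L^2$. So this requires $\|\sup_t\|x^\rho(t)\|_H\|_{L^q}=O(\sqrt\rho)$ for some $q>4$. The unweighted SEE estimate at that order would need $\mathfrak E_{q\kappa,C_D}(\xi)<\infty$ and moments of $\zeta$ beyond order $8$, neither of which is assumed. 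The paper closes this with the weighted eighth-moment estimate at a smaller parameter $\kappa'\in(2,\kappa)$, followed by H\"older's inequality against $e^{4\kappa C_D\xi(T)}$. The exponents are chosen so that only $\mathfrak E_{4\kappa,C_D}(\xi)$ is used.

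A minor correction: $\mathbb E\sup_t\|x^\rho(t)\|_H^{4\alpha}$ is $O(\rho^{2\alpha})$, not $O(\rho^\alpha)$. This holds only at those $t_0$ where $\rho^{-1}\int_{E_\rho}\mathbb E\|\zeta\|_H^{8\alpha}ds$ stays bounded.
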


The proof is given in Appendix~\ref{subsec:BSIE-Ito-proofs}.

\section{Singular stochastic control and maximum principles}
\label{sec:control-problem}

\subsection{Formulation of the control problem}
\label{subsec:control-formulation}
Let \(U\) be a nonempty separable metric space with metric
\(d\). Fix a reference point \(u_0\in U\) and write
\(|u|_U:=d(u,u_0)\).

We impose each of the polynomial-moment and conditional
exponential-moment requirements below for every \(m\ge1\).
The set of admissible control pairs is
\begin{align}
    \mathcal U[0,T]
    :=&
    \bigg\{
    (u,\xi):
    \ u \ \text{is a \(U\)-valued progressively measurable process satisfying} \ \mathbb E\int_0^T |u(t)|_U^{m}\,dt<\infty,     \notag\\
    & \xi \ \text{is a real-valued adapted nondecreasing c\`adl\`ag process with} \  \xi(0)=0,\ \xi(T)\in L^m(\mathcal F_T),
    \label{eq:admissible-control-set}\\
	   & \text{and}\
 \mathfrak E_{m,L_\gamma}(\xi)<\infty, \ \text{for every}\ m \geq 1              \notag
    \bigg\}.
\end{align}
Here \(L_\gamma\) is the uniform Lipschitz constant of the singular
coefficient with respect to the state variable. The quantity
\(\mathfrak E_{m,L_\gamma}(\xi)\) is defined in
\eqref{eq:unbounded-xi-conditional-exponential-moment}.
The conditional exponential-moment requirement holds automatically when \(L_\gamma=0\), since then
\(\mathfrak E_{m,0}(\xi)=1\).

For $x_0\in H$ and each \((u,\xi)\in\mathcal U[0,T]\), the state process \(x^{u,\xi}\) satisfies
\begin{equation}
\begin{aligned}
    x^{u,\xi}(t)
    &=
    x_0
    +
    \int_0^t
    \big[
        A(s)x^{u,\xi}(s)
        +
        a(s,x^{u,\xi}(s),u(s))
    \big]\,ds
    \\
    &\quad
    +
    \int_0^t
    \big[
        B(s)x^{u,\xi}(s)
        +
        b(s,x^{u,\xi}(s),u(s))
    \big]\,dW(s)
    +
    \int_{(0,t]}
        \gamma(s,x^{u,\xi}(s-))\,d\xi(s),
    \quad t\in[0,T].
\end{aligned}
\label{eq:controlled-state-equation}
\end{equation}
The cost functional is defined by
\begin{equation}
    J(u,\xi)
    :=
    \mathbb E\left[
        h(x^{u,\xi}(T))
        +
        \int_0^T
            k(t,x^{u,\xi}(t),u(t))\,dt
        +
        \int_{(0,T]}
            g(t,x^{u,\xi}(t-))\,d\xi(t)
    \right].
    \label{eq:cost-functional}
\end{equation}
The nonlinear coefficients are
\[
    a,b:\Omega\times[0,T]\times H\times U\to H,
    \quad
    k:\Omega\times[0,T]\times H\times U\to\mathbb R,
\] 
\[
    \gamma:\Omega\times[0,T]\times H\to H,
    \quad
    g:\Omega\times[0,T]\times H\to\mathbb R,
    \quad
    h:\Omega\times H\to\mathbb R.
\]
The control problem is to minimize \(J(u,\xi)\) over
\((u,\xi)\in\mathcal U[0,T]\).

Let
\((\bar u,\bar\xi)\in\mathcal U[0,T]\) be an optimal control and write
\(\bar x:=x^{\bar u,\bar\xi}\) for the corresponding state.

The following differentiability and growth conditions permit second-order
expansions of the state and cost and ensure that the cost is finite for
every admissible control.
\begin{assumption}
\label{ass:control-coefficients}
The maps \(a,b,k\) are
$
    \operatorname{Prog}
    \otimes\mathcal B(H)
    \otimes\mathcal B(U)
$-measurable, \(\gamma,g\) are
$
    \operatorname{Prog}\otimes\mathcal B(H)
$-measurable, and \(h\) is
$
    \mathcal F_T\otimes\mathcal B(H)
$-measurable.
For every \((t,\omega,u)\), the mappings
$
    a(t,\omega,\cdot,u),\
    b(t,\omega,\cdot,u),\
    k(t,\omega,\cdot,u)
$
are twice continuously Fr\'echet differentiable on \(H\). For every
\((t,\omega)\),
$
    \gamma(t,\omega,\cdot),\
    g(t,\omega,\cdot)
$
are twice continuously Fr\'echet differentiable on \(H\), and
\(h(\omega,\cdot)\in C^2(H;\mathbb R)\). Moreover, for every
\((t,\omega,x)\), the mappings
$
    a(t,\omega,x,\cdot),\
    b(t,\omega,x,\cdot),\
    k(t,\omega,x,\cdot)
$
are continuous on \(U\).
The first- and second-order derivatives of $a,b,k,\gamma,g,h$ with respect to \(x\) are
uniformly bounded.  
$
    h(\cdot,0)\in L^1(\mathcal F_T),
$
and there exists \(C>0\) such that for all \((t,\omega,u)\)
\[
\|a(t,0,u)\|_H
    +
    \|b(t,0,u)\|_H
    +
    |k(t,0,u)|
    \le
    C\bigl(1+|u|_U\bigr),\quad
\|\gamma(t,0)\|_H+|g(t,0)|\le C.
\]

\end{assumption}

For every admissible control \((u,\xi)\),
Corollary~\ref{thm:FV-SEE-wellposed} yields a unique solution of
\eqref{eq:controlled-state-equation}. 
Proposition~\ref{prop:unbounded-xi-SEE-estimate} gives, for
all \(m\ge1\),
\begin{equation}
 x^{u,\xi}\in
 \mathbb D^{2m}_{\mathbb F}(0,T;H)
 \cap L^{2,2m}_{\mathbb F}(0,T;V).
 \label{eq:controlled-state-high-order-moments}
\end{equation}

Along the optimal trajectory \((\bar x,\bar u,\bar\xi)\), set, for $\psi=a,b,k$,
\[\bar \psi_x(t):=\psi_x(t,\bar x(t),\bar u(t)),\quad 
\bar \psi_{xx}(t):=\psi_{xx}(t,\bar x(t),\bar u(t)),
\]
and, for $\phi=\gamma,g$, set
\[
    \bar\phi(t):=\phi(t,\bar x(t-)),
    \quad
    \bar\phi_x(t):=\phi_x(t,\bar x(t-)),
    \quad
    \bar\phi_{xx}(t):=\phi_{xx}(t,\bar x(t-)).
\]

Set
\[
\bar A(t):=A(t)+\bar a_x(t),\qquad
\bar B(t):=B(t)+\bar b_x(t).
\]

\subsection{Variational equations and estimates}
\label{subsec:variational-equations}
We first perturb the regular control while keeping the singular component fixed.

Let \(v(\cdot)\) be an admissible regular control.
For \(t_0\in[0,T)\) and \(0<\rho<T-t_0\), put
\(E_\rho:=[t_0,t_0+\rho)\). The spike perturbation is
\[
    u^\rho(t)
    :=
    \begin{cases}
        v(t), & t\in E_\rho,\\
        \bar u(t), & t\notin E_\rho.
    \end{cases}
\]
Then \(u^\rho\) is admissible. Let
\(x^\rho:=x^{u^\rho,\bar\xi}\) denote the corresponding state process.
For $\psi=a,b,k$, write
\[
 \delta \psi(t):=\psi(t,\bar x(t),v(t))-\psi(t,\bar x(t),\bar u(t)), \quad \delta \psi_x(t):=\psi_x(t,\bar x(t),v(t))-\psi_x(t,\bar x(t),\bar u(t)).
\]

The first-order variational equation is
\begin{equation}
\left\{
\begin{aligned}
    dx_1^\rho(t)
    &=
    \bar A(t)x_1^\rho(t)\,dt
    +
    \big[
        \bar B(t)x_1^\rho(t)
        +
        \delta b(t)I_{E_\rho}(t)
    \big]\,dW(t)
    +
    \bar\gamma_x(t)x_1^\rho(t-)\,d\bar\xi(t),
    \quad t\in[0,T],
    \\
    x_1^\rho(0)&=0.
\end{aligned}
\right.
\label{eq:first-variation-regular}
\end{equation}
The second-order variational equation is
\begin{equation}
\left\{
\begin{aligned}
    dx_2^\rho(t)
    &=
    \bigg[
        \bar A(t)x_2^\rho(t)
        +
        \delta a(t)I_{E_\rho}(t)
        +
        \frac12
        \bar a_{xx}(t)
        \big(x_1^\rho(t),x_1^\rho(t)\big)
    \bigg]\,dt
    \\
    &\quad
    +
    \bigg[
        \bar B(t)x_2^\rho(t)
        +
        \delta b_x(t)x_1^\rho(t)I_{E_\rho}(t)
        +
        \frac12
        \bar b_{xx}(t)
        \big(x_1^\rho(t),x_1^\rho(t)\big)
    \bigg]\,dW(t)
    \\
    &\quad
    +
    \bigg[
        \bar\gamma_x(t)x_2^\rho(t-)
        +
        \frac12
        \bar\gamma_{xx}(t)
        \big(x_1^\rho(t-),x_1^\rho(t-)\big)
    \bigg]\,d\bar\xi(t),
    \quad t\in[0,T],
    \\
    x_2^\rho(0)&=0.
\end{aligned}
\right.
\label{eq:second-variation-regular}
\end{equation}
The next lemma estimates \(x_1^\rho\), \(x_2^\rho\), and
\(x^\rho-\bar x-x_1^\rho-x_2^\rho\) at the orders needed
for the cost expansion.
\begin{lemma}
\label{lem:regular-variation-estimates}
Assume \((A1)\) and Assumption~\ref{ass:control-coefficients}.
For Lebesgue-almost every \(t_0\in[0,T)\) and every
\(\alpha\ge1\), there is a constant \(C_\alpha\) such that
\begin{align}
 \mathbb E\sup_{t\le T}\|x_1^\rho(t)\|_H^{2\alpha}
 &\le C_\alpha\rho^\alpha,\label{eq:first-variation-estimate}\\
 \mathbb E\sup_{t\le T}\|x_2^\rho(t)\|_H^{2\alpha}
 &\le C_\alpha\rho^{2\alpha},\label{eq:second-variation-estimate}\\
 \mathbb E\sup_{t\le T}\|x^\rho(t)-\bar x(t)-x_1^\rho(t)\|_H^{2\alpha}
 &\le C_\alpha\rho^{2\alpha},\label{eq:first-remainder-estimate}\\
 \mathbb E\sup_{t\le T}
 \|x^\rho(t)-\bar x(t)-x_1^\rho(t)-x_2^\rho(t)\|_H^{2\alpha}
 &=o(\rho^{2\alpha}).\label{eq:second-remainder-estimate}
\end{align}
\end{lemma}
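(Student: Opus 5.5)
Each of the four estimates will be obtained from the unweighted bound of Lemma~\ref{lem:FV-SEE-high-order-Vstar} (or Proposition~\ref{prop:unbounded-xi-SEE-estimate}) applied on $[0,T]$ with driver $\bar\xi$. The Lipschitz constant in front of $d\bar\xi$ is $L_\gamma$, and admissibility gives $\mathfrak E_{m,L_\gamma}(\bar\xi)<\infty$ for every $m$. The $\mathfrak E$-factor is therefore a harmless constant, and each estimate reduces to bounding the $4\alpha$-moments of the inhomogeneous data and taking the square root. All inhomogeneous terms live on $E_\rho$ or are quadratic in lower-order variations, which produces the powers of $\rho$.

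\emph{First variation.} Equation \eqref{eq:first-variation-regular} is linear with the single inhomogeneous term $\delta b\,I_{E_\rho}$ in the diffusion. The bound requires
\[
\mathbb E\Bigl(\int_{E_\rho}\|\delta b(t)\|_H^2dt\Bigr)^{2\alpha}
\le \rho^{2\alpha-1}\,\mathbb E\int_{E_\rho}\|\delta b(t)\|_H^{4\alpha}dt .
\]
By Assumption~\ref{ass:control-coefficients}, $\|\delta b\|_H\le C(1+\|\bar x\|_H+|v|_U+|\bar u|_U)$, and this lies in $L^{4\alpha}_{\mathbb F}(0,T)$ by \eqref{eq:controlled-state-high-order-moments} and admissibility. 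For Lebesgue-a.e.\ $t_0$ (a common Lebesgue point of $t\mapsto \mathbb E\|\delta b(t)\|^{4\alpha}$ and of the other data below, for all rational $\alpha$ and hence all $\alpha$ by Hölder), the right side is $O(\rho^{2\alpha})$. Taking the square root gives \eqref{eq:first-variation-estimate}. To apply the estimate at moment $2\alpha$ one uses $4\alpha$-moments, which are all available.

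\emph{Second variation.} Equation \eqref{eq:second-variation-regular} has drift data $\delta a I_{E_\rho}+\tfrac12\bar a_{xx}(x_1^\rho,x_1^\rho)$, diffusion data $\delta b_xx_1^\rho I_{E_\rho}+\tfrac12\bar b_{xx}(x_1^\rho,x_1^\rho)$, and Stieltjes data $\tfrac12\bar\gamma_{xx}(x_1^\rho,x_1^\rho)$.
\begin{itemize}
\item The $\delta a$ term contributes $(\int_{E_\rho}\|\delta a\|dt)^{4\alpha}=O(\rho^{4\alpha})$, again by Hölder and the Lebesgue point.
\item The $\delta b_x x_1^\rho$ term gives $(\rho\sup\|x_1^\rho\|^2)^{2\alpha}$, whose expectation is $O(\rho^{4\alpha})$ by \eqref{eq:first-variation-estimate} at order $4\alpha$ and boundedness of $b_x$.
\item The quadratic drift and diffusion terms are $O(\sup\|x_1^\rho\|^2)$, hence of order $\rho$ in $L^{4\alpha}$-type norms.
\item The Stieltjes term is bounded by $C\sup\|x_1^\rho\|^2\,\bar\xi(T)$. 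Cauchy–Schwarz with $\bar\xi(T)\in L^m$ for all $m$ gives order $\rho^{4\alpha}$ for its $4\alpha$-th moment.
\end{itemize}
Taking the square root yields \eqref{eq:second-variation-estimate}.

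\emph{Remainders.} Set $y=x^\rho-\bar x-x_1^\rho$ and $r=y-x_2^\rho$. Writing the equation for $x^\rho-\bar x$ with Taylor expansions of $a,b,\gamma$ about $\bar x$, $y$ solves a linear equation of type \eqref{eq:first-variation-regular} with data $\delta a I_{E_\rho}$ plus terms bounded by $C\|x^\rho-\bar x\|^2$ (in $dt$, $dW$ and $d\bar\xi$) and $\delta b_x$-type terms on $E_\rho$. First, $\mathbb E\sup\|x^\rho-\bar x\|^{2\alpha}\le C\rho^\alpha$ follows by the same argument as \eqref{eq:first-variation-estimate}, applied to the difference equation with Lipschitz coefficients. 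Combined with Hölder this gives \eqref{eq:first-remainder-estimate}.

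For $r$, the data consist of:
\begin{itemize}
\item second-order Taylor remainders of the form $\int_0^1(1-\lambda)[\phi_{xx}(\bar x+\lambda(x^\rho-\bar x))-\bar\phi_{xx}](x^\rho-\bar x)^{\otimes2}d\lambda$;
\item cross terms $\bar\phi_{xx}(x^\rho-\bar x)^{\otimes2}-\bar\phi_{xx}(x_1^\rho)^{\otimes2}$, which are $O(\|y\|(\|x^\rho-\bar x\|+\|x_1^\rho\|))$ and hence of order $\rho^{3/2}$;
\item spike terms $(\delta b_x(\cdot)$ at the intermediate state$)-\delta b_x$ on $E_\rho$.
\end{itemize}

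The main obstacle is the \emph{little-o} in \eqref{eq:second-remainder-estimate}, especially for the Stieltjes Taylor remainder. The first idea is to write its $4\alpha$-th moment as
\[
\mathbb E\Bigl[\sup\|x^\rho-\bar x\|^{8\alpha}\,\bar\xi(T)^{4\alpha}\,\sup_{s,\lambda}\|\gamma_{xx}(s,\cdot)-\bar\gamma_{xx}(s)\|^{4\alpha}\Bigr].
\]
Using Hölder, the first two factors give $O(\rho^{4\alpha})$. The last factor is bounded and tends to zero in probability as $\sup\|x^\rho-\bar x\|\to0$ in probability, but only if $\gamma_{xx}(s,\cdot)$ converges uniformly in $s$, which is not assumed. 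To handle this, the plan is to avoid the sup and instead use dominated convergence pathwise in $d\bar\xi\otimes d\mathbb P$. Concretely, bound the integral by
\[
\sup\|x^\rho-\bar x\|^2\int_{(0,T]}\omega^\rho(s)\,d\bar\xi(s),\qquad
\omega^\rho(s)=\sup_\lambda\|\gamma_{xx}(s,\bar x(s-)+\lambda(x^\rho-\bar x)(s-))-\bar\gamma_{xx}(s)\|.
\]
Since $\omega^\rho\to0$ pointwise along subsequences, is bounded, and $\bar\xi(T)$ has all moments, it follows that $\mathbb E(\int\omega^\rho d\bar\xi)^m\to0$, and Hölder finishes the argument. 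The $dt$ and $dW$ remainders are treated identically. The spike term $\delta b_x$ on $E_\rho$ contributes $O(\rho^{2\alpha}\cdot\rho^{\alpha})$. Feeding these into Lemma~\ref{lem:FV-SEE-high-order-Vstar} gives \eqref{eq:second-remainder-estimate}, after passing from subsequences to the full family by the standard subsequence argument.
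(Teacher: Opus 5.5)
Your route is the paper's: unweighted forward estimates with driver $\bar\xi$ and Lipschitz bound $L_\gamma$, a Lebesgue-point choice of $t_0$, Taylor expansion of the remainder equations, bilinear cross terms of order $\rho^{3/2}$, and Hessian differences handled by convergence in probability plus Cauchy--Schwarz. However, your final step, as written, does not give little-$o$. The reason is that Lemma~\ref{lem:FV-SEE-high-order-Vstar} and Proposition~\ref{prop:unbounded-xi-SEE-estimate} are not interchangeable here. The lemma measures the drift datum through $\mathbb E(\int_0^T\|f(t)\|_{V^*}^2dt)^{2\alpha}$, that is, in $L^2$ in time. The proposition uses $\mathbb E(\int_0^T\|f(t)\|_H\,dt)^{4\alpha}$, and only this $L^1$-in-time norm gains a full power of $\rho$ from drift data supported on $E_\rho$.

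Concretely, the drift of $x^\rho-\bar x-x_1^\rho-x_2^\rho$ contains the term $\delta a_x(t)\Delta x^\rho(t)I_{E_\rho}(t)$, modulo $O(I_{E_\rho}\|\Delta x^\rho\|_H^2)$. This term is only first order in $\Delta x^\rho$, because \eqref{eq:second-variation-regular} has no $\delta a_x x_1^\rho I_{E_\rho}$ correction in the drift, and it is missing from your list. In the $L^1$-in-time norm,
\[
\mathbb E\Bigl(\int_{E_\rho}\|\Delta x^\rho(t)\|_H\,dt\Bigr)^{4\alpha}\le\rho^{4\alpha}\,\mathbb E\sup_{t\le T}\|\Delta x^\rho(t)\|_H^{4\alpha}=O(\rho^{6\alpha}),
\]
which becomes $O(\rho^{3\alpha})=o(\rho^{2\alpha})$ after the square root. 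In the $L^2$-in-time norm of Lemma~\ref{lem:FV-SEE-high-order-Vstar}, you only get $\mathbb E(\int_{E_\rho}\|\Delta x^\rho\|_H^2dt)^{2\alpha}=O(\rho^{4\alpha})$, hence $O(\rho^{2\alpha})$, which is not \eqref{eq:second-remainder-estimate}. The same loss would also break \eqref{eq:second-variation-estimate} and \eqref{eq:first-remainder-estimate}: there the datum $\delta a\,I_{E_\rho}$, measured in $L^2$ in time, yields only $O(\rho^{\alpha})$.

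The fix is to use \eqref{eq:unbounded-xi-unweighted-SEE-estimate} of Proposition~\ref{prop:unbounded-xi-SEE-estimate} throughout, as the paper does. All data here are $H$-valued, and your computation $(\int_{E_\rho}\|\delta a\|_H\,dt)^{4\alpha}=O(\rho^{4\alpha})$ for $x_2^\rho$ already uses that norm tacitly. You should also add the diffusion term $\delta b_x(x^\rho-\bar x-x_1^\rho)I_{E_\rho}$ to your list. It is harmless, contributing $O(\rho^{6\alpha})$ to the datum moment. With these corrections your argument coincides with the paper's; using $\sup_\lambda$ instead of the paper's $\int_0^1d\theta$ average in the Hessian modulus makes no difference.
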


The proof is given in Appendix~\ref{subsec:variational-adjoint-estimate-proofs}.

We now vary the singular component while keeping the regular control fixed.
Let \(\eta\) be an admissible singular control, and put
\(\nu=\eta-\bar\xi\) and \(K=\bar\xi+\eta\). Then \(\nu\) is
adapted, c\`adl\`ag, and of finite variation, and
\begin{equation}
 d|\nu|_v\le dK,\quad |\nu|_v(T)\le K(T)=\eta(T)+\bar\xi(T).
 \label{eq:nu-total-variation-bound}
\end{equation}
In particular, \(K(T)\in L^m(\mathcal F_T)\) for every \(m\ge1\).
Conditional H\"older's inequality yields
\[
 \mathfrak E_{m,L_\gamma}(K)
 \le\mathfrak E_{2m,L_\gamma}(\bar\xi)^{1/2}
      \mathfrak E_{2m,L_\gamma}(\eta)^{1/2}<\infty.
\]
For \(0\le\varepsilon\le1\), set
\[
 \xi^\varepsilon=(1-\varepsilon)\bar\xi+\varepsilon\eta,
 \quad u^\varepsilon=\bar u,
 \quad x^\varepsilon=x^{\bar u,\xi^\varepsilon}.
\]
Since \(d\xi^\varepsilon\le dK\), these perturbations are admissible,
and the moment and conditional exponential bounds for these perturbations
are uniform in \(\varepsilon\).

The first variation with respect to the singular component is the solution of
\begin{equation}
\left\{
\begin{aligned}
    dy(t)
    &=
    \bar A(t)y(t)\,dt
    +
    \bar B(t)y(t)\,dW(t)
    +
    \bar\gamma_x(t)y(t-)\,d\bar\xi(t)
    +
    \bar\gamma(t)\,d\nu(t),
    \quad t\in[0,T],
    \\
    y(0)&=0,
\end{aligned}
\right.
\label{eq:first-variation-singular}
\end{equation}

The next lemma justifies differentiation of the state with respect to
the singular control in the moment spaces needed for the cost expansion.
\begin{lemma}
\label{lem:singular-variation-estimate}
Under \((A1)\) and Assumption~\ref{ass:control-coefficients}, for every
\(\alpha\ge1\),
\(y\in\mathbb D^{2\alpha}_{\mathbb F}(0,T;H) \cap L^{2,2\alpha}_{\mathbb F}(0,T;V).\)
Moreover,
\[
 \mathbb E\sup_{t\le T}
 \left\|(x^\varepsilon(t)-\bar x(t))/\varepsilon-y(t)\right\|_H^{2\alpha}
 \le C_\alpha\varepsilon^{2\alpha}.
\]
\end{lemma}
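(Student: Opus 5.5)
We plan to treat both claims as consequences of the linear/nonlinear SEE estimates with a finite-variation driver, using Remark~\ref{rem:multiple-FV-driver-reduction} to handle the two drivers \(d\bar\xi\) and \(d\nu\) together with the reference driver \(K=\bar\xi+\eta\). For the first claim, equation \eqref{eq:first-variation-singular} is linear with Lipschitz coefficient \(\bar\gamma_x\) (bounded by \(L_\gamma\)) multiplying \(d\bar\xi\), and the state-independent datum \(\bar\gamma\,d\nu\). Since \(\|\bar\gamma(t)\|_H\le\|\gamma(t,0)\|_H+L_\gamma\|\bar x(t-)\|_H\le C(1+\sup_s\|\bar x(s)\|_H)\) and \(d|\nu|_v\le dK\), the additive Stieltjes datum satisfies \(\int_{(0,T]}\|\bar\gamma\|_H\,d|\nu|_v\le C(1+\sup\|\bar x\|_H)K(T)\), which lies in every \(L^m\) by \eqref{eq:controlled-state-high-order-moments} and \(K(T)\in L^m\). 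Since the state-dependent coefficient only multiplies \(d\bar\xi\), the remark lets us retain \(\bar\xi\) as reference driver for the weight; the \(d\nu\)-term is pure data (with \(\nu\ll K\), we may equally work with reference driver \(K\), whose conditional exponential moment \(\mathfrak E_{m,L_\gamma}(K)\) is finite as shown before the lemma). Then Proposition~\ref{prop:unbounded-xi-SEE-estimate} (unweighted part, via Lemma~\ref{lem:FV-SEE-high-order-Vstar} if needed) together with Corollary~\ref{thm:FV-SEE-wellposed} gives existence and \(y\in\mathbb D^{2\alpha}_{\mathbb F}\cap L^{2,2\alpha}_{\mathbb F}(0,T;V)\).

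For the difference quotient, set \(\delta^\varepsilon:=(x^\varepsilon-\bar x)/\varepsilon\). Since \(d\xi^\varepsilon=d\bar\xi+\varepsilon\,d\nu\), subtracting the state equations gives
\[
d\delta^\varepsilon=\bigl[A\delta^\varepsilon+\widetilde a^\varepsilon\delta^\varepsilon\bigr]dt+\bigl[B\delta^\varepsilon+\widetilde b^\varepsilon\delta^\varepsilon\bigr]dW+\widetilde\gamma^\varepsilon(t)\delta^\varepsilon(t-)\,d\bar\xi+\gamma(t,x^\varepsilon(t-))\,d\nu,
\]
with \(\widetilde a^\varepsilon=\int_0^1a_x(t,\bar x+\lambda(x^\varepsilon-\bar x),\bar u)d\lambda\), etc. First we obtain \(\mathbb E\sup\|x^\varepsilon-\bar x\|_H^{2m}\le C\varepsilon^{2m}\), i.e. uniform moments of \(\delta^\varepsilon\), by the same estimate with reference driver \(K\) (uniform in \(\varepsilon\) since \(d\xi^\varepsilon\le dK\)). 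Then \(r^\varepsilon:=\delta^\varepsilon-y\) solves a linear equation with coefficients \(\bar a_x,\bar b_x,\bar\gamma_x\) and sources
\[
(\widetilde a^\varepsilon-\bar a_x)\delta^\varepsilon\,dt,\quad(\widetilde b^\varepsilon-\bar b_x)\delta^\varepsilon\,dW,\quad(\widetilde\gamma^\varepsilon-\bar\gamma_x)\delta^\varepsilon(t-)\,d\bar\xi,\quad\bigl[\gamma(t,x^\varepsilon(t-))-\bar\gamma(t)\bigr]d\nu .
\]
By boundedness of second derivatives, each source is bounded by \(C\|x^\varepsilon-\bar x\|_H\|\delta^\varepsilon\|_H=C\varepsilon\|\delta^\varepsilon\|^2_H\) or by \(L_\gamma\varepsilon\|\delta^\varepsilon\|_H\) against \(d|\nu|_v\le dK\). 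Applying the unweighted estimate with data moments of order \(4\alpha\), Hölder, and the uniform moment bound on \(\delta^\varepsilon\) yields \(\mathbb E\sup\|r^\varepsilon\|_H^{2\alpha}\le C_\alpha\varepsilon^{2\alpha}\).

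The main obstacle is bookkeeping of drivers and weights: the Stieltjes sources are integrated against \(d\bar\xi\) and \(d\nu\) with random, unbounded total variation, so the data norms in Proposition~\ref{prop:unbounded-xi-SEE-estimate} involve products like \(\sup\|\delta^\varepsilon\|^2\cdot K(T)\). We handle these by Cauchy--Schwarz with the polynomial moments of \(K(T)\) and the exponential factor \(\mathfrak E_{2\alpha\kappa,L_\gamma}(K)^{1/2}\), all finite and \(\varepsilon\)-independent by admissibility.
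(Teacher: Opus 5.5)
Your proposal is correct and follows the paper's proof essentially step by step. Both use the reference driver \(K=\bar\xi+\eta\) via Remark~\ref{rem:multiple-FV-driver-reduction}, derive the \(O(\varepsilon)\) bound on \(x^\varepsilon-\bar x\) to get uniform moments of the difference quotient, and close the remainder equation for \(\delta^\varepsilon-y\) (quadratic Taylor sources against \(dt,dW,d\bar\xi\), plus the \(\varepsilon L_\gamma\|\delta^\varepsilon(t-)\|_H\) source against \(d|\nu|_v\le dK\)) with the unweighted estimate at data order \(4\alpha\).

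Drop the option of retaining \(\bar\xi\) as the reference driver. The remark only allows that when \(d\nu\ll d\bar\xi\), which fails in general because \(\eta\) may charge times where \(\bar\xi\) does not. The \(K\)-route you also mention is the one the paper uses.
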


The proof is given in Appendix~\ref{subsec:variational-adjoint-estimate-proofs}.

\subsection{Adjoint equations and duality relations}
\label{subsec:adjoint-equations}
The adjoint equations express the linear and quadratic terms in the
cost expansion directly through the control perturbations.
The first-order adjoint equation is the backward stochastic evolution equation
with a finite-variation driver
\begin{equation}
\left\{
\begin{aligned}
    p(t)
    &=
    h_x(\bar x(T))
    +
    \int_t^T
    \Big[
        \bar A(s)^*p(s)
        +
        \bar B(s)^*q(s)
        +
        \bar k_x(s)
    \Big]\,ds
    \\
    &\quad
    -
    \int_t^T q(s)\,dW(s)
    +
    \int_{(t,T]}
    \Big[
        \bar\gamma_x(s)^*p(s)
        +
        \bar g_x(s)
    \Big]\,d\bar\xi(s),
    \quad t\in[0,T].
\end{aligned}
\right.
\label{eq:first-adjoint}
\end{equation}
Fix \(\kappa>2\). Theorem~\ref{thm:unbounded-xi-BSEE-finite-order}
yields a solution \((p,q)\) of \eqref{eq:first-adjoint}, unique
in the weighted quadratic class
\(\|(p,q)\|_{\theta_{\beta,L_\gamma}^{0}}<\infty\), for sufficiently
large deterministic \(\beta\). Boundedness of the adjoint data and
the admissibility of \(\bar\xi\) give the required data moments
of every finite order. Applying
\eqref{eq:unbounded-xi-unweighted-BSEE-estimate} to this solution
gives, for every \(\alpha\ge1\),
\[
 (p,q)\in
 \bigl(\mathbb D^{2\alpha}_{\mathbb F}(0,T;H)
       \cap L^{2,2\alpha}_{\mathbb F}(0,T;V)\bigr)
 \times L^{2,2\alpha}_{\mathbb F}(0,T;H)
\]
and
\begin{align}
 &\mathbb E\left[
   \sup_{0\le t\le T}\|p(t)\|_H^{2\alpha}
   +\left(\int_0^T\|p(t)\|_V^2\,dt\right)^\alpha
   +\left(\int_0^T\|q(t)\|_H^2\,dt\right)^\alpha
 \right]\notag\\
 &\quad\le C_\alpha
 \left(\mathbb E e^{2\alpha\kappa L_\gamma\bar\xi(T)}\right)^{1/2}
 \left(1+\mathbb E\bar\xi(T)^{4\alpha}\right)^{1/2}<\infty.
 \label{eq:first-adjoint-moment-estimate}
\end{align}

We identify the second derivatives of real-valued coefficients with
bounded self-adjoint operators on \(H\). 
For \(\psi=a,b,\gamma\) and \(z\in H\), let
\(\bar\psi_{xx}(t)^*z\in\mathfrak L(H)\)
denote the bounded operator determined by
\[
\big\langle(\bar\psi_{xx}(t)^*z)e,f\big\rangle_H
=
\big\langle z,\bar\psi_{xx}(t)(e,f)\big\rangle_H,
\quad e,f\in H.
\]

The second-order adjoint coefficients are
\begin{align*}
    \bar H_{xx}(t)
    :=\bar k_{xx}(t)+\bar a_{xx}(t)^*p(t)
    +\bar b_{xx}(t)^*q(t),\quad
    \bar G_{xx}(t)
    :=\bar g_{xx}(t)+\bar\gamma_{xx}(t)^*p(t).
\end{align*}
The second-order adjoint equation is the operator-valued BSIE
\begin{equation}
\begin{aligned}
    P(t)
    =
    \mathbb E\bigg[
    &L(t,T)^*
        h_{xx}(\bar x(T))
        L(t,T)
   +
    \int_t^T
        L(t,s)^*
        \bar H_{xx}(s)
        L(t,s)\,ds
    \\
    &+
    \int_{(t,T]}
        L(t,s-)^*
        \bar G_{xx}(s)
        L(t,s-)\,d\bar\xi(s)
    \,\bigg|\,\mathcal F_t
    \bigg],
    \quad t\in[0,T].
\end{aligned}
\label{eq:second-adjoint}
\end{equation}
Here \(L(t,s)\) is the stochastic evolution operator generated by the
homogeneous linearized equation
\begin{equation}
\left\{
\begin{aligned}
    dX(s)
    &=
    \bar A(s)X(s)\,ds
    +
    \bar B(s)X(s)\,dW(s)
    +
    \bar\gamma_x(s)X(s-)\,d\bar\xi(s),
    \quad s\in[t,T],
    \\
    X(t)&=\eta.
\end{aligned}
\right.
\label{eq:homogeneous-linear-flow}
\end{equation}
Thus \(L(t,s)\eta=X(s)\) and \(L(t,s-)\eta=X(s-)\).
The conditional flow estimates in
\eqref{eq:SEE-deterministic-start-conditional-estimate}, applied to
\((\bar A,\bar B,\bar\gamma_x,\bar\xi)\), give the required moment
bounds for \(L\).
The adjoint data satisfy
\begin{equation}
 \|\bar H_{xx}(s)\|_{\mathfrak L(H)}
 \le C(1+\|p(s)\|_H+\|q(s)\|_H),\quad
 \|\bar G_{xx}(s)\|_{\mathfrak L(H)}
 \le C(1+\|p(s)\|_H),
\label{eq:second-adjoint-data-bounds}
\end{equation}
so Theorem~\ref{thm:unbounded-xi-BSIE} gives a weakly progressively
measurable solution \(P\in\mathcal H_\beta\) of
\eqref{eq:second-adjoint}, unique up to modification, for sufficiently
large \(\beta\). Proposition~\ref{cor:FV-BSIE-conditional-estimate} gives
\begin{align}
 \|P(t)\|_{\mathfrak L(H)}^2
 &\le C\mathbb E\Bigg[
    1+\int_t^T(\|p(s)\|_H^2+\|q(s)\|_H^2)\,ds\notag\\
 &\hspace{28mm}
    +\left(\int_{(t,T]}(1+\|p(s)\|_H)\,d\bar\xi(s)\right)^2
    \,\Big|\,\mathcal F_t\Bigg],\quad\mathbb P\text{-a.s.}
 \label{eq:second-adjoint-conditional-moment-bound}
\end{align}
Conditional Jensen's inequality then gives, for every \(\alpha\ge1\),
\begin{align}
 \sup_{0\le t\le T}\mathbb E\|P(t)\|_{\mathfrak L(H)}^{2\alpha}
 &\le C_\alpha\mathbb E\Bigg[
 (1+\bar\xi(T)^{2\alpha})
 \left(1+\sup_{s\le T}\|p(s)\|_H^{2\alpha}\right)
 +\left(\int_0^T\|q(s)\|_H^2\,ds\right)^\alpha
 \Bigg]<\infty.
 \label{eq:second-adjoint-uniform-moment-bound}
\end{align}

The next identity expresses the pairings with
\(h_x(\bar x(T)),\bar k_x,\bar g_x\) in terms of \(p,q\)
and the inhomogeneous coefficients of the variational equation.
\begin{lemma}
\label{lem:first-order-duality}
Let
$
    f\in L^{1,2}_{\mathbb F}(0,T;H),\ 
    g\in L^2_{\mathbb F}(0,T;H),\
    \eta\in L^{1,2}_{\mathbb F,\bar\xi}(0,T;H).
	$
Let \(\nu\) be an adapted c\`adl\`ag finite-variation process with
\(\nu(0)=0\), and let
$
    \zeta\in L^{1,2}_{\mathbb F,\nu}(0,T;H).
$
Let
\(X\in\mathbb D^2_{\mathbb F}(0,T;H) \cap L^2_{\mathbb F}(0,T;V)\)
be a solution of
\begin{equation}
\left\{
\begin{aligned}
    dX(t)
    &=
    \big[
        \bar A(t)X(t)+f(t)
    \big]dt
    +
    \big[
        \bar B(t)X(t)+g(t)
    \big]dW(t)
    \\
    &\quad+
    \big[
        \bar\gamma_x(t)X(t-)+\eta(t)
    \big]d\bar\xi(t)
    +
    \zeta(t)\,d\nu(t),
    \quad t\in[0,T],
    \\
    X(0)&=0.
\end{aligned}
\right.
\label{eq:linear-forward-duality}
\end{equation}
Then
\begin{align}
    &\mathbb E\bigg[
        \big\langle h_x(\bar x(T)),X(T)\big\rangle_H
        +
        \int_0^T
            \big\langle \bar k_x(t),X(t)\big\rangle_H\,dt
        +
        \int_{(0,T]}
            \big\langle \bar g_x(t),X(t-)\big\rangle_H\,d\bar\xi(t)
    \bigg]
    \notag\\
    &=
    \mathbb E\bigg[
        \int_0^T
            \big\langle p(t),f(t)\big\rangle_H\,dt
        +
        \int_0^T
            \big\langle q(t),g(t)\big\rangle_H\,dt
        +
        \int_{(0,T]}
            \big\langle p(t),\eta(t)\big\rangle_H\,d\bar\xi(t)
        +
        \int_{(0,T]}
            \big\langle p(t),\zeta(t)\big\rangle_H\,d\nu(t)
    \bigg].
    \label{eq:first-order-duality}
\end{align}
\end{lemma}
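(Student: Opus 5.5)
The plan is to apply an Itô product formula to \(\langle p(t),X(t)\rangle_H\) on \([0,T]\), take expectations, and substitute the equations for \(X\) and \(p\). Since \(p(0)\) pairs with \(X(0)=0\) and \(p(T)=h_x(\bar x(T))\), this gives \(\mathbb E\langle h_x(\bar x(T)),X(T)\rangle_H\). The linear operator terms cancel. The source terms of \(p\) produce the \(\bar k_x\) and \(\bar g_x\) pairings, and the source terms of \(X\) produce the right-hand side of \eqref{eq:first-order-duality}.

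\textbf{Step 1: the product formula in the Gelfand triple.} Both \(X\) and \(p\) take values in \(V\) only \(dt\)-a.e., so I would not use a naive product rule. Instead I would polarize: \(4\langle p,X\rangle_H=\|p+X\|_H^2-\|p-X\|_H^2\). Then \(p\pm X\) is a process of the mixed forward/backward type, which I rewrite in forward form on \([0,T]\). Its dynamics are:
\begin{itemize}
\item a \(V^*\)-valued drift \(\pm(\bar A X+f)-(\bar A^*p+\bar B^*q+\bar k_x)\);
\item the martingale \(\int(q\pm(\bar BX+g))\,dW\);
\item finite-variation terms driven by \(\bar\xi\) and \(\nu\).
\end{itemize}
By Remark~\ref{rem:multiple-FV-driver-reduction}, the two finite-variation drivers \(\bar\xi\) and \(\nu\) can be combined into the single driver \(\bar\xi+|\nu|_v\). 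Lemma~\ref{Ito-lemma} then applies. Subtracting the two resulting identities gives, on \([0,T]\),
\begin{align*}
\langle p(T),X(T)\rangle_H&=\int_0^T\big[\langle \bar A X+f,p\rangle_*-\langle \bar A^*p+\bar B^*q+\bar k_x,X\rangle_*+\langle q,\bar BX+g\rangle_H\big]dt\\
&\quad+\int_{(0,T]}\langle p(t-),\bar\gamma_xX(t-)+\eta\rangle_H\,d\bar\xi+\int_{(0,T]}\langle p(t-),\zeta\rangle_H\,d\nu\\
&\quad-\int_{(0,T]}\langle X(t-),\bar\gamma_x^*p(t)+\bar g_x\rangle_H\,d\bar\xi+[\text{jump cross terms}]+\text{local martingale}.
\end{align*}

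\textbf{Step 2: the jump cross terms.} These are the delicate point, so I would track them explicitly. At a jump time \(s\), the forward equation gives
\[
\Delta X(s)=(\bar\gamma_xX(s-)+\eta)\Delta\bar\xi+\zeta\Delta\nu,
\]
and the backward equation gives
\[
\Delta p(s)=-(\bar\gamma_x^*p(s)+\bar g_x)\Delta\bar\xi .
\]
The covariation term \(\sum\langle\Delta p,\Delta X\rangle\) combines with the left limit \(p(s-)=p(s)-\Delta p(s)\) appearing in the forward Stieltjes integrals. Using \(p(s-)+\Delta p(s)\cdot(\text{cross})\), the effect is that \(p(s-)\) is replaced by \(p(s)\) in those integrals. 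This matches \eqref{eq:first-order-duality}, where the pairing is with \(p(t)\) against \(d\bar\xi\) and \(d\nu\). It also matches the backward integral, which is evaluated at \(p(s)\) against \(X(s-)\). After this replacement:
\begin{itemize}
\item \(\langle p(s),\bar\gamma_xX(s-)\rangle\) cancels \(\langle X(s-),\bar\gamma_x^*p(s)\rangle\) exactly;
\item the \(\bar A\) terms cancel, since \(\bar A^*\) is the Gelfand-triple adjoint;
\item the \(\bar B\) terms cancel, since \(\langle q,\bar BX\rangle_H=\langle\bar B^*q,X\rangle_*\).
\end{itemize}
The remaining terms are exactly those in \eqref{eq:first-order-duality}. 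I would verify this bookkeeping once at the polarized level, using \eqref{eq:FV-Ito-left-limit} and \eqref{eq:backward-FV-Ito-M-equivalent} with \(\Delta F\) included. I expect this to be the main technical obstacle: the quadratic terms \(\|\zeta\|^2\Delta F\) in the two formulas carry opposite signs in the forward and backward versions, and they must reduce precisely to the cross term \(\langle\Delta p,\Delta X\rangle\).

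\textbf{Step 3: vanishing of the local martingale.} The local martingale is \(\int\langle p,\bar BX+g\rangle\,dW+\int\langle X,q\rangle\,dW\). Its quadratic variation satisfies \(\langle\cdot\rangle^{1/2}\le \sup\|p\|\,(\int\|\bar BX+g\|^2)^{1/2}+\sup\|X\|\,(\int\|q\|^2)^{1/2}\). This is in \(L^1\) by Cauchy--Schwarz, using:
\begin{itemize}
\item \(X\in\mathbb D^2\cap L^2(V)\) and \(g\in L^2\);
\item the moment bound \eqref{eq:first-adjoint-moment-estimate} for \(p\) and \(q\).
\end{itemize}
By BDG it is therefore a true martingale with zero mean.

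\textbf{Step 4: integrability of the remaining terms.} The other integrals are integrable by the same Hölder pairing. The term \(\mathbb E\sup\|p\|\int\|\eta\|d\bar\xi\) uses the \(L^{1,2}_{\mathbb F,\bar\xi}\) assumption on \(\eta\), and \(\zeta\) is handled analogously via its \(L^{1,2}_{\mathbb F,\nu}\) assumption. Taking expectations then yields \eqref{eq:first-order-duality}.
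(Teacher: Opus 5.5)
Your proposal is correct and follows the paper's proof. The paper applies the product formula to $\langle p(t),X(t)\rangle_H$ and uses the jump identity \eqref{eq:first-duality-jump-cancellation}, in which the cross term $\langle\Delta p(s),\Delta X(s)\rangle_H$ turns $p(s-)$ into $p(s)$ so that the $\bar\gamma_x$ terms cancel; it then concludes with the $H^1$-martingale bound $\mathbb E\langle\mathcal M\rangle_T^{1/2}<\infty$. Your polarization through $\|p\pm X\|_H^2$ and Lemma~\ref{Ito-lemma}, with the combined driver $\bar\xi+|\nu|_v$, is just a more explicit justification of the product formula that the paper invokes without comment. Since you write both $p\pm X$ in forward form, the quadratic jump terms polarize directly to $4\langle\Delta p,\Delta X\rangle_H$, so the sign mismatch you anticipated does not arise.
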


\begin{proof}
At a jump time \(s\),
\[
    X(s)
    =
    X(s-)
    +
    \big[
        \bar\gamma_x(s)X(s-)+\eta(s)
    \big]\Delta\bar\xi(s)
    +
    \zeta(s)\Delta\nu(s),
\]
whereas
\[
    p(s-)
    =
    p(s)
    +
    \big[
        \bar\gamma_x(s)^*p(s)+\bar g_x(s)
    \big]\Delta\bar\xi(s).
\]
Therefore
\begin{equation}
 \big\langle p(s),X(s)\big\rangle_H -\big\langle p(s-),X(s-)\big\rangle_H = \big[ \langle p(s),\eta(s)\rangle_H -\langle\bar g_x(s),X(s-)\rangle_H \big]\Delta\bar\xi(s) + \langle p(s),\zeta(s)\rangle_H\Delta\nu(s).
\label{eq:first-duality-jump-cancellation}
\end{equation}
Applying the product formula to \(\langle p(t),X(t)\rangle_H\)
and using \eqref{eq:first-adjoint} and
\eqref{eq:linear-forward-duality}, we obtain
\begin{align}
    d\langle p(t),X(t)\rangle_H
    &=
    -\big\langle \bar k_x(t),X(t)\big\rangle_Hdt
    +\big\langle p(t),f(t)\big\rangle_Hdt
    +\big\langle q(t),g(t)\big\rangle_Hdt
    \notag\\
    &\quad
    -\big\langle \bar g_x(t),X(t-)\big\rangle_Hd\bar\xi(t)
    +\big\langle p(t),\eta(t)\big\rangle_Hd\bar\xi(t)
    +\big\langle p(t),\zeta(t)\big\rangle_Hd\nu(t)
    +d\mathcal M(t),
    \label{eq:product-p-X}
\end{align}
where
\[
    \mathcal M(t)
    :=
    \int_0^t
    \Big[
        \big\langle p(s),\bar B(s)X(s)+g(s)\big\rangle_H
        +\langle q(s),X(s)\rangle_H
    \Big]dW(s).
\]
The assumption \(X\in\mathbb D^2_{\mathbb F}\cap
	L^2_{\mathbb F}(0,T;V)\), estimate
	\eqref{eq:first-adjoint-moment-estimate} for \((p,q)\), and the
bound \(\|\bar B(s)X(s)\|_H\le C\|X(s)\|_V\) give
\begin{equation}
\begin{aligned}
    \mathbb E\langle\mathcal M\rangle_T^{1/2}
    &\le
    C\mathbb E\Bigg[
        \sup_{0\le t\le T}\|p(t)\|_H
        \left(
            \int_0^T
            [\|X(t)\|_V^2+\|g(t)\|_H^2]dt
        \right)^{1/2}
    \\
    &\hspace{2.4cm}
        +
        \sup_{0\le t\le T}\|X(t)\|_H
        \left(
            \int_0^T\|q(t)\|_H^2dt
        \right)^{1/2}
    \Bigg]
    <\infty.
\end{aligned}
\label{eq:first-duality-martingale-integrability}
\end{equation}
Thus \(\mathcal M\) is a uniformly integrable martingale. Integrating
\eqref{eq:product-p-X} over \([0,T]\), using \(X(0)=0\) and
\(p(T)=h_x(\bar x(T))\), and taking expectations gives
\eqref{eq:first-order-duality}.
\end{proof}

\subsection{Stochastic maximum principle}
\label{subsec:SMP}
Define the Hamiltonian associated with the regular control by
\[
\mathcal H(t,x,v,p,q)
:=
\langle p,a(t,x,v)\rangle_H
+
\langle q,b(t,x,v)\rangle_H
+
k(t,x,v),
\]
and the singular Hamiltonian by
\[
\mathcal G(t,x)
:=
\langle p(t),\gamma(t,x)\rangle_H+g(t,x).
\]
Set
$
Z(t):=\mathcal G(t,\bar x(t-)),
\ t\in(0,T].
$
The growth bounds for
\(\gamma,g\) and the moment bounds for \(\bar x,p\) give
\[
 |Z(t)|
 \le C\bigl(1+\sup_{s\le T}\|\bar x(s)\|_H\bigr)
        \bigl(1+\sup_{s\le T}\|p(s)\|_H\bigr),
 \quad t\in(0,T],
\]
where the right-hand side belongs to \(L^m(\mathcal F_T)\) for
every finite \(m\ge1\). In particular,
\(\sup_\tau\|Z(\tau)\|_{L^1(\mathcal F_\tau)}<\infty\), with
\(\tau\) ranging over \((0,T]\)-valued stopping times.
The optional projection theorem
\cite[Chapter~VI, Theorem~43]{DellacherieMeyer1982} therefore gives
an optional projection \({}^oZ\), unique up to evanescence. Since
\(Z(\tau)\in\mathcal F_\tau\),
\[
 {}^oZ(\tau)=\mathbb E[Z(\tau)\mid\mathcal F_\tau]=Z(\tau),
 \quad\mathbb P\text{-a.s.}
\]

The bound on \(Z\) and the moment estimates imply
\(\mathbb E\int_{(0,T]}|Z(t)|\,d\chi(t)<\infty\)
for every admissible singular control or bounded adapted
nondecreasing c\`adl\`ag process \(\chi\) with \(\chi(0)=0\).
The integration property of optional projection gives
\[
\mathbb E\int_{(0,T]}Z(t)\,d\chi(t)
=
\mathbb E\int_{(0,T]}{}^oZ(t)\,d\chi(t).
\]
Taking \(\chi=\eta\) and \(\chi=\bar\xi\) shows that replacing
\(Z\) by \({}^oZ\) preserves the singular variational inequality.
We use this optional modification throughout the theorem and its
proof and continue to denote it by
\(\mathcal G(t,\bar x(t-))\).

The maximum principle combines pointwise minimization in the regular
control with nonnegativity of the singular Hamiltonian and its
vanishing \(d\bar\xi\)-almost everywhere.
\begin{theorem}
\label{thm:SMP}
\label{thm:SMP-unbounded-xi}
Assume \((A1)\) and Assumption~\ref{ass:control-coefficients}, and let
\((\bar u,\bar\xi)\) be an optimal admissible control with corresponding
state \(\bar x\). Let \((p,q)\) be the solution of
the first-order adjoint BSEE \eqref{eq:first-adjoint}, and let \(P\) be the
solution of the second-order adjoint BSIE \eqref{eq:second-adjoint}. Then,
for \((dt\otimes d\mathbb P)\)-a.e. \((t,\omega)\),
\begin{align}
    &\mathcal H(t,\bar x(t),\bar u(t),p(t),q(t))
    \notag\\
    &=
    \inf_{v\in U}
    \bigg\{
        \mathcal H(t,\bar x(t),v,p(t),q(t))
        +
        \frac12
        \Big\langle
            P(t)
            \big[
                b(t,\bar x(t),v)-b(t,\bar x(t),\bar u(t))
            \big],
    \notag\\
    &\hspace{5.7cm}
            b(t,\bar x(t),v)-b(t,\bar x(t),\bar u(t))
        \Big\rangle_H
    \bigg\}.
    \label{eq:SMP-regular-final}
\end{align}
Moreover, 
\begin{equation}
    \mathbb P\left\{
        \mathcal G(t,\bar x(t-))\ge0,
        \quad t\in(0,T]
    \right\}=1,
    \label{eq:SMP-singular-nonnegative}
\end{equation}
and
\begin{equation}
    \mathcal G(t,\bar x(t-))\,d\bar\xi(t)=0,\quad \mathbb P\text{-a.s.}
    \label{eq:SMP-singular-complementarity}
\end{equation}
\end{theorem}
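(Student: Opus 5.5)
The proof splits into two independent parts: spike variations of the regular control with $\bar\xi$ fixed, and convex variations of the singular control with $\bar u$ fixed.

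\emph{Regular part.} Fix an admissible $v$ and a Lebesgue point $t_0$ as in Lemma~\ref{lem:regular-variation-estimates} and Theorem~\ref{thm:FV-BSIE-Ito}. I will expand $J(u^\rho,\bar\xi)-J(\bar u,\bar\xi)\ge0$ to second order. Taylor expansion of $h$, $k$ and $g$ around $\bar x$, together with the estimates \eqref{eq:first-variation-estimate}--\eqref{eq:second-remainder-estimate}, gives the following expansion up to $o(\rho)$. It consists of the linear pairings of $h_x,\bar k_x,\bar g_x$ with $x_1^\rho+x_2^\rho$, the quadratic terms $\frac12\langle h_{xx}x_1^\rho,x_1^\rho\rangle$, $\frac12\bar k_{xx}(x_1^\rho,x_1^\rho)$ and $\frac12\bar g_{xx}(x_1^\rho,x_1^\rho)\,d\bar\xi$, and the term $\mathbb E\int_{E_\rho}\delta k\,dt$.

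In the $g$-term the singular integrand is evaluated at $x(t-)$, and $\bar\xi(T)$ has all moments. Hence the remainders are controlled by Hölder's inequality with $\sup_t$ bounds.

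Next I apply Lemma~\ref{lem:first-order-duality} to $X=x_1^\rho+x_2^\rho$. Here $f=\delta a I_{E_\rho}+\frac12\bar a_{xx}(x_1^\rho,x_1^\rho)$, $g=\delta bI_{E_\rho}+\delta b_x x_1^\rho I_{E_\rho}+\frac12\bar b_{xx}(x_1^\rho,x_1^\rho)$, $\eta=\frac12\bar\gamma_{xx}(x_1^\rho,x_1^\rho)$ and $\nu=0$. This converts the linear terms into
\[
\mathbb E\int_{E_\rho}(\langle p,\delta a\rangle+\langle q,\delta b\rangle)\,dt
\]
plus quadratic terms. The term $\langle q,\delta b_x x_1^\rho\rangle I_{E_\rho}$ is $o(\rho)$ by Cauchy--Schwarz and \eqref{eq:first-variation-estimate}, since $\mathbb E\int_{E_\rho}\|q\|^2\to0$ at a.e. $t_0$.

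The quadratic terms then combine into exactly $\frac12$ times the right-hand side of the BSIE structure in \eqref{eq:second-adjoint}, evaluated along $x_1^\rho$. Theorem~\ref{thm:FV-BSIE-Ito}, applied with $\zeta=\delta b$, $D=\bar\gamma_x$, $F=\bar H_{xx}$, $G=\bar G_{xx}$ and $t=0$ (so that $x_1^\rho(0)=0$), rewrites these quadratic terms as
\[
\tfrac12\mathbb E\int_{E_\rho}\langle P\delta b,\delta b\rangle\,dt+o(\rho).
\]
The $Z^\rho$ martingale has zero mean by \eqref{eq:FV-BSIE-Ito-Z}, and $\sigma^\rho$ is $o(\rho)$ by \eqref{eq:FV-BSIE-Ito-sigma}. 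The required moment conditions follow from \eqref{eq:second-adjoint-data-bounds} and the admissibility of $\bar\xi$.

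Dividing by $\rho$ and letting $\rho\downarrow0$ at Lebesgue points gives
\[
\mathbb E\big[\mathcal H(t,\bar x,v,p,q)-\mathcal H(t,\bar x,\bar u,p,q)+\tfrac12\langle P\delta b,\delta b\rangle\big]\ge0
\]
for a.e. $t$. The main obstacle is that this must hold uniformly across choices of $v$ so as to pass to a pointwise statement. For this I take $v=vI_A+\bar uI_{A^c}$ with $A\in\mathcal F_t$ to localize in $\omega$. I then use a countable dense subset of the separable space $U$, together with continuity of $a,b,k$ in $u$, to obtain the infimum in \eqref{eq:SMP-regular-final}. The Lebesgue-point set must be chosen simultaneously for the countable family of $v$'s, and the weak right continuity in Lemma~\ref{lem:FV-BSIE-weak-right-continuity} is what makes $P(t_0+\cdot)$ tested against $\delta b$ converge.

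\emph{Singular part.} By Lemma~\ref{lem:singular-variation-estimate}, $0\le\varepsilon^{-1}(J(\bar u,\xi^\varepsilon)-J(\bar u,\bar\xi))$ converges to
\[
\mathbb E\Big[\langle h_x,y(T)\rangle+\int\langle\bar k_x,y\rangle\,dt+\int\langle\bar g_x,y(t-)\rangle\,d\bar\xi+\int\bar g\,d\nu\Big].
\]
The cost derivative also contains $\int g(\bar x(t-))\,d\nu$, since $g$ integrates against $d\xi^\varepsilon$. Applying Lemma~\ref{lem:first-order-duality} with $\zeta=\bar\gamma$ yields
\[
\mathbb E\int_{(0,T]}Z\,d(\eta-\bar\xi)\ge0 .
\]
Replacing $Z$ by ${}^oZ$ as described before the theorem, I choose $\eta=\bar\xi+I_{[\![\tau,T]\!]}I_A$ for a stopping time $\tau$ and $A\in\mathcal F_\tau$; this is admissible, since the added jump is bounded and so preserves the exponential moments. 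This gives $\mathbb E[I_A\,{}^oZ(\tau)]\ge0$, hence ${}^oZ(\tau)\ge0$ a.s. The optional section theorem then upgrades this to \eqref{eq:SMP-singular-nonnegative} up to evanescence. Finally, the choice $\eta=0$ gives $\mathbb E\int Z\,d\bar\xi\le0$. Combined with $Z\ge0$, this yields $\int Z\,d\bar\xi=0$ a.s., which is \eqref{eq:SMP-singular-complementarity}.
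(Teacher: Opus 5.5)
Your proposal is correct and follows essentially the same route as the paper: a Taylor expansion of the cost, first-order duality, and Theorem~\ref{thm:FV-BSIE-Ito} at $t=0$ to turn the quadratic terms into $\tfrac12\mathbb E\int_{E_\rho}\langle P\delta b,\delta b\rangle_H\,dt+o(\rho)$; for the singular conditions, the optional-projection and stopping-time tests with $\eta=\bar\xi+I_AI_{\{\tau\le t\}}$ and $\eta=0$. Your single duality application to $x_1^\rho+x_2^\rho$ and your localization via $vI_A+\bar uI_{A^c}$ over a countable dense subset of $U$ only spell out steps the paper leaves implicit. The final limit $\rho\downarrow0$ needs only Lebesgue differentiation in $t$, since the weak right continuity of $P$ is really used inside the proof of the It\^o formula.
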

\begin{remark}
	\label{rem:singular-Hamiltonian-optionality}
	In the finite-dimensional formulations of singular stochastic
	control in \cite{BahlaliMezerdi2005,Bahlali2007,JiXue09}, the coefficients
	of the singular terms are deterministic, continuous in time, and
	independent of the state. The adjoint processes entering the singular
	Hamiltonians are continuous and adapted, so the singular Hamiltonians
	are optional.
	
	Here \(\gamma\) and \(g\) may be random and state-dependent,
	with only progressive measurability imposed. The optional projection
	constructed above provides the optional process needed for the
	stopping-time tests that yield
	\eqref{eq:SMP-singular-nonnegative} and
	\eqref{eq:SMP-singular-complementarity}.
\end{remark}

\subsection{Proof of Theorem~\ref{thm:SMP}}
\label{subsec:SMP-proof}

\subsubsection{Regular perturbations}

We first expand the cost under a spike perturbation of the regular control.

\begin{lemma}
\label{lem:regular-cost-expansion}
Assume \((A1)\) and Assumption~\ref{ass:control-coefficients}.
Then
\begin{align}
    &J(u^\rho,\bar\xi)-J(\bar u,\bar\xi)
    =
    \mathbb E\bigg[
        \int_{E_\rho}\delta k(t)\,dt
        +
        \big\langle h_x(\bar x(T)),
            x_1^\rho(T)+x_2^\rho(T)
        \big\rangle_H
        +
        \frac12
        \big\langle
            h_{xx}(\bar x(T))x_1^\rho(T),
            x_1^\rho(T)
        \big\rangle_H
    \notag\\
    &\quad
        +
        \int_0^T
            \big\langle
                \bar k_x(t),
                x_1^\rho(t)+x_2^\rho(t)
            \big\rangle_H\,dt
        +
        \frac12
        \int_0^T
            \big\langle
                \bar k_{xx}(t)x_1^\rho(t),
                x_1^\rho(t)
            \big\rangle_H\,dt
    \notag\\
    &\quad
        +
        \int_{(0,T]}
            \big\langle
                \bar g_x(t),
                x_1^\rho(t-)+x_2^\rho(t-)
            \big\rangle_H\,d\bar\xi(t)
        +
        \frac12
        \int_{(0,T]}
            \big\langle
                \bar g_{xx}(t)x_1^\rho(t-),
                x_1^\rho(t-)
            \big\rangle_H\,d\bar\xi(t)
    \bigg]
    +
    o(\rho).
    \label{eq:regular-cost-expansion}
\end{align}
\end{lemma}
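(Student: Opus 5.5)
We will expand each of the three terms of the cost separately around \(\bar x\), writing \(x^\rho=\bar x+x_1^\rho+x_2^\rho+r^\rho\), where by Lemma~\ref{lem:regular-variation-estimates} the remainder satisfies \(\mathbb E\sup_t\|r^\rho(t)\|_H^{2\alpha}=o(\rho^{2\alpha})\). Also \(\mathbb E\sup_t\|x_1^\rho\|_H^{2\alpha}=O(\rho^\alpha)\) and \(\mathbb E\sup_t\|x_2^\rho\|_H^{2\alpha}=O(\rho^{2\alpha})\). Write \(\Delta^\rho:=x^\rho-\bar x\), so that \(\mathbb E\sup\|\Delta^\rho\|^{2\alpha}=O(\rho^\alpha)\).

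\emph{Terminal term.} We use Taylor's formula with integral remainder,
\(h(x^\rho(T))-h(\bar x(T))=\langle h_x(\bar x(T)),\Delta^\rho(T)\rangle+\int_0^1(1-\lambda)\langle h_{xx}(\bar x(T)+\lambda\Delta^\rho(T))\Delta^\rho(T),\Delta^\rho(T)\rangle d\lambda\).
\begin{itemize}
\item In the linear part, replacing \(\Delta^\rho\) by \(x_1^\rho+x_2^\rho\) costs \(\mathbb E|\langle h_x,r^\rho(T)\rangle|\le C(\mathbb E\|r^\rho\|^2)^{1/2}=o(\rho)\), since \(h_x\) is bounded.
\item In the quadratic part, replacing \(\Delta^\rho\) by \(x_1^\rho\) costs terms like \(\|x_1^\rho\|\,\|x_2^\rho+r^\rho\|\), whose expectation is \(O(\rho^{1/2}\rho)\).
\item Replacing \(h_{xx}(\bar x+\lambda\Delta^\rho)\) by \(h_{xx}(\bar x)\) costs \(\mathbb E[\|h_{xx}(\cdot)-h_{xx}(\bar x)\|\,\|x_1^\rho\|^2]\). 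By Cauchy--Schwarz this is at most \((\mathbb E\|\cdot\|^2)^{1/2}\,O(\rho)\). The first factor tends to zero by continuity of \(h_{xx}\), \(\Delta^\rho(T)\to0\) in probability, boundedness of \(h_{xx}\), and dominated convergence. Hence this term is \(o(\rho)\).
\end{itemize}

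\emph{Running cost.} We split \(k(t,x^\rho,u^\rho)-k(t,\bar x,\bar u)=\delta k(t)I_{E_\rho}+[k(t,x^\rho,u^\rho)-k(t,\bar x,u^\rho)]\). The second bracket is expanded as above with \(k_x,k_{xx}\) evaluated at \(u^\rho\). The differences from the \(\bar u\)-versions are supported on \(E_\rho\). For instance, \(\mathbb E\int_{E_\rho}|\langle\delta k_x,\Delta^\rho\rangle|dt\le C\rho\,(\mathbb E\sup\|\Delta^\rho\|^2)^{1/2}=O(\rho^{3/2})\), and the corresponding second-order terms are of even higher order. The remaining replacements are done as in the terminal term, now with \(dt\)-integrals. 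For the continuity-in-\(x\) step we again use dominated convergence on \(\Omega\times[0,T]\), with boundedness of the second derivatives.

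\emph{Singular term.} The same Taylor expansion is applied at \(\bar x(t-)\) to \(g(t,x^\rho(t-))-g(t,\bar x(t-))\) and integrated against \(d\bar\xi\). Here every error estimate picks up the factor \(\bar\xi(T)\). For example,
\(\mathbb E\int|\langle\bar g_x,r^\rho(t-)\rangle|d\bar\xi\le C\,\mathbb E[\bar\xi(T)\sup\|r^\rho\|]\le C(\mathbb E\bar\xi(T)^2)^{1/2}o(\rho)\),
using the left-limit bound \(\|r^\rho(t-)\|\le\sup\|r^\rho\|\).

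\emph{Main obstacle.} We expect the hardest step to be the \(o(\rho)\) bound for the second-derivative continuity term, \(\mathbb E\int_{(0,T]}\|g_{xx}(t,\bar x(t-)+\lambda\Delta^\rho(t-))-\bar g_{xx}(t)\|\,\|x_1^\rho(t-)\|^2d\bar\xi\). Our plan is to apply H\"older with three factors. The first, \((\mathbb E\bar\xi(T)^4)^{1/4}\), is finite by admissibility. The second is \((\mathbb E\sup\|x_1^\rho\|^8)^{1/4}=O(\rho)\). The third is \((\mathbb E\sup_t\omega^\rho(t)^2)^{1/2}\), where \(\omega^\rho(t)\) is the modulus term above. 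This last factor needs uniform-in-\(t\) convergence, which we obtain by a different route than pointwise continuity. The set \(\{\bar x(t-):t\}\) is not compact in \(H\), so we avoid needing it. Instead, for fixed \(\omega\) we use that \(\sup_t\|\Delta^\rho(t)\|\to0\) in probability along subsequences. We then bound the \(d\bar\xi\)-integral of \(\|g_{xx}(\cdot)-\bar g_{xx}\|\) directly by dominated convergence with respect to the finite measure \(d\bar\xi\,d\mathbb P\), which is weighted by \(\sup\|x_1^\rho\|^2/\rho\) after a normalization argument. If uniform integrability of \(\rho^{-1}\sup\|x_1^\rho\|^2\) is needed, it follows from the higher moment bound with \(\alpha=2\). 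Finally, summing the three expansions gives \eqref{eq:regular-cost-expansion}.
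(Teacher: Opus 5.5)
Your strategy is the paper's. It uses Taylor expansion of $h$, $k$ and $g$ with integral remainder, the decomposition $x^\rho-\bar x=x_1^\rho+x_2^\rho+r^\rho$ with the moment bounds of Lemma~\ref{lem:regular-variation-estimates}, and separate treatment of the $E_\rho$-supported differences $k_x(t,\bar x,v)-\bar k_x$ and $k_{xx}(t,\bar x,v)-\bar k_{xx}$. It also uses H\"older's inequality with a factor $\bar\xi(T)$ for the Stieltjes terms, and dominated convergence for the Hessian-modulus terms. You replace $\Delta^\rho$ by $x_1^\rho$ before freezing the Hessian, whereas the paper freezes the Hessian at $\bar x$ first and then splits $\frac12\langle h_{xx}(\bar x(T))\Delta^\rho(T),\Delta^\rho(T)\rangle_H$; this difference is immaterial.

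The one step that does not work as written is the three-factor H\"older bound in your last paragraph. Its third factor is $(\mathbb E\sup_t\omega^\rho(t)^2)^{1/2}$, with $\omega^\rho(t)=\|g_{xx}(t,\bar x(t-)+\lambda\Delta^\rho(t-))-\bar g_{xx}(t)\|$. Assumption~\ref{ass:control-coefficients} gives continuity of $g_{xx}(t,\omega,\cdot)$ only for each fixed $(t,\omega)$, with no equicontinuity in $t$, so the hypotheses do not yield $\sup_t\omega^\rho(t)\to0$. For instance, take $g(t,x)=c(t)^{-2}\sin(c(t)\langle x,e\rangle_H)$ with $\|e\|_H=1$, $c(t)=\max\{1,(T-t)^{-1}\}$ for $t<T$, and $c(T)=1$. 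This satisfies all the hypotheses, but $\{g_{xx}(t,\cdot)\}_t$ is not equicontinuous. Compactness is not the obstruction: the values and left limits of a c\`adl\`ag path form a relatively compact set.

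The fallback you sketch afterwards is the right idea and is exactly what the paper does. It needs no weights, normalization or uniform integrability. Cauchy--Schwarz with respect to the Dol\'eans measure $\mu^{\bar\xi}$ gives
\[
\mathbb E\int_{(0,T]}\omega^\rho(t)\|x_1^\rho(t-)\|_H^2\,d\bar\xi(t)
\le\Big(\mathbb E\int_{(0,T]}\omega^\rho(t)^2\,d\bar\xi(t)\Big)^{1/2}
\Big(\mathbb E\int_{(0,T]}\|x_1^\rho(t-)\|_H^4\,d\bar\xi(t)\Big)^{1/2}.
\]
The second factor is at most $(\mathbb E\bar\xi(T)^2)^{1/4}(\mathbb E\sup_t\|x_1^\rho(t)\|_H^8)^{1/4}=O(\rho)$. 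The first factor tends to zero by bounded convergence on the finite measure space, since $\omega^\rho\le C$ and $\omega^\rho\to0$ in $\mu^{\bar\xi}$-measure. That convergence follows from continuity of $g_{xx}(t,\omega,\cdot)$ together with
\[
\mu^{\bar\xi}\bigl(\|\Delta^\rho(t-)\|_H>\varepsilon\bigr)
\le(\mathbb E\bar\xi(T)^2)^{1/2}\,
\mathbb P\Bigl(\sup_{t\le T}\|\Delta^\rho(t)\|_H>\varepsilon\Bigr)^{1/2}\longrightarrow0.
\]
The same device handles the $dt$-terms, with $dt\otimes d\mathbb P$ in place of $\mu^{\bar\xi}$.
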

\begin{proof}
	Write
	$
	\Delta x^\rho:=x^\rho-\bar x,
	\
	r_2^\rho
	:=
	\Delta x^\rho-x_1^\rho-x_2^\rho.
	$
	Taylor's formula gives
	\begin{align}
		&J(u^\rho,\bar\xi)-J(\bar u,\bar\xi)
		\notag\\
		&=
		\mathbb E\bigg[
		\int_{E_\rho}\delta k(t)\,dt
		+
		\big\langle
		h_x(\bar x(T)),
		x_1^\rho(T)+x_2^\rho(T)
		\big\rangle_H
		+
		\frac12
		\big\langle
		h_{xx}(\bar x(T))x_1^\rho(T),
		x_1^\rho(T)
		\big\rangle_H
		\notag\\
		&\quad
		+
		\int_0^T
		\big\langle
		\bar k_x(t),
		x_1^\rho(t)+x_2^\rho(t)
		\big\rangle_H\,dt
		+
		\frac12
		\int_0^T
		\big\langle
		\bar k_{xx}(t)x_1^\rho(t),
		x_1^\rho(t)
		\big\rangle_H\,dt
		\notag\\
		&\quad
		+
		\int_{(0,T]}
		\big\langle
		\bar g_x(t),
		x_1^\rho(t-)+x_2^\rho(t-)
		\big\rangle_H\,d\bar\xi(t)
		+
		\frac12
		\int_{(0,T]}
		\big\langle
		\bar g_{xx}(t)x_1^\rho(t-),
		x_1^\rho(t-)
		\big\rangle_H\,d\bar\xi(t)
		\bigg]
		\notag\\
		&\quad
		+
		\mathbb E\bigg[
		R_h^\rho
		+
		\int_0^T R_k^\rho(t)\,dt
		+
		\int_{(0,T]}R_g^\rho(t)\,d\bar\xi(t)
		\bigg],
		\label{eq:regular-cost-expansion-with-remainders}
	\end{align}
	where
	\begin{align*}
		R_h^\rho
		&=
		\big\langle
		h_x(\bar x(T)),r_2^\rho(T)
		\big\rangle_H
		+
		\big\langle
		h_{xx}(\bar x(T))x_1^\rho(T),
		x_2^\rho(T)+r_2^\rho(T)
		\big\rangle_H
		\\
		&\quad+
		\frac12
		\big\langle
		h_{xx}(\bar x(T))
		\big(x_2^\rho(T)+r_2^\rho(T)\big),
		x_2^\rho(T)+r_2^\rho(T)
		\big\rangle_H
		\\
		&\quad+
		\int_0^1(1-\theta)
		\Big\langle
		\Big[
		h_{xx}\big(
		\bar x(T)+\theta\Delta x^\rho(T)
		\big)
		-
		h_{xx}(\bar x(T))
		\Big]\Delta x^\rho(T),
		\Delta x^\rho(T)
		\Big\rangle_H\,d\theta,
	\end{align*}
	\begin{align*}
		R_k^\rho(t)
		&=
		\big\langle
		\bar k_x(t),r_2^\rho(t)
		\big\rangle_H
		+
		I_{E_\rho}(t)
		\Big\langle
		k_x(t,\bar x(t),v(t))
		-
		\bar k_x(t),
		\Delta x^\rho(t)
		\Big\rangle_H
		\\
		&\quad+
		\big\langle
		\bar k_{xx}(t)x_1^\rho(t),
		x_2^\rho(t)+r_2^\rho(t)
		\big\rangle_H
		+
		\frac12
		\big\langle
		\bar k_{xx}(t)
		\big(x_2^\rho(t)+r_2^\rho(t)\big),
		x_2^\rho(t)+r_2^\rho(t)
		\big\rangle_H
		\\
		&\quad+
		\frac12 I_{E_\rho}(t)
		\Big\langle
		\big[
		k_{xx}(t,\bar x(t),v(t))
		-
		\bar k_{xx}(t)
		\big]\Delta x^\rho(t),
		\Delta x^\rho(t)
		\Big\rangle_H
		\\
		&\quad+
		\int_0^1(1-\theta)
		\Big\langle
		\Big[
		k_{xx}\big(
		t,\bar x(t)+\theta\Delta x^\rho(t),u^\rho(t)
		\big)
		-
		k_{xx}\big(
		t,\bar x(t),u^\rho(t)
		\big)
		\Big]\Delta x^\rho(t),
		\Delta x^\rho(t)
		\Big\rangle_H\,d\theta,
	\end{align*}
	and
	\begin{align*}
		R_g^\rho(t)
		&=
		\big\langle
		\bar g_x(t),r_2^\rho(t-)
		\big\rangle_H
		+
		\big\langle
		\bar g_{xx}(t)x_1^\rho(t-),
		x_2^\rho(t-)+r_2^\rho(t-)
		\big\rangle_H
		\\
		&\quad+
		\frac12
		\big\langle
		\bar g_{xx}(t)
		\big(x_2^\rho(t-)+r_2^\rho(t-)\big),
		x_2^\rho(t-)+r_2^\rho(t-)
		\big\rangle_H
		\\
		&\quad+
		\int_0^1(1-\theta)
		\Big\langle
		\Big[
		g_{xx}\big(
		t,\bar x(t-)+\theta\Delta x^\rho(t-)
		\big)
		-
		\bar g_{xx}(t)
		\Big]\Delta x^\rho(t-),
		\Delta x^\rho(t-)
		\Big\rangle_H\,d\theta.
	\end{align*}

	For the integral remainder in \(R_h^\rho\), H\"older's inequality
and the convergence \(\Delta x^\rho(T)\to0\) in probability give
	\begin{align*}
		&\mathbb E
		\int_0^1(1-\theta)
		\left|
		\Big\langle
		\Big[
		h_{xx}\big(
		\bar x(T)+\theta\Delta x^\rho(T)
		\big)
		-
		h_{xx}(\bar x(T))
		\Big]\Delta x^\rho(T),
		\Delta x^\rho(T)
		\Big\rangle_H
		\right|d\theta
		\\
		&\quad\le
		\left(
		\mathbb E\|\Delta x^\rho(T)\|_H^4
		\right)^{1/2}
		\int_0^1(1-\theta)
		\left(
		\mathbb E
		\left\|
		h_{xx}\big(
		\bar x(T)+\theta\Delta x^\rho(T)
		\big)
		-
		h_{xx}(\bar x(T))
		\right\|_{\mathfrak L(H)}^2
		\right)^{1/2}d\theta=o(\rho).
	\end{align*}
	Consequently,
	\begin{equation}
		\mathbb E|R_h^\rho|=o(\rho).
		\label{eq:R-h-regular}
	\end{equation}
	
	For the integral remainder in \(R_k^\rho\), since
\(\Delta x^\rho\rightarrow 0\) in \(dt\otimes d\mathbb P\)-measure,
we obtain
	\begin{align*}
		&\mathbb E\int_0^T\int_0^1(1-\theta)
		\Big|
		\Big\langle
		\Big[
		k_{xx}\big(
		t,\bar x(t)+\theta\Delta x^\rho(t),u^\rho(t)
		\big)
		-
		k_{xx}\big(
		t,\bar x(t),u^\rho(t)
		\big)
		\Big]\Delta x^\rho(t),
		\Delta x^\rho(t)
		\Big\rangle_H
		\Big|\,d\theta\,dt
		\\
		&\quad\le
		\left(
		\mathbb E\int_0^T
		\|\Delta x^\rho(t)\|_H^4\,dt
		\right)^{1/2}
		\\
		&\qquad\quad\times
		\int_0^1(1-\theta)
		\left[
		\mathbb E\int_0^T
		\left\|
		k_{xx}\big(
		t,\bar x(t)+\theta\Delta x^\rho(t),u^\rho(t)
		\big)
		-
		k_{xx}\big(
		t,\bar x(t),u^\rho(t)
		\big)
		\right\|_{\mathfrak L(H)}^2dt
		\right]^{1/2}d\theta=o(\rho).
	\end{align*}
     Consequently,
	\begin{equation}
		\mathbb E\int_0^T|R_k^\rho(t)|\,dt=o(\rho).
		\label{eq:R-k-regular}
	\end{equation}
	
	It remains to estimate \(R_g^\rho\). For every \(\varepsilon>0\),
	\begin{align*}
		\mu^{\bar\xi}
		\big(
		\|\Delta x^\rho(t-)\|_H>\varepsilon
		\big)
		&\le
		\bigl(\mathbb E\bar\xi(T)^2\bigr)^{1/2}
		\mathbb P\left(
		\sup_{0\le t\le T}
		\|\Delta x^\rho(t)\|_H>\varepsilon
		\right)^{1/2}
		\longrightarrow0.
	\end{align*}
	Hence
	$
	\Delta x^\rho(t-)\rightarrow0
	\
	\text{in }\mu^{\bar\xi}\text{-measure}.
	$
    H\"older's inequality then gives
	\begin{align*}
		&\mathbb E\int_{(0,T]}\int_0^1(1-\theta)
		\left|
		\Big\langle
		\Big[
		g_{xx}\big(
		t,\bar x(t-)+\theta\Delta x^\rho(t-)
		\big)
		-
		\bar g_{xx}(t)
		\Big]\Delta x^\rho(t-),
		\Delta x^\rho(t-)
		\Big\rangle_H
		\right|d\theta\,d\bar\xi(t)
		\\
		&\quad\le
		\left[
		\mathbb E\int_{(0,T]}
		\|\Delta x^\rho(t-)\|_H^4
		\,d\bar\xi(t)
		\right]^{1/2}
		\\
		&\qquad\quad\times
		\int_0^1(1-\theta)
		\left[
		\mathbb E\int_{(0,T]}
		\left\|
		g_{xx}\big(
		t,\bar x(t-)+\theta\Delta x^\rho(t-)
		\big)
		-
		\bar g_{xx}(t)
		\right\|_{\mathfrak L(H)}^2
		\,d\bar\xi(t)
		\right]^{1/2}d\theta=o(\rho),
	\end{align*}
	where the first factor satisfies
	\begin{align*}
		\left[
		\mathbb E\int_{(0,T]}
		\|\Delta x^\rho(t-)\|_H^4
		\,d\bar\xi(t)
		\right]^{1/2}
		&\le
		\bigl(\mathbb E\bar\xi(T)^2\bigr)^{1/4}
        \left[\mathbb E\sup_{t\le T}
          \|\Delta x^\rho(t)\|_H^8\right]^{1/4}
		=
		O(\rho).
	\end{align*}
	 Thus,
\begin{equation}
\mathbb E\int_{(0,T]}
		|R_g^\rho(t)|\,d\bar\xi(t)
		=
		o(\rho).
		\label{eq:R-g-regular}
	\end{equation}
	
	Combining
	\eqref{eq:R-h-regular},
	\eqref{eq:R-k-regular}, and
	\eqref{eq:R-g-regular} with
	\eqref{eq:regular-cost-expansion-with-remainders}
	yields
	\eqref{eq:regular-cost-expansion}.
\end{proof}

Applying Lemma~\ref{lem:first-order-duality} to \(X=x_1^\rho\), with
\(f=0\), \(g(t)=\delta b(t)I_{E_\rho}(t)\), \(\eta=0\), and
\(\nu=0\), we obtain
\begin{align}
    &\mathbb E\bigg[
        \big\langle h_x(\bar x(T)),x_1^\rho(T)\big\rangle_H
        +
        \int_0^T
            \big\langle \bar k_x(t),x_1^\rho(t)\big\rangle_H\,dt
        +
        \int_{(0,T]}
            \big\langle \bar g_x(t),x_1^\rho(t-)\big\rangle_H\,d\bar\xi(t)
    \bigg]
    \notag\\
    &=
    \mathbb E\int_{E_\rho}
        \big\langle q(t),\delta b(t)\big\rangle_H\,dt.
    \label{eq:duality-x1}
\end{align}
Applying Lemma~\ref{lem:first-order-duality} to \(X=x_2^\rho\), with
\begin{align*}
f(t)&=
    \delta a(t)I_{E_\rho}(t)
    +
    \frac12\bar a_{xx}(t)
    \big(x_1^\rho(t),x_1^\rho(t)\big),\\
 g(t)&=
    \delta b_x(t)x_1^\rho(t)I_{E_\rho}(t)
    +
    \frac12\bar b_{xx}(t)
    \big(x_1^\rho(t),x_1^\rho(t)\big),
\end{align*}
and
\[
    \eta(t)=
    \frac12\bar\gamma_{xx}(t)
    \big(x_1^\rho(t-),x_1^\rho(t-)\big),
\]
with \(\nu=0\), we obtain
\begin{align}
    &\mathbb E\bigg[
        \big\langle h_x(\bar x(T)),x_2^\rho(T)\big\rangle_H
        +
        \int_0^T
            \big\langle \bar k_x(t),x_2^\rho(t)\big\rangle_H\,dt
        +
        \int_{(0,T]}
            \big\langle \bar g_x(t),x_2^\rho(t-)\big\rangle_H\,d\bar\xi(t)
    \bigg]
    \notag\\
    &=
    \mathbb E\int_{E_\rho}
        \big\langle p(t),\delta a(t)\big\rangle_H\,dt
    +
    \mathbb E\int_{E_\rho}
        \big\langle q(t),\delta b_x(t)x_1^\rho(t)\big\rangle_H\,dt
    \notag\\
    &\quad
    +
    \frac12
    \mathbb E\int_0^T
        \big\langle
            \bar a_{xx}(t)^*p(t)x_1^\rho(t),
            x_1^\rho(t)
        \big\rangle_H\,dt
    +
    \frac12
    \mathbb E\int_0^T
        \big\langle
            \bar b_{xx}(t)^*q(t)x_1^\rho(t),
            x_1^\rho(t)
        \big\rangle_H\,dt
    \notag\\
    &\quad
    +
    \frac12
    \mathbb E\int_{(0,T]}
        \big\langle
            \bar\gamma_{xx}(t)^*p(t)x_1^\rho(t-),
            x_1^\rho(t-)
        \big\rangle_H\,d\bar\xi(t).
    \label{eq:duality-x2}
\end{align}
The term \(\mathbb E\int_{E_\rho}
	\langle q(t),\delta b_x(t)x_1^\rho(t)\rangle_H\,dt\)
	in \eqref{eq:duality-x2} satisfies
\begin{align}
\left|
\mathbb E\int_{E_\rho}
\langle q(t),\delta b_x(t)x_1^\rho(t)\rangle_H\,dt
\right|
&\le C
\left(\mathbb E\int_{E_\rho}\|q(t)\|_H^2\,dt\right)^{1/2}
\left(\mathbb E\int_{E_\rho}\|x_1^\rho(t)\|_H^2\,dt\right)^{1/2}
=o(\rho).
    \label{eq:delta-bx-x1-small}
\end{align}

Substituting \eqref{eq:duality-x1} and \eqref{eq:duality-x2}
into \eqref{eq:regular-cost-expansion} and using
\eqref{eq:delta-bx-x1-small} yields
\begin{align}
    J(u^\rho,\bar\xi)-J(\bar u,\bar\xi)
    &=
    \mathbb E\int_{E_\rho}
    \Big[
        \delta k(t)
        +
        \big\langle p(t),\delta a(t)\big\rangle_H
        +
        \big\langle q(t),\delta b(t)\big\rangle_H
    \Big]\,dt
    \notag\\
    &\quad
    +
    \frac12
    \mathbb E\bigg[
        \big\langle
            h_{xx}(\bar x(T))x_1^\rho(T),
            x_1^\rho(T)
        \big\rangle_H
        +
        \int_0^T
            \big\langle
                \bar H_{xx}(t)x_1^\rho(t),
                x_1^\rho(t)
            \big\rangle_H\,dt
    \notag\\
    &\hspace{2.5cm}
        +
        \int_{(0,T]}
            \big\langle
                \bar G_{xx}(t)x_1^\rho(t-),
                x_1^\rho(t-)
            \big\rangle_H\,d\bar\xi(t)
    \bigg]
    +
    o(\rho).
    \label{eq:regular-cost-before-second-adjoint}
\end{align}

Since \(\delta b\in L^{16}_{\mathbb F}(0,T;H)\) and
\(\mathfrak E_{8\kappa,L_\gamma}(\bar\xi)<\infty\), we apply
Theorem~\ref{thm:FV-BSIE-Ito} with \(\alpha=2\) to
\eqref{eq:second-adjoint} and the first variation \(x_1^\rho\) to obtain
\begin{align}
    &\mathbb E\bigg[
        \big\langle
            h_{xx}(\bar x(T))x_1^\rho(T),
            x_1^\rho(T)
        \big\rangle_H
        +
        \int_0^T
            \big\langle
                \bar H_{xx}(t)x_1^\rho(t),
                x_1^\rho(t)
            \big\rangle_H\,dt
    \notag\\
    &\hspace{2.5cm}
        +
        \int_{(0,T]}
            \big\langle
                \bar G_{xx}(t)x_1^\rho(t-),
                x_1^\rho(t-)
            \big\rangle_H\,d\bar\xi(t)
    \bigg]
    =
    \mathbb E\int_{E_\rho}
        \big\langle P(t)\delta b(t),\delta b(t)\big\rangle_H\,dt
    +
    o(\rho).
    \label{eq:second-adjoint-duality-x1}
\end{align}
Substituting \eqref{eq:second-adjoint-duality-x1} into
\eqref{eq:regular-cost-before-second-adjoint} yields
\begin{align}
   0\leq J(u^\rho,\bar\xi)-J(\bar u,\bar\xi)
    &=
    \mathbb E\int_{E_\rho}
    \bigg[
        \delta k(t)
        +
        \big\langle p(t),\delta a(t)\big\rangle_H
        +
        \big\langle q(t),\delta b(t)\big\rangle_H
    \notag\\
    &\hspace{3.1cm}
        +
        \frac12
        \big\langle P(t)\delta b(t),\delta b(t)\big\rangle_H
    \bigg]\,dt
    +
    o(\rho).
    \label{eq:regular-spike-expansion-final}
\end{align}
From \eqref{eq:regular-spike-expansion-final}, we obtain the first-order
necessary condition \eqref{eq:SMP-regular-final} for the regular control.

\subsubsection{Singular perturbations}
\label{subsec:singular-perturbations}

We next expand the cost under a convex perturbation of the singular control.

\begin{lemma}
\label{lem:singular-cost-expansion}
Under the assumptions of Theorem~\ref{thm:SMP}, we have
\begin{align}
    \frac{
        J(\bar u,\xi^\varepsilon)-J(\bar u,\bar\xi)
    }{\varepsilon}
    &=
    \mathbb E\bigg[
        \big\langle h_x(\bar x(T)),y(T)\big\rangle_H
        +
        \int_0^T
            \big\langle \bar k_x(t),y(t)\big\rangle_H\,dt
    \notag\\
    &\quad
        +
        \int_{(0,T]}
            \big\langle \bar g_x(t),y(t-)\big\rangle_H\,d\bar\xi(t)
        +
        \int_{(0,T]}
            \bar g(t)\,d\nu(t)
    \bigg]
    +o(1),
    \label{eq:singular-cost-expansion}
\end{align}
as \(\varepsilon\downarrow0\), where \(y\) is the solution of
\eqref{eq:first-variation-singular}.
\end{lemma}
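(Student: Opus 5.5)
We write the cost difference as a first-order Taylor expansion in the state increment \(\Delta x^\varepsilon:=x^\varepsilon-\bar x\), while tracking the change of the singular measure \(d\xi^\varepsilon-d\bar\xi=\varepsilon\,d\nu\). The starting decomposition is
\begin{align*}
J(\bar u,\xi^\varepsilon)-J(\bar u,\bar\xi)
&=\mathbb E\Big[h(x^\varepsilon(T))-h(\bar x(T))
+\int_0^T\big(k(t,x^\varepsilon(t),\bar u(t))-k(t,\bar x(t),\bar u(t))\big)dt\\
&\quad+\int_{(0,T]}\big(g(t,x^\varepsilon(t-))-\bar g(t)\big)\,d\bar\xi(t)
+\varepsilon\int_{(0,T]}g(t,x^\varepsilon(t-))\,d\nu(t)\Big].
\end{align*}
Dividing by \(\varepsilon\) gives four terms, and each one is compared with the corresponding term of \eqref{eq:singular-cost-expansion}.

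For the first three terms we use the mean value form \(h(x^\varepsilon(T))-h(\bar x(T))=\int_0^1\langle h_x(\bar x(T)+\theta\Delta x^\varepsilon(T)),\Delta x^\varepsilon(T)\rangle_H\,d\theta\), and the analogous forms for \(k\) and \(g\). We then write \(\Delta x^\varepsilon/\varepsilon=y+r^\varepsilon\). By Lemma~\ref{lem:singular-variation-estimate}, \(\mathbb E\sup_t\|r^\varepsilon(t)\|_H^{2\alpha}\le C\varepsilon^{2\alpha}\). Three kinds of error appear.
\begin{itemize}
\item The \(r^\varepsilon\) contributions are bounded by the uniform bounds on \(h_x,k_x,g_x\) and are \(O(\varepsilon)\). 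In the \(d\bar\xi\) term we use H\"older's inequality with \(\bar\xi(T)\in L^2\).
\item The derivative differences, for example \(h_x(\bar x+\theta\Delta x^\varepsilon)-h_x(\bar x)\), are bounded in norm by \(C\|\Delta x^\varepsilon\|_H\) because the second derivatives are uniformly bounded.
\item Multiplying the derivative differences by \(\|y\|_H\) and using \(\mathbb E\sup\|\Delta x^\varepsilon\|_H^2=O(\varepsilon^2)\), obtained from Lemma~\ref{lem:singular-variation-estimate} and \(y\in\mathbb D^{2\alpha}\), gives \(O(\varepsilon)\) after Cauchy--Schwarz. In the \(d\bar\xi\) integral we additionally apply H\"older's inequality with a moment of \(\bar\xi(T)\).
\end{itemize}
Thus these three terms equal the \(h_x\), \(k_x\) and \(\bar g_x\) terms of \eqref{eq:singular-cost-expansion} up to \(O(\varepsilon)\).

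For the last term we write \(g(t,x^\varepsilon(t-))=\bar g(t)+\big(g(t,x^\varepsilon(t-))-\bar g(t)\big)\). By \eqref{eq:nu-total-variation-bound},
\[
\Big|\mathbb E\int_{(0,T]}\big(g(t,x^\varepsilon(t-))-\bar g(t)\big)d\nu(t)\Big|
\le C\,\mathbb E\Big[\sup_t\|\Delta x^\varepsilon(t)\|_H\,K(T)\Big]
\le C\big(\mathbb E\sup_t\|\Delta x^\varepsilon(t)\|_H^2\big)^{1/2}\big(\mathbb E K(T)^2\big)^{1/2}=O(\varepsilon),
\]
since \(K(T)\in L^m\). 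The remaining piece \(\mathbb E\int\bar g\,d\nu\) is finite because \(|\bar g|\le C(1+\sup\|\bar x\|_H)\) and \(K(T)\in L^2\).

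The main delicacy is keeping track of the Stieltjes terms, whose integrator \(\bar\xi\) is unbounded. All of them are handled by H\"older's inequality, which combines the polynomial moments of \(\bar\xi(T)\) and \(K(T)\) guaranteed by admissibility with the uniform-in-time moment bounds on \(\Delta x^\varepsilon\) and \(y\). Integrability of the left limits needs no extra argument, since \(\sup_t\|x(t-)\|_H\le\sup_t\|x(t)\|_H\). Collecting the estimates gives \eqref{eq:singular-cost-expansion}, in fact with remainder \(O(\varepsilon)\).
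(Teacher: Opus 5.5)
Your proposal is correct and follows essentially the same route as the paper. It Taylor-expands the costs in the state: you use the first-order mean-value form with Lipschitz derivatives, while the paper bounds a second-order remainder through the bounded Hessians, which is the same estimate organized differently. Like the paper, you replace \((x^\varepsilon-\bar x)/\varepsilon\) by \(y\) using Lemma~\ref{lem:singular-variation-estimate}, and you control \(g(t,x^\varepsilon(t-))-\bar g(t)\) in the \(d\nu\)-integral through \(d|\nu|_v\le dK\), with every error of order \(O(\varepsilon)\).
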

\begin{proof}
	Set
	\(z^\varepsilon(t) := \frac{x^\varepsilon(t)-\bar x(t)}{\varepsilon}.\)
	Applying Taylor's formula to \(h(x^\varepsilon(T))\),
	\(k(t,x^\varepsilon(t),\bar u(t))\), and
	\(g(t,x^\varepsilon(t-))\), and using
	\(d\xi^\varepsilon=d\bar\xi+\varepsilon d\nu\), we obtain
	\begin{equation}
\begin{aligned}
		\frac{J(\bar u,\xi^\varepsilon)-J(\bar u,\bar\xi)}
		{\varepsilon}
		&=
		\mathbb E\bigg[
		\big\langle h_x(\bar x(T)),z^\varepsilon(T)\big\rangle_H
		+
		\int_0^T
		\big\langle \bar k_x(t),z^\varepsilon(t)\big\rangle_H\,dt
		\\
		&\quad
		+
		\int_{(0,T]}
		\big\langle \bar g_x(t),z^\varepsilon(t-)\big\rangle_H
		\,d\bar\xi(t)
		+
		\int_{(0,T]}
		g(t,x^\varepsilon(t-))\,d\nu(t)
		\bigg]
		+I_\varepsilon,
	\end{aligned}
\label{eq:singular-cost-Taylor-quotient}
\end{equation}
Here \(I_\varepsilon\) is the sum of the quadratic Taylor remainders
from the terminal cost and the Lebesgue and Stieltjes integrals, divided by
\(\varepsilon\). Boundedness of the second derivatives gives
\[
 |I_\varepsilon|
 \le C\varepsilon\,
 \|1+\bar\xi(T)\|_{L^2}
 \|z^\varepsilon\|_{\mathbb D^4_{\mathbb F}(0,T;H)}^2
 =O(\varepsilon),
\]
where Lemma~\ref{lem:singular-variation-estimate} bounds the
\(\mathbb D^4_{\mathbb F}\)-norm uniformly in \(\varepsilon\).
The error from replacing \(g(t,x^\varepsilon(t-))\) by
\(\bar g(t)\) in the \(d\nu\)-integral of
\eqref{eq:singular-cost-Taylor-quotient} is estimated using
\eqref{eq:nu-total-variation-bound} and the bounded derivative \(g_x\):
\begin{align*}
 &\left|\mathbb E\int_{(0,T]}
       [g(t,x^\varepsilon(t-))-\bar g(t)]\,d\nu(t)\right|\\
 &\quad\le
 C\varepsilon\,
 \||\nu|_v(T)\|_{L^2}
 \|z^\varepsilon\|_{\mathbb D^2_{\mathbb F}(0,T;H)}
 =O(\varepsilon).
\end{align*}
The errors in the terminal, Lebesgue, and Stieltjes linear terms are
bounded by
\begin{align*}
 &\left|\mathbb E\langle h_x(\bar x(T)),z^\varepsilon(T)-y(T)\rangle_H\right|\\
 &\quad+
 \left|\mathbb E\int_0^T
       \langle\bar k_x(t),z^\varepsilon(t)-y(t)\rangle_H\,dt\right|\\
 &\quad+
 \left|\mathbb E\int_{(0,T]}
       \langle\bar g_x(t),z^\varepsilon(t-)-y(t-)\rangle_H
       \,d\bar\xi(t)\right|\\
 &\quad\le
 C(1+T+\|\bar\xi(T)\|_{L^2})
 \|z^\varepsilon-y\|_{\mathbb D^2_{\mathbb F}(0,T;H)}
 =O(\varepsilon).
\end{align*}
Substituting these \(O(\varepsilon)\) error bounds into
\eqref{eq:singular-cost-Taylor-quotient} proves
\eqref{eq:singular-cost-expansion}.
\end{proof}

Applying Lemma~\ref{lem:first-order-duality} to
\(y\) in \eqref{eq:first-variation-singular},
whose only inhomogeneous term is
\(\bar\gamma(t)\,d\nu(t)\), with
\(\nu=\eta-\bar\xi\), we obtain
\begin{align}
    &\mathbb E\bigg[
        \big\langle h_x(\bar x(T)),y(T)\big\rangle_H
        +
        \int_0^T
            \big\langle \bar k_x(t),y(t)\big\rangle_H\,dt
        +
        \int_{(0,T]}
            \big\langle \bar g_x(t),y(t-)\big\rangle_H\,d\bar\xi(t)
    \bigg]
    \notag\\
    &=
    \mathbb E\int_{(0,T]}
        \big\langle p(t),\bar\gamma(t)\big\rangle_H\,d\nu(t).
    \label{eq:singular-duality}
\end{align}
Substituting \eqref{eq:singular-duality} into
\eqref{eq:singular-cost-expansion} yields
\begin{align}
    \frac{
        J(\bar u,\xi^\varepsilon)-J(\bar u,\bar\xi)
    }{\varepsilon}
    =
    \mathbb E\int_{(0,T]}
       \mathcal G(t,\bar x(t-))
        \,d(\eta-\bar\xi)(t)
    +o(1).
    \label{eq:singular-cost-after-duality}
\end{align}
Optimality of \((\bar u,\bar\xi)\) and admissibility of
\((\bar u,\xi^\varepsilon)\) therefore imply
\begin{equation}
    \mathbb E\int_{(0,T]}
        \mathcal G(t,\bar x(t-))
        \,d(\eta-\bar\xi)(t)
    \ge0,
    \quad
    \text{for every admissible singular control }\eta.
    \label{eq:singular-variational-inequality}
\end{equation}
The optional-projection and stopping-time localization argument in
Appendix~\ref{subsec:singular-VI-pointwise-conditions} shows that
\eqref{eq:singular-variational-inequality} implies
\eqref{eq:SMP-singular-nonnegative} and
\eqref{eq:SMP-singular-complementarity}.

\begin{remark}
The results of this paper extend, with direct modifications, to
equations driven by a cylindrical Wiener process on a separable
Hilbert space \(K_W\), with Hilbert--Schmidt diffusion coefficients
and the bound
\[
    \|(B(t)x)^*x\|_{K_W}\le C_B\|x\|_H^2,
    \qquad x\in V.
\]
\end{remark}

\subsection{A stochastic heat equation with regular and singular controls}
\label{subsec:compact-mixed-control-example}
Let \(H=L^2(0,\pi)\) and \(V=H_0^1(0,\pi)\), with
\(\|z\|_V=\|z'\|_{L^2}\). The Dirichlet Laplacian
\(A:V\to V^*\) is defined by
\[
\langle Az,v\rangle_*
=-\int_0^\pi z'(\theta)v'(\theta)\,d\theta,
\quad z,v\in V.
\]
The realization of \(A\) on \(H\) acts as \(Az=z''\), with
domain \(H^2(0,\pi)\cap H_0^1(0,\pi)\). Its normalized first
eigenfunction is
\[
e(\theta)=\sqrt{\frac2\pi}\sin\theta,
\quad \|e\|_H=1,\quad Ae=-e.
\]

Fix \(r,\sigma>0\), \(0<c<1\), \(K\in\mathbb R\), and
\(0<m<2c/r\). For \(x_0\in H\), the state equation and cost are
\begin{align}
	dx(t)&=\bigl[Ax(t)+u(t)e\bigr]dt
	+\sigma e\,dW(t)+e\,d\xi(t),
	\quad x(0)=x_0,\notag\\
	J(u,\xi)&=\mathbb E\left[
	\varphi\bigl(\langle x(T),e\rangle_H\bigr)
	+\frac r2\int_0^T u(t)^2\,dt+c\xi(T)\right],
	\quad \varphi(y)=\sqrt{1+(y-K)^2}.
	\label{eq:compact-example-model}
\end{align} 
Both controls act with spatial distribution \(e\). The regular control takes values in the nonconvex
set \(U=\{-m,0,m\}\), corresponding to cooling, no regular input,
and heating. The singular control \(\xi\) is an adapted
nondecreasing c\`adl\`ag process with \(\xi(0)=0\).
The regular control \(u\) gives a bounded heating or cooling
rate, whereas the singular control \(\xi\)
supplies nonnegative heat inputs and allows instantaneous
heating: a jump \(\Delta\xi(t)\) adds
\(e\,\Delta\xi(t)\) to the state.

The scalar process \(y(t)=\langle x(t),e\rangle_H\) is the
coefficient of \(x(t)\) along \(e\), and \(y(t)e\) is the
corresponding spatial component. Projection onto \(e\) gives
\[
dy(t)=\bigl[-y(t)+u(t)\bigr]dt+\sigma\,dW(t)+d\xi(t),
\quad y(0)=\langle x_0,e\rangle_H.
\]
The terminal penalty measures the deviation of \(y(T)\) from
\(K\), and \(c\) is the cost per unit of singular input.

The coefficients satisfy Assumption~\ref{ass:control-coefficients}.
Since \(L_\gamma=0\), the conditional exponential-moment requirement
is automatic.

Let \((\bar u,\bar\xi)\) be an optimal pair and write
\(\bar y(t)=\langle\bar x(t),e\rangle_H\).
Define the scalar adjoint processes \((p_1,q_1)\) by
\begin{equation}
	dp_1(t)=p_1(t)\,dt+q_1(t)\,dW(t),
	\quad p_1(T)=\varphi'(\bar y(T)).
	\label{eq:compact-example-adjoint}
\end{equation}
Since \(h_x(\bar x(T))=\varphi'(\bar y(T))e\) and \(Ae=-e\),
uniqueness for the first-order adjoint equation gives
\(p(t)=p_1(t)e\) and \(q(t)=q_1(t)e\).
The process \(p_1\) is continuous and satisfies
\begin{equation}
	p_1(t)=e^{-(T-t)}
	\mathbb E\bigl[p_1(T)\mid\mathcal F_t\bigr]
	=e^{-(T-t)}
	\mathbb E\bigl[\varphi'(\bar y(T))\mid\mathcal F_t\bigr],
	\quad |p_1(t)|\le e^{-(T-t)}.
	\label{eq:compact-example-adjoint-representation}
\end{equation}

The diffusion does not depend on \(u\), so the second-order
correction in Theorem~\ref{thm:SMP} vanishes.
The regular Hamiltonian is \(p_1v+\sigma q_1+rv^2/2\).
Since \(\sigma q_1\) does not depend on \(v\), the regular
minimization condition becomes
\begin{equation}
	\bar u(t)\in
	\operatorname*{arg\,min}_{v\in U}
	\left\{p_1(t)v+\frac r2v^2\right\},
	\quad (dt\otimes d\mathbb P)\text{-a.e.}
	\label{eq:compact-example-regular-minimization}
\end{equation}
Evaluating the Hamiltonian at the three control values gives
\begin{equation}
	\bar u(t)=
	\begin{cases}
		m,&p_1(t)<-rm/2,\\
		0,&|p_1(t)|<rm/2,\\
		-m,&p_1(t)>rm/2.
	\end{cases}
	\label{eq:nonconvex-heat-regular-control}
\end{equation}
At \(p_1=-rm/2\), both \(0\) and \(m\) minimize the Hamiltonian;
at \(p_1=rm/2\), both \(-m\) and \(0\) minimize it.

The singular Hamiltonian \(\mathcal G(t)=p_1(t)+c\) is
continuous. The singular nonnegativity and contact conditions
in Theorem~\ref{thm:SMP} read
\begin{equation}
	p_1(t)+c\ge0,\quad
	\bigl(p_1(t)+c\bigr)d\bar\xi(t)=0.
	\label{eq:compact-example-SMP}
\end{equation}
Evaluating the nonnegativity condition at \(T\) gives
\(p_1(T)\ge-c\). By
\eqref{eq:compact-example-adjoint-representation},
\[
p_1(t)+c
=e^{-(T-t)}\mathbb E[p_1(T)\mid\mathcal F_t]+c
\ge c\bigl(1-e^{-(T-t)}\bigr)>0,
\quad 0\le t<T.
\]
The contact condition in \eqref{eq:compact-example-SMP},
together with \(\bar\xi(0)=0\), therefore gives
\(\bar\xi(t)=0\) for \(t<T\).
Any optimal singular input occurs at \(T\).

To determine \(\Delta\bar\xi(T)\), let \(Y\) solve the scalar
state equation with regular control \(\bar u\) and no singular
input:
\[
dY(t)=\bigl[-Y(t)+\bar u(t)\bigr]dt+\sigma\,dW(t),
\quad Y(0)=\langle x_0,e\rangle_H.
\]
Then \(\bar y(T)=Y(T)+\Delta\bar\xi(T)\).
At \(T\), \eqref{eq:compact-example-adjoint} and
\eqref{eq:compact-example-SMP} give
\[
\varphi'(\bar y(T))+c\ge0,\quad
\bigl(\varphi'(\bar y(T))+c\bigr)\Delta\bar\xi(T)=0.
\]
Define the terminal heating threshold \(y_c\) as the unique
solution of \(\varphi'(y_c)+c=0\).
Since \(\varphi'(y)=(y-K)/\sqrt{1+(y-K)^2}\) is strictly
increasing, the terminal nonnegativity condition requires
\(\bar y(T)\ge y_c\), while the terminal contact condition requires
\(\bar y(T)=y_c\) whenever \(\Delta\bar\xi(T)>0\).
Consequently,
\begin{equation}
	y_c=K-\frac{c}{\sqrt{1-c^2}},\quad
	\Delta\bar\xi(T)=\bigl(y_c-Y(T)\bigr)^+,\quad
	\bar y(T)=\max\{Y(T),y_c\}.
	\label{eq:compact-example-terminal-control}
\end{equation}
When \(Y(T)<y_c\), the terminal input raises the scalar state
from \(Y(T)\) to \(y_c\). When \(Y(T)\ge y_c\), the terminal
input is zero.

Conversely, let \((\bar u,\bar\xi)\) be an admissible pair,
and let \((p_1,q_1)\) solve \eqref{eq:compact-example-adjoint}
for its state \(\bar x\).
If the regular minimization condition
\eqref{eq:compact-example-regular-minimization} and the singular
nonnegativity and contact conditions
\eqref{eq:compact-example-SMP} hold, then
\((\bar u,\bar\xi)\) is optimal.
Indeed, convexity of \(\varphi\) and adjoint duality give,
for every admissible \((u,\xi)\),
\begin{equation}
	J(u,\xi)-J(\bar u,\bar\xi)
	\ge
	\mathbb E\int_0^T
	\left[p_1(u-\bar u)+\frac r2(u^2-\bar u^2)\right]dt
	+\mathbb E\int_{(0,T]}(p_1+c)\,d\xi
	\ge0.
	\label{eq:compact-example-optimality}
\end{equation}
The first integrand is nonnegative by
\eqref{eq:compact-example-regular-minimization}.
The second integral is nonnegative by
\eqref{eq:compact-example-SMP}, which also gives
\(\int_{(0,T]}(p_1+c)\,d\bar\xi=0\).

\section{Appendix}

\subsection{Well-posedness of the SEE with a bounded driver}
\label{subsec:localized-SEE-construction}

The following lemma provides the bounded-driver solutions used in
Theorem~\ref{thm:FV-SEE-weighted-wellposed}.
\begin{lemma}
\label{lem:localized-FV-SEE-construction}
\label{lem:additive-FV-SEE-wellposed}
Let \(r\in[0,T]\) and assume \((A1)\)--\((A3)\).
Let \(\chi\) be adapted, nondecreasing, and c\`adl\`ag on
\([r,T]\), with \(\chi(T)-\chi(r)\le K_0\) \(\mathbb P\text{-a.s.}\) for some constant
 \(K_0<\infty\). Suppose that
\[
 X_r\in L^2(\mathcal F_r;H),\quad
 a(\cdot,0)\in L^{1,2}_{\mathbb F}(r,T;H),\quad
 b(\cdot,0)\in L^2_{\mathbb F}(r,T;H),\quad
 \gamma(\cdot,0)\in L^{1,2}_{\mathbb F,\chi}(r,T;H).
\]
Then \eqref{eq:unbounded-xi-forward-SEE}, with \(\chi\) in place
of \(\xi\), has a unique solution in
\(\mathbb D^2_{\mathbb F}(r,T;H) \cap L^2_{\mathbb F}(r,T;V).\)
\end{lemma}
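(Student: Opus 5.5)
The plan is to build the solution by a Picard iteration (a contraction) in a space carrying an exponential weight. We cannot simply quote the estimates of Proposition~\ref{prop:unbounded-xi-SEE-estimate}, because they assume a solution already exists. What we can reuse is its energy computation. The boundedness \(\chi(T)-\chi(r)\le K_0\) makes the weight \(\rho(t)=e^{-\beta(t-r)-\kappa\ell(\chi(t)-\chi(r))}\) comparable to \(1\), with bounds \(e^{-\beta T-\kappa\ell K_0}\le\rho\le1\). Weighted and unweighted norms are therefore equivalent, and the weight is used only to make the map contractive.

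Step 1 treats the linear problem with a given input. For \(Y\in\mathbb D^2_{\mathbb F}(r,T;H)\) we freeze the nonlinearities and solve
\[
dX=[AX+a(t,Y)]dt+[BX+b(t,Y)]dW+\gamma(t,Y(t-))\,d\chi .
\]
The Stieltjes term is now an additive datum \(\Gamma(t):=\int_{(r,t]}\gamma(s,Y(s-))\,d\chi(s)\). This is an adapted c\`adl\`ag finite-variation process with \(\mathbb E\sup\|\Gamma\|_H^2<\infty\), since \(\|\gamma(s,Y(s-))\|\le\ell\sup\|Y\|+\|\gamma(s,0)\|\) and \(\chi\) has bounded increments. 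We set \(X=\widetilde X+\Gamma\), and \(\widetilde X\) then solves a classical linear SEE with drift \(A\widetilde X+A\Gamma+a\) and diffusion \(B\widetilde X+B\Gamma+b\).

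This substitution has a catch: \(\Gamma\) is only \(H\)-valued, so \(A\Gamma\) is not defined. I therefore plan to avoid it. Instead I would solve the linear equation with the finite-variation forcing directly, by Galerkin approximation on the spans of an orthonormal basis of \(V\). On each finite-dimensional space the equation is a linear SDE with c\`adl\`ag FV forcing, and it is solvable pathwise by variation of constants. We then derive the uniform energy bound from Lemma~\ref{Ito-lemma}/Corollary~\ref{cor:weighted-Ito-general}, using the coercivity \eqref{eq:SEE-coercivity}, the BDG inequality and \eqref{eq:SEE-B-quasi-skew}. This gives weak limits in \(L^2_{\mathbb F}(r,T;V)\). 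The limit equation is identified using monotonicity/linearity, and the c\`adl\`ag version comes from Lemma~\ref{Ito-lemma}. Uniqueness of the linear problem follows from the same energy identity applied to a difference.

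Step 2 is the contraction. Define \(\Phi(Y)=X\) and set \(\Delta X=\Phi(Y_1)-\Phi(Y_2)\). Apply the weighted formula \eqref{eq:weighted-FV-Ito} to \(\Delta X\) with \(K(t)=\beta(t-r)+\lambda(\chi(t)-\chi(r))\), where \(\lambda\) is a large constant. The Lebesgue cross terms are bounded by \(C\|\Delta X\|^2+C\|\Delta Y\|^2\), and the \(-\beta\int\rho\|\Delta X\|^2\) term absorbs the first part. The Stieltjes terms are \(2\langle\Delta X(s-),\Delta\gamma\rangle\,d\chi\) together with the jump square \(\|\Delta\gamma\|^2(\Delta\chi)^2\le K_0\ell^2\|\Delta Y(s-)\|^2\,\Delta\chi\). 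Against these we have the absorption \(-\lambda\int\rho\|\Delta X(s-)\|^2\,d\chi\) and the jump part \(-(e^{\lambda\Delta\chi}-1)\), which is at least \(\lambda\Delta\chi\). The cross term splits into \(\|\Delta X\|^2\), which is absorbed, plus \(\ell^2\|\Delta Y\|^2\,d\chi\).

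The main obstacle is that the \(\Delta Y\) contributions on the right-hand side must come out with a small factor. For the \(dt\) part this is easy: a factor \(C/\beta\) is obtained by comparing \(\int\rho\|\Delta Y\|^2dt\le(1/\beta)\sup\rho\|\Delta Y\|^2\), since \(\int_r^T e^{-\beta(s-r)}ds\le1/\beta\). For the \(d\chi\) part the same device works: by \eqref{eq:BSEE-reciprocal-weight-integrals}-type reasoning, \(\int\rho(s-)\,d\chi(s)\le C/\lambda\) for the chain-rule weight. This requires care with the left limits at jumps, because \(\rho(s-)=\rho(s)e^{\lambda\Delta\chi(s)}\) can be large relative to \(\rho(s)\). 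To avoid that, I would evaluate the \(d\chi\)-terms of \(\Delta Y\) with the left weight and use \(\int_{(r,T]}e^{-\lambda(\chi(s-)-\chi(r))}d\chi(s)\le e^{\lambda K_0}\cdot\)\(\ldots\). If this fails to be small, the fallback is to partition \([r,T]\) by the stopping times \(\tau_{k+1}=\inf\{t>\tau_k:\chi(t)-\chi(\tau_k)\ge\epsilon\}\). On each stochastic interval, apart from the terminal jump, the \(d\chi\)-mass is at most \(\epsilon\), so the contraction constant is \(C(\epsilon+1/\beta)\). The equation across a jump at \(\tau_{k+1}\) is solved explicitly by \(X(\tau)=X(\tau-)+\gamma(\tau,X(\tau-))\Delta\chi(\tau)\), which needs no smallness. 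The number of pieces is at most \(K_0/\epsilon+1\), so finitely many concatenations give the solution. With BDG for the sup-norm, we obtain \(\|\Phi(Y_1)-\Phi(Y_2)\|\le\frac12\|Y_1-Y_2\|\) in the weighted \(\mathbb D^2\) norm, and a unique fixed point exists in \(\mathbb D^2_{\mathbb F}\cap L^2_{\mathbb F}(V)\). Uniqueness among all solutions in that class is the same estimate applied with \(Y_i=X_i\).
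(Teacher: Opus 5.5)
Your Step~1 (Galerkin scheme for the frozen linear problem, uniform energy bounds, weak limit in $L^2_{\mathbb F}(r,T;V)$, c\`adl\`ag representative from Lemma~\ref{Ito-lemma}) is essentially the paper's Step~1. Step~2, however, does not establish the contraction, because both smallness claims it relies on are false.

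\emph{The $dt$ bound.} The inequality $\int_r^T\rho(t)\|\Delta Y(t)\|_H^2\,dt\le\beta^{-1}\sup_t\rho(t)\|\Delta Y(t)\|_H^2$ already fails for constant $\chi$. Take $\rho(t)=e^{-\beta(t-r)}$ and $\|\Delta Y(t)\|_H^2=e^{\beta(t-r)}$; then the left side is $T-r$ and the right side is $1/\beta$. The estimate $\int_r^Te^{-\beta(s-r)}ds\le1/\beta$ only controls $\int\rho\|\Delta Y\|^2dt$ by the \emph{unweighted} supremum. Returning to the weighted norm costs a factor $e^{\beta T+\lambda K_0}$, which swamps $1/\beta$.

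\emph{The $d\chi$ bound.} The claim $\int_{(r,T]}\rho(s-)\,d\chi(s)\le C/\lambda$ is also false. A single jump of size $K_0$ at $t_1$ contributes $e^{-\beta(t_1-r)}K_0$, independently of $\lambda$. Even with the post-jump weight, $\int\rho(s)\|\Delta Y(s-)\|_H^2\,d\chi(s)$ is not small compared with $\sup\rho\|\Delta Y\|_H^2$ when $\chi$ has a continuous part: take $\chi(t)=c(t-r)$ and $\|\Delta Y\|_H^2=1/\rho$.

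\emph{The fallback.} Slicing at the stopping times where $\chi$ has grown by $\epsilon$ handles only the $d\chi$ mass. It still leans on the false $dt$ bound, and it would require solving from random initial times and concatenating, which you do not carry out.

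The missing idea is to keep the coercive terms that you use only for absorption. In \eqref{eq:weighted-FV-Ito} with $K(t)=\beta(t-r)+\lambda(\chi(t)-\chi(r))$, the weight produces $\beta\int\rho\|\Delta X\|_H^2dt$ on the left. From $dK^c$ together with $e^{\lambda\Delta\chi}-1\ge\lambda\Delta\chi$, it also produces $\lambda\int_{(r,T]}\rho(s)\|\Delta X(s-)\|_H^2\,d\chi(s)$. Keep a fixed fraction of each, and include $\beta\int\rho\|y\|_H^2dt$ and $\lambda\int_{(r,T]}\rho(s)\|y(s-)\|_H^2d\chi(s)$ in the norm. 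The data terms on the right, $C\int\rho\|\Delta Y\|_H^2dt$ and $C(1+K_0)\ell^2\int\rho(s)\|\Delta Y(s-)\|_H^2d\chi(s)$, carry the \emph{same} weight $\rho(s)$. They are therefore at most $C\{\beta^{-1}+(1+K_0)\lambda^{-1}\}$ times the squared norm of $\Delta Y$. No comparison of weights at different times, no left-limit issue, and no slicing arises.

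This is the paper's proof. It works with the norm $\|\cdot\|_{\rho^r_\beta}$, which contains exactly these two energy terms. It takes $\lambda=\beta$, via the auxiliary Lipschitz bound $\bar\ell=\beta/\kappa$; this is legitimate because the frozen Stieltjes coefficient does not depend on the state. Since $\Phi(Y_1)-\Phi(Y_2)$ exists by Step~1, the paper simply applies Proposition~\ref{prop:unbounded-xi-SEE-estimate} to it and obtains the factor $C\{L^2(T-r+1)+\ell^2K_0\}/\beta$. Your opening objection, that the Proposition presupposes a solution, therefore does not apply at this stage.
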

\begin{proof}
We first solve the equation with prescribed inhomogeneous coefficients
and then solve the nonlinear equation by contraction.
Set
\begin{equation}
 \rho_\beta^r(t)
 =\exp\{-\beta[(t-r)+\chi(t)-\chi(r)]\},
 \quad r\le t\le T.
 \label{eq:K-beta}
\end{equation}
For fixed \(\beta>0\), write
\begin{align*}
    \|y\|_{\rho_\beta^r}^2
    &:={}
    \mathbb E\bigg[
        \sup_{r\le t\le T}
            \rho_\beta^r(t)\|y(t)\|_H^2
        +
        \int_r^T
            \rho_\beta^r(t)\|y(t)\|_V^2\,dt
    \\
    &\hspace{18mm}
        +
        \beta\int_r^T
            \rho_\beta^r(t)\|y(t)\|_H^2\,dt
        +
        \beta\int_{(r,T]}
            \rho_\beta^r(t)\|y(t-)\|_H^2\,d\chi(t)
    \bigg].
\end{align*}
Since \(\chi(T)-\chi(r)\le K_0\),
\(e^{-\beta(T-r+K_0)}\le\rho_\beta^r(t)\le1,\ r\le t\le T.\)
The two additional energy terms are bounded by
\(\beta(T-r+K_0)\mathbb E\sup_t\|y(t)\|_H^2\).
Thus, for fixed \(\beta,K_0\), this norm is equivalent to the usual
norm on the complete space
\(\mathbb D^2_{\mathbb F}(r,T;H)\cap L^2_{\mathbb F}(r,T;V)\).

\medskip\noindent\emph{Step 1: The linear equation with prescribed inhomogeneous coefficients.}
Let
\[
    X_r\in L^2(\mathcal F_r;H),
    \quad
    f\in L^{1,2}_{\mathbb F}(r,T;H),
    \quad
    g\in L^{2,2}_{\mathbb F}(r,T;H),\quad
    \eta\in L^{1,2}_{\mathbb F,\chi}(r,T;H).
\]
Then the equation
\begin{equation}
\left\{
\begin{aligned}
    dx(t)
    &=
    [A(t)x(t)+f(t)]\,dt
    +
    [B(t)x(t)+g(t)]\,dW(t)
    +
    \eta(t)\,d\chi(t),
    \\
    x(r)&=X_r,
\end{aligned}
\right.
\label{eq:additive-FV-SEE}
\end{equation}
admits a unique solution
$
    x\in
    \mathbb D^2_{\mathbb F}(r,T;H)
    \cap
    L^2_{\mathbb F}(r,T;V).
$

First suppose that
\(f,g\in L^2_{\mathbb F}(r,T;H)\) and
\(\eta\in L^2_{\mathbb F,\chi}(r,T;H)\).
Let \(\{e_k\}_{k\ge1}\subset V\) be an orthonormal basis of \(H\)
whose linear span is dense in \(V\). Set
\(H_n:=\operatorname{span}\{e_1,\ldots,e_n\},\)
and denote by \(\Pi_n\) the orthogonal projection from \(H\) onto
\(H_n\).
We seek an \(H_n\)-valued process \(x_n\) satisfying, for
\(i=1,\ldots,n\),
\begin{align}
    \langle x_n(t),e_i\rangle_H
    &=
    \langle\Pi_nX_r,e_i\rangle_H
    +
    \int_r^t
        \langle A(s)x_n(s),e_i\rangle_*\,ds
    +
    \int_r^t
        \langle f(s),e_i\rangle_H\,ds
    \notag\\
    &\quad
    +
    \int_r^t
        \langle B(s)x_n(s)+g(s),e_i\rangle_H\,dW(s)
    +
    \int_{(r,t]}
        \langle\eta(s),e_i\rangle_H\,d\chi(s).
    \label{eq:SEE-Galerkin}
\end{align}
This finite-dimensional linear equation has a unique adapted c\`adl\`ag
solution.

With the inherited \(V\)- and \(H\)-norms, the projected
coefficients satisfy \((A1)\) uniformly in \(n\). Since the finite-variation
term is independent of the state variable, its
Lipschitz constant in the state variable is zero. Hence Proposition~\ref{prop:unbounded-xi-SEE-estimate}
may be applied with any positive auxiliary Lipschitz upper bound
$\bar\ell$.
We choose $\bar\ell=\beta/\kappa$, so that
$\kappa\bar\ell=\beta$ and the weight $\rho_{\beta,\bar\ell}^r$
coincides with $\rho_\beta^r$.
Proposition~\ref{prop:unbounded-xi-SEE-estimate}, applied with
\(\alpha=1\), \(\xi=\chi\), and the auxiliary Lipschitz upper bound
\(\bar\ell=\beta/\kappa\), together
with \(\rho_\beta^r\ge e^{-\beta(T-r+K_0)}\), gives uniform bounds in
\(\mathbb D^2_{\mathbb F}(r,T;H)\cap L^2_{\mathbb F}(r,T;V)\)
for sufficiently large \(\beta\).

We extract a weakly convergent subsequence in
\(L^2_{\mathbb F}(r,T;V)\), with limit \(x\).
Passing to the limit, using countability and density, yields the
weak identity for \eqref{eq:additive-FV-SEE},
\(dt\otimes d\mathbb P\)-almost everywhere.
Lemma~\ref{Ito-lemma} gives an adapted c\`adl\`ag
\(H\)-valued representative for which the weak identity holds
at every time in \([r,T]\), with \(x(r)=X_r\).
Applying Proposition~\ref{prop:unbounded-xi-SEE-estimate} to this
representative gives the required maximal and energy bounds.

We now extend the construction to general
\(f\in L^{1,2}_{\mathbb F}(r,T;H)\) and
\(\eta\in L^{1,2}_{\mathbb F,\chi}(r,T;H)\).
For $n\ge1$, define the radial truncations
\[
f_n(t):=
\begin{cases}
f(t), & \|f(t)\|_H\le n,\\[1mm]
\dfrac{n}{\|f(t)\|_H}f(t), & \|f(t)\|_H>n,
\end{cases}
\qquad
\eta_n(t):=
\begin{cases}
\eta(t), & \|\eta(t)\|_H\le n,\\[1mm]
\dfrac{n}{\|\eta(t)\|_H}\eta(t), & \|\eta(t)\|_H>n.
\end{cases}
\]
We keep $X_r$ and $g$ unchanged. Then \(f_n\in L^2_{\mathbb F}(r,T;H)\) and
\(\eta_n\in L^2_{\mathbb F,\chi}(r,T;H)\), and the dominated convergence theorem gives
\[
 f_n\to f\quad\hbox{in }L^{1,2}_{\mathbb F}(r,T;H),\quad
 \eta_n\to\eta\quad\hbox{in }L^{1,2}_{\mathbb F,\chi}(r,T;H).
\]
Let \(x_n\) denote the corresponding solutions. Applying Proposition~\ref{prop:nonlinear-FV-SEE-estimate}
with \(\alpha=1\) to \(x_n-x_m\) shows that
\(\{x_n\}\) is Cauchy in the weighted norm \(\|\cdot\|_{\rho_\beta^r}\).
Passing to the limit in \eqref{eq:additive-FV-SEE} for $x_n$,
viewed as an identity in \(V^*\), shows that the limit solves
\eqref{eq:additive-FV-SEE}.

Uniqueness follows by applying \eqref{eq:unbounded-xi-weighted-SEE-estimate}
with \(\alpha=1\) to the difference of two solutions.

\medskip\noindent\emph{Step 2: The nonlinear equation.}
For \(y\in\mathbb D^2_{\mathbb F}(r,T;H)\cap
L^2_{\mathbb F}(r,T;V)\), let \(\Gamma y\) be the solution of
\eqref{eq:additive-FV-SEE} with inhomogeneous coefficients
\(f(t)=a(t,y(t)),\ g(t)=b(t,y(t)),\ \eta(t)=\gamma(t,y(t-)).\)
Thus \eqref{eq:additive-FV-SEE} defines \(\Gamma\) on the complete space
equipped with \(\|\cdot\|_{\rho_\beta^r}\).

We apply
\eqref{eq:unbounded-xi-weighted-SEE-estimate} to \(\Gamma y_1-\Gamma y_2\), for \(y_1,y_2\) in this space, with \(\alpha=1\),
\(\xi=\chi\), and 
\(\bar\ell=\beta/\kappa\). The left-hand side has the same four terms as
\(\|\cdot\|_{\rho_\beta^r}^2\), except that the Stieltjes-energy
coefficient is \((\kappa-2)\beta/\kappa\) in place of \(\beta\).
The left-hand side and this squared norm are therefore equivalent
up to constants depending only on \(\kappa\). Hence, for sufficiently
large \(\beta\),
\begin{align*}
 \|\Gamma y_1-\Gamma y_2\|_{\rho_\beta^r}^2
 &\le C L^2(T-r+1)\mathbb E\int_r^T
       \rho_\beta^r(t)\|y_1(t)-y_2(t)\|_H^2\,dt\\
 &+C\ell^2K_0\mathbb E\int_{(r,T]}
       \rho_\beta^r(t)\|y_1(t-)-y_2(t-)\|_H^2\,d\chi(t)\\
 &\le
 \frac{C\{L^2(T-r+1)+\ell^2K_0\}}{\beta}
       \|y_1-y_2\|_{\rho_\beta^r}^2.
\end{align*}
Choose \(\beta\) large enough for the estimate to hold and such that
\[
 \frac{C\{L^2(T-r+1)+\ell^2K_0\}}{\beta}\le\frac12.
\]
The map \(\Gamma\) is then a contraction, and its unique fixed point
solves \eqref{eq:unbounded-xi-forward-SEE} with driver \(\chi\).
Conversely, a solution in the stated unweighted space belongs to
this weighted space and, by uniqueness for the linear equation, is a
fixed point. This proves uniqueness.
\end{proof}

\subsection{Well-posedness of the BSEE with a bounded driver}
\label{subsec:bounded-BSEE-construction}

The next lemma establishes well-posedness of the approximating
equations with bounded drivers used in
Theorem~\ref{thm:unbounded-xi-BSEE-finite-order}. 
\begin{lemma}
\label{lem:BSEE-bounded-driver-construction}
Let \(\chi\) be adapted, nondecreasing, and c\`adl\`ag, with
\(\chi(0)=0\) and \(\chi(T)\le n\) for a deterministic \(n\).
Assume \((B1)\)--\((B3)\),
\(\varphi\in L^2(\mathcal F_T;H)\),
\(g(\cdot,0,0)\in L^2_{\mathbb F}(0,T;H)\), and
\(\psi(\cdot,0)\in L^{1,2}_{\mathbb F,\chi}(0,T;H)\).
Then \eqref{eq:FV-BSEE}, with \(\xi\) replaced by
 \(\chi\), has a unique solution
$
 (p,q)\in
 \bigl(\mathbb D^2_{\mathbb F}(0,T;H)
       \cap L^2_{\mathbb F}(0,T;V)\bigr)
 \times L^2_{\mathbb F}(0,T;H).
$
\end{lemma}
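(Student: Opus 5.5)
The argument mirrors Lemma~\ref{lem:localized-FV-SEE-construction}: first the linear equation with prescribed inhomogeneous coefficients, then a contraction for the nonlinear generators. The boundedness \(\chi(T)\le n\) makes the weight \(\theta_{\beta}(t):=\exp\{\beta(t+\chi(t))\}\) satisfy \(1\le\theta_\beta\le e^{\beta(T+n)}\). Hence the weighted norm \eqref{eq:BSEE-theta-norm}, built with this weight and augmented by \(\beta\mathbb E\int_{(0,T]}\theta_\beta\|p(t)\|_H^2\,d\chi(t)\), is equivalent to the unweighted norm on \((\mathbb D^2_{\mathbb F}(0,T;H)\cap L^2_{\mathbb F}(0,T;V))\times L^2_{\mathbb F}(0,T;H)\). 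This is the space where the fixed point will live. As in the SEE appendix, the weight is matched to Proposition~\ref{prop:unbounded-xi-BSEE-estimate} by using the auxiliary Lipschitz bound \(\bar\ell=\beta/\kappa\) for \(\psi\), so that \(\kappa\bar\ell=\beta\).

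\emph{Step 1: linear equation.} Take \(M,N\) as given, and replace \(g\) and \(\psi\) by prescribed processes \(g_0\in L^2_{\mathbb F}(0,T;H)\) and \(\psi_0\in L^{1,2}_{\mathbb F,\chi}(0,T;H)\). First assume \(\psi_0\in L^2_{\mathbb F,\chi}\). I will use a Galerkin scheme on \(H_n=\operatorname{span}\{e_1,\dots,e_n\}\). Each projected problem is a finite-dimensional linear BSDE driven by \(dt\), \(dW\) and a bounded \(d\chi\); it is solvable by the standard martingale-representation/Picard argument, since \(\chi\) is bounded and the \(d\chi\) term carries no unknown. Proposition~\ref{prop:unbounded-xi-BSEE-estimate} with \(\alpha=1\) then gives bounds uniform in \(n\), because the projected operators inherit \((B1)\). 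Next extract weak limits in \(L^2_{\mathbb F}(0,T;V)\times L^2_{\mathbb F}(0,T;H)\) and pass to the limit in the weak form \eqref{eq:FV-BSEE-weak-form} tested against \(e_i\). Lemma~\ref{lem:backward-FV-Ito-M} then provides a càdlàg \(H\)-valued version. To reach general \(\psi_0\in L^{1,2}_{\mathbb F,\chi}\), truncate radially as in the forward case. The \(\alpha=1\) estimate, with \(\psi_1=\psi_0^n-\psi_0^m\) and \(\psi_2=0\), shows the approximations are Cauchy; the Stieltjes integrals converge in \(L^2(\Omega;H)\) uniformly in \(t\).

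\emph{Step 2: contraction.} For \((y,z)\) in the space, let \(\Gamma(y,z)\) be the linear solution with \(g_0=g(\cdot,y,z)\) and \(\psi_0=\psi(\cdot,y)\). The integrability \(\psi(\cdot,y)\in L^{1,2}_{\mathbb F,\chi}\) holds because \(\chi(T)\le n\) and \(y\in\mathbb D^2\). Apply the weighted energy identity \eqref{eq:positive-weight-backward-FV-Ito-M} to the difference of two images, following \eqref{eq:BSEE-weighted-energy-identity}. Bound the Lebesgue term by \eqref{eq:BSEE-linear-part-final}. Bound the Stieltjes term with \(\psi_1=0\) and \(\psi_2=\psi(\cdot,y_1)-\psi(\cdot,y_2)\), so that jumps are absorbed through \eqref{eq:BSEE-split-jump-estimate}. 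This yields
\[
\|\Gamma(y_1,z_1)-\Gamma(y_2,z_2)\|^2\le \frac{C(L^2+\ell^2)}{\beta}\,\|(y_1-y_2,z_1-z_2)\|^2 .
\]
The Lipschitz term in \(z\) is controlled by keeping the \(q\)-coefficient \(c_0\) from \eqref{eq:BSEE-linear-part-final}: the \(z\)-difference enters as \(\frac{L^2}{c_0}\|z_1-z_2\|^2\) weighted by \(1/\beta\)-type factors. I expect that bookkeeping to be the main obstacle, specifically making the \(z\)-dependence contract, not only the \(y\)-dependence. If the direct estimate does not close, I would first contract in \(y\) for fixed \(z\) and then use \(\beta\) large, with \(\frac{2L^2}{\beta}\) absorbing via Young applied to \(2\langle g_1-g_2,\hat p\rangle\le\beta\|\hat p\|^2/2+2L^2(\|\hat y\|^2+\|\hat z\|^2)/\beta\); this makes the \(z\)-contribution of order \(1/\beta\). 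Choosing \(\beta\) large gives a contraction, and its fixed point solves the equation.

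Uniqueness in the unweighted space follows because any solution there lies in the equivalent weighted space, and Proposition~\ref{prop:unbounded-xi-BSEE-estimate} with zero data gives zero difference.
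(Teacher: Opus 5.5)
Your plan follows the paper's structure: solve the linear equation with frozen generators, contract in a norm weighted by $\theta(t)=e^{\beta(t+\chi(t))}$ (auxiliary Lipschitz bound $\beta/\kappa$), and get uniqueness from the weighted estimate. For the linear step you run a Galerkin scheme, whereas the paper invokes \cite[Theorem~3.4]{LSW25} with zero anticipation and then truncates radially. Your route is self-contained and workable, since the projected operators inherit $(B1)$ with the same constants. You must, however, also show $\mathbb E\sup_t\|p(t)\|_H^2<\infty$ for the c\`adl\`ag version of the weak limit, because Proposition~\ref{prop:unbounded-xi-BSEE-estimate} presupposes a finite weighted norm.

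The genuine gap is in Step 2: the norm is augmented by $\beta\,\mathbb E\int_{(0,T]}\theta(t)\|p(t)\|_H^2\,d\chi(t)$. This term is the only source of your $n$-free factor $C(L^2+\ell^2)/\beta$, and it does not carry over from the forward lemma.

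\emph{Forward versus backward.} In the forward case, the atom correction in Corollary~\ref{cor:weighted-Ito-general} is $\rho(s)(e^{\Delta K(s)}-1)\|h(s-)\|_H^2\ge\rho(s)\Delta K(s)\|h(s-)\|_H^2$, so the full $d\chi$-energy is retained. In the backward identity \eqref{eq:positive-weight-backward-FV-Ito-M}, the atom correction is $\theta(s)(1-e^{-\beta\Delta\chi(s)})\|h(s-)\|_H^2$, which saturates at $\theta(s)\|h(s-)\|_H^2$. Accordingly, the left side of the BSEE estimate carries no $d\xi$-energy. The augmented term of the image is therefore uncontrolled. The only available bound, $\beta n\sup_t\theta\|\hat p\|_H^2$, cancels the $1/\beta$ gain.

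\emph{A counterexample.} This is not an artifact of the proof. Take $H=V=\mathbb R$, $M=-\tfrac12$, $N=0$, $\psi=0$, $g(t,y,z)=Lz$, and $\chi=d\,I_{[s_0,T]}$ with $0<d\le n$. Choose two inputs that differ only by $\hat z=I_{(s_0,s_0+h]}$, with $h=1/\beta$. The image difference is deterministic, $\hat q=0$, and $\hat p(s_0)\approx Lh$, so
\[
\frac{\beta d\,\theta(s_0)|\hat p(s_0)|^2}{\int_{s_0}^{s_0+h}\theta(s)\,ds}\approx\frac{dL^2}{e-1}
\]
for every large $\beta$. Hence $\Gamma$ is not a contraction in your norm once $nL^2$ is moderately large. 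Your Young-inequality fallback treats only the Lebesgue term, not this one.

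\emph{The repair.} Do what the paper does and use the unaugmented norm
\[
\mathbb E\Big[\sup_t\theta\|p\|_H^2+\int_0^T\theta\big(\|p\|_V^2+\|q\|_H^2+\beta\|p\|_H^2\big)\,dt\Big].
\]
Treat $\hat\psi=\psi(\cdot,y_1)-\psi(\cdot,y_2)$ as additive data and pay the factor $n$:
\[
\frac{C}{\beta}\,\mathbb E\int_{(0,T]}\theta\|\hat\psi\|_H^2\,d\chi\le\frac{C\ell^2n}{\beta}\,\mathbb E\sup_t\theta\|y_1-y_2\|_H^2 .
\]
The contraction factor becomes $C(L^2+\ell^2n)/\beta$. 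The $z$-dependence then contracts automatically through the $\frac1\beta\int\theta\|g\|_H^2\,dt$ term, and since $n$ is fixed, a large $\beta$ suffices.
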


\begin{proof}
\medskip\noindent\emph{Step 1: The linear equation with frozen generators.}
Fix \(P\in\mathbb D^2_{\mathbb F}(0,T;H)
\cap L^2_{\mathbb F}(0,T;V)\) and
\(Q\in L^2_{\mathbb F}(0,T;H)\).
Then
\(g(\cdot,P,Q)\in L^2_{\mathbb F}(0,T;H)
\subset L^{1,2}_{\mathbb F}(0,T;H)\), and 
\(\psi(\cdot,P)\in L^{1,2}_{\mathbb F,\chi}(0,T;H)\).

For 
\(\zeta\in L^{1,2}_{\mathbb F,\chi}(0,T;H)\), let \(\zeta^m\) be
its radial truncation at radius \(m\). Since \(\chi(T)\le n\),
\(\zeta^m\in L^2_{\mathbb F,\chi}(0,T;H)\), and dominated convergence gives
\[
 \mathbb E\left(
   \int_{(0,T]}\|\zeta^m(t)-\zeta(t)\|_H\,d\chi(t)
 \right)^2\longrightarrow0.
\]
For each \(m\), \cite[Theorem~3.4]{LSW25}, with zero anticipation
interval, \(F=\chi\), terminal datum \(\varphi\), and prescribed
coefficients \(g(\cdot,P,Q)\) and \(\zeta^m\), gives the unique solution of the frozen
linear equation. Letting \(m\to\infty\) extends solvability to \(\zeta\).
Uniqueness follows by applying
\eqref{eq:unbounded-xi-weighted-BSEE-estimate} to the difference of two solutions.
We may therefore define \(\Gamma(P,Q)=(p,q)\).

\medskip\noindent\emph{Step 2: Contraction for the nonlinear equation.}
Fix \(\kappa>2\). Write
$
 \theta(t):=e^{\beta(t+\chi(t))}
           =\theta_{\beta,\beta/\kappa}^{0}(t),
$
and use the norm
\[
 \|(p,q)\|_{\beta,\chi}^2
 =\mathbb E\left[
    \sup_{0\le t\le T}\theta(t)\|p(t)\|_H^2
    +\int_0^T\theta(t)
       \bigl(\|p(t)\|_V^2+\|q(t)\|_H^2
                         +\beta\|p(t)\|_H^2\bigr)dt\right].
\]
Since \(1\le\theta\le e^{\beta(T+n)}\), this norm is equivalent,
for fixed \(\beta,n\), to the usual norm on the complete product
space in Definition~\ref{def:FV-BSEE-solution}.

Let \((p_i,q_i)=\Gamma(P_i,Q_i)\), \(i=1,2\), and write
\(\widehat p=p_1-p_2,\ \widehat q=q_1-q_2,\)
\[
\begin{aligned}
 \widehat g(t)
 &=g(t,P_1(t),Q_1(t))-g(t,P_2(t),Q_2(t)),\\
 \widehat\psi(t)
 &=\psi(t,P_1(t))-\psi(t,P_2(t)).
\end{aligned}
\]
The pair \((\widehat p,\widehat q)\) satisfies the linear equation
\[
 \widehat p(t)
 =\int_t^T[M(s)\widehat p(s)+N(s)\widehat q(s)
                              +\widehat g(s)]ds
   +\int_{(t,T]}\widehat\psi(s)d\chi(s)
   -\int_t^T\widehat q(s)dW(s).
\]

The integrability of \(g(\cdot,P_i,Q_i)\) gives
\(\widehat g\in L^2_{\mathbb F}(0,T;H)\), and the Lipschitz
condition on \(\psi\) gives
\[
 \mathbb E\int_{(0,T]}\|\widehat\psi(t)\|_H^2\,d\chi(t)
 \le\ell^2n\,\mathbb E\sup_{0\le t\le T}
                   \|P_1(t)-P_2(t)\|_H^2<\infty.
\]
Hence \(\widehat\psi\in L^2_{\mathbb F,\chi}(0,T;H)\).
To apply \eqref{eq:unbounded-xi-weighted-BSEE-estimate}, note that \(\widehat\psi\) is independent of \(\widehat p\).
We may therefore choose \(\bar{\ell}=\beta/\kappa\), for which the weight is
\(\theta(t)=e^{\beta(t+\chi(t))}\). The estimate then gives
\begin{equation}
 \|(\widehat p,\widehat q)\|_{\beta,\chi}^2
 \le\frac C\beta\,\mathbb E\left[
       \int_0^T\theta(t)\|\widehat g(t)\|_H^2\,dt
       +\int_{(0,T]}\theta(t)\|\widehat\psi(t)\|_H^2\,d\chi(t)
                                 \right],
 \label{eq:BSEE-linear-L2-source-estimate}
\end{equation}
where \(C\) is independent of \(\beta,n\) and
\((P_i,Q_i)\), \(i=1,2\). 
Since \(\chi(T)\le n\),
\[
 \int_{(0,T]}\theta(t)\|P_1(t)-P_2(t)\|_H^2d\chi(t)
 \le n\sup_t\theta(t)\|P_1(t)-P_2(t)\|_H^2.
\]
Substituting this bound into \eqref{eq:BSEE-linear-L2-source-estimate}, we obtain
\[
 \|\Gamma(P_1,Q_1)-\Gamma(P_2,Q_2)\|_{\beta,\chi}^2
 \le\frac{C(L^2+\ell^2n)}{\beta}
             \|(P_1-P_2,Q_1-Q_2)\|_{\beta,\chi}^2.
\]
We choose \(\beta\ge\beta_1\), as required by
\eqref{eq:unbounded-xi-weighted-BSEE-estimate} with moment parameter
\(1\), and \(\beta>2C(L^2+\ell^2n)\). The solution map is then a strict contraction, and its unique fixed
point solves \eqref{eq:FV-BSEE} with driver \(\chi\). Conversely,
by uniqueness for the frozen linear equation, every solution in
the stated class is a fixed point. This proves uniqueness.

\end{proof}

\subsection{The evolution family of the homogeneous SEE}
\label{subsec:homogeneous-SEE-evolution-family}

The next result verifies \((C1)\) for the homogeneous SEE evolution family used in
Section~\ref{subsec:FV-BSIE-Ito}.
\begin{proposition}
\label{lem:SEE-joint-cadlag-flow-version}
Let the family \(L\) be defined by
\eqref{eq:SEE-deterministic-start-L-definition} under assumptions
\((A1)\), \eqref{bound-D-condition}, and
\eqref{eq:BSIE-flow-exponential-condition}.
For every \(0\le r\le s\le T\) and \(1\le m\le2\alpha\),
\(L(r,s)\) maps \(L^{2m}(\mathcal F_r;H)\) into
\(L^{2m}(\mathcal F_s;H)\). The family \(L\) satisfies \((C1)\).
\end{proposition}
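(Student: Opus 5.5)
We verify the properties in \((C1)\) one at a time, relying on the well-posedness and conditional estimates for the forward equation obtained in Section~3. The first step is to apply Corollary~\ref{thm:FV-SEE-wellposed}, together with the conditional form of Proposition~\ref{prop:unbounded-xi-SEE-estimate}, to \eqref{eq:BSIE-deterministic-start-homogeneous-SEE}. This equation has coefficients \(a=b=0\) and \(\gamma(t,x)=D(t)x\), and its Lipschitz constant is \(\ell=C_D\). Since all data vanish except the initial value \(\eta\), the conditional unweighted estimate \eqref{eq:unbounded-xi-unweighted-SEE-estimate}, taken at moment order \(m\), reads
\[
\mathbb E\Bigl[\sup_{r\le s\le T}\|X^{r,\eta}(s)\|_H^{2m}\Bigm|\mathcal F_r\Bigr]
\le C\,\mathfrak E_{2m\kappa,C_D}(\xi)^{1/2}\bigl(\mathbb E[\|\eta\|_H^{4m}\mid\mathcal F_r]\bigr)^{1/2}
= C\,\mathfrak E_{2m\kappa,C_D}(\xi)^{1/2}\|\eta\|_H^{2m},
\]
because \(\eta\) is \(\mathcal F_r\)-measurable. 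For \(m\le2\alpha\), Jensen's inequality on the conditional exponential moment gives \(\mathfrak E_{2m\kappa,C_D}\le\mathfrak E_{4\alpha\kappa,C_D}^{m/(2\alpha)}<\infty\). Well-posedness itself requires \(\eta\in L^{4m}\), and this has to be handled carefully. For \(\eta\in L^{2m}\) only, we plan to work instead with the weighted class of Theorem~\ref{thm:FV-SEE-weighted-wellposed}, which needs only \(\eta\in L^{2m}\). We then obtain the conditional bound by truncating \(\eta\) on the \(\mathcal F_r\)-sets \(\{\|\eta\|_H\le n\}\) and using uniqueness (locality, see below). This produces \eqref{eq:SEE-deterministic-start-conditional-estimate}, i.e.\ the \(L^{2m}\)-mapping property together with \eqref{eq:BSIE-random-maximal-estimate} for \(m=2\), with constants independent of \(r\).

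Next come the algebraic properties. Linearity of \(\eta\mapsto X^{r,\eta}\) follows from uniqueness, since a linear combination of solutions is again a solution with the combined initial value. The identity \(L(r,r)=I\) is immediate. For locality, take \(A\in\mathcal F_r\). Then \(I_AX^{r,\eta}\) solves the equation with initial value \(I_A\eta\), because \(I_A\) can be moved inside the \(ds\), \(d\xi\) and \(dW\) integrals on \([r,T]\). Uniqueness then gives the identity. For the flow property \(L(r',s)L(r,r')\eta=L(r,s)\eta\), note that \(s\mapsto X^{r,\eta}(s)\) restricted to \([r',T]\) solves the equation started at \(r'\) from \(X^{r,\eta}(r')\in L^{2m}(\mathcal F_{r'};H)\). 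Uniqueness on \([r',T]\) in Theorem~\ref{thm:FV-SEE-weighted-wellposed} concludes. The c\`adl\`ag version in \(\mathbb D^4_{\mathbb F}\) is built into the solution concept.

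The main obstacle is the joint measurable version in \((\omega,r,s)\) for fixed \(u\in H\), with c\`adl\`ag paths in \(s\) for every \((\omega,r)\). We plan to proceed in three steps.

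First, we approximate \(r\) from the right by dyadic times \(r_k=\lceil 2^kr\rceil 2^{-k}\). We define \(X^k(\omega,r,s)=X^{r_k,u}(s)\) for \(s\ge r_k\), and \(X^k(\omega,r,s)=u\) for \(s<r_k\). Each \(X^k\) is jointly measurable, since only countably many starting times occur.

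Second, we show \(X^{r_k,u}\to X^{r,u}\) uniformly on \([r_k,T]\) in probability, at a rate summable along \(k\). By the flow property, \(X^{r,u}(s)-X^{r_k,u}(s)=L(r_k,s)\bigl(X^{r,u}(r_k)-u\bigr)\). The conditional maximal estimate above therefore bounds the difference by \(\|X^{r,u}(r_k)-u\|_H\). That quantity tends to zero by right continuity of \(X^{r,u}\) at \(r\). Note, however, that \(\xi\) may jump at \(r_k\), so the convergence is only through the right limit. The rate is not uniform in \(r\) in general, so instead of a Borel–Cantelli argument at a fixed rate we plan to define the version as the \(\limsup\) of \(X^k\) along the subsequence where it converges (a measurable selection of the limit). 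We then check that, for each fixed \(r\), it coincides a.s.\ with \(X^{r,u}\) uniformly on \([r,T]\).

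Third, we must handle the exceptional \((\omega,r)\) where the limit fails to exist or fails to be c\`adl\`ag. On that measurable exceptional set we redefine the path to be constant \(u\); since it is null for each fixed \(r\), this does not affect the version property. Uniform convergence of c\`adl\`ag paths preserves the c\`adl\`ag property, which yields the required paths for every \((\omega,r)\).
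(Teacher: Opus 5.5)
Your treatment of the conditional moment bound (bounded initial data first, then truncation on $\{\|\eta\|_H\le n\}$ with locality and uniqueness in the weighted class) matches the paper's proof. So do your linearity, locality and flow arguments and your dyadic approximation of the starting time from the right. The gap is in passing from the dyadic approximations to a jointly measurable version.

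For fixed $r$, the flow identity and the conditional maximal bound give only $\mathbb E\sup_{s}\|X^k(r,s)-U^r(s)\|_H^4\to0$, where $U^r$ is $X^{r,u}$ extended by $u$ before $r$. There is no rate. This is convergence in probability, not almost sure convergence of the whole sequence $k\mapsto X^k(\omega,r,\cdot)$. So if you take the limit of $X^k$ where it exists and put $u$ elsewhere, your claim in the third step that the exceptional set is null for each fixed $r$ is unjustified. The resulting field need not be a version of $X^{r,u}$.

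A pathwise $\limsup$ does not help; it is not even defined for $H$-valued paths. Neither does choosing, $\omega$ by $\omega$, some convergent subsequence. A sequence converging in probability can have, at every $\omega$, cluster points other than its limit: the typewriter sequence tends to $0$ in probability while its pointwise $\limsup$ is $1$. So an $\omega$-wise selection can pick the wrong limit.

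You correctly noticed that the rate is not uniform in $r$. The paper handles this not by abandoning Borel--Cantelli but by applying it along a subsequence that is deterministic in $\omega$, depends on $r$, and is chosen measurably in $r$:
\begin{itemize}
\item With $U_n(r,s)=U^{\pi_n(r)}(s)$, set $b_N(r):=\sup_{n,k\ge N}\mathbb E\sup_{0\le s\le T}\|U_n(r,s)-U_k(r,s)\|_H^4$. This is Borel in $r$ because each dyadic starting time takes only finitely many values.
\item The right-continuity estimate in the starting time gives $b_N(r)\downarrow0$ for every $r$.
\item Let $N_j(r)$ be the smallest $N>N_{j-1}(r)$ with $b_N(r)\le2^{-8j}$. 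Markov's inequality then gives $\mathbb P(\sup_s\|U_{N_{j+1}(r)}(r,s)-U_{N_j(r)}(r,s)\|_H>2^{-j})\le2^{-4j}$.
\item By Borel--Cantelli, for each fixed $r$ the sequence $U_{N_j(r)}(r,\cdot)$ converges uniformly almost surely, and its limit is $U^r$.
\item The set of $(\omega,r)$ on which this sequence is uniformly Cauchy is measurable, because by the c\`adl\`ag property the suprema can be taken over rational times together with $T$. On its complement you use the constant path $u$.
\end{itemize}

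One smaller point: the uniform convergence is needed on all of $[0,T]$ for the extended paths, including $[r,r_k)$ where $X^k\equiv u$. There your flow bound on $[r_k,T]$ must be supplemented, as in the paper, by $\sup_{r\le s\le r_k}\|L(r,s)u-u\|_H\to0$. This follows from right continuity at $r$ and dominated convergence.
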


\begin{proof}
We first establish the conditional moment bound, locality, and the flow
identity. We then construct the jointly measurable version using right
continuity with respect to the starting time.

Fix \(1\le m\le2\alpha\) and choose an auxiliary time-weight parameter
\(\beta\ge\max\{\beta_m,\beta_{2\alpha},\beta_{4\alpha}\}\).
For \(E\in\mathcal F_r\), multiplying the weak equation by \(I_E\)
shows that \(I_EX^{r,\eta}\) is the solution with initial value
\(I_E\eta\). Hence
\[
 L(r,s)(I_E\eta)=I_E L(r,s)\eta,\quad r\le s\le T,
\]
up to indistinguishability. The flow identity $L(t,s)L(r,t)\eta=L(r,s)\eta,\ r\le t\le s\le T,$ follows from the uniqueness of solutions.

For bounded \(\eta\), the conditional form of
\eqref{eq:unbounded-xi-unweighted-SEE-estimate} gives
\begin{align}
 &\mathbb E\left[
   \sup_{r\le s\le T}\|L(r,s)\eta\|_H^{2m}
   +\left(\int_r^T\|L(r,s)\eta\|_V^2\,ds\right)^m
   \,\middle|\,\mathcal F_r\right]
 \le C_m\|\eta\|_H^{2m},\quad\mathbb P\text{-a.s.}
 \label{eq:SEE-deterministic-start-conditional-estimate}
\end{align}
To extend this bound to \(\eta\in L^{2m}(\mathcal F_r;H)\), let
\(X^{r,\eta}\) be the corresponding weighted solution, and put
\(E_n:=\{\|\eta\|_H\le n\}\in\mathcal F_r\).
By locality, the solution with bounded initial value \(I_{E_n}\eta\)
is \(I_{E_n}X^{r,\eta}\), up to indistinguishability. Applying the
estimate \eqref{eq:SEE-deterministic-start-conditional-estimate} gives
\begin{align*}
 &\mathbb E\left[
   I_{E_n}\left(
     \sup_{r\le s\le T}\|X^{r,\eta}(s)\|_H^{2m}
     +\left(\int_r^T\|X^{r,\eta}(s)\|_V^2\,ds\right)^m
   \right)\,\middle|\,\mathcal F_r\right]
 \le C_m I_{E_n}\|\eta\|_H^{2m}.
\end{align*}
Since \(E_n\uparrow\Omega\) up to a null set, conditional monotone
convergence proves \eqref{eq:SEE-deterministic-start-conditional-estimate}
for \(\eta\). Taking expectations shows that
$
 X^{r,\eta}\in\mathbb D^{2m}_{\mathbb F}(r,T;H)
                    \cap L^{2,2m}_{\mathbb F}(r,T;V).
$
Every solution in this unweighted class belongs to the weighted class,
so uniqueness follows from
Theorem~\ref{thm:FV-SEE-weighted-wellposed}.

For the jointly measurable version, fix \(u\in H\).
To compare solutions with different starting times on the same interval,
extend \(X^{r,u}\) to \([0,T]\), for each deterministic \(r\), by setting
\[
 U^r(s)=u\quad(0\le s<r),\quad
 U^r(s)=L(r,s)u\quad(r\le s\le T).
\]
For each \(r\), choose a c\`adl\`ag version. Since \(L(r,r)u=u\),
the extension is also c\`adl\`ag at \(r\).
For deterministic \(r'\ge r\), the flow identity and
\eqref{eq:SEE-deterministic-start-conditional-estimate} at \(m=2\)
give
\begin{align}
 \mathbb E\sup_{0\le s\le T}\|U^{r'}(s)-U^r(s)\|_H^4
 &\le (1+C_2)\mathbb E\sup_{r\le s\le r'}
                         \|L(r,s)u-u\|_H^4
 \longrightarrow0,\quad r'\downarrow r.
 \label{eq:SEE-start-time-right-continuity}
\end{align}
Indeed, on \([r',T]\) the difference equals
\(L(r',s)(u-L(r,r')u)\). On \([r,r']\), it equals
\(u-L(r,s)u\). The convergence in
\eqref{eq:SEE-start-time-right-continuity} follows from right continuity
at \(r\) and dominated convergence, using the integrable bound
\(8(\sup_{r\le s\le T}\|L(r,s)u\|_H^4+\|u\|_H^4)\).

We next approximate the starting time from above by dyadic times.
Let \(\pi_n(r)=T2^{-n}\lceil2^nr/T\rceil\), so that
\(\pi_n(r)\downarrow r\), and put
\(U_n(\omega,r,s)=U^{\pi_n(r)}(s,\omega)\).
For each \(n\), the starting time \(\pi_n(r)\) takes finitely many
deterministic values. At the countably many dyadic times, choose versions
with c\`adl\`ag paths everywhere by replacing them by the constant
path \(u\) on one null event. Thus \(U_n\) is jointly measurable in
\((\omega,r,s)\), and \(s\mapsto U_n(\omega,r,s)\) is c\`adl\`ag
for every \((\omega,r)\).

We now select a convergent subsequence measurably in \(r\).
For each \(N\ge1\), the deterministic function
\[
 b_N(r):=\sup_{n,k\ge N}\mathbb E
       \sup_{0\le s\le T}\|U_n(r,s)-U_k(r,s)\|_H^4
\]
is Borel measurable, because each pair of dyadic starting times takes
only finitely many values. Equation
\eqref{eq:SEE-start-time-right-continuity} implies
\(b_N(r)\downarrow0\) for every \(r\).
Choose increasing Borel integer-valued functions \(N_j(r)\) recursively
by taking the smallest \(N>N_{j-1}(r)\) such that
\(b_N(r)\le2^{-8j}\), with \(N_0(r)=0\).
For each fixed \(r\), Markov's inequality gives
\[
 \mathbb P\left(
   \sup_{0\le s\le T}
   \|U_{N_{j+1}(r)}(r,s)-U_{N_j(r)}(r,s)\|_H>2^{-j}
             \right)
 \le 2^{-4j}.
\]
By Borel--Cantelli, \(U_{N_j(r)}(r,\cdot)\) converges uniformly in
\(s\), almost surely for each fixed \(r\).

To define the joint version, consider the set of
\((\omega,r)\) on which \(U_{N_j(r)}(r,\cdot)\) is uniformly Cauchy.
This set is measurable because the c\`adl\`ag property allows all
suprema to be taken over
\((\mathbb Q\cap[0,T])\cup\{T\}\).
We take the uniform limit on this set and use the constant path \(u\)
on its complement. The resulting field is jointly measurable, with
a c\`adl\`ag path for every \((\omega,r)\).
For each deterministic \(r\),
\eqref{eq:SEE-start-time-right-continuity} identifies this limit with
\(U^r\), up to indistinguishability. Restricted to \(s\ge r\), the field
is therefore an adapted version of \(L(r,s)u\). Completeness of the
filtration preserves adaptedness after modification on the exceptional
event for that fixed \(r\).
This proves the last assertion of \((C1)\). Finally,
\eqref{eq:SEE-deterministic-start-conditional-estimate} at
\(m=2\alpha\) gives \eqref{eq:BSIE-high-order-maximal-estimate}.
\end{proof}

\subsection{Proof of Lemma~\ref{lem:FV-BSIE-weak-right-continuity}}
\label{subsec:proof-FV-BSIE-weak-right-continuity}

\begin{proof}
Proposition~\ref{cor:FV-BSIE-conditional-estimate} and
\eqref{eq:BSIE-Ito-high-moment-data} give
\begin{align}
 \sup_{0\le t\le T}\mathbb E\|P(t)\|_{\mathfrak L(H)}^{2\alpha}
 \le C_\alpha\mathbb E\Bigg[
     \|\Xi\|_{\mathfrak L(H)}^{2\alpha}
     +\left(\int_0^T\|F(s,0)\|_{\mathfrak L(H)}^2\,ds\right)^\alpha
     +\left(\int_{(0,T]}\|G(s)\|_{\mathfrak L(H)}\,d\xi(s)\right)^{2\alpha}
   \Bigg]<\infty.\label{eq:BSIE-P-high-moment-from-data}
\end{align}
Then
\begin{equation}
 F(\cdot,P(\cdot))
 \in L^{2,2\alpha}_{\mathbb F,w}(0,T;\mathfrak L(H)).
 \label{eq:BSIE-Ito-FP-high-moment}
\end{equation}

For \(t\in[r,T]\), write
\(\Phi_P^{u,v}(t):=\Phi_P(t)(u,v)\), using the extension to random initial
vectors established in \eqref{eq:BSIE-Ito-random-duality}.
Estimate \eqref{eq:BSIE-high-order-maximal-estimate} and
\eqref{eq:BSIE-P-high-moment-from-data}--\eqref{eq:BSIE-Ito-FP-high-moment}
	show that
	$
	\Phi_P^{u,v}(t)\in L^\alpha(\Omega).$
	
	Set
	\(\delta_hu:=u-L(r,r+h)u, \ \delta_hv:=v-L(r,r+h)v.\)
	Since \(s\mapsto L(r,s)u\) is c\`adl\`ag and \(L(r,r)u=u\),
	\(\delta_hu\to0\) \(\mathbb P\text{-a.s.}\) as \(h\downarrow0\).
	By \eqref{eq:BSIE-high-order-maximal-estimate} and the dominated convergence theorem, 
	\begin{equation}
		\|\delta_hu\|_{L^{4\alpha}(\mathcal F_{r+h};H)}
		+
		\|\delta_hv\|_{L^{4\alpha}(\mathcal F_{r+h};H)}
		\longrightarrow0.
		\label{eq:BSIE-delta-h-vanishing}
	\end{equation}

	For fixed \(h>0\), we apply the flow property of $L$ in $(C1)$ at times in
	\((\mathbb Q\cap(r+h,T))\cup\{r+h,T\}\). Extending these identities by
	right continuity,
	we obtain, outside one null set,
	\begin{align}
		L(r+h,s)u-L(r,s)u
		&=
		L(r+h,s)\delta_hu,
		\quad s\in[r+h,T],
		\label{eq:BSIE-flow-increment-u}
		\\
		L(r+h,s-)u-L(r,s-)u
		&=
		L(r+h,s-)\delta_hu,
		\quad s\in(r+h,T].
		\label{eq:BSIE-flow-left-increment-u}
	\end{align}
	Replacing \(u\) and \(\delta_hu\) by \(v\) and \(\delta_hv\),
	respectively, in \eqref{eq:BSIE-flow-increment-u} and
	\eqref{eq:BSIE-flow-left-increment-u} gives the same
	identities for \(v\).
	
	We now prove that
	\begin{equation}
		\Phi_P^{u,v}(r+h)
		\longrightarrow
		\Phi_P^{u,v}(r)
		\quad\text{in }L^\alpha(\Omega).
		\label{eq:BSIE-Phi-direct-right-continuity}
	\end{equation}
By \eqref{eq:BSIE-Phi-R}, the difference
\(\Phi_P^{u,v}(r+h)-\Phi_P^{u,v}(r)\) consists of the terminal
difference, the two integrals of the integrand differences over \([r+h,T]\)
and \((r+h,T]\), minus the original integrals over
\([r,r+h]\) and \((r,r+h]\), respectively. We estimate these five terms in order.

Set
\[
 d_h:=\|\delta_hu\|_{L^{4\alpha}(\Omega;H)}
       \|v\|_{L^{4\alpha}(\Omega;H)}
       +\|u\|_{L^{4\alpha}(\Omega;H)}
        \|\delta_hv\|_{L^{4\alpha}(\Omega;H)}.
\]
H\"older's inequality and
\eqref{eq:BSIE-high-order-maximal-estimate} give
\begin{align}
 &\Big\|
    \langle\Xi L(r+h,T)u,L(r+h,T)v\rangle_H
    -\langle\Xi L(r,T)u,L(r,T)v\rangle_H
  \Big\|_{L^\alpha}
 \le C \|\Xi\|_{L_w^{2\alpha}(\mathcal F_T;\mathfrak L(H))} d_h,
 \label{eq:BSIE-terminal-direct-increment}\\
 &\Bigg\|
   \int_{r+h}^T\Big[
     \langle F(s,P(s))L(r+h,s)u,L(r+h,s)v\rangle_H
     -\langle F(s,P(s))L(r,s)u,L(r,s)v\rangle_H
   \Big]\,ds
  \Bigg\|_{L^\alpha}
 \notag\\
 &\quad\le
 C\|F(\cdot,P(\cdot))\|_{L^{2,2\alpha}_{\mathbb F,w}(0,T;\mathfrak L(H))}
 d_h,
 \label{eq:BSIE-dt-long-direct-increment}\\
 &\Bigg\|
   \int_{(r+h,T]}\Big[
     \langle G(s)L(r+h,s-)u,L(r+h,s-)v\rangle_H
     -\langle G(s)L(r,s-)u,L(r,s-)v\rangle_H
   \Big]\,d\xi(s)
  \Bigg\|_{L^\alpha}
 \notag\\
 &\quad\le
 C\|G\|_{L^{1,2\alpha}_{\mathbb F,\xi,w}(0,T;\mathfrak L(H))}
 d_h.
 \label{eq:BSIE-dxi-long-direct-increment}
\end{align}
Here \(C\) is independent of \(h,u,v\).

Moreover, \eqref{eq:BSIE-high-order-maximal-estimate} gives
	\begin{align}
		&\left\|
		\int_r^{r+h}
		\big\langle
		F(s,P(s))L(r,s)u,
		L(r,s)v
		\big\rangle_H\,ds
		\right\|_{L^\alpha}
		\notag\\
		&\quad\le
		C
		\|u\|_{L^{4\alpha}(\Omega;H)}
		\|v\|_{L^{4\alpha}(\Omega;H)}
		\left\|
		\int_r^{r+h}
		\|F(s,P(s))\|_{\mathfrak L(H)}\,ds
		\right\|_{L^{2\alpha}},
		\label{eq:BSIE-dt-short-direct-increment}
	\end{align}
where \(C\) is independent of \(h,u,v\). By the Cauchy--Schwarz inequality 
and \eqref{eq:BSIE-Ito-FP-high-moment},
\[
 \left\|\int_r^{r+h}\|F(s,P(s))\|_{\mathfrak L(H)}\,ds\right\|_{L^{2\alpha}}
 \le h^{1/2}
 \|F(\cdot,P(\cdot))\|_{L^{2,2\alpha}_{\mathbb F,w}(0,T;\mathfrak L(H))}
 \longrightarrow0.
\]
For the finite-variation integral,
	\begin{align}
		&\left\|
		\int_{(r,r+h]}
		\big\langle
		G(s)L(r,s-)u,
		L(r,s-)v
		\big\rangle_H\,d\xi(s)
		\right\|_{L^\alpha}
		\notag\\
		&\quad\le
		C
		\|u\|_{L^{4\alpha}(\Omega;H)}
		\|v\|_{L^{4\alpha}(\Omega;H)}
		\left\|
		\int_{(r,r+h]}
		\|G(s)\|_{\mathfrak L(H)}\,d\xi(s)
		\right\|_{L^{2\alpha}}.
		\label{eq:BSIE-dxi-short-direct-increment}
	\end{align}
Continuity from above for the finite measure
\(\|G(s,\omega)\|_{\mathfrak L(H)}d\xi(s,\omega)\) and the dominated convergence theorem give
\[
 \int_{(r,r+h]}\|G(s)\|_{\mathfrak L(H)}\,d\xi(s)
 \longrightarrow0
 \quad\text{in }L^{2\alpha}(\Omega).
\]

Combining \eqref{eq:BSIE-delta-h-vanishing} with
    \eqref{eq:BSIE-terminal-direct-increment},
    \eqref{eq:BSIE-dt-long-direct-increment},
    \eqref{eq:BSIE-dxi-long-direct-increment},
    \eqref{eq:BSIE-dt-short-direct-increment}, and
    \eqref{eq:BSIE-dxi-short-direct-increment}, we derive
    \eqref{eq:BSIE-Phi-direct-right-continuity}.
	
Finally, \eqref{eq:BSIE-Ito-random-duality} gives
	\begin{align*}
		\left\|
		\langle P(r+h)u,v\rangle_H
		-
		\langle P(r)u,v\rangle_H
		\right\|_{L^\alpha}
		&\le
		\left\|
		\Phi_P^{u,v}(r+h)
		-
		\Phi_P^{u,v}(r)
		\right\|_{L^\alpha}
		\\
		&\quad+
		\left\|
		\mathbb E\left[
		\Phi_P^{u,v}(r)
		\,\middle|\,
		\mathcal F_{r+h}
		\right]
		-
		\mathbb E\left[
		\Phi_P^{u,v}(r)
		\,\middle|\,
		\mathcal F_r
		\right]
		\right\|_{L^\alpha}.
	\end{align*}
The first term tends to zero by
\eqref{eq:BSIE-Phi-direct-right-continuity}. For the second term,
right-continuity of \(\mathbb F\) and the reverse martingale
convergence theorem give
\[
 \mathbb E[\Phi_P^{u,v}(r)\mid\mathcal F_{r+h}]
 \longrightarrow
 \mathbb E[\Phi_P^{u,v}(r)\mid\mathcal F_r]
 \quad\text{in }L^\alpha(\Omega).
\]
	This proves \eqref{eq:FV-BSIE-weak-right-continuity}.
\end{proof}

\subsection{Moving-spike estimate and proof of the operator-valued It\^o-type formula}
\label{subsec:BSIE-Ito-proofs}

Let \(t_0\in[0,T)\), let \(0<\rho<T-t_0\), and put
\(E_\rho:=[t_0,t_0+\rho), \quad t_1:=t_0+\rho.\)

Let \(A,B\) satisfy \((A1)\), let \(D\) be weakly progressively
measurable with \(\|D(t)\|_{\mathfrak L(H)}\le C_D\), and let
\(\zeta_0\in L^\infty(\mathcal F_{t_0};V)\).
    Let \(x^\rho\) be the solution of
    \begin{equation}
        \left\{
        \begin{aligned}
            dx^\rho(t)
            &=
            A(t)x^\rho(t)\,dt
            +
            \big[
            B(t)x^\rho(t)
            +
            \zeta_0 I_{E_\rho}(t)
            \big]\,dW(t)
            \\
            &\quad+
            D(t)x^\rho(t-)\,d\xi(t),
            \\
            x^\rho(0)&=0.
        \end{aligned}
        \right.
        \label{eq:BSIE-Ito-xrho}
    \end{equation}
    To compare \(x^\rho\) with a Brownian increment, define
    \[
    \eta^\rho(t)
    :=
    \frac{1}{\sqrt\rho}\,
    \zeta_0
    \int_{t_0}^t I_{E_\rho}(s)\,dW(s),
    \quad
    t\in[t_0,t_1].
    \]
    Define \(z^\rho\) as follows:
    $
    z^\rho(t)=0$ for
    $ t<t_0,
    $ $
    z^\rho(t)=\eta^\rho(t)$ for $
     t_0\le t<t_1,
    $
    while, on \([t_1,T]\), \(z^\rho\) is the solution of
    \begin{equation}
        \left\{
        \begin{aligned}
            dz^\rho(t)
            &=
            A(t)z^\rho(t)\,dt
            +
            B(t)z^\rho(t)\,dW(t)
            +
            D(t)z^\rho(t-)\,d\xi(t),
            \\
            z^\rho(t_1)&=\eta^\rho(t_1).
        \end{aligned}
        \right.
        \label{eq:BSIE-Ito-zrho}
    \end{equation}

The following estimate compares the state driven by the spike with a
Brownian increment continued by the homogeneous flow. It will be used to prove
the quadratic BSIE formula.
\begin{proposition}
    \label{prop:moving-Brownian-spike-FV}
    Let \(\alpha\ge1\), choose \(\kappa>2\) and
    \(\beta\ge\max\{\beta_\alpha,\beta_{2\alpha}\}\) in the forward
    weight, and suppose that
    \(\mathfrak E_{2\alpha\kappa,C_D}(\xi)<\infty\).
    The processes \(x^\rho,z^\rho\) defined in
    \eqref{eq:BSIE-Ito-xrho} and \eqref{eq:BSIE-Ito-zrho} satisfy
    \begin{equation}
        \mathbb E
        \sup_{0\le t\le T}
        \left\|
        x^\rho(t)-\sqrt\rho\,z^\rho(t)
        \right\|_H^{2\alpha}
        =
        o(\rho^\alpha),
        \quad
        \rho\downarrow0.
        \label{eq:moving-spike-estimate}
    \end{equation}
    Moreover,
    \begin{align}
        &\mathbb E
        \sup_{0\le t\le T}
        \|x^\rho(t)\|_H^{2\alpha}
        +
        \mathbb E
        \sup_{0\le t\le T}
        \|\sqrt\rho\,z^\rho(t)\|_H^{2\alpha}
        \le
        C_\alpha\rho^\alpha
        \left(\mathbb E\|\zeta_0\|_H^{4\alpha}\right)^{1/2},
        \label{eq:xrho-zrho-size-estimate}
    \end{align}
    where \(C_\alpha\) is independent of \(\rho\).
\end{proposition}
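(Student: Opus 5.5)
We split the time axis into $[0,t_0)$, $E_\rho=[t_0,t_1)$, and $[t_1,T]$. On $[0,t_0)$ both $x^\rho$ and $z^\rho$ vanish: the equation for $x^\rho$ is homogeneous there with zero initial value, so uniqueness in Corollary~\ref{thm:FV-SEE-wellposed} gives $x^\rho=0$.

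\emph{Size estimate.} We first prove \eqref{eq:xrho-zrho-size-estimate}. For $x^\rho$, apply Proposition~\ref{prop:unbounded-xi-SEE-estimate} on $[0,T]$ with $a=0$, $\gamma(t,x)=D(t)x$ (so $\ell=C_D$), and $b(t,0)=\zeta_0I_{E_\rho}(t)$. The data term $\mathbb E(\int_0^T\|b(t,0)\|_H^2dt)^{2\alpha}=\rho^{2\alpha}\mathbb E\|\zeta_0\|_H^{4\alpha}$, together with the unweighted estimate \eqref{eq:unbounded-xi-unweighted-SEE-estimate}, yields $C\rho^\alpha(\mathbb E\|\zeta_0\|_H^{4\alpha})^{1/2}$. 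For $\sqrt\rho\,z^\rho$ we argue in two pieces. On $E_\rho$ it equals $\zeta_0(W(t)-W(t_0))$, which is controlled by BDG. On $[t_1,T]$ it equals $L(t_1,\cdot)(\sqrt\rho\,\eta^\rho(t_1))$, which is controlled by the conditional flow bound \eqref{eq:SEE-deterministic-start-conditional-estimate} (with $D$ in place of $\bar\gamma_x$) followed by BDG for $\mathbb E\|\zeta_0(W(t_1)-W(t_0))\|^{2\alpha}\le C\rho^\alpha\mathbb E\|\zeta_0\|^{2\alpha}$.

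\emph{Difference on the spike interval.} On $E_\rho$, set $w:=x^\rho-\sqrt\rho\,z^\rho$. Then
\[
dw=A x^\rho\,dt+Bx^\rho\,dW+Dx^\rho(t-)\,d\xi,\qquad w(t_0)=0.
\]
The inhomogeneity is thus the full $x^\rho$ rather than $w$. Writing $x^\rho=w+\zeta_0(W-W(t_0))$, we view this as a linear SEE for $w$ with additive terms $A\zeta_0(W-W(t_0))$ (which lies in $V^*$ because $\zeta_0\in V$), $B\zeta_0(W-W(t_0))$, and $D\zeta_0(W(t-)-W(t_0))$ against $d\xi$. Lemma~\ref{lem:FV-SEE-high-order-Vstar} on $[t_0,t_1]$ bounds $\mathbb E\sup_{E_\rho}\|w\|^{2\alpha}$ by the following quantities, using $\zeta_0\in L^\infty(V)$ and the exponential moment:
\[
\mathbb E\Bigl(\int_{E_\rho}\|\zeta_0\|_V^2|W-W(t_0)|^2dt\Bigr)^{\alpha}=O(\rho^{2\alpha}),
\]
and
\[
\mathbb E\Bigl(\sup_{E_\rho}|W-W(t_0)|\,(\xi(t_1)-\xi(t_0))\Bigr)^{2\alpha}.
\]
The second bound is the main obstacle, because $\xi$ need not satisfy $\xi(t_0+\rho)-\xi(t_0)=O(\rho)$. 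We plan to handle it with Cauchy--Schwarz. The factor $\rho^{\alpha}$ comes from the Brownian supremum, and $\mathbb E(\xi(t_1)-\xi(t_0))^{4\alpha}\to0$ by right continuity of $\xi$ at $t_0$ and dominated convergence (moments come from $\mathfrak E_{2\alpha\kappa,C_D}$ when $C_D>0$; if $C_D=0$ this term vanishes). Hence $\mathbb E\sup_{E_\rho}\|w\|^{2\alpha}=o(\rho^\alpha)$. The same argument with only the weighted estimate works if we prefer to avoid unweighted bounds.

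\emph{Difference after the spike.} On $[t_1,T]$, both $x^\rho$ and $\sqrt\rho\,z^\rho$ solve the homogeneous equation \eqref{eq:BSIE-deterministic-start-homogeneous-SEE}. Therefore $w(t)=L(t_1,t)w(t_1)$ on $[t_1,T]$, after checking that $w(t_1)=w(t_1-)$: the two processes jump in the same way at $t_1$ since the left limits coincide as computed. By \eqref{eq:SEE-deterministic-start-conditional-estimate},
\[
\mathbb E\sup_{[t_1,T]}\|w\|^{2\alpha}\le C\,\mathbb E\|w(t_1)\|^{2\alpha}=o(\rho^\alpha).
\]
One subtlety remains: $z^\rho$ is defined with $z^\rho(t_1)=\eta^\rho(t_1)$, whereas $x^\rho(t_1)$ includes a jump $D(t_1)x^\rho(t_1-)\Delta\xi(t_1)$. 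The discrepancy is $O(\|x^\rho(t_1-)\|\Delta\xi(t_1))$, and we absorb it into the $o(\rho^\alpha)$ term by the same right-continuity argument, noting that $\mathbb E(\Delta\xi(t_0+\rho))^{4\alpha}\to0$ along every sequence of $\rho$. Combining the three intervals gives \eqref{eq:moving-spike-estimate}.
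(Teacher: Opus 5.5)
Your proposal is essentially the paper's proof. It uses the same split into $[0,t_0)$, $[t_0,t_1]$ and $[t_1,T]$, and the same error equation on the spike interval with additive data $A\zeta_0(W-W(t_0))$, $B\zeta_0(W-W(t_0))$ and $D\zeta_0(W(\cdot-)-W(t_0))$, fed into Lemma~\ref{lem:FV-SEE-high-order-Vstar}. It also uses Cauchy--Schwarz with right continuity of $\xi$ at $t_0$ for the Stieltjes datum, the conditional homogeneous-flow bound on $[t_1,T]$, and the same size estimates for $x^\rho$ and $\sqrt\rho\,z^\rho$.

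One step needs correcting: the claim $w(t_1)=w(t_1-)$ is false. The process $z^\rho$ is continuous at $t_1$, because $z^\rho(t_1)=\eta^\rho(t_1)$ and its equation on $[t_1,T]$ only integrates over $(t_1,t]$. By contrast, $x^\rho$ jumps at $t_1$ by $D(t_1)x^\rho(t_1-)\Delta\xi(t_1)$. Your separate patch for this discrepancy, done by Cauchy--Schwarz as you indicate, needs $\mathbb E\|x^\rho(t_1-)\|_H^{4\alpha}=O(\rho^{2\alpha})$. Through the size estimate at order $2\alpha$, that calls for a stronger exponential moment than the assumed $\mathfrak E_{2\alpha\kappa,C_D}(\xi)<\infty$.

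The patch is unnecessary. Do what the paper does and apply the lemma on the closed interval $[t_0,t_1]$. There the Stieltjes integral runs over $(t_0,t_1]$, so the atom at $t_1$ is already inside your data term, which involves $\xi(t_1)-\xi(t_0)\ge\Delta\xi(t_1)$. The resulting bound controls $\sup_{[t_0,t_1]}\|w\|_H$, including the post-jump value $w(t_1)$. On $[t_1,T]$ the difference is then exactly the homogeneous flow started from this $w(t_1)$.

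A minor bookkeeping point: the unweighted form of the lemma places the data at doubled exponents under a square root. You therefore need $\mathbb E(\xi(t_1)-\xi(t_0))^{8\alpha}\to0$ rather than the $4\alpha$-th moment, but this still follows from right continuity and the exponential moment.
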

\begin{proof}
	On \([0,t_0)\), both processes vanish. On \([t_0,t_1]\), set
	$
	e^\rho(t)
	:=
	x^\rho(t)-\sqrt\rho\,\eta^\rho(t).
	$
	Subtracting
    \(d(\sqrt\rho\,\eta^\rho(t))=\zeta_0\,dW(t)\)
    from the equation for \(x^\rho\) on \([t_0,t_1]\) gives
	\begin{equation}
		\left\{
		\begin{aligned}
			de^\rho(t)
			&=
			\big[
			A(t)e^\rho(t)
			+
			\sqrt\rho\,A(t)\eta^\rho(t)
			\big]\,dt
			\\
			&\quad+
			\big[
			B(t)e^\rho(t)
			+
			\sqrt\rho\,B(t)\eta^\rho(t)
			\big]\,dW(t)
			\\
			&\quad+
			\big[
			D(t)e^\rho(t-)
			+
			\sqrt\rho\,D(t)\eta^\rho(t-)
			\big]\,d\xi(t),
			\\
			e^\rho(t_0)&=0.
		\end{aligned}
		\right.
		\label{eq:moving-spike-error-equation}
	\end{equation}
	
	By Brownian scaling and the independence of
	\(W(t_0+\cdot)-W(t_0)\) from \(\mathcal F_{t_0}\),
	\begin{equation}
		\sup_{0<\rho<T-t_0}
		\mathbb E
		\sup_{t_0\le t\le t_1}
		\|\eta^\rho(t)\|_V^{2\alpha}
		\le
		C_\alpha
		\mathbb E\|\zeta_0\|_V^{2\alpha}.
		\label{eq:eta-rho-moment}
	\end{equation}

We apply \eqref{eq:FV-SEE-high-order-Vstar-estimate} to
\eqref{eq:moving-spike-error-equation} on \([t_0,t_1]\), with moment
parameter \(\alpha\), Lipschitz bound \(C_D\), and zero-point data
\(\sqrt\rho\,A\eta^\rho\), \(\sqrt\rho\,B\eta^\rho\), and
\(\sqrt\rho\,D\eta^\rho_-\), respectively.
The moment conditions on these data follow from the boundedness of
\(\zeta_0\) in \(V\), Brownian scaling for \(\eta^\rho\), and, when
\(C_D>0\), the condition \(\mathfrak E_{2\alpha\kappa,C_D}(\xi)<\infty\),
which gives every polynomial moment of \(C_D\xi(T)\).
For \(C_D=0\), the Stieltjes term vanishes.
Dividing the resulting bound by \(\rho^\alpha\) gives
    \begin{align}
\rho^{-\alpha}\mathbb E\sup_{t_0\le t\le t_1}
       \|e^\rho(t)\|_H^{2\alpha}
    &\le C_\alpha\left[\mathbb E
     \left(\int_{t_0}^{t_1}
     [\|A(s)\eta^\rho(s)\|_{V^*}^2+
       \|B(s)\eta^\rho(s)\|_H^2]ds\right)^{2\alpha}\right]^{1/2}
     \notag\\
    &\quad+C_\alpha\left[\mathbb E
     \left(\int_{(t_0,t_1]}
       \|D(s)\eta^\rho(s-)\|_Hd\xi(s)\right)^{4\alpha}\right]^{1/2}.
\label{eq:moving-spike-normalized-error-bound}
\end{align}
    By \eqref{eq:eta-rho-moment} at order \(2\alpha\) and the bounds
    on \(A\) and \(B\), the first term on the right-hand side of
    \eqref{eq:moving-spike-normalized-error-bound} is \(O(\rho^\alpha)\).
    If \(C_D>0\), the exponential condition at time zero implies
    \(C_D\xi(T)\in L^{8\alpha}\). Right-continuity and dominated
    convergence yield
    \(\mathbb E[C_D(\xi(t_1)-\xi(t_0))]^{8\alpha}\to0\), whence
    \[
 \mathbb E\left(\int_{(t_0,t_1]} \|D(s)\eta^\rho(s-)\|_Hd\xi(s)\right)^{4\alpha} \le \left(\mathbb E\sup_{t_0\le s\le t_1} \|\eta^\rho(s)\|_H^{8\alpha}\right)^{1/2} \left(\mathbb E[C_D(\xi(t_1)-\xi(t_0))]^{8\alpha}\right)^{1/2} \longrightarrow0.
\]
    When \(C_D=0\),
    \(\int_{(t_0,t_1]}\|D(s)\eta^\rho(s-)\|_Hd\xi(s)=0\), so no
    moment of the finite-variation driver is needed. In either case,
    \(\mathbb E\sup_{[t_0,t_1]}\|e^\rho\|_H^{2\alpha}
      =o(\rho^\alpha)\).
    On \([t_1,T]\), the difference \(x^\rho-\sqrt\rho\,z^\rho\)
    solves the homogeneous equation with initial value \(e^\rho(t_1)\).
    For the homogeneous equation
    \eqref{eq:BSIE-deterministic-start-homogeneous-SEE}, the conditional
    form of \eqref{eq:unbounded-xi-unweighted-SEE-estimate}, with moment
    parameter \(\alpha\), Lipschitz bound \(C_D\), and zero
    inhomogeneous data, gives
    \begin{equation}
        \mathbb E\left[
          \sup_{r\le s\le T}\|X^{r,\eta}(s)\|_H^{2\alpha}
          \,\middle|\,\mathcal F_r\right]
        \le C_\alpha\|\eta\|_H^{2\alpha},
        \quad\mathbb P\text{-a.s.},
        \label{eq:moving-spike-homogeneous-estimate}
    \end{equation}
    for \(\eta\in L^\infty(\mathcal F_r;H)\), with \(C_\alpha\)
    independent of \(r\).
    For \(\eta\in L^{2\alpha}(\mathcal F_r;H)\), let
    \(X^{r,\eta}\) be the weighted solution given by
    Theorem~\ref{thm:FV-SEE-weighted-wellposed}.
    By linearity and uniqueness, the solution with initial value
    \(I_{\{\|\eta\|_H\le n\}}\eta\) is
    \(I_{\{\|\eta\|_H\le n\}}X^{r,\eta}\).
    Applying \eqref{eq:moving-spike-homogeneous-estimate} to these
    truncations and using conditional monotone convergence gives
    the same bound for \(X^{r,\eta}\).
    Taking \(r=t_1\) and \(\eta=e^\rho(t_1)\), and then taking
    expectations, bounds the maximal \(2\alpha\)-moment on
    \([t_1,T]\) by
    \(C_\alpha\mathbb E\|e^\rho(t_1)\|_H^{2\alpha}=o(\rho^\alpha)\).
    Together with the estimate on \([t_0,t_1]\), this proves
    \eqref{eq:moving-spike-estimate}.

    Applying \eqref{eq:unbounded-xi-unweighted-SEE-estimate}
    to \(x^\rho\), with zero drift and Stieltjes data and
    diffusion datum \(\zeta_0I_{E_\rho}\), gives
    \[
    \mathbb E\sup_{0\le t\le T}\|x^\rho(t)\|_H^{2\alpha}
    \le C_\alpha
    \left[\mathbb E\left(\int_{E_\rho}\|\zeta_0\|_H^2ds
                              \right)^{2\alpha}\right]^{1/2}
    =C_\alpha\rho^\alpha
                 (\mathbb E\|\zeta_0\|_H^{4\alpha})^{1/2}.
    \]
    Brownian scaling bounds the maximal \(2\alpha\)-moment of
    \(\sqrt\rho\,z^\rho=\sqrt\rho\,\eta^\rho\) on \([t_0,t_1]\).
    Applying \eqref{eq:moving-spike-homogeneous-estimate}
    with \(r=t_1\) and initial value
    \(\sqrt\rho\,\eta^\rho(t_1)\) bounds the same maximal moment
    on \([t_1,T]\).
    Adding the bounds on the two intervals gives an upper bound of
    \(C_\alpha\rho^\alpha\mathbb E\|\zeta_0\|_H^{2\alpha}\),
    which proves \eqref{eq:xrho-zrho-size-estimate}.
\end{proof}

\begin{proof}[Proof of Theorem~\ref{thm:FV-BSIE-Ito}]
We first estimate the remainder \(\sigma^\rho\) for constant spike
amplitudes and extend the estimate to general amplitudes by approximation.
We then construct \(Z^\rho\) by martingale representation and choose a
process version of the remainder.

	Fix a progressively measurable representative of \(\zeta\). By the
	\(L^{8\alpha}(\Omega;H)\)-valued Lebesgue differentiation theorem, for
	Lebesgue-a.e. \(t_0\in[0,T)\),
	\begin{equation}
		\frac1\rho
		\int_{t_0}^{t_0+\rho}
		\mathbb E
		\|\zeta(s)-\zeta(t_0)\|_H^{8\alpha}\,ds
		\longrightarrow0.
		\label{eq:BSIE-Ito-zeta-Lebesgue-point}
	\end{equation}
	We fix such a \(t_0\). In particular,
	\[
	\limsup_{\rho\downarrow0}
	\frac1\rho
	\int_{t_0}^{t_0+\rho}
	\mathbb E\|\zeta(s)\|_H^{8\alpha}\,ds
	<\infty,
	\]
	and hence, by H\"older's inequality and
	\eqref{eq:BSIE-P-high-moment-from-data},
	\begin{equation}
		\limsup_{\rho\downarrow0}
		\frac1\rho
		\int_{t_0}^{t_0+\rho}
		\mathbb E\left[
		\|P(s)\|_{\mathfrak L(H)}^\alpha
		\|\zeta(s)\|_H^{2\alpha}
		\right]ds
		<\infty.
		\label{eq:BSIE-Ito-P-zeta-local-bound}
	\end{equation}
	
    Corollary~\ref{thm:FV-SEE-wellposed}, with moment parameter \(2\alpha\),
    gives a solution \(x^\rho\) in
    \(\mathbb D^{4\alpha}_{\mathbb F}(0,T;H)
    \cap L^{2,4\alpha}_{\mathbb F}(0,T;V)\).
    The exponential assumption of the corollary holds by
    \eqref{eq:BSIE-flow-exponential-condition}, and the diffusion
    datum has the required integrability because \(\zeta\in L^{8\alpha}_{\mathbb F}(0,T;H)\).
    Estimate \eqref{eq:unbounded-xi-unweighted-SEE-estimate}
    with moment parameter \(2\alpha\) gives
    \begin{equation}
    \left\|\sup_{0\le t\le T}\|x^\rho(t)\|_H\right\|_{L^{4\alpha}}
    \le C\left[\mathbb E\left(
      \int_{E_\rho}\|\zeta(s)\|_H^2ds\right)^{4\alpha}\right]^{1/(8\alpha)}
    \le C\sqrt\rho\left[
      \frac1\rho\int_{E_\rho}\mathbb E\|\zeta(s)\|_H^{8\alpha}ds
      \right]^{1/(8\alpha)}.
    \label{eq:BSIE-Ito-general-state-bound}
    \end{equation}
    By H\"older's inequality, \eqref{eq:BSIE-Ito-general-state-bound}, the data
    conditions, \eqref{eq:BSIE-Ito-FP-high-moment}, and
    \eqref{eq:BSIE-Ito-P-zeta-local-bound},
    the terminal, Lebesgue, Stieltjes, and current-time quadratic
    terms in \eqref{eq:sigma-rho-definition} all belong to
    \(L^\alpha(\Omega)\), including when \(\alpha=1\).

	We therefore define the remainder at each deterministic \(t\in[0,T]\) by
	\begin{align}
		\sigma^\rho(t)
		&:=
		\mathbb E\Bigg[
		\big\langle
		\Xi x^\rho(T),x^\rho(T)
		\big\rangle_H
		\notag\\
		&\quad+
		\int_t^T
		\Big[
		\big\langle
		F(s,P(s))x^\rho(s),x^\rho(s)
		\big\rangle_H
		-
		\big\langle
		P(s)\zeta(s),\zeta(s)
		\big\rangle_H I_{E_\rho}(s)
		\Big]\,ds
		\notag\\
		&\quad+
		\int_{(t,T]}
		\big\langle
		G(s)x^\rho(s-),x^\rho(s-)
		\big\rangle_H\,d\xi(s)
		\,\Bigg|\,\mathcal F_t
		\Bigg]
		-
		\big\langle
		P(t)x^\rho(t),x^\rho(t)
		\big\rangle_H.
		\label{eq:sigma-rho-definition}
	\end{align}

	We first prove \eqref{eq:FV-BSIE-Ito-sigma} when the spike amplitude is
	constant on \(E_\rho\):
	\(\zeta(s)=\zeta_0, \quad s\in E_\rho, \quad \zeta_0\in L^\infty(\mathcal F_{t_0};V).\)
	Let \(z^\rho\) be the process introduced in
	Proposition~\ref{prop:moving-Brownian-spike-FV}. Applying that
	proposition with \(2\alpha\) in place of \(\alpha\), we have
	\begin{align}
		\mathbb E\sup_{0\le t\le T}
		\left\|
		x^\rho(t)-\sqrt\rho\,z^\rho(t)
		\right\|_H^{4\alpha}
		&=
		o(\rho^{2\alpha}),
		\label{eq:moving-spike-estimate-high}
		\\
		\mathbb E\sup_{0\le t\le T}
		\|x^\rho(t)\|_H^{4\alpha}
		+
		\mathbb E\sup_{0\le t\le T}
		\|\sqrt\rho\,z^\rho(t)\|_H^{4\alpha}
		&=
		O(\rho^{2\alpha}).
		\label{eq:xrho-zrho-size-high}
	\end{align}
	
	Since \(z^\rho\) solves the homogeneous equation on \([t_1,T]\),
	\eqref{eq:BSIE-Ito-random-duality}, applied at \(t_1=t_0+\rho\), gives
	\begin{align}
&\big\langle
		P(t_1)\sqrt\rho\,z^\rho(t_1),
		\sqrt\rho\,z^\rho(t_1)
		\big\rangle_H
		\notag\\
		&=
		\mathbb E\Bigg[
		\big\langle
		\Xi\sqrt\rho\,z^\rho(T),
		\sqrt\rho\,z^\rho(T)
		\big\rangle_H
		\notag\\
		&\quad+
		\int_{t_1}^T
		\big\langle
		F(s,P(s))\sqrt\rho\,z^\rho(s),
		\sqrt\rho\,z^\rho(s)
		\big\rangle_H\,ds
		\notag\\
		&\quad+
		\int_{(t_1,T]}
		\big\langle
		G(s)\sqrt\rho\,z^\rho(s-),
		\sqrt\rho\,z^\rho(s-)
		\big\rangle_H\,d\xi(s)
		\,\Bigg|\,\mathcal F_{t_1}
		\Bigg].
\label{eq:constant-spike-flow-duality}
\end{align}
	On \([t_0,t_1]\),
	\[
	z^\rho(t)
	=
	\zeta_0
	\frac{W(t)-W(t_0)}{\sqrt\rho}.
	\]
The conditional second moments of Brownian increments give
\begin{align}
 &\mathbb E\left[
   (W(t_1)-W(t_0))^2
   \langle P(t_0)\zeta_0,\zeta_0\rangle_H
   \,\middle|\,\mathcal F_t\right]
 \notag\\
 &\quad=
 \rho\,\mathbb E\left[
   \langle P(t_0)\zeta_0,\zeta_0\rangle_H
   \,\middle|\,\mathcal F_t\right],
 \quad t\le t_0,
 \label{eq:constant-spike-Brownian-before}\\
 &\Big(
   \mathbb E[(W(t_1)-W(t_0))^2\mid\mathcal F_t]
   -(W(t)-W(t_0))^2
  \Big)\langle P(t)\zeta_0,\zeta_0\rangle_H
 \notag\\
 &\quad=(t_1-t)\langle P(t)\zeta_0,\zeta_0\rangle_H,
 \quad t_0\le t\le t_1.
 \label{eq:constant-spike-Brownian-during}
\end{align}

	We use \eqref{eq:constant-spike-Brownian-before} for \(t\le t_0\)
    and \eqref{eq:constant-spike-Brownian-during} for
    \(t\in[t_0,t_1]\) in
    \eqref{eq:constant-spike-flow-duality}. Comparing this identity with
    \eqref{eq:sigma-rho-definition} and using the contraction property of
	conditional expectation in \(L^\alpha(\Omega)\), we obtain, for any
	\(\alpha'\in(\alpha,2\alpha)\),
	\begin{align}
		\sup_{0\le t\le T}
		\|\sigma^\rho(t)\|_{L^\alpha(\Omega)}
		&\le
		C\rho
		\sup_{t_0\le r,s\le t_1}
		\left\|
		\big\langle
		[P(r)-P(s)]\zeta_0,\zeta_0
		\big\rangle_H
		\right\|_{L^{\alpha'}(\Omega)}
		\notag\\
		&\quad+
		C\rho
		\left\|
		\sup_{t_0\le s\le t_1}
		\|z^\rho(s)\|_H^2
		\right\|_{L^{2\alpha}(\Omega)}
		\notag\\
		&\quad\times
		\Bigg[
		\left\|
		\int_{t_0}^{t_1}
		\|F(s,P(s))\|_{\mathfrak L(H)}\,ds
		\right\|_{L^{2\alpha}(\Omega)}
		\notag\\
		&\hspace{36mm}+
		\left\|
		\int_{(t_0,t_1]}
		\|G(s)\|_{\mathfrak L(H)}\,d\xi(s)
		\right\|_{L^{2\alpha}(\Omega)}
		\Bigg]
		\notag\\
		&\quad+
		C
		\left(
		\mathbb E\sup_{0\le s\le T}
		\|x^\rho(s)-\sqrt\rho\,z^\rho(s)\|_H^{4\alpha}
		\right)^{1/(4\alpha)}
		\notag\\
		&\quad\times
		\left(
		\mathbb E\sup_{0\le s\le T}
		\big(
		\|x^\rho(s)\|_H
		+
		\|\sqrt\rho\,z^\rho(s)\|_H
		\big)^{4\alpha}
		\right)^{1/(4\alpha)} .
		\label{eq:constant-spike-sigma-direct-bound}
	\end{align}
The three terms on the right-hand side of
\eqref{eq:constant-spike-sigma-direct-bound} arise from the increments of
\(\langle P(\cdot)\zeta_0,\zeta_0\rangle_H\), the integrals involving
\(F\) and \(G\) over the spike interval, and the replacement of
\(x^\rho\) by \(\sqrt\rho\,z^\rho\), respectively.
	The constant \(C\) is finite by
	\eqref{eq:BSIE-Ito-high-moment-data},
	\eqref{eq:BSIE-P-high-moment-from-data}, and
	\eqref{eq:BSIE-Ito-FP-high-moment}.
	
Lemma~\ref{lem:FV-BSIE-weak-right-continuity} gives
\[
 \langle[P(s)-P(t_0)]\zeta_0,\zeta_0\rangle_H
 \longrightarrow0
 \quad\text{in }L^\alpha(\Omega),\quad s\downarrow t_0.
\]
Since \(\zeta_0\) is bounded and
\eqref{eq:BSIE-P-high-moment-from-data} bounds these pairings in
\(L^{2\alpha}(\Omega)\), norm interpolation yields convergence
in \(L^{\alpha'}(\Omega)\) for every
\(\alpha'\in(\alpha,2\alpha)\). In particular,
\[
 \sup_{t_0\le r,s\le t_1}
 \|\langle[P(r)-P(s)]\zeta_0,\zeta_0\rangle_H\|_{L^{\alpha'}}
 \longrightarrow0.
\]
Here \(\alpha'>1\), even when \(\alpha=1\), so H\"older's
inequality controls the Brownian factors in
\eqref{eq:constant-spike-sigma-direct-bound} using their finite
moments.
	Brownian scaling and
	\eqref{eq:xrho-zrho-size-high} show that
	\(\left\| \sup_{t_0\le s\le t_1}\|z^\rho(s)\|_H^2 \right\|_{L^{2\alpha}(\Omega)}\)
	is bounded uniformly in \(\rho\).
The short-interval convergence established after
\eqref{eq:BSIE-dt-short-direct-increment} and
\eqref{eq:BSIE-dxi-short-direct-increment}, applied with \(r=t_0\),
also gives
\[
 \int_{t_0}^{t_1}\|F(s,P(s))\|_{\mathfrak L(H)}\,ds
 \longrightarrow0,
 \quad
 \int_{(t_0,t_1]}\|G(s)\|_{\mathfrak L(H)}\,d\xi(s)
 \longrightarrow0
 \quad\text{in }L^{2\alpha}(\Omega).
\]
    The product in \eqref{eq:constant-spike-sigma-direct-bound}
    containing \(\|\sup_s\|x^\rho(s)-\sqrt\rho z^\rho(s)\|_H\|_{L^{4\alpha}}\)
    is \(o(\sqrt\rho)O(\sqrt\rho)=o(\rho)\) by
    \eqref{eq:moving-spike-estimate-high} and
    \eqref{eq:xrho-zrho-size-high}. Therefore,
\begingroup
\postdisplaypenalty=10000
	\begin{equation}
		\sup_{0\le t\le T}
		\|\sigma^\rho(t)\|_{L^\alpha(\Omega)}
		=
		o(\rho)
		\label{eq:constant-V-spike-sigma}
	\end{equation}
	for bounded \(\mathcal F_{t_0}\)-measurable \(V\)-valued constant
	spike amplitudes.
\endgroup
	
We next consider a general constant amplitude
\(\zeta_0\in L^{8\alpha}(\mathcal F_{t_0};H)\).
Since \(V\) is dense in \(H\), \(\zeta_0\) can be approximated in
\(L^{8\alpha}(\Omega;H)\) by bounded
\(\mathcal F_{t_0}\)-measurable \(V\)-valued simple random variables.
To estimate the difference between the corresponding states, we apply
\eqref{eq:unbounded-xi-unweighted-SEE-estimate} to
    \(x^\rho_\zeta-x^\rho_{\widetilde\zeta}\), with diffusion datum
    \((\zeta-\widetilde\zeta)I_{E_\rho}\). This gives
    \begin{align}
&\left\|\sup_{0\le t\le T}
       \|x^\rho_\zeta(t)-x^\rho_{\widetilde\zeta}(t)\|_H
       \right\|_{L^{4\alpha}}
    \notag\\
    &\quad\le C\left[\mathbb E
      \left(\int_{E_\rho}\|\zeta(s)-\widetilde\zeta(s)\|_H^2ds
      \right)^{4\alpha}\right]^{1/(8\alpha)}
    \notag\\
    &\quad\le C\rho^{1/2}
       \left[\frac1\rho\int_{E_\rho}
        \mathbb E\|\zeta(s)-\widetilde\zeta(s)\|_H^{8\alpha}ds
       \right]^{1/(8\alpha)}.
\label{eq:BSIE-spike-amplitude-stability}
\end{align}
    Estimate \eqref{eq:BSIE-spike-amplitude-stability} and the inequality
	\[
	|\langle Ru,u\rangle_H-\langle Rv,v\rangle_H|
	\le
	\|R\|_{\mathfrak L(H)}
	\|u-v\|_H(\|u\|_H+\|v\|_H)
	\]
	show that the difference between the remainders for the two constant
    amplitudes is bounded by
	\[
	C\rho
	\|\zeta_0-\zeta_0^n\|_{L^{8\alpha}(\Omega;H)}
	\left(
	\|\zeta_0\|_{L^{8\alpha}(\Omega;H)}
	+
	\|\zeta_0^n\|_{L^{8\alpha}(\Omega;H)}
	\right)
	\]
	in \(L^\alpha(\Omega)\), uniformly in \(t\) and \(\rho\). Hence
	\eqref{eq:constant-V-spike-sigma} holds for every
	\(\zeta_0\in L^{8\alpha}(\mathcal F_{t_0};H)\).
	
For the original progressively measurable process \(\zeta\), we freeze
its value at the chosen Lebesgue point \(t_0\). Let \(x^{\rho,t_0}\)
denote the solution with amplitude \(\zeta(t_0)\) on \(E_\rho\).
By \eqref{eq:BSIE-spike-amplitude-stability}
    with \(\widetilde\zeta(s)=\zeta(t_0)\) and
    \eqref{eq:BSIE-Ito-zeta-Lebesgue-point},
    \[
 \mathbb E\sup_{0\le t\le T} \|x^\rho(t)-x^{\rho,t_0}(t)\|_H^{4\alpha} \le C\rho^{2\alpha} \left[\frac1\rho\int_{t_0}^{t_0+\rho} \mathbb E\|\zeta(s)-\zeta(t_0)\|_H^{8\alpha}ds\right]^{1/2} =o(\rho^{2\alpha}).
\]
    The terminal, \(F\)-, \(G\)-, and current-time \(P\)-terms in
	\eqref{eq:sigma-rho-definition} for the two amplitudes therefore differ by
	\(o(\rho)\) in \(L^\alpha(\Omega)\), uniformly in \(t\).
    For the difference between the integrals
    \(\int_{E_\rho\cap[t,T]}\langle P(s)\zeta(s),\zeta(s)\rangle_Hds\)
    and
    \(\int_{E_\rho\cap[t,T]}\langle P(s)\zeta(t_0),\zeta(t_0)\rangle_Hds\),
    conditional Jensen's and H\"older's inequalities give
	\begin{align*}
		&\sup_{0\le t\le T}
		\mathbb E\Bigg|
		\mathbb E\Bigg[
		\int_{E_\rho\cap[t,T]}
		\Big[
		\langle P(s)\zeta(s),\zeta(s)\rangle_H
		-
		\langle P(s)\zeta(t_0),\zeta(t_0)\rangle_H
		\Big]\,ds
		\,\Bigg|\,\mathcal F_t
		\Bigg]
		\Bigg|^\alpha
		\\
		&\quad\le
		\rho^{\alpha-1}
		\int_{t_0}^{t_0+\rho}
		\mathbb E\Big[
		\|P(s)\|_{\mathfrak L(H)}^\alpha
		\|\zeta(s)-\zeta(t_0)\|_H^\alpha
		\big(
		\|\zeta(s)\|_H+\|\zeta(t_0)\|_H
		\big)^\alpha
		\Big]\,ds
		\\
		&=o(\rho^\alpha),
	\end{align*}
	by \eqref{eq:BSIE-Ito-zeta-Lebesgue-point} and
	\eqref{eq:BSIE-Ito-P-zeta-local-bound}. Combining this \(o(\rho^\alpha)\) bound for the difference
    between the two \(P\)-integrals with
    \eqref{eq:constant-V-spike-sigma} for \(\zeta_0=\zeta(t_0)\)
    and the other quadratic-form differences gives
	\[
	\sup_{0\le t\le T}
	\mathbb E|\sigma^\rho(t)|^\alpha
	=
	o(\rho^\alpha),
	\]
	which is \eqref{eq:FV-BSIE-Ito-sigma}.
	
    We now construct \(Z^\rho\) and prove \eqref{eq:FV-BSIE-Ito-Z}
    by applying martingale representation to the terminal random variable
    \begin{align}
    R^\rho:={}&\langle\Xi x^\rho(T),x^\rho(T)\rangle_H
      +\int_0^T\langle F(s,P(s))x^\rho(s),x^\rho(s)\rangle_Hds
      \notag\\
      &+\int_{(0,T]}\langle G(s)x^\rho(s-),x^\rho(s-)\rangle_Hd\xi(s)
      -\int_{E_\rho}\langle P(s)\zeta(s),\zeta(s)\rangle_Hds.
    \label{eq:BSIE-Ito-martingale-terminal}
    \end{align}
    When \(\alpha>1\), H\"older's inequality with moment orders
    \((2\alpha,4\alpha,4\alpha)\),
    \eqref{eq:BSIE-Ito-general-state-bound}, and
    \eqref{eq:BSIE-Ito-P-zeta-local-bound} give
    \begin{equation}
    \|R^\rho\|_{L^\alpha}
    \le C\left\|\sup_t\|x^\rho(t)\|_H\right\|_{L^{4\alpha}}^2
      +C\int_{E_\rho}\|\zeta(s)\|_{L^{4\alpha}(\Omega;H)}^2ds
    =O(\rho).
    \label{eq:BSIE-Ito-terminal-alpha-bound}
    \end{equation}
    For \(\alpha=1\), we first establish an \(L^r\) bound for
    \(R^\rho\), with \(r>1\), so that Doob's and the BDG inequalities
    can be applied in \(L^r\). Using \(\kappa>2\), choose
    \[
      2<\kappa'<\kappa,\quad
      q=\frac{8\kappa}{\kappa+\kappa'}\in(4,8),\quad
      r=\frac{2q}{q+4}>1.
    \]
    Take \(\beta'\) large enough for the weighted SEE estimate at
    exponent \(4\) with \(\kappa'\), and write
    \(\widehat\rho(t)=e^{-\beta't-\kappa'C_D\xi(t)}\).
    Apply \eqref{eq:unbounded-xi-weighted-SEE-estimate} with moment parameter \(4\)
    and weight \(\widehat\rho\) to the constructed solution
    \(x^\rho\in\mathbb D^4_{\mathbb F}(0,T;H)\).
    The localization and Fatou argument in
    Proposition~\ref{prop:unbounded-xi-SEE-estimate} justifies this
    estimate without assuming an eighth moment of \(x^\rho\).
    The data terms in the weighted estimate are finite because
    \(\zeta\in L^8_{\mathbb F}(0,T;H)\). Thus
    \begin{equation}
    \mathbb E\sup_{0\le t\le T}
       \widehat\rho(t)^4\|x^\rho(t)\|_H^8
    \le C\mathbb E\left(\int_{E_\rho}\|\zeta(s)\|_H^2ds\right)^4
    \le C\rho^4\frac1\rho\int_{E_\rho}\mathbb E\|\zeta(s)\|_H^8ds.
    \label{eq:BSIE-Ito-endpoint-weighted-eight}
    \end{equation}
    Applying H\"older's inequality with exponents \(8/q\) and \(8/(8-q)\)
    to this weighted bound yields
    \begin{align*}
    \left\|\sup_t\|x^\rho(t)\|_H\right\|_{L^q}
    &\le e^{\beta'T/2}
      \left[\mathbb E\sup_t\widehat\rho(t)^4
                    \|x^\rho(t)\|_H^8\right]^{1/8}
      \left[\mathbb E e^{\frac{4q\kappa'}{8-q}C_D\xi(T)}
                    \right]^{(8-q)/(8q)}\\
    &\le C\sqrt\rho\left[
       \frac1\rho\int_{E_\rho}\mathbb E\|\zeta(s)\|_H^8ds
                    \right]^{1/8}=O(\sqrt\rho),
    \end{align*}
    because
    \(\frac{4q\kappa'}{8-q}=4\kappa, \ \frac1r=\frac12+\frac2q.\)
    The exponential moment in this estimate follows from
    \(\mathfrak E_{4\kappa,C_D}(\xi)<\infty\) at time zero.
    The data conditions and
    \eqref{eq:BSIE-Ito-FP-high-moment} give
    \[
    \begin{aligned}
    \Xi&\in L_w^2(\mathcal F_T;\mathfrak L(H)),\\
    F(\cdot,P(\cdot))
    &\in L^{2,2}_{\mathbb F,w}(0,T;\mathfrak L(H))
      \subset L^{1,2}_{\mathbb F,w}(0,T;\mathfrak L(H)),\\
    G&\in L^{1,2}_{\mathbb F,\xi,w}(0,T;\mathfrak L(H)).
    \end{aligned}
    \]
    Thus \(\|\Xi\|\), \(\int_0^T\|F(s,P(s))\|ds\), and
    \(\int_{(0,T]}\|G(s)\|d\xi(s)\) belong to \(L^2(\mathcal F_T)\).
    The product of each with \(\sup_t\|x^\rho(t)\|_H^2\) therefore has \(L^r(\mathcal F_T)\)-norm \(O(\rho)\).
    For \(\int_{E_\rho}\langle P(s)\zeta(s),\zeta(s)\rangle_Hds\)
    in \eqref{eq:BSIE-Ito-martingale-terminal}, Minkowski's and
    H\"older's inequalities give
    \begin{align*}
    \left\|\int_{E_\rho}
       \langle P(s)\zeta(s),\zeta(s)\rangle_Hds\right\|_{L^r}
    &\le \sup_s\bigl(\mathbb E\|P(s)\|_{\mathfrak L(H)}^2\bigr)^{1/2}
            \int_{E_\rho}\|\zeta(s)\|_{L^q(\Omega;H)}^2ds\\
    &\le C\rho\left[
       \frac1\rho\int_{E_\rho}\mathbb E\|\zeta(s)\|_H^8ds
                 \right]^{1/4}=O(\rho).
    \end{align*}
    Consequently, \(R^\rho\in L^r\) and
    \(\|R^\rho\|_{L^r}=O(\rho)\) at the endpoint.

    The Brownian martingale representation theorem now gives a
    continuous version
    \[
      M^\rho(t):=\mathbb E[R^\rho\mid\mathcal F_t]
      =\mathbb E R^\rho+\int_0^t Z^\rho(s)dW(s).
    \]
    For \(\alpha>1\), Doob's inequality in \(L^\alpha\) and the BDG inequality,
    together with \eqref{eq:BSIE-Ito-terminal-alpha-bound}, imply
    \[
      \left\|\left(\int_0^T|Z^\rho(s)|^2ds\right)^{1/2}
          \right\|_{L^\alpha}=O(\rho).
    \]
    For \(\alpha=1\), these inequalities are applied in \(L^r\),
    where \(r>1\); hence
    \[
      \mathbb E\left(\int_0^T|Z^\rho(s)|^2ds\right)^{1/2}
      \le \left\|\left(\int_0^T|Z^\rho(s)|^2ds\right)^{1/2}
          \right\|_{L^r}
      \le C_r\|R^\rho\|_{L^r}=O(\rho).
    \]
    These estimates prove \eqref{eq:FV-BSIE-Ito-Z} for every
    \(\alpha\ge1\).

    We now choose a process version of the fixed-time remainder in
    \eqref{eq:sigma-rho-definition}:
    \begin{align*}
    \sigma^\rho(t)
    &=M^\rho(t)-\langle P(t)x^\rho(t),x^\rho(t)\rangle_H
      -\int_0^t\langle F(s,P(s))x^\rho(s),x^\rho(s)\rangle_H\,ds\\
    &\quad-\int_{(0,t]}\langle G(s)x^\rho(s-),x^\rho(s-)\rangle_H\,d\xi(s)
      +\int_0^t I_{E_\rho}(s)
        \langle P(s)\zeta(s),\zeta(s)\rangle_H\,ds.
    \end{align*}
    Each process on the right-hand side is progressively measurable.
    For each deterministic time, this version agrees \(\mathbb P\text{-a.s.}\)
    with the random variable in \eqref{eq:sigma-rho-definition}.
    With this choice,
    \eqref{eq:FV-BSIE-Ito} holds outside one evanescent set.
    Estimate \eqref{eq:FV-BSIE-Ito-sigma} gives
    \(\mathbb E\int_0^T|\sigma^\rho(t)|^\alpha dt
    \le T\sup_{t\le T}\mathbb E|\sigma^\rho(t)|^\alpha<\infty\),
    hence \(\sigma^\rho\in L^\alpha_{\mathbb F}(0,T)\).

	For uniqueness, suppose that two pairs satisfy \eqref{eq:FV-BSIE-Ito}. Subtracting the two identities gives
	\[
	\sigma_1^\rho(t)-\sigma_2^\rho(t)
	=
	-\int_t^T
	\big(
	Z_1^\rho(s)-Z_2^\rho(s)
	\big)\,dW(s).
	\]
The difference integrand lies in
\(L^{2,\alpha}_{\mathbb F}(0,T)\subset L^{2,1}_{\mathbb F}(0,T)\).
Its stochastic integral is therefore an \(H^1\) martingale by
the BDG inequality. Taking conditional expectation in the difference identity
gives \(\sigma_1^\rho=\sigma_2^\rho\); uniqueness of the Brownian
martingale representation gives
\(Z_1^\rho=Z_2^\rho\), \(dt\otimes d\mathbb P\)-almost everywhere.
	This completes the proof.
\end{proof}

\subsection{Proofs of the variational estimates}
\label{subsec:variational-adjoint-estimate-proofs}

\begin{proof}[Proof of Lemma~\ref{lem:regular-variation-estimates}]
We first estimate the state increment and the two variations,
and then the errors in the first- and second-order expansions.
Put \(\Delta x^\rho=x^\rho-\bar x\) and define
\begin{align}
    a_x^\rho(t)
    &:=
    \int_0^1
        a_x\big(t,\bar x(t)+\theta\Delta x^\rho(t),u^\rho(t)\big)
    \,d\theta,
    \label{eq:a-x-rho-average-proof}
    \\
    b_x^\rho(t)
    &:=
    \int_0^1
        b_x\big(t,\bar x(t)+\theta\Delta x^\rho(t),u^\rho(t)\big)
    \,d\theta,
    \notag
    \\
    \gamma_x^\rho(t)
    &:=
    \int_0^1
        \gamma_x\big(t,\bar x(t-)+\theta\Delta x^\rho(t-)\big)
    \,d\theta.
    \label{eq:gamma-x-rho-average-proof}
\end{align}
Subtracting the equation for \(\bar x\) from that for \(x^\rho\)
and applying the integral mean-value formula with the averaged
derivatives in \eqref{eq:a-x-rho-average-proof} and
\eqref{eq:gamma-x-rho-average-proof}, we obtain
\begin{equation}
\left\{
\begin{aligned}
    d\Delta x^\rho(t)
    &=
    \big[
        A(t)\Delta x^\rho(t)
        +
        a_x^\rho(t)\Delta x^\rho(t)
        +
        \delta a(t)I_{E_\rho}(t)
    \big]\,dt
    \\
    &\quad
    +
    \big[
        B(t)\Delta x^\rho(t)
        +
        b_x^\rho(t)\Delta x^\rho(t)
        +
        \delta b(t)I_{E_\rho}(t)
    \big]\,dW(t)
    \\
    &\quad
    +
    \gamma_x^\rho(t)\Delta x^\rho(t-)\,d\bar\xi(t),
    \quad t\in[0,T],
    \\
    \Delta x^\rho(0)&=0.
\end{aligned}
\right.
\label{eq:Delta-x-rho-equation-proof}
\end{equation}

For \(\alpha\ge1\), applying
\eqref{eq:unbounded-xi-unweighted-SEE-estimate} to
\eqref{eq:first-variation-regular}, with diffusion datum
\(\delta b I_{E_\rho}\) and zero drift and Stieltjes data, gives
\begin{align}
&\mathbb E\left[
    \sup_{0\le t\le T}\|x_1^\rho(t)\|_H^{2\alpha}
    +\left(\int_0^T\|x_1^\rho(t)\|_V^2dt\right)^\alpha
\right]
\notag\\
&\quad\le
C_\alpha\mathfrak E_{2\alpha\kappa,L_\gamma}(\bar\xi)^{1/2}
\left\{
\mathbb E\left(
    \int_{E_\rho}\|\delta b(t)\|_H^2dt
\right)^{2\alpha}
\right\}^{1/2}
=O(\rho^\alpha).
\label{eq:unbounded-xi-regular-first-variation-estimate}
\end{align}
For integer \(\alpha\), the bound for
\(\mathbb E(\int_{E_\rho}\|\delta b(t)\|_H^2\,dt)^{2\alpha}\)
follows from
\[
\mathbb E\left(
    \int_{E_\rho}\|\delta b(t)\|_H^2dt
\right)^{2\alpha}
\le
\rho^{2\alpha-1}\int_{E_\rho}\mathbb E\|\delta b(t)\|_H^{4\alpha}dt
=O(\rho^{2\alpha}).
\]

Applying \eqref{eq:unbounded-xi-unweighted-SEE-estimate} to
\eqref{eq:Delta-x-rho-equation-proof} also gives
\begin{align}
 &\mathbb E\left[\sup_{t\le T}\|\Delta x^\rho(t)\|_H^{2\alpha}
       +\left(\int_0^T\|\Delta x^\rho(t)\|_V^2\,dt\right)^\alpha\right]
 \notag\\
 &\quad\le C_\alpha\left\{\mathbb E\left[
       \left(\int_{E_\rho}\|\delta a(t)\|_H\,dt\right)^{4\alpha}
       +\left(\int_{E_\rho}\|\delta b(t)\|_H^2\,dt\right)^{2\alpha}
                    \right]\right\}^{1/2}
 =O(\rho^\alpha).
 \label{eq:regular-state-increment-moments}
\end{align}
Here the conditional exponential-moment factor is absorbed into
\(C_\alpha\). At the chosen \(t_0\), the drift datum satisfies
\(\mathbb E(\int_{E_\rho}\|\delta a(t)\|_Hdt)^{4\alpha}
=O(\rho^{4\alpha})\). The diffusion datum satisfies
\(\mathbb E(\int_{E_\rho}\|\delta b(t)\|_H^2dt)^{2\alpha}
=O(\rho^{2\alpha})\).

For the second variation, we first estimate the quadratic
Stieltjes datum in \eqref{eq:second-variation-regular}:
\begin{equation}
 \mathbb E\left( \int_{(0,T]}\|x_1^\rho(t-)\|_H^2d\bar\xi(t) \right)^{4\alpha} \le \left(\mathbb E\bar\xi(T)^{8\alpha}\right)^{1/2} \left(\mathbb E\sup_{t\le T}\|x_1^\rho(t)\|_H^{16\alpha}\right)^{1/2} =O(\rho^{4\alpha}).
\label{eq:second-variation-Stieltjes-data-bound}
\end{equation}
Applying \eqref{eq:unbounded-xi-unweighted-SEE-estimate} to
\eqref{eq:second-variation-regular}, together with
\eqref{eq:second-variation-Stieltjes-data-bound} and the expressions
for the drift and diffusion coefficients in
\eqref{eq:second-variation-regular}, gives
\[
\mathbb E\sup_{t\le T}\|x_2^\rho(t)\|_H^{2\alpha}
+\mathbb E\left(\int_0^T\|x_2^\rho(t)\|_V^2dt\right)^\alpha
=O(\rho^{2\alpha}).
\]

We next estimate the first-order approximation error. Define
$
    r_1^\rho(t):=\Delta x^\rho(t)-x_1^\rho(t).
$
Subtracting \eqref{eq:first-variation-regular} from
\eqref{eq:Delta-x-rho-equation-proof} gives
\begin{equation}
\left\{
\begin{aligned}
    dr_1^\rho(t)
    &=
    \big[
        \bar A(t)r_1^\rho(t)+R_{a,1}^\rho(t)
    \big]\,dt
    +
    \big[
        \bar B(t)r_1^\rho(t)+R_{b,1}^\rho(t)
    \big]\,dW(t)
    \\
    &\quad
    +
    \big[
        \bar\gamma_x(t)r_1^\rho(t-)+R_{\gamma,1}^\rho(t)
    \big]\,d\bar\xi(t),
    \\
    r_1^\rho(0)&=0,
\end{aligned}
\right.
\label{eq:r1-rho-equation-proof}
\end{equation}
where
\begin{align*}
    R_{a,1}^\rho(t)
    &:=
    a(t,x^\rho(t),u^\rho(t))
    -
    a(t,\bar x(t),\bar u(t))
    -
    \bar a_x(t)\Delta x^\rho(t),
    \\
    R_{b,1}^\rho(t)
    &:=
    b(t,x^\rho(t),u^\rho(t))
    -
    b(t,\bar x(t),\bar u(t))
    -
    \bar b_x(t)\Delta x^\rho(t)
    -
    \delta b(t)I_{E_\rho}(t),
    \\
    R_{\gamma,1}^\rho(t)
    &:=
    \gamma(t,x^\rho(t-))
    -
    \gamma(t,\bar x(t-))
    -
    \bar\gamma_x(t)\Delta x^\rho(t-).
\end{align*}
Applying the integral mean-value formula to the state differences
in the definitions of
\(R_{a,1}^\rho,R_{b,1}^\rho,R_{\gamma,1}^\rho\) gives
\begin{align*}
    R_{a,1}^\rho(t)
    &=
    \delta a(t)I_{E_\rho}(t)
    +
    \delta a_x(t)\Delta x^\rho(t)I_{E_\rho}(t)
    \\
    &\quad
    +
    \int_0^1
    \Big[
        a_x\big(t,\bar x(t)+\theta\Delta x^\rho(t),u^\rho(t)\big)
        -
        a_x\big(t,\bar x(t),u^\rho(t)\big)
    \Big]
    \Delta x^\rho(t)\,d\theta ,
\end{align*}
\begin{align*}
    R_{b,1}^\rho(t)
    &=
    \delta b_x(t)\Delta x^\rho(t)I_{E_\rho}(t)
    \\
    &\quad
    +
    \int_0^1
    \Big[
        b_x\big(t,\bar x(t)+\theta\Delta x^\rho(t),u^\rho(t)\big)
        -
        b_x\big(t,\bar x(t),u^\rho(t)\big)
    \Big]
    \Delta x^\rho(t)\,d\theta ,
\end{align*}
and
\begin{align*}
    R_{\gamma,1}^\rho(t)
    =
    \int_0^1
    \Big[
        \gamma_x\big(t,\bar x(t-)+\theta\Delta x^\rho(t-)\big)
        -
        \bar\gamma_x(t)
    \Big]
    \Delta x^\rho(t-)\,d\theta.
\end{align*}
By boundedness of the second derivatives,
\begin{equation}
\begin{aligned}
    \|R_{a,1}^\rho(t)\|_H
    &\le
    \|\delta a(t)\|_H I_{E_\rho}(t)
    +
    C I_{E_\rho}(t)\|\Delta x^\rho(t)\|_H
    +
    C\|\Delta x^\rho(t)\|_H^2,
    \\
    \|R_{b,1}^\rho(t)\|_H
    &\le
    C I_{E_\rho}(t)\|\Delta x^\rho(t)\|_H
    +
    C\|\Delta x^\rho(t)\|_H^2,
    \\
    \|R_{\gamma,1}^\rho(t)\|_H
    &\le
    C\|\Delta x^\rho(t-)\|_H^2.
\end{aligned}
\label{eq:first-variation-remainders-pointwise}
\end{equation}
We integrate the bounds for
\(\|R_{a,1}^\rho\|_H\), \(\|R_{b,1}^\rho\|_H^2\), and
\(\|R_{\gamma,1}^\rho\|_H\) from
\eqref{eq:first-variation-remainders-pointwise} against
\(dt,dt,d\bar\xi\), respectively. Then
\eqref{eq:regular-state-increment-moments} at the required moment
orders and H\"older's inequality for the factor \(\bar\xi(T)\)
give
\[
 \|R_{a,1}^\rho\|_{L^{1,4\alpha}_{\mathbb F}(0,T;H)}
 +\|R_{b,1}^\rho\|_{L^{2,4\alpha}_{\mathbb F}(0,T;H)}
 +\|R_{\gamma,1}^\rho\|_{L^{1,4\alpha}_{\mathbb F,\bar\xi}(0,T;H)}
 =O(\rho).
\]
Applying \eqref{eq:unbounded-xi-unweighted-SEE-estimate} to
\eqref{eq:r1-rho-equation-proof} with these data bounds gives
\[
\mathbb E\sup_{t\le T}\|r_1^\rho(t)\|_H^{2\alpha}
+\mathbb E\left(\int_0^T\|r_1^\rho(t)\|_V^2dt\right)^\alpha
=O(\rho^{2\alpha}).
\]

It remains to estimate the second-order approximation error. Define
$
    r_2^\rho(t):=
    \Delta x^\rho(t)-x_1^\rho(t)-x_2^\rho(t).
$
Subtracting \eqref{eq:first-variation-regular} and
\eqref{eq:second-variation-regular} from
\eqref{eq:Delta-x-rho-equation-proof} gives
\begin{equation}
\left\{
\begin{aligned}
    dr_2^\rho(t)
    &=
    \big[
        \bar A(t)r_2^\rho(t)+R_{a,2}^\rho(t)
    \big]\,dt
    +
    \big[
        \bar B(t)r_2^\rho(t)+R_{b,2}^\rho(t)
    \big]\,dW(t)
    \\
    &\quad
    +
    \big[
        \bar\gamma_x(t)r_2^\rho(t-)+R_{\gamma,2}^\rho(t)
    \big]\,d\bar\xi(t),
    \\
    r_2^\rho(0)&=0,
\end{aligned}
\right.
\label{eq:r2-rho-equation-proof}
\end{equation}
where
\begin{align*}
    R_{a,2}^\rho(t)
    &:=
    a(t,x^\rho(t),u^\rho(t))
    -
    a(t,\bar x(t),\bar u(t))
    -
    \bar a_x(t)\Delta x^\rho(t)
    -
    \delta a(t)I_{E_\rho}(t)
    \\
    &\quad
    -
    \frac12
    \bar a_{xx}(t)\big(x_1^\rho(t),x_1^\rho(t)\big),
    \\
    R_{b,2}^\rho(t)
    &:=
    b(t,x^\rho(t),u^\rho(t))
    -
    b(t,\bar x(t),\bar u(t))
    -
    \bar b_x(t)\Delta x^\rho(t)
    -
    \delta b(t)I_{E_\rho}(t)
    \\
    &\quad
    -
    \delta b_x(t)x_1^\rho(t)I_{E_\rho}(t)
    -
    \frac12
    \bar b_{xx}(t)\big(x_1^\rho(t),x_1^\rho(t)\big),
    \\
    R_{\gamma,2}^\rho(t)
    &:=
    \gamma(t,x^\rho(t-))
    -
    \gamma(t,\bar x(t-))
    -
    \bar\gamma_x(t)\Delta x^\rho(t-)
    -
    \frac12
    \bar\gamma_{xx}(t)
    \big(x_1^\rho(t-),x_1^\rho(t-)\big).
\end{align*}
Applying Taylor's formula in the state variable to
\(a,b,\gamma\) in the definitions of
\(R_{a,2}^\rho,R_{b,2}^\rho,R_{\gamma,2}^\rho\), and using
\(r_1^\rho=\Delta x^\rho-x_1^\rho\), we obtain
\begin{equation}
\begin{aligned}
	R_{a,2}^\rho(t)
	&=
	\int_0^1(1-\theta)
	\Big[
	a_{xx}\bigl(t,\bar x(t)+\theta\Delta x^\rho(t),\bar u(t)\bigr)
	-\bar a_{xx}(t)
	\Big]
	\bigl(\Delta x^\rho(t),\Delta x^\rho(t)\bigr)\,d\theta
	\\
	&\quad
	+\frac12\bar a_{xx}(t)
	\bigl(r_1^\rho(t),\Delta x^\rho(t)\bigr)
	+\frac12\bar a_{xx}(t)
	\bigl(x_1^\rho(t),r_1^\rho(t)\bigr)
	\\
	&\quad
	+I_{E_\rho}(t)
	\int_0^1
	\Big[
	a_x\bigl(t,\bar x(t)+\theta\Delta x^\rho(t),v(t)\bigr)
	\\
	&\hspace{38mm}
	-a_x\bigl(t,\bar x(t)+\theta\Delta x^\rho(t),\bar u(t)\bigr)
	\Big]
	\Delta x^\rho(t)\,d\theta ,
\end{aligned}
\label{eq:second-remainder-drift-Taylor}
\end{equation}
\begin{equation}
\begin{aligned}
	R_{b,2}^\rho(t)
	&=
	\int_0^1(1-\theta)
	\Big[
	b_{xx}\bigl(t,\bar x(t)+\theta\Delta x^\rho(t),\bar u(t)\bigr)
	-\bar b_{xx}(t)
	\Big]
	\bigl(\Delta x^\rho(t),\Delta x^\rho(t)\bigr)\,d\theta
	\\
	&\quad
	+\frac12\bar b_{xx}(t)
	\bigl(r_1^\rho(t),\Delta x^\rho(t)\bigr)
	+\frac12\bar b_{xx}(t)
	\bigl(x_1^\rho(t),r_1^\rho(t)\bigr)
	\\
	&\quad
	+\delta b_x(t)r_1^\rho(t)I_{E_\rho}(t)
	\\
	&\quad
	+I_{E_\rho}(t)
	\int_0^1
	\Big[
	b_x\bigl(t,\bar x(t)+\theta\Delta x^\rho(t),v(t)\bigr)
	\\
	&\hspace{29mm}
	-b_x\bigl(t,\bar x(t)+\theta\Delta x^\rho(t),\bar u(t)\bigr)
	-\delta b_x(t)
	\Big]
	\Delta x^\rho(t)\,d\theta ,
\end{aligned}
\label{eq:second-remainder-diffusion-Taylor}
\end{equation}
\begin{equation}
\begin{aligned}
	R_{\gamma,2}^\rho(t)
	&=
	\int_0^1(1-\theta)
	\Big[
	\gamma_{xx}\bigl(
	t,\bar x(t-)+\theta\Delta x^\rho(t-)
	\bigr)
	-\bar\gamma_{xx}(t)
	\Big]
	\\
	&\hspace{25mm}\times
	\bigl(
	\Delta x^\rho(t-),\Delta x^\rho(t-)
	\bigr)\,d\theta
	\\
	&\quad
	+\frac12\bar\gamma_{xx}(t)
	\bigl(
	r_1^\rho(t-),\Delta x^\rho(t-)
	\bigr)
	+\frac12\bar\gamma_{xx}(t)
	\bigl(
	x_1^\rho(t-),r_1^\rho(t-)
	\bigr).
\end{aligned}
\label{eq:second-remainder-Stieltjes-Taylor}
\end{equation}

We first estimate the terms containing differences of second
derivatives in \eqref{eq:second-remainder-drift-Taylor}--
\eqref{eq:second-remainder-Stieltjes-Taylor}.
By \eqref{eq:regular-state-increment-moments},
\(\Delta x^\rho\to0\) uniformly in time in probability, and the
same convergence holds for the left limits. By continuity of the
Hessians in the state variable and their uniform bounds, the
dominated convergence theorem for convergence
in probability gives
\begin{align*}
 &\mathbb E\left[
 \int_{(0,T]}\int_0^1(1-\theta)
 \left\|\gamma_{xx}(t,\bar x(t-)+\theta\Delta x^\rho(t-))
                   -\bar\gamma_{xx}(t)\right\|
 \,d\theta\,d\bar\xi(t)\right]^{8\alpha}\longrightarrow0,\\
 &\mathbb E\left[
 \int_0^T\int_0^1(1-\theta)
 \left\|\psi_{xx}(t,\bar x(t)+\theta\Delta x^\rho(t),\bar u(t))
                   -\bar\psi_{xx}(t)\right\|^2
 \,d\theta\,dt\right]^{4\alpha}\longrightarrow0,
 \quad\psi=a,b.
\end{align*}
The integrals inside the expectations are bounded by \(C\bar\xi(T)\) and \(CT\),
respectively; the required domination follows from
\(\mathbb E\bar\xi(T)^{8\alpha}<\infty\).
The Hessian norms in these estimates are taken in the space of
bounded bilinear maps from \(H\times H\) to \(H\).

For the first term in
\eqref{eq:second-remainder-Stieltjes-Taylor}, the
Cauchy--Schwarz inequality yields
\begin{equation}
\begin{aligned}
 &\mathbb E\left(\int_{(0,T]}\int_0^1(1-\theta)
 \left\|\gamma_{xx}\bigl(t,\bar x(t-)+\theta\Delta x^\rho(t-)\bigr)
                    -\bar\gamma_{xx}(t)\right\|
 \|\Delta x^\rho(t-)\|_H^2\,d\theta\,d\bar\xi(t)\right)^{4\alpha}\\
 &\quad\le
 \left(\mathbb E\sup_{t\le T}\|\Delta x^\rho(t)\|_H^{16\alpha}\right)^{1/2}\\
 &\quad\times\left\{\mathbb E\left[
 \int_{(0,T]}\int_0^1(1-\theta)
 \left\|\gamma_{xx}\bigl(t,\bar x(t-)+\theta\Delta x^\rho(t-)\bigr)
                    -\bar\gamma_{xx}(t)\right\|
 \,d\theta\,d\bar\xi(t)\right]^{8\alpha}\right\}^{1/2}\\
 &\quad=o(\rho^{4\alpha}).
\end{aligned}
\label{eq:second-remainder-Stieltjes-Hessian-bound}
\end{equation}

For the first terms in
\eqref{eq:second-remainder-drift-Taylor} and
\eqref{eq:second-remainder-diffusion-Taylor}, the
Cauchy--Schwarz inequality yields, for \(\psi=a,b\),
\begin{equation}
\begin{aligned}
 &\mathbb E\left(\int_0^T
 \left\|\int_0^1(1-\theta)
 \Big[\psi_{xx}\bigl(t,\bar x(t)+\theta\Delta x^\rho(t),\bar u(t)\bigr)
                     -\bar\psi_{xx}(t)\Big]
       (\Delta x^\rho(t),\Delta x^\rho(t))\,d\theta\right\|_H^2
 \,dt\right)^{2\alpha}\\
 &\quad\le C_\alpha
 \left(\mathbb E\sup_{t\le T}\|\Delta x^\rho(t)\|_H^{16\alpha}\right)^{1/2}\\
 &\quad\times\left\{\mathbb E\left[\int_0^T\int_0^1(1-\theta)
 \left\|\psi_{xx}\bigl(t,\bar x(t)+\theta\Delta x^\rho(t),\bar u(t)\bigr)
                     -\bar\psi_{xx}(t)\right\|^2
 \,d\theta\,dt\right]^{4\alpha}\right\}^{1/2}\\
 &\quad=o(\rho^{4\alpha}).
\end{aligned}
\label{eq:second-remainder-dt-Hessian-bound}
\end{equation}

We now estimate the bilinear terms containing \(r_1^\rho\)
and the terms supported on \(E_\rho\).
The two bilinear terms and the term multiplied by \(I_{E_\rho}\)
in \eqref{eq:second-remainder-drift-Taylor} have total norm at most
\[
C I_{E_\rho}\bigl(\|\Delta x^\rho\|_H+\|\Delta x^\rho\|_H^2\bigr)
+C\|r_1^\rho\|_H(\|\Delta x^\rho\|_H+\|x_1^\rho\|_H).
\]
The two bilinear terms and both terms multiplied by \(I_{E_\rho}\)
in \eqref{eq:second-remainder-diffusion-Taylor} have total norm at most
\[
C I_{E_\rho}\bigl(\|r_1^\rho\|_H+\|\Delta x^\rho\|_H^2\bigr)
+C\|r_1^\rho\|_H(\|\Delta x^\rho\|_H+\|x_1^\rho\|_H).
\]
The two bilinear terms in
\eqref{eq:second-remainder-Stieltjes-Taylor} have total norm at most
\[
 C\|r_1^\rho(t-)\|_H
       (\|\Delta x^\rho(t-)\|_H+\|x_1^\rho(t-)\|_H).
\]
For the \(d\bar\xi\)-integral of this bound, H\"older's inequality gives
\begin{align*}
 &\left\|\int_{(0,T]}\|r_1^\rho(t-)\|_H
       \bigl(\|\Delta x^\rho(t-)\|_H+\|x_1^\rho(t-)\|_H\bigr)
       \,d\bar\xi(t)\right\|_{L^{4\alpha}}\\
 &\quad\le \|\bar\xi(T)\|_{L^{12\alpha}}
       \left\|\sup_{t\le T}\|r_1^\rho(t)\|_H\right\|_{L^{12\alpha}}
       \left\|\sup_{t\le T}
          (\|\Delta x^\rho(t)\|_H+\|x_1^\rho(t)\|_H)
       \right\|_{L^{12\alpha}}
 =O(\rho^{3/2}).
\end{align*}
Combining the bounds for the bilinear terms and the terms supported
on the spike interval with
\eqref{eq:second-remainder-Stieltjes-Hessian-bound} and
\eqref{eq:second-remainder-dt-Hessian-bound} gives
\begin{align}
&\mathbb E\left(\int_0^T\|R_{a,2}^\rho(t)\|_H\,dt\right)^{4\alpha}
 +\mathbb E\left(\int_0^T\|R_{b,2}^\rho(t)\|_H^2\,dt\right)^{2\alpha}+
 \mathbb E\left(\int_{(0,T]}\|R_{\gamma,2}^\rho(t)\|_H
                                    \,d\bar\xi(t)\right)^{4\alpha}
 =o(\rho^{4\alpha}).
\label{eq:regular-second-error-data-moments}
\end{align}
Applying \eqref{eq:unbounded-xi-unweighted-SEE-estimate} to
\eqref{eq:r2-rho-equation-proof} with the three data bounds in
\eqref{eq:regular-second-error-data-moments}, we obtain
\[
\mathbb E\sup_{t\le T}
\|x^\rho(t)-\bar x(t)-x_1^\rho(t)-x_2^\rho(t)\|_H^{2\alpha}
=o(\rho^{2\alpha}).
\]
\end{proof}

\begin{proof}[Proof of Lemma~\ref{lem:singular-variation-estimate}]
We use \(K=\bar\xi+\eta\) as defined in
Section~\ref{subsec:variational-equations}.
We apply Remark~\ref{rem:multiple-FV-driver-reduction} with the two
nondecreasing drivers \(\bar\xi,\eta\) to
\eqref{eq:first-variation-singular} and the exact state-difference
equation, using
\[
 d\nu=d\eta-d\bar\xi,\quad
 d\xi^\varepsilon=(1-\varepsilon)d\bar\xi+\varepsilon d\eta.
\]
The coefficients multiplying the unknown state in these two
Stieltjes integrals have Lipschitz bounds \(L_\gamma,0\) for the first
variation, and \((1-\varepsilon)L_\gamma,\varepsilon L_\gamma\)
for the exact difference. Hence their combined state Lipschitz
constant is at most \(L_\gamma\), uniformly in \(\varepsilon\).

The homogeneous driver in the exact difference equation is
\(\xi^\varepsilon\). The identity
\begin{equation}
\gamma(t,x^\varepsilon(t-))d\xi^\varepsilon(t)
-\bar\gamma(t)d\bar\xi(t)
=\bigl[\gamma(t,x^\varepsilon(t-))-\bar\gamma(t)\bigr]
 d\xi^\varepsilon(t)
+\varepsilon\bar\gamma(t)d\nu(t)
\label{eq:singular-state-Stieltjes-decomposition}
\end{equation}
expresses the Stieltjes term as a homogeneous increment integrated
against \(d\xi^\varepsilon\) and an inhomogeneous term independent
of the perturbed state.
Moreover,
\[
 \mathbb E\left( \int_{(0,T]}\|\bar\gamma(t)\|_Hd|\nu|_v(t) \right)^{4\alpha} \le C_\alpha \left(\mathbb E K(T)^{8\alpha}\right)^{1/2} \left(\mathbb E[1+\sup_{t\le T}\|\bar x(t)\|_H]^{8\alpha}\right)^{1/2} <\infty,
\]
since \(d|\nu|_v\le dK\). We apply the multiple-driver form of
\eqref{eq:unbounded-xi-unweighted-SEE-estimate} (see
Remark~\ref{rem:multiple-FV-driver-reduction}) to the equation
for \(x^\varepsilon-\bar x\), with the decomposition
\eqref{eq:singular-state-Stieltjes-decomposition}, and to
\eqref{eq:first-variation-singular}.
The respective additive data are
\(\varepsilon\bar\gamma\,d\nu\) and \(\bar\gamma\,d\nu\), and we obtain
\[
\mathbb E\sup_{t\le T}\|x^\varepsilon(t)-\bar x(t)\|_H^{2\alpha}
\le C_\alpha\varepsilon^{2\alpha},
\]
and
\begin{align}
&\mathbb E\left[
    \sup_{t\le T}\|y(t)\|_H^{2\alpha}
    +\left(\int_0^T\|y(t)\|_V^2dt\right)^\alpha
\right]
\notag\\
&\quad\le
C_\alpha\mathfrak E_{2\alpha\kappa,L_\gamma}(K)^{1/2}
\left\{\mathbb E\left(
    \int_{(0,T]}\|\bar\gamma(t)\|_Hd|\nu|_v(t)
\right)^{4\alpha}\right\}^{1/2}<\infty.
\label{eq:unbounded-xi-singular-first-variation-estimate}
\end{align}
Consequently, writing
\(y^\varepsilon=(x^\varepsilon-\bar x)/\varepsilon\), we have
\begin{equation}
 \sup_{0<\varepsilon\le1}\mathbb E\sup_{t\le T}
 \|y^\varepsilon(t)\|_H^{2\alpha}<\infty,\quad \alpha\ge1.
 \label{eq:singular-scaled-increment-high-moment}
\end{equation}
Divide the equation for \(x^\varepsilon-\bar x\) by
\(\varepsilon\), and subtract \eqref{eq:first-variation-singular}.
In the resulting equation for \(y^\varepsilon-y\), we write the
homogeneous coefficients as \(\bar A,\bar B,\bar\gamma_xd\bar\xi\).
The inhomogeneous terms are
\[
\begin{aligned}
 R_\psi^\varepsilon(t)
  &=\varepsilon^{-1}\big[\psi(t,x^\varepsilon(t),\bar u(t))
    -\psi(t,\bar x(t),\bar u(t))
    -\bar\psi_x(t)(x^\varepsilon(t)-\bar x(t))\big],
    \quad\psi=a,b,\\
 R_\gamma^\varepsilon(t)
  &=\varepsilon^{-1}\big[\gamma(t,x^\varepsilon(t-))
    -\bar\gamma(t)-\bar\gamma_x(t)(x^\varepsilon(t-)-\bar x(t-))\big],\\
 \widetilde R_\gamma^\varepsilon(t)
  &=\varepsilon^{-1}\big[\gamma(t,x^\varepsilon(t-))-\bar\gamma(t)\big].
\end{aligned}
\]
Taylor's formula and the
bounded second derivatives give bounds for
\(R_a^\varepsilon,R_b^\varepsilon,R_\gamma^\varepsilon\);
the Lipschitz bound on \(\gamma\) gives the bound for
\(\widetilde R_\gamma^\varepsilon\):
\begin{align*}
\|R_a^\varepsilon(t)\|_H+\|R_b^\varepsilon(t)\|_H
&\le C\varepsilon\|y^\varepsilon(t)\|_H^2,
\\
\|R_\gamma^\varepsilon(t)\|_H
&\le C\varepsilon\|y^\varepsilon(t-)\|_H^2,
\\
\|\widetilde R_\gamma^\varepsilon(t)\|_H
&\le L_\gamma\|y^\varepsilon(t-)\|_H.
\end{align*}
The term \(\widetilde R_\gamma^\varepsilon\) is integrated
against \(\varepsilon d\nu\), whereas
\(R_\gamma^\varepsilon\) is integrated against \(d\bar\xi\).
The \(L^{4\alpha}\) moment of the sum of
\(\int_{(0,T]}\|R_\gamma^\varepsilon\|_Hd\bar\xi\)
and \(\varepsilon\int_{(0,T]}
\|\widetilde R_\gamma^\varepsilon\|_Hd|\nu|_v\) satisfies
\begin{equation}
\begin{aligned}
&\mathbb E\left(
 \int_{(0,T]}\|R_\gamma^\varepsilon(t)\|_Hd\bar\xi(t)
 +\varepsilon\int_{(0,T]}
       \|\widetilde R_\gamma^\varepsilon(t)\|_Hd|\nu|_v(t)
\right)^{4\alpha}
\\
&\quad\le C_\alpha\varepsilon^{4\alpha}
\left(\mathbb E K(T)^{8\alpha}\right)^{1/2}
\left(\mathbb E[1+\sup_{t\le T}\|y^\varepsilon(t)\|_H]^{16\alpha}\right)^{1/2}
\le C_\alpha\varepsilon^{4\alpha}.
\end{aligned}
\label{eq:singular-remainder-Stieltjes-data}
\end{equation}
The bounds on \(R_a^\varepsilon\) and
\(R_b^\varepsilon\), together with
\eqref{eq:singular-scaled-increment-high-moment} at orders
\(4\alpha\) and \(8\alpha\), give
\begin{equation}
 \|R_a^\varepsilon\|_{L^{1,4\alpha}_{\mathbb F}(0,T;H)}
 +\|R_b^\varepsilon\|_{L^{2,4\alpha}_{\mathbb F}(0,T;H)}
 =O(\varepsilon).
\label{eq:singular-remainder-Lebesgue-data}
\end{equation}
Applying the multiple-driver estimate from
Remark~\ref{rem:multiple-FV-driver-reduction} and
\eqref{eq:unbounded-xi-unweighted-SEE-estimate} to
\(y^\varepsilon-y\), using
\eqref{eq:singular-remainder-Stieltjes-data} and
\eqref{eq:singular-remainder-Lebesgue-data}, gives
\[
\mathbb E\sup_{t\le T}\|y^\varepsilon(t)-y(t)\|_H^{2\alpha}
\le C_\alpha\varepsilon^{2\alpha}.
\]
\end{proof}

\subsection[Derivation of the pointwise singular-control conditions]{Derivation of the pointwise conditions \eqref{eq:SMP-singular-nonnegative} and
	\eqref{eq:SMP-singular-complementarity} from the singular variational inequality}
\label{subsec:singular-VI-pointwise-conditions}

The integrability and optional-projection properties verified in
Section~\ref{subsec:SMP} apply to \(\chi=\eta,\bar\xi\) and to
bounded increasing test processes. Thus
\eqref{eq:singular-variational-inequality} holds for the chosen
optional version of \(\mathcal G(t,\bar x(t-))\).

Let \(\zeta\) be any bounded adapted nondecreasing c\`adl\`ag process with
\(\zeta(0)=0\). For \(\lambda>0\), the process
\(\eta^\lambda:=\bar\xi+\lambda\zeta\)
is admissible, because
\[
 \mathfrak E_{m,L_\gamma}(\eta^\lambda)
 \le e^{mL_\gamma\lambda\|\zeta(T)\|_\infty}
      \mathfrak E_{m,L_\gamma}(\bar\xi)<\infty,
\]
and its polynomial moments are finite. Substituting \(\eta^\lambda\) into
\eqref{eq:singular-variational-inequality} and dividing by \(\lambda\) gives
\begin{equation}
    \mathbb E\int_{(0,T]}
    \mathcal G(t,\bar x(t-))\,d\zeta(t)\ge0.
    \label{eq:singular-positive-against-increasing-process}
\end{equation}

Let \(\tau\) be an \((0,T]\)-valued stopping time and
\(A\in\mathcal F_\tau\). Taking
\(\zeta(t)=I_A I_{\{\tau\le t\}}\)
in \eqref{eq:singular-positive-against-increasing-process} yields
$
    \mathbb E\!
    \left[I_A\mathcal G(\tau,\bar x(\tau-))\right]\ge0.
$
The value of the optional version at \(\tau\) is
\(\mathcal F_\tau\)-measurable.
Taking \(A=\{\mathcal G(\tau,\bar x(\tau-))<0\}\) therefore gives
\(\mathcal G(\tau,\bar x(\tau-))\ge0\) almost surely for every
\((0,T]\)-valued stopping time \(\tau\). 
By the optional section theorem
(see \cite[Theorem~IV.84]{DellacherieMeyer1978}),
\[
 \mathbb P\left\{
   \mathcal G(t,\bar x(t-))\ge0
   \text{ for all }t\in(0,T]
 \right\}=1.
\]
This is \eqref{eq:SMP-singular-nonnegative}.

Finally, taking \(\eta\equiv0\) in
\eqref{eq:singular-variational-inequality} gives
\begin{equation}
    \mathbb E\int_{(0,T]}
    \mathcal G(t,\bar x(t-))\,d\bar\xi(t)\le0.
    \label{eq:singular-contact-expectation}
\end{equation}
The integral in \eqref{eq:singular-contact-expectation} is
nonnegative by \eqref{eq:SMP-singular-nonnegative}; its expectation
is therefore zero, so it vanishes almost surely. Since the integrand
is nonnegative, this is equivalent to
\(\mathcal G(t,\bar x(t-))\,d\bar\xi(t)=0\) as a random measure,
proving \eqref{eq:SMP-singular-complementarity}.

\begingroup
\small
\setlength{\parskip}{0pt}

\endgroup

\end{document}